\numberwithin{equation}{section}
\newcommand{\beq}{\begin{equation}}
\newcommand{\eeq}{\end{equation}}
\newcommand{\ben}{\begin{eqnarray}}
\newcommand{\een}{\end{eqnarray}}
\newcommand{\beno}{\begin{eqnarray*}}
\newcommand{\eeno}{\end{eqnarray*}}
\newcommand{\pa}{\partial}
\newcommand{\eps}{\epsilon}
\newcommand{\RR}{\mathbb{R}}
\newcommand{\ZZ}{\mathbb{Z}}
\newcommand{\NN}{\mathbb{N}}
\newcommand{\DD}{\Delta}
\newcommand{\dd}{\partial}
\newcommand{\f}{\frac}
\newcommand{\supp}{\mbox{supp }}
\newcommand{\abs}[1]{\left\vert#1\right\vert}
\newcommand{\set}[1]{\left\{#1\right\}}
\newcommand{\psca}[1]{\left\langle#1\right\rangle}
\newcommand{\pare}[1]{\left(#1\right)}
\newcommand{\norm}[1]{\left\Vert#1\right\Vert}
\newtheorem{theorem}{\textbf Theorem}[section]
\newtheorem{lemma}{\textbf Lemma}[section]
\newtheorem{rem}{\textbf Remark}[section]
\newtheorem{cor}{\textbf Corollary}[section]
\newtheorem{prop}{\textbf Proposition}[section]
\newtheorem{defin}{\textbf Definition}[section]
\newcounter{remark}
\par \stepcounter{remark} {\it Remark
\arabic{section}.\arabic{remark}.}~}{\rm \end Proof\par}
\numberwithin{equation}{section}
\numberwithin{equation}{section}\allowdisplaybreaks
\subjclass[2010]{35Q30, 76D03}
\keywords{Hyperbolic Perturbation Of NS, Littlewood-Paley theory, Well-posedness.}
\begin{document}
\title{Global well-posedness of 2D Hyperbolic perturbation of the Navier-Stokes system in a thin strip}

\author[N. AARACH]{Nacer Aarach$^{1}$}

\address{$^1$ Universit\'e de Bordeaux, Institut de Math\'ematiques de Bordeaux, F-33405 Talence Cedex, France}

\email{nacer.aarach@math.u-bordeaux.fr}

\begin{abstract}  In this paper, we study a hyperbolic version of the Navier-Stokes equations, obtained by using the approximation by relaxation of the Euler system, evolving in a thin strip domain. The formal limit of these equations is a hyperbolic Prandtl type equation, our goal is to prove the existence and uniqueness of a global solution to these equations for analytic initial data in the tangential variable, under a uniform smallness assumption. Then we justify the limit from the anisotropic hyperbolic Navier-Stokes system to the hydrostatic hyperbolic Navier-Stokes system with small analytic data.   
 \end{abstract} 
 
\maketitle
\section{introduction}
 In our work, we consider a hyperbolic perturbation of the incompressible Navier-Stokes equations in $\mathbb{R}\times (0,\epsilon)$ such that the viscosity $\nu = \eps^2$ (this viscosity called vanishing viscosity). We study this system in a thin like region and have given it slip-free boundary conditions. We denote $\mathcal{S}^\eps = \lbrace (x,Y) \in \mathbb{R}^2 : 0<Y<\eps \rbrace $ where $\eps$ is the width of the domain. Our system is of the following form:
\begin{equation}
	\label{HPNS}
	\left\{\;
	\begin{aligned}
		&\tau \pa_t^2 U^{(\tau,\eps)} + \partial_t U^{(\tau,\eps)} + U^{(\tau,\eps)} .\nabla U^{(\tau,\eps)}    \\ &\quad \quad \quad \quad \quad \quad -\eps^2 \Delta U^{(\tau,\eps)} + \nabla P^{(\tau,\eps)} = 0, && \qquad \mbox{in }\, ]0,\infty[ \times \mathcal{S}^\eps\\
		&\text{ div } \ U^{(\tau,\eps)} = 0, && \qquad \mbox{in }\,]0,\infty[ \times \mathcal{S}^\eps \\
		&U^{(\tau,\eps)}_{/t=0} = U^{(\tau,\eps)}_0, \text{ \ \ \ } \pa_t U^{(\tau,\eps)}_{/t=0} =U^{(\tau,\eps)}_1, && \qquad \mbox{in }\,\mathcal{S}^\eps 		
	\end{aligned}
	\right.
\end{equation}
 
where $U^{(\tau,\eps)}(t,x,Y) = \pare{U_1^{(\tau,\eps)}(t,x,Y), U_2^{(\tau,\eps)}(t,x,Y)}$ denotes the velocity of the fluid and $P^{(\tau,\eps)}(t,x,y)$ the scalar pressure function, which guarantees the divergence-free property of the velocity field $U^{(\tau,\eps)}$. The system $\eqref{HPNS}$ is complemented by the no-slip boundary condition
\begin{equation*}
	U^{(\tau,\eps)}_{\vert Y=0} = 0 \quad\mbox{and}\quad U^{(\tau,\eps)}_{\vert Y=\eps} = 0 
	\text{ \ in \ } ]0,\infty[ \times \mathcal{S}^\eps.
\end{equation*}
Here, in the equation of the velocity, the Laplacian is $\Delta = \partial_x^2 + \partial_Y^2$ .\\

The Navier-Stokes equations are a type of nonlinear partial differential equation that describes the motion of Newtonian fluids. They have been a subject of intense research since their invention in the 1930s.  These equations are defined by the following Newtonian incompressible viscous fluids:
\begin{equation}
	\label{NSEQ}
	\left\{\;
	\begin{aligned}
		& \partial_t U + U \cdot \nabla U -\nu \Delta U + \nabla P = 0, \\
		&\text{ div } \ U = 0.   		
	\end{aligned}
	\right.
\end{equation}
here $U$ denotes the velocity field, $P$ the scalar pressure function, and $\nu$ is the fluid's viscosity coefficient. \\

Nonetheless, the system \eqref{NSEQ} creates a physical conundrum due to the fact that the velocity equation has an infinite propagation speed. To avoid this nonphysical aspect, Cattaneo in \cite{CC,CR22} advocated modifying the heat equation to a hyperbolic form known as Cattaneo's heat transfer law. More specifically, they advocated replacing the Fourier law  they proposed to replace Fourier's law which describes the following stress tensor 
$$ \tau(t) = -PId + \nu\Big(\nabla U + (\nabla U)\top \Big)(t)^, $$ 
with the hyperbolic model shown below
 $$ \frac{1}{c^2} \pa_t^2 \theta + \frac{1}{\beta}\pa_t \theta - \Delta \theta = 0. $$
 
 This equation is called \textbf{The Telegraph equation}. It has a finite propagation speed and is compatible with both principle of relativity and second law of thermodynamics, so it is a satisfactory physical model. As a result, it makes sense to think of a hyperbolic Navier-Stokes system by including the term $\tau \pa_t^2$ to the classical Navier-Stokes system \eqref{NSEQ}, where $\tau$ is a small parameter we can assume that is more small than $1/c^2$ where $c$ is the speed of light.\\

The dissipative hyperbolic Navier-Stokes equation \eqref{HPNS} is obtained after relaxing the Euler equations and a change of scale variables. This perturbation, considered as a relaxation of Euler's equations, was considered by Brenier, Natalini, and Puel in \cite{YRM2004}. They  introduced a hyperbolic version of equations, based on a relaxation approximation of the incompressible Navier-Stokes equations, following the scheme described by Jin and Xin in \cite{SZ1995}.

\begin{equation}
	\label{HPNS22}
	\left\{\;
	\begin{aligned}
		\pa_t U^\eps + div \ V^\eps &= \nabla Q^\eps \\
		\pa_t V^\eps + \frac{\eps^2}{\tau}\nabla U^\eps &= -\frac{1}{\tau}(V^\eps - U^\eps \otimes U^\eps) \\
		div \ U^\eps &= 0 \\
		(U^\eps,V^\eps)|_{t=0}  &= (U_0^\eps,V_0^\eps)  
	\end{aligned}
	\right.
\end{equation}

In their work they proved global existence and uniqueness for the perturbed Navier-Stokes equation with initial data in $\mathcal{H}^2(\mathcal{T}^2)^2\times \mathcal{H}^1(\mathcal{T}^2)^2 $, where $\mathcal{T}^2$ is the periodic square $\mathbb{R}^2/\mathbb{Z}^2$. Moreover, they proved the convergence of the solution for perturbed Navier-Stokes to a smooth solution for Navier-Stokes. 

Later this equation was considered by Paicu and Raugel in \cite{MR2007,MR2008}. In their work, they also proved a global existence and uniqueness result with significantly improved regularity for the initial data, when $\tau$ is small enough. In fact, they only require the regularity in $\mathcal{H}^1(\mathbb{R}^2)^2 \times L^2(\mathbb{R}^2)^2$. Also Hachicha in \cite{IHachicha}, obtained a global result of existence and uniqueness of the perturbed Navier-Stokes in two and three space dimensions and under suitable smallness assumptions on the initial data in the space $ [(\mathcal{H}^{\frac{n}{2} + \delta} \cap \mathcal{H}^{\frac{n}{2}-1 + \delta})(\mathbb{R}^n)]^n $ with $n=2,3$. Moreover, for all positive times $T$, it proved the convergence to perturbed Navier-Stokes towards solutions of the Navier-Stokes system (NS) with initial data in $ \mathcal{H}^{\frac{n}{2}-1 + s}(\mathbb{R}^n)^n $, $s>0$. 

We finally mention a recent result obtained by O. Coulaud, I. Hachicha and G. Raugel in \cite{OIG}. They considered a hyperbolic quasi-linear version of the Navier-Stokes equations in $\mathbb{R}^2$ and proved the existence and uniqueness of solutions to these equations, and exhibit smallness assumptions on the data, under which the solutions are global in time in the 2D case. 

The way these authors introduce their system of equations is by using
methods that are usually devoted to the study of numerical patterns, and that can be applied to all preservation laws. In this article, we take the problem from another point of view. In order to describe hydrodynamical flows on the earth, in geophysics, it is usually assumed that vertical motion is much smaller than horizontal motion and that the fluid layer depth is small compared to the radius of the sphere, thus, they are a good approximation of global atmospheric and oceanic flows. The thin domain in the system \eqref{HPNS} is considered to take into account this anisotropy between horizontal and vertical directions. Under this assumption, it is believed that the dynamics of fluids on large scale tend towards a geostrophic balance (see \cite{G1982}, \cite{H2004} or \cite{PZ2005}). 

The purpose of this paper is to show the existence and uniqueness of solutions to \eqref{HPNS} in the thin domain $\mathbb{R} \times (0,\epsilon)$, For some analytically small initial data in the tangential variable. To simplify our system we eliminate the $\tau$-dependency. To that extend, we perform the re-scaling 
\begin{align}\label{rescaling}
U^{(\tau,\eps)}(t,X) = \tau^{\alpha}U^\eps(\tau^\beta t, \frac{X}{\sqrt{\tau}}), \quad \quad \quad  P^{(\tau,\eps)}(t,X) = \tau^{\alpha'}P^\eps(\tau^\beta t, \frac{X}{\sqrt{\tau}}).
\end{align}

We replace in system \eqref{HPNS}, we find that $\alpha = -\frac{1}{2}$, $\beta = -1$ and $\alpha^{'} = -1$, then our re-scaling \ref{rescaling} have the following form 
\begin{align}\label{rescaling1}
U^{(\tau,\eps)}(t,X) = \frac{1}{\sqrt{\tau}} U^\eps(\frac{t}{\tau}, \frac{X}{\sqrt{\tau}}), \quad \quad \quad  P^{(\tau,\eps)}(t,X) = \frac{1}{\tau}P^\eps(\frac{t}{\tau}, \frac{X}{\sqrt{\tau}}).
\end{align}

This scaling transforms the $\tau$-dependent equations \eqref{HPNS} into the following system of equations with initial data which depend on $\tau$:
\begin{equation}
	\label{HPNS1}
	\left\{\;
	\begin{aligned}
		 &\pa_t^2 U^{\eps} + \partial_t U^{\eps} + U^{\eps} .\nabla U^{\eps}  -\eps^2 \Delta U^{\eps} + \nabla P^{\eps} = 0, && \qquad \mbox{in }\, ]0,\infty[\times \mathcal{S}^\eps\\
		&\text{ div } \ U^{(\tau,\eps)} = 0, && \qquad \mbox{in }\,]0,\infty[\times \mathcal{S}^\eps \\
		&U^{\eps}_{/t=0} = \sqrt{\tau} U_0^{\tau,\eps}(\sqrt{\tau}X) = U^{\eps}_0,  && \qquad \mbox{in }\, \mathcal{S}^\eps \\
		&\pa_t U^{\eps}_{/t=0}  = \tau^{\frac{3}{2}}U^{\tau,\eps}_1(\sqrt{\tau}X)=U^{\eps}_1, && \qquad \mbox{in }\, \mathcal{S}^\eps \\
		& U^\eps/_{y=0} = U^\eps/_{y=1} = 0, && \qquad \mbox{in }\,]0,\infty[\times \mathcal{S}^\eps
		\end{aligned}
	\right.
\end{equation}

In a formal way, as in \cite{PZZ2019} and \cite{AN2020}, taking into account this anisotropy, we also consider the initial data of the following form,
$$ U^\eps_{\vert t=0} = U_{0}^{\eps} = \left( u_0\pare{x,\frac{Y}{\eps}},\eps v_{0}\pare{x,\frac{Y}{\eps}}\right),$$ and $$ U^\eps_{\vert t=1} = U_{1}^{\eps} = \left( u_1\pare{x,\frac{Y}{\eps}},\eps v_{1}\pare{x,\frac{Y}{\eps}}\right).$$

In this paper, we look for solutions in the form  
\begin{equation} 
	\label{S1eq3bis}
	\left\{\;
	\begin{aligned}
		&U^\eps(t,x,Y)=\pare{u^\eps \pare{t,x,\frac{Y}{\eps}}, \eps v^\eps \pare{t,x,\frac{Y}{\eps}}}\\
		&P^\eps(t,x,Y)=p^\eps \pare{t,x,\frac{Y}{\eps}}.
	\end{aligned}
	\right.
\end{equation}

 Let $\mathcal{S} := \left\{(x,y)\in\mathbb{R}^2:\ 0<y<1\right\}$, we can rewrite the system $\eqref{HPNS1}$ as follows
\begin{equation}
	\label{eq:HPNhydroPE}
	\left\{\;
	\begin{aligned}
		&\partial_t^2 u^\eps + \partial_t u^\eps +u^\eps \partial_x u^\eps + v^\eps \partial_yu^\eps -\eps^2\partial_x^2u^\eps-\partial_y^2u^\eps+\partial_x p^\eps=0, && \mbox{in } \; ]0,\infty[\times \mathcal{S} \\
		&\eps^2(\partial_t^2 v^\eps + \partial_tv^\eps+u^\eps \partial_x v^\eps+ v^\eps \partial_yv^\eps -\eps^2\partial_x^2v^\eps-\partial_y^2v^\eps)+\partial_yp^\eps=0, && \mbox{in } \; ]0,\infty[\times \mathcal{S}\\
		&\partial_x u^\eps+\partial_yv^\eps=0, && \mbox{in } \; ]0,\infty[\times \mathcal{S}\\
		&\left(u^\eps, v^\eps \right)|_{t=0}=\left(u_0, v_0 \right) \text{ \ and \ } \pa_t\left(u^\eps, v^\eps \right)|_{t=0}=\left(u_1, v_1 \right), && \mbox{in } \; \mathcal{S}\\
		&\left(u^\eps, v^\eps \right)|_{y=0}=\left(u^\eps, v^\eps \right)|_{y=1}=0.
	\end{aligned}
	\right.
\end{equation}
Formally taking $\eps\to 0$ in the system $\eqref{eq:HPNhydroPE}$, we obtain the following perturbation hydrostatic Navier-Stokes equations,
	\begin{equation}
	\label{eq:HPNhydrolimit}
	\left\{\;
	\begin{aligned}
		&\pa_t^2 u + \pa_tu+u\pa_x u+v\pa_yu -\pa_y^2u+\pa_xp=0, && \mbox{in } \; ]0,\infty[\times \mathcal{S} \\
		&\pa_y p= 0, && \mbox{in } \; ]0,\infty[\times \mathcal{S}\\
		&\pa_xu+\pa_yv=0,  && \mbox{in } \; ]0,\infty[\times \mathcal{S}\\
		&u|_{t=0}=u_0,  && \mbox{in } \; \mathcal{S}\\
		&\pa_tu|_{t=0}=u_1, && \mbox{in } \; \mathcal{S},
	\end{aligned}
	\right.
\end{equation}
where the velocity $U = (u,v)$ satisfy the Dirichlet no-slip boundary condition
\begin{equation} 
	\label{S1eq7}
	\left(u, v\right)|_{y=0}=\left(u, v\right)|_{y=1}=0.
\end{equation}

Now let us state our main results.

The first result obtained in this paper is the global well-posedness of the system \eqref{eq:HPNhydrolimit} with small analytic data in the tangential variable. The global well-posedness and the global analyticity of the solutions to the classical 2-D perturbed hydrostatic Navier-Stokes system are well-known (see \cite{MR2007} for instance). We remark that a similar global result seems open for the Prandtl equation, where only a lower bound of the lifespan to the solution was obtained (see \cite{ZZ2016},\cite{MV2016}).
\begin{theorem}\label{th:11}
	Let $a > 0$. There exists a constant $c_0 > 0$ sufficiently small, such that, for any data $(u_0,u_1)$ verifying the  compatibility condition $\int_0^1 u_0 dy =0$, the smallness condition 
	\begin{align}\label{eq:condini}
		\norm{e^{a|D_x|} (u_0+u_1)}_{\mathcal{B}^\f12} + \norm{e^{a|D_x|} \pa_y u_0}_{\mathcal{B}^\f12} + \|e^{a|D_x|}u_1 \|_{\mathcal{B}^{\f12}} \leq c_0a,
	\end{align}
	then the system \eqref{eq:HPNhydrolimit} has a unique global solution $u$ satisfying the estimate
	\begin{multline} \label{estimate 1}
	 \Big( \frac{1}{2}\|e^{\mathcal{R} t}(u + \pa_tu)_\phi \|_{\tilde{L}_t^\infty(\mathcal{B}^{\f12})} + \|e^{\mathcal{R} t} \pa_yu_\phi \|_{\tilde{L}_t^\infty(\mathcal{B}^{\f12})} \Big)+ \f12 \|e^{\mathcal{R} t}(\pa_tu)_\phi\|_{\tilde{L}_t^2(\mathcal{B}^{\f12})} + \f12\|e^{\mathcal{R} t}\pa_yu_\phi\|_{\tilde{L}_t^2(\mathcal{B}^{\f12})}  \\+ \frac{1}{2}\|e^{\mathcal{R} t}(\pa_tu)_\phi\|_{\tilde{L}_t^\infty(\mathcal{B}^{\f12})} \leq C\|e^{a|D_x|}\pa_yu_0 \|_{\mathcal{B}^{\f12}} +C\|e^{a|D_x|}(u_0+u_1) \|_{\mathcal{B}^{\f12}} + C\|e^{a|D_x|}u_1 \|_{\mathcal{B}^{\f12}},
\end{multline}
where $u_\phi$ is given by \eqref{eq:Anphichp3}, $\mathcal{R}$ is a constant smaller than the constant that comes out of the Poincaré inequality for the domain $\mathcal{S}$ (see \eqref{poincarebis1}), and the functional spaces will be presented in Section \ref{sec:1}.
\end{theorem}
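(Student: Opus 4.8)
The plan is to run the Chemin--Gallagher--Paicu analytic-energy method, adapted to the damped hyperbolic structure of \eqref{eq:HPNhydrolimit}, and to extract the factor $e^{\mathcal{R}t}$ from the Poincaré inequality on the thin strip $\mathcal{S}$. First I would eliminate $p$ and $v$. Since $\pa_y p=0$ the pressure depends only on $(t,x)$, while $\pa_x u+\pa_y v=0$ together with $v|_{y=0}=0$ gives $v(t,x,y)=-\int_0^y\pa_x u\,dy'$. Writing the convection term in divergence form, $u\pa_x u+v\pa_y u=\pa_x(u^2)+\pa_y(uv)$, and integrating the first equation in $y$ over $(0,1)$, the no-slip conditions make the boundary contributions of $uv$ vanish; the compatibility condition $\int_0^1 u_0\,dy=0$, which I would show propagates so that $\int_0^1 u\,dy\equiv 0$, then determines $\pa_x p$ explicitly in terms of $u$ and the traces of $\pa_y u$. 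This closes \eqref{eq:HPNhydrolimit} into a single nonlocal scalar equation for $u$.

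\emph{Energy structure.} The coercive energy is dictated by two multipliers. Testing the damped wave operator $\pa_t^2+\pa_t-\pa_y^2$ against $\pa_t u$ yields, after integrating by parts on $\mathcal{S}$,
\[
\frac{d}{dt}\Big(\tfrac12\|\pa_t u\|^2+\tfrac12\|\pa_y u\|^2\Big)+\|\pa_t u\|^2=-\langle N,\pa_t u\rangle,
\]
while testing against $u+\pa_t u$ produces the good unknown $u+\pa_t u$,
\[
\frac{d}{dt}\Big(\tfrac12\|u+\pa_t u\|^2+\tfrac12\|\pa_y u\|^2\Big)+\|\pa_y u\|^2=-\langle N,u+\pa_t u\rangle,
\]
$N$ denoting the nonlinear and pressure forcing. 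A suitable combination of these two identities reproduces exactly the five quantities appearing in \eqref{estimate 1}, with total dissipation $\|\pa_t u\|^2+\|\pa_y u\|^2$.

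\emph{Analytic weight and nonlinear estimates.} Following \cite{PZZ2019,AN2020}, introduce the phase $\phi(t,\xi)=(a-\lambda\theta(t))|\xi|$ with $\theta(0)=0$ and $\dot\theta(t)=\|\pa_x u_\phi(t)\|_{\mathcal{B}^{\f12}}$, and set $u_\phi$ as in \eqref{eq:Anphichp3}. Applying $e^{\phi(t,D_x)}$ and a tangential dyadic block $\ddelt_j^h$ to the reduced equation and running the identities above block by block, the time derivative falling on $e^{\phi}$ furnishes the gain term $\lambda\dot\theta\,\||D_x|^{\f12}(\cdot)_\phi\|^2$. Estimating $N$ by Bony's paraproduct decomposition in $\mathcal{B}^{\f12}$, each contribution loses one tangential derivative; the point is to bound it by $C\dot\theta(t)$ times the dissipation norm, so that the loss is absorbed by the gain once $\lambda$ is fixed large enough.

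\emph{Exponential weight and closure.} The thin-strip Poincaré inequality (see \eqref{poincarebis1}) bounds $\|u\|$ by $\|\pa_y u\|$; choosing $\mathcal{R}$ below the Poincaré constant lets the factor $e^{2\mathcal{R}t}$ be absorbed into the $\|\pa_y u\|^2$ dissipation. Summing over $j$ with the $\mathcal{B}^{\f12}$ weights and integrating in time gives a closed inequality which, under the smallness \eqref{eq:condini}, yields by a continuation argument that $\lambda\theta(t)<a/2$ for all $t$, hence global analyticity and \eqref{estimate 1}. Existence then follows from a Friedrichs approximation together with these uniform bounds, and uniqueness from the same energy estimate applied to the difference of two solutions. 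The main obstacle is the genuinely hydrostatic term $v\pa_y u=-\big(\int_0^y\pa_x u\,dy'\big)\pa_y u$, which carries the unavoidable loss of one tangential derivative: controlling it in the weighted Besov norm by $\dot\theta$ times the dissipation, and doing so compatibly with the two hyperbolic multipliers at once, is the heart of the argument.
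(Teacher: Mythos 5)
Your overall scheme---the two multipliers $\pa_t u$ and $u+\pa_t u$ producing exactly the five quantities of \eqref{estimate 1}, the vanishing of the pressure term, the exponential weight $e^{\mathcal{R}t}$ absorbed through the Poincar\'e inequality, Bony decomposition for the nonlinearity, and closure by a continuation argument---is precisely the paper's strategy. However, there is one genuine gap, and it sits at the heart of the method: you take the analyticity loss rate to be $\dot\theta(t)=\norm{\pa_x u_\phi(t)}_{\mathcal{B}^{\f12}}$, whereas the argument only closes with $\dot\theta(t)=\norm{\pa_y u_\phi(t)}_{\mathcal{B}^{\f12}}$ (the paper's choice, cf.\ Lemmas \ref{lem:ABCbis1} and \ref{lem:Apa_yBCbis1}). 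The reason is twofold. First, in the paraproduct estimates the low-frequency factors must be bounded in $L^\infty$ of the strip, and any $L^\infty_v$ bound on a function vanishing at $y=0,1$ necessarily passes through $\pa_y$ (via $\norm{f}_{L^\infty_v}\lesssim\norm{f}_{L^2_v}^{\f12}\norm{\pa_y f}_{L^2_v}^{\f12}$ and Poincar\'e); hence the weight that naturally multiplies the quadratic terms is $\norm{\pa_y u_\phi}_{\mathcal{B}^{\f12}}$, and it cannot be dominated by $\norm{\pa_x u_\phi}_{\mathcal{B}^{\f12}}$, so with your $\dot\theta$ the losses are \emph{not} absorbed by the gain term $\lambda\dot\theta\,\norm{|D_x|^{\f12}\cdot}^2$ no matter how large $\lambda$ is.

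Second, the global-in-time closure requires $\theta(\infty)\leq a/\lambda$, i.e.\ $\int_0^\infty\dot\theta\,dt'$ must be small, and this is obtained in the paper by Cauchy--Schwarz against the integrable factor $e^{-\mathcal{R}t'}$:
\begin{equation*}
\theta(t)\leq\Big(\int_0^t e^{-2\mathcal{R}t'}dt'\Big)^{\f12}\norm{e^{\mathcal{R}t}\pa_y u_\phi}_{\tilde{L}^2_t(\mathcal{B}^{\f12})},
\end{equation*}
which works precisely because $\norm{e^{\mathcal{R}t}\pa_y u_\phi}_{\tilde{L}^2_t(\mathcal{B}^{\f12})}$ is one of the dissipated quantities controlled by the energy inequality under \eqref{eq:condini}. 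Your choice would instead require $\int_0^\infty\norm{u_\phi}_{\mathcal{B}^{\f32}}\,dt'<\infty$; but the system has no smoothing in $x$ (the dissipation is only $\norm{\pa_t u}^2+\norm{\pa_y u}^2$ at the $\mathcal{B}^{\f12}$ level), so this quantity is not controlled by the energy estimate and would in any case demand $\mathcal{B}^{\f32}$-regular data, which Theorem \ref{th:11} does not assume. Replacing your $\dot\theta$ by $\norm{\pa_y u_\phi}_{\mathcal{B}^{\f12}}$ (and, accordingly, bounding the hydrostatic term $v\pa_y u$ by the weighted Chemin--Lerner norms $\tilde{L}^2_{t,\dot\theta}(\mathcal{B}^{1})$ rather than by ``$\dot\theta$ times the dissipation'') restores exactly the paper's proof; the rest of your outline then goes through.
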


 The second result is the global well-posedness of the perturbed Navier-Stokes system \eqref{eq:HPNhydroPE} with small analytic data in the tangential variable. 
\begin{theorem}\label{th:22}
Let $a>0$. We assume that our initial satisfies the following smallness condition
	\begin{align}\label{u0v0}
		\|e^{a|D_x|}\pa_y(u_0,\eps v_0) \|_{\mathcal{B}^{\f12}} + \eps \|e^{a|D_x|}\pa_x(u_0,\eps v_0) \|_{\mathcal{B}^{\f12}} \nonumber \\ +\|e^{a|D_x|}(u_0+u_1,\eps (v_0 + v_1)) \|_{\mathcal{B}^{\f12}} + \|e^{a|D_x|}(u_1,\eps v_1) \|_{\mathcal{B}^{\f12}}\leq c_1a,
	\end{align}
	for some $c_1$ sufficiently small. Then System \eqref{eq:HPNhydroPE} has a unique global solution $(u,v)$, so that
	
	\begin{multline} \label{estimate2}
	 \Big( \frac{1}{2}\|e^{\mathcal{R} t}(u + \pa_tu,\eps(v+ \pa_t v))_\Theta \|_{\tilde{L}_t^\infty(\mathcal{B}^{\f12})} + \|e^{\mathcal{R} t} \pa_y(u,\eps v)_\Theta \|_{\tilde{L}_t^\infty(\mathcal{B}^{\f12})} + \eps \|e^{\mathcal{R} t} \pa_x(u,\eps v)_\Theta \|_{\tilde{L}_t^\infty(\mathcal{B}^{\f12})}\Big) \\ + \frac{1}{2}\|e^{\mathcal{R} t}(\pa_tu,\eps \pa_tv)_\Theta\|_{\tilde{L}_t^\infty(\mathcal{B}^{\f12})}  + \|e^{\mathcal{R} t}(\pa_tu,\eps \pa_tv)_\Theta\|_{\tilde{L}_t^2(\mathcal{B}^{\f12})}  + \|e^{\mathcal{R} t}\pa_y(u,\eps v)_\Theta\|_{\tilde{L}_t^2(\mathcal{B}^{\f12})} \\ + \eps \|e^{\mathcal{R} t}\pa_x(u,\eps v)_\Theta\|_{\tilde{L}_t^2(\mathcal{B}^{\f12})}   \leq C\Big(\|e^{a|D_x|}\pa_y(u_0,\eps v_0) \|_{\mathcal{B}^{\f12}} + \eps \|e^{a|D_x|}\pa_x(u_0,\eps v_0) \|_{\mathcal{B}^{\f12}} \\+ \|e^{a|D_x|}(u_1,\eps v_1) \|_{\mathcal{B}^{\f12}} +\|e^{a|D_x|}(u_0+u_1,\eps (v_0 + v_1)) \|_{\mathcal{B}^{\f12}}\Big),
\end{multline}
where $(u_\Theta,v_\Theta)$ is given by \eqref{analybis}.
\end{theorem}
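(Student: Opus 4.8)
The plan is to adapt the weighted analytic energy method already used for Theorem \ref{th:11}, now carrying the anisotropic $\eps$-weights through every estimate so that the bounds are uniform in $\eps$. First I would fix the analytic phase $\Theta(t,\xi)=(a-\lambda\theta(t))|\xi|$ entering \eqref{analybis}, with $\theta$ the nondecreasing function determined by $\theta(0)=0$ and $\dot\theta(t)=\|\pa_xu_\Theta(t)\|_{\mathcal{B}^{\f12}}$, and pass to the analytic lift $(u_\Theta,v_\Theta)$. Applying $e^{\Theta(t,D_x)}$ to \eqref{eq:HPNhydroPE} yields the evolution system for $(u_\Theta,v_\Theta)$, in which $\pa_t$ generates the extra coercive term $\lambda\dot\theta\,|D_x|(u_\Theta,v_\Theta)$; this is the device that compensates the loss of one tangential derivative produced by the transport terms.

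Because of the hyperbolic $\pa_t^2$, I would treat \eqref{eq:HPNhydroPE} as a damped-wave system and test the dyadically localized (in $x$) equations against suitable linear combinations of $\pa_tu_\Theta$ and $u_\Theta$, together with their $\eps v$ analogues, which is the standard way to obtain simultaneous control of $(u+\pa_tu,\eps(v+\pa_tv))_\Theta$ and of $(\pa_tu,\eps\pa_tv)_\Theta$ as on the left of \eqref{estimate2}. The damping $\pa_tu$ furnishes the $\tilde L^2_t$ bound on $(\pa_tu,\eps\pa_tv)_\Theta$, while the full dissipation $-\eps^2\pa_x^2-\pa_y^2$ furnishes the $\tilde L^2_t$ bounds on $\pa_y(u,\eps v)_\Theta$ and $\eps\pa_x(u,\eps v)_\Theta$; the pairing of $u$ with $\eps v$ and of $\pa_x$ with $\eps\pa_x$ is exactly what keeps these contributions $\eps$-uniform. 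Combining the vertical dissipation with the no-slip condition and the Poincaré inequality \eqref{poincarebis1}, and weighting the functional by $e^{2\mathcal{R}t}$ with $\mathcal{R}$ strictly below the Poincaré constant, then produces the exponentially weighted norms on the left of \eqref{estimate2}.

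The heart of the argument is the estimate of the nonlinear terms $u\pa_xu$, $v\pa_yu$ and their $\eps^2$-weighted counterparts in the $v$-equation. I would use the divergence-free relation to write $v=-\int_0^y\pa_xu\,dy'$, decompose each product by Bony's paraproduct in the tangential variable, and bound the resulting trilinear quantities in $\mathcal{B}^{\f12}$ by $C\dot\theta(t)$ times the weighted energy, the gain of one horizontal derivative being supplied precisely by the good term $\lambda\dot\theta\,|D_x|$; here one must verify that no inverse power of $\eps$ appears, which is guaranteed by the $\eps$-adapted pairings in \eqref{u0v0} and \eqref{estimate2}. This closes a differential inequality of the form $\tfrac{d}{dt}(\text{energy})\le C\dot\theta\,(\text{energy})$, and taking $\lambda$ large together with the smallness \eqref{u0v0} keeps the analytic radius $a-\lambda\theta(t)$ positive for all $t$, so that $\theta$ stays bounded and the solution is global.

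Existence would then follow from a standard Friedrichs frequency-truncation scheme whose approximations obey the same $\eps$-uniform bound \eqref{estimate2} and therefore converge, and uniqueness from running the identical weighted estimate on the difference of two solutions. The step I expect to be the main obstacle is controlling $v\pa_yu$: since $v$ costs one tangential derivative and the $v$-equation degenerates with the prefactor $\eps^2$, one must recover the pressure from $\pa_yp=-\eps^2(\pa_t^2v+\cdots)$ by vertical integration and simultaneously absorb the derivative loss through the analyticity gain, all without generating any factor of $\eps^{-1}$ and while retaining enough coercivity from $-\pa_y^2$ to run the Poincaré step.
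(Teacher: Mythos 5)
Your overall architecture is the same as the paper's: analytic lift \eqref{analybis}, reformulation as a damped first-order (in time) system tested against dyadic blocks of $(\pa_t u,\eps\pa_t v)_\Theta$ and $(u,\eps v)_\Theta$, Bony paraproducts absorbed by the good term $\lambda\dot\theta(t)|D_x|$, Poincar\'e inequality \eqref{poincarebis1} with the $e^{\mathcal{R}t}$ weight, largeness of $\lambda$, and a continuity argument. However, there is a genuine gap in your choice of the analyticity-loss rate. You take $\dot\theta(t)=\|\pa_x u_\Theta(t)\|_{\mathcal{B}^{\f12}}$, whereas the paper takes \eqref{bandbis1}, namely $\dot\tau(t)=\|\pa_y u^\eps_\Theta(t)\|_{\mathcal{B}^{\f12}}+\eps\|\pa_y v^\eps_\Theta(t)\|_{\mathcal{B}^{\f12}}$, and this difference breaks the proof in two places. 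First, the trilinear estimates (Lemmas \ref{lem:ABCbis1}, \ref{lem:Apa_yBCbis1}, \ref{lem:BBBbis1}) rest on bounds of the type $\|S^h_{q'-1}A_\Theta\|_{L^\infty}\lesssim\dot\theta(t)$; in the thin strip the $L^\infty_v$ part of such a bound comes from $\|f\|_{L^\infty_v}\lesssim\|f\|^{\f12}_{L^2_v}\|\pa_y f\|^{\f12}_{L^2_v}$ combined with Poincar\'e, which produces precisely $\|\pa_y u_\Theta\|_{\mathcal{B}^{\f12}}$. A horizontal derivative cannot substitute for this: Bernstein already handles the horizontal direction, and there is no frequency localization in $y$. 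Moreover the term $\eps^2 v\pa_y v$ forces the additional piece $\eps\|\pa_y v_\Theta\|_{\mathcal{B}^{\f12}}$ in the weight, exactly as in Lemma \ref{lem:BBBbis1}.

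Second, and decisively, globality uniform in $\eps$ requires showing $\theta(t)\le a/\lambda$ for all $t$ from the closed energy estimate. By Cauchy--Schwarz in time,
\begin{equation*}
\theta(t)\;\lesssim\;\big\|e^{\mathcal{R}t}\pa_x u_\Theta\big\|_{\tilde{L}^2_t(\mathcal{B}^{\f12})},
\end{equation*}
but the only control of horizontal derivatives furnished by the dissipation $-\eps^2\pa_x^2$ is $\eps\|e^{\mathcal{R}t}\pa_x(u,\eps v)_\Theta\|_{\tilde{L}^2_t(\mathcal{B}^{\f12})}$ --- note the prefactor $\eps$ on the left of \eqref{estimate2}. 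Hence with your choice $\theta(t)$ is bounded only by $\eps^{-1}$ times the size of the data, and keeping the radius $a-\lambda\theta(t)$ positive would force the smallness condition \eqref{u0v0} to degenerate like $\eps$. This destroys exactly the $\eps$-uniformity that Theorem \ref{th:22} must deliver (and which Theorem \ref{th:33} uses afterwards). With the paper's weight, instead, $\tau(t)\lesssim\|e^{\mathcal{R}t}\pa_y(u,\eps v)_\Theta\|_{\tilde{L}^2_t(\mathcal{B}^{\f12})}$, which the vertical dissipation controls uniformly in $\eps$, and the continuity argument closes. A minor further point: no vertical integration of $\pa_y p=-\eps^2(\pa_t^2 v+\cdots)$ is needed here; the pressure is eliminated outright by integration by parts against $(\pa_t u,\pa_t v)_\Theta$ using the divergence-free condition and the Dirichlet boundary conditions.
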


The main idea to prove the above two theorems is to control the new unknown $u_\phi$ defined by \eqref{eq:Anphichp3}, where $u$ is the horizontal velocity and $u_\phi $ is a weighted function of $u$ in the dual Fourier variable with an exponential function of $(a - \lambda \theta(t))|\xi|$. By the classical Cauchy-Kovalevskaya theorem, one expects the radius of the analytically of the solutions to decay in time and so the exponent, which corresponds to the width of the analytical, is allowed to vary with time. Using energy estimates on the equation satisfied by $u_\phi $ and the control of the quantity which describes " the loss of the analytical radius ", we shall show that the analytical persists globally in time. Consequently, our result is a global Cauchy-Kovalevskaya type theorem.

The third result concern the study of the convergence from the scaled anisotropic perturbed Navier-Stokes system \eqref{eq:HPNhydroPE} to the limit system  \eqref{eq:HPNhydrolimit}, so in this theorem, we proved that the convergence across globally in time. 
\begin{theorem}\label{th:33}
Let $a>0$, and $(u_0^\eps,v_0^\eps)$ satisfying \eqref{u0v0}. Let $(u_0,u_1)$ satisfy $e^{a|D_x|} (u_0,u_1) \in (\mathcal{B}^\f12 \cap \mathcal{B}^{\f72})^2$, $e^{a|D_x|} \pa_y (u_0,u_1) \in (\mathcal{B}^{\f32})^2$, and the  compatibility condition $ \int_0^1 u_0 dy =0$ and 
\begin{align}\label{smallnn}
&\norm{e^{a|D_x|} (u_0+u_1)}_{\mathcal{B}^\f12} + \norm{e^{a|D_x|} \pa_y u_0}_{\mathcal{B}^\f12} + \|e^{a|D_x|}u_1 \|_{\mathcal{B}^{\f12}} \nonumber \\ &\quad\quad\quad \leq \frac{c_2 a}{2+ \norm{e^{a|D_x|} (u_0+u_1)}_{\mathcal{B}^{\f32}} + \norm{e^{a|D_x|} \pa_y u_0}_{\mathcal{B}^{\f32}}+ \|e^{a|D_x|}u_1 \|_{\mathcal{B}^{\f32}}},
\end{align}
for some $c_2$ sufficiently small, then we have 

\begin{multline} \label{estimate3}
	 \Big( \frac{1}{2}\|(R^1 + \pa_tR^1,\eps (R^2+\pa_tR^2) )_\varphi \|_{\tilde{L}_t^\infty(\mathcal{B}^{\f12})} + \| \pa_y(R^1,\eps R^2)_\varphi \|_{\tilde{L}_t^\infty(\mathcal{B}^{\f12})} + \eps \| \pa_x(R^1,\eps R^2)_\varphi \|_{\tilde{L}_t^\infty(\mathcal{B}^{\f12})} \\ + \frac{1}{2} \|(\pa_tR^1)_\varphi,\eps (\pa_tR^2)_\varphi \|_{\tilde{L}_t^\infty(\mathcal{B}^{\f12})}\Big)+\|(\pa_tR^1,\eps\pa_tR^2)_\varphi\|_{\tilde{L}_t^2(\mathcal{B}^{\f12})} + \|\pa_y(R^1,\eps R^2)_\varphi\|_{\tilde{L}_t^2(\mathcal{B}^{\f12})} \\ +\eps \|\pa_x(R^1,\eps R^2)_\varphi\|_{\tilde{L}_t^2(\mathcal{B}^{\f12})} 
	\leq \|e^{a|D_x|}((u^\eps_1 -u_1),\eps(v^\eps_1 - v_1)) \|_{\mathcal{B}^{\f12}} \\ +C\Big( \|e^{a|D_x|}\pa_y(u^\eps_0 - u_0,\eps (v^\eps_0 - v_0)) \|_{\mathcal{B}^{\f12}} + \eps \|e^{a|D_x|}\pa_x(u^\eps_0 - u_0,\eps (v^\eps_0 - v_0)) \|_{\mathcal{B}^{\f12}}  \\ +\|e^{a|D_x|}((u^\eps_0-u_0)+(u^\eps_1 -u_1),\eps (v^\eps_0 - v_0) + \eps(v^\eps_1 - v_1)) \|_{\mathcal{B}^{\f12}} + M\eps \Big).
\end{multline}
where 
\begin{align} \label{eq:psiphiqBIH}
	\left\{
	\begin{aligned}
		R^{1} = u^\eps - u, \\
		R^{2} = v^\eps - v,
	\end{aligned}
		\right.
\end{align}
and $v_0$ is determined from $u_0$ via $\pa_x u +\pa_y v =0 $ and $v_0|_{y=0} = v_0|_{y=1}= 0$, $(R_\varphi^1,\eps R_\varphi^2)$ is given by \eqref{analytiqueff**}.
\end{theorem}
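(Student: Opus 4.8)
The plan is to derive the system satisfied by the remainders $(R^1,R^2)$ and then to run the same weighted-analytic energy scheme that produced Theorems \ref{th:11} and \ref{th:22}, now with the $\eps$-dependent consistency error playing the role of a forcing. Subtracting \eqref{eq:HPNhydrolimit} from \eqref{eq:HPNhydroPE} and writing the nonlinear discrepancies as $u^\eps\pa_x u^\eps-u\pa_x u=R^1\pa_x u^\eps+u\pa_x R^1$ and $v^\eps\pa_y u^\eps-v\pa_y u=R^2\pa_y u^\eps+v\pa_y R^1$, the horizontal equation becomes
\begin{equation*}
\pa_t^2 R^1+\pa_t R^1+u\pa_x R^1+v\pa_y R^1+R^1\pa_x u^\eps+R^2\pa_y u^\eps-\pa_y^2 R^1+\pa_x(p^\eps-p)=\eps^2\pa_x^2 u^\eps,
\end{equation*}
while the vertical equation reduces (using $\pa_y p=0$ for the limit) to the relation $\pa_y(p^\eps-p)=-\eps^2(\pa_t^2 v^\eps+\pa_t v^\eps+u^\eps\pa_x v^\eps+v^\eps\pa_y v^\eps-\eps^2\pa_x^2 v^\eps-\pa_y^2 v^\eps)$, which controls the $y$-dependence of the pressure difference at order $\eps^2$ and explains why $R^2$ enters \eqref{estimate3} only through the $\eps$-weighted unknown $\eps R^2_\varphi$. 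Together with $\pa_x R^1+\pa_y R^2=0$ and the inherited boundary conditions, the left-hand side has exactly the linear-plus-bilinear structure already treated in Theorem \ref{th:22}, whereas every discrepancy between the two models sits on the right as a source carrying at least one power of $\eps$.

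Next I would introduce the analytic weight and the weighted unknowns $(R^1_\varphi,\eps R^2_\varphi)$ from \eqref{analytiqueff**}, apply the tangential Littlewood--Paley projection $\djh$, and perform an $L^2$ energy estimate on each dyadic block. As in the first two theorems, differentiating the weight in time produces the ``loss of analyticity radius'' term $\lambda\dot\theta|\xi|$, which I would use to absorb the transport and stretching contributions after summation in $\mathcal{B}^{\f12}$; the Poincar\'e inequality on $\mathcal{S}$ (see \eqref{poincarebis1}) supplies the dissipation that allows one to insert the weight $e^{\mathcal{R}t}$ and keep the left-hand side of \eqref{estimate3} coercive. The bilinear terms $R^1\pa_x u^\eps$, $R^2\pa_y u^\eps$, $u\pa_x R^1$ and $v\pa_y R^1$ are treated by the product laws in the analytic Besov framework, bounding one factor in the remainder norm and the other by the a priori analytic bounds for $u^\eps$ and $u$ coming from Theorems \ref{th:22} and \ref{th:11}; the tangential pressure gradient $\pa_x(p^\eps-p)$ is removed by testing against the divergence-free remainder exactly as in the closed system, after isolating the $O(\eps^2)$ part of $\pa_y p^\eps$.

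The heart of the matter is the control of the forcing. Terms such as $\eps^2\pa_x^2 u^\eps$ carry two tangential derivatives more than the energy level $\mathcal{B}^{\f12}$, so to bound them by $M\eps$ in $\tilde L^2_t(\mathcal{B}^{\f12})$ one must first know that $u^\eps$, and through the limit system also $u$, stays analytically bounded in the higher spaces $\mathcal{B}^{\f32}$ and $\mathcal{B}^{\f72}$ globally in time. This is exactly why the hypotheses demand $e^{a|D_x|}(u_0,u_1)\in(\mathcal{B}^{\f12}\cap\mathcal{B}^{\f72})^2$ and why the smallness condition \eqref{smallnn} places the higher $\mathcal{B}^{\f32}$ norms of the data in the denominator: the constant $M$ is assembled from these propagated high-regularity analytic norms, and the denominator keeps the product of the data size with $M$ below the threshold required to run the scheme. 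I would therefore first upgrade the estimates of Theorems \ref{th:11} and \ref{th:22} to the $\mathcal{B}^{\f32}$ and $\mathcal{B}^{\f72}$ levels for the reference solutions, then feed these bounds into the forcing, which closes the $M\eps$ term.

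Finally, I would close the argument by a continuation (bootstrap) scheme: assuming \eqref{estimate3} holds with a slightly enlarged constant on a maximal interval where the analyticity radius $a-\lambda\theta(t)$ stays positive, the smallness \eqref{smallnn} forces the right-hand side, and hence $\lambda\theta(t)$, to remain bounded below $a$, so the radius never collapses and the estimate propagates to all times. The main obstacle I anticipate is precisely this coupling: the forcing requires high-regularity analytic control of $u^\eps$ uniformly in $\eps$ and globally in $t$, and one must verify that the higher-order energy estimates for the anisotropic system close under the \emph{same} analyticity budget $\lambda\theta(t)$ used at the base level, so that the loss-of-radius terms generated at the $\mathcal{B}^{\f72}$ level are still dominated by the gain supplied jointly by Poincar\'e and the time derivative of the weight.
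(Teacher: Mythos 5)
Your overall scheme --- form the remainder system for $(R^1,R^2)$, run the weighted analytic energy estimates of Theorems \ref{th:11}--\ref{th:22} with the consistency error as forcing, invoke high-regularity analytic bounds for the reference solutions, and close by a continuation argument on the analyticity radius --- is the same as the paper's in Section \ref{sec:6}. There is, however, one concrete structural flaw in how you set up the forcing. You keep the full term $\eps^2\pa_x^2 u^\eps$ on the right-hand side and then assert that the proof requires, and that you will establish, uniform-in-$\eps$ analytic bounds for $u^\eps$ at the $\mathcal{B}^{\f32}$ and $\mathcal{B}^{\f72}$ levels. Under the stated hypotheses this step cannot be carried out: assumptions \eqref{u0v0} and \eqref{smallnn} impose higher-norm control only on the data $(u_0,u_1)$ of the \emph{limit} system; no $\mathcal{B}^{\f32}$ or $\mathcal{B}^{\f72}$ smallness (or even finiteness) is assumed for $(u_0^\eps,v_0^\eps)$, so there is nothing to propagate for the anisotropic system at those levels. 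The paper never proves --- and never needs --- such bounds: its propagation-of-regularity results (Propositions \ref{prop:1bis1} and \ref{prop:2bis1}, together with the $\pa_t^2u$ estimate) concern the hydrostatic limit system only, and the a priori bound \eqref{5Mbis1} controls $u^\eps_\Theta$ only in $\mathcal{B}^{\f12}$.

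The fix is the one the paper builds in from the start: split $\eps^2\pa_x^2u^\eps=\eps^2\pa_x^2R^1+\eps^2\pa_x^2u$ and keep $-\eps^2\pa_x^2R^1$ on the left, where it is part of the dissipation (exactly as in system \eqref{S6eq1bis1} with forcing \eqref{eq:Fibis1}). Then every $O(\eps)$ source ($\eps^2\pa_x^2u$ inside $F^{1,\eps}$, and all of $F^{2,\eps}$) involves only the limit solution $u$ (and $v$ through \eqref{vvbis1}), whose $\mathcal{B}^{\f12}\cap\mathcal{B}^{\f72}$ analytic bounds follow from \eqref{smallnn} via Propositions \ref{prop:1bis1}--\ref{prop:2bis1}; the $\eps$-solution enters the nonlinear estimates \eqref{eq:G1bis1}--\eqref{eq:G2bis1} only through its base-level norm. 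Two smaller points in the same spirit: the paper fixes the common analyticity budget explicitly, taking $\dot{\eta}(t)=\Vert(\pa_yu^\eps_\Theta,\eps\pa_xu^\eps_\Theta)(t)\Vert_{\mathcal{B}^{\f12}}+\Vert\pa_yu_\phi(t)\Vert_{\mathcal{B}^{\f12}}$ so that $\varphi\leq\min(\phi,\Theta)$, rather than vaguely reusing ``the same budget'' as one of the two solutions; and the argument is closed by Young's inequality with the choice $\mu=CM^2$, which is precisely where the $M\eps$ term in \eqref{estimate3} originates. With the splitting above, the rest of your argument goes through as in the paper.
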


We remark that without the smallness conditions \eqref{u0v0} and \eqref{smallnn}, we can not prove the convergence from the solutions to System \eqref{eq:HPNhydroPE} to the solutions System \eqref{eq:HPNhydrolimit} on a fixed time interval $[0,t]$ for $t <T^\star$, where $T^\star$ is the lifetime of the solution of the hydrostatic perturbed Navier-Stokes equation with the large initial data $u_0$.

\textbf{Organisation of the paper}: Our paper will be divided into several sections as follows. In section \ref{sec:1}, we present some basic notions of the Littlewood-Paley Theory and some technical lemmas. In Section \ref{sec:3}, we prove the global wellposedness of the system \eqref{eq:HPNhydrolimit} for small data in the analytic framework. Section \ref{sec:4} is devoted to the study of the system \eqref{eq:HPNhydroPE} and the proof of Theorem \ref{th:22}. In section \ref{sec:5} we present some proposition stating the propagation for any $\mathcal{B}^s$ regularity. In Section \ref{sec:6}, we prove the convergence of System \eqref{eq:HPNhydroPE} towards System \eqref{eq:HPNhydrolimit} when $\eps \to 0$. Finally, in the last section, we give the proofs of some technical estimates.

We end this introduction by the notations that will be used in all that follows. By $f\lesssim g$, we mean that there is a uniform constant $C$, which may be different from line to line, such that $f\leq Cg$. We denote by $\psca{f,g}_{L^2}$ the inner product of $f$ and $g$ in $L^2(\mathcal{S})$. Finally, we denote by $(d_q)_{q\in\ZZ}$(resp. $(d_q(t))_{q\in\ZZ}$) to be a generic element of $ \ell^1(\ZZ)$ so that $\sum_{q\in\ZZ} d_q = 1$ (resp. $\sum_{q\in\ZZ} d_q(t) = 1$).

\section{Littlewood-Paley Theory and Some technical lemmas} \label{sec:1}

\subsection{Littlewood-Paley Theory}
To introduce the result of this paper, we will recall some elements of the Littlewood-Paley theory and also introduce the function space and technique used in the proof of our result. We define the dyadic operator in the horizontal variable, (of $x$ variable) and for all $q \in \ZZ$, we recall from \cite{BCD2011book} that  
	\begin{align*}
	\Delta_q^h f(x,y) &= \mathcal{F}_h^{-1}\pare{\varphi(2^{-q}\abs{\xi})\widehat{f}(\xi,y)},\\
	S_q^h f(x,y) &= \mathcal{F}_h^{-1}\pare{\psi(2^{-q}\abs{\xi})\widehat{f}(\xi,y)}.
	\end{align*}
	
where $\psi$ and $\varphi$ are a smooth function such that 
\begin{align*}
     &supp \ \varphi \subset \lbrace z\in \RR/ \text{ \ } \frac{3}{4} \leq |z| \leq \frac{8}{3} \rbrace \text{ \ and \ } \forall z \neq 0, \text{ \ } \sum_{q\in \ZZ} \varphi (2^{-q} z) =1,\\
     & supp \ \psi \subset \lbrace z\in \RR/ \text{ \ } |z| \leq \frac{4}{3} \rbrace \text{ \ and \ } \text{ \ } \psi(z) +  \sum_{q \geq 0} \varphi (2^{-q} z) =1.
 \end{align*}
 and 
 \begin{equation*}
\forall\, q,q' \in \NN, \; \abs{q-q'}\geq 2, \quad \supp \varphi(2^{-q} \cdot) \cap \supp \varphi(2^{-q'} \cdot) = \emptyset.
\end{equation*}
And in all that follows, $\mathcal{F}_hf$ and $\widehat{f}$ always denote the partial Fourier transform of the distribution $f$ with respect to the horizontal variable (of $x$ variable), that is, $\widehat{f}(\xi,y) = \mathcal{F}_{x \rightarrow \xi} (f)(\xi,y) = \mathcal{F}_{h} (f)(\xi,y).$ We refer to \cite{BCD2011book} and \cite{B1981} for a more detailed construction of the dyadic decomposition. Combining the definition of the dyadic operator to the fact that 
\begin{equation}
\label{decompositionbis1} \forall z\in \RR, \quad\psi(z) + \sum_{j\in\NN} \varphi (2^{-j} z) = 1,
\end{equation}
implies that all tempered distributions can be decomposed with respect to the horizontal frequencies as $$ f = \sum_{q \in \ZZ} \Delta_q^h f.$$
We now introduce the function spaces used throughout the paper. As in \cite{PZZ2019}, we define the anisotropic Besov-type spaces $\mathcal{B}^s$, $s\in\RR$ as follows.
\begin{defin}
	Let $s \in \RR$ and $\mathcal{S} = \RR \times ]0,1[$. For all tempered distributions $u \in S'_h(\mathcal{S})$, \emph{i.e.}, $f$ belongs to $S'(\mathcal{S})$ and $\lim_{q\to -\infty} \norm{S^h_q f}_{L^\infty} = 0$, we set
	\begin{equation*}
	\norm{f}_{\mathcal{B}^{s}} \stackrel{\tiny def}{=} \;\| (2^{qs} \norm{\Delta_q^h f}_{L^2})_{q \in \ZZ} \|_{\ell^1(\ZZ)}.
	\end{equation*}
	\begin{enumerate}
		\item[(i)] For $s \leq \frac{1}{2}$, we define 
		\begin{equation*}
		\mathcal{B}^s(\mathcal{S}) \stackrel{\tiny def}{=} \; \set{f \in S'_h(\mathcal{S}) \ : \ \norm{f}_{\mathcal{B}^s} < +\infty}.
		\end{equation*}
		\item[(ii)] For $s \in \; ]k-\frac{1}{2}, k+\frac{1}{2}]$, with $k \in \NN^*$, we define $\mathcal{B}^s(\mathcal{S})$ as the subset of distributions $f$ in $S'_h(\mathcal{S})$ such that $\dd_x^kf \in \mathcal{B}^{s-k}(\mathcal{S})$.
	\end{enumerate}
\end{defin}
For a better use of the smoothing effect given by the diffusion terms, we will work in the following Chemin-Lerner type spaces and also the time-weighted Chemin-Lerner type spaces.
\begin{defin} \label{def:CLspacesbis1}
	Let $p \in [1,+\infty]$ and $T \in ]0,+\infty]$. Then, the space $\tilde{L}^p_T(\mathcal{B}^s(\mathcal{S}))$ is the closure of $C([0,T];S(\mathcal{S}))$ under the norm
	\begin{equation*}
	\norm{f}_{\tilde{L}^p_T(\mathcal{B}^s(\mathcal{S}))} \stackrel{\tiny def}{=} \; \sum_{q \in \ZZ} 2^{qs} \pare{\int_0^T \norm{\DD^h_q f(t)}_{L^2}^p dt}^{\frac{1}{p}},
	\end{equation*}
	with the usual change if $p = +\infty$.
\end{defin} \label{def:wCLspacesbis1}
\begin{defin}
	\label{def:CLweightbis12}
	Let $p \in [1,+\infty]$ and let $\delta \in L^1_{\tiny loc}(\RR_+)$ be a non negative function. Then, the space $\tilde{L}^p_{t,\delta(t)}(\mathcal{B}^s(\mathcal{S}))$ is the closure of $C([0,T];S(\mathcal{S}))$ under the norm
	\begin{equation*}
	\norm{f}_{\tilde{L}^p_{t,\delta(t)}(\mathcal{B}^s(\mathcal{S}))} \stackrel{\tiny def}{=} \; \sum_{q\in\ZZ} 2^{qs}\left( \int_0^t \delta(t') \norm{\DD^h_q f(t')}_{L^2}^p dt'\right)^{\frac{1}{p}}.
	\end{equation*}
\end{defin}

The following Bernstein lemma gives important properties of a distribution $u$ when its Fourier transform is well localized. We refer the reader to \cite{C1995book} for the proof of this lemma.
\begin{lemma}
	\label{lem:Bernsteinbis1}
	Let $k\in\NN$, $d \in \NN^*$ and $r_1, r_2 \in \RR$ satisfy $0 < r_1 < r_2$. There exists a constant $C > 0$ such that, for any $a, b \in \RR$, $1 \leq a \leq b \leq +\infty$, for any $\lambda > 0$ and for any $f \in L^a(\RR^d)$, we have
	\begin{equation*} 
	\supp\pare{\widehat{f}} \subset \set{\xi \in \RR^d \;\vert\; \abs{\xi} \leq r_1\lambda} \quad \Longrightarrow \quad \sup_{\abs{\alpha} = k} \norm{\dd^\alpha f}_{L^b} \leq C^k\lambda^{k+ d \pare{\frac{1}{a}-\frac{1}{b}}} \norm{f}_{L^a},
	\end{equation*}
	and
	\begin{equation*} 
	\supp\pare{\widehat{f}} \subset \set{\xi \in \RR^d \;\vert\; r_1\lambda \leq \abs{\xi} \leq r_2\lambda} \quad \Longrightarrow \quad C^{-k} \lambda^k\norm{f}_{L^a} \leq \sup_{\abs{\alpha} = k} \norm{\dd^\alpha f}_{L^a} \leq C^k \lambda^k\norm{f}_{L^a}.
	\end{equation*}
\end{lemma}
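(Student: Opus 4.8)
The plan is to represent $f$ and each of its derivatives as a convolution against a rescaled Schwartz kernel built from a smooth Fourier cutoff, and then to read off both inequalities directly from Young's convolution inequality; all the $\lambda$-dependence will emerge from one elementary scaling computation on the kernels, so no separate rescaling reduction to $\lambda=1$ is needed. Note that since $\widehat f$ is compactly supported and $f\in L^a$, $f$ is smooth, so all convolutions and derivatives below are classical.

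For the first (low-frequency) estimate I would fix once and for all a function $\phi\in C_c^\infty(\RR^d)$ with $\phi\equiv 1$ on $\set{\abs{\xi}\le r_1}$, and set $g=\mathcal{F}^{-1}\phi$, $g_\lambda(x)=\lambda^d g(\lambda x)$, so that $\widehat{g_\lambda}(\xi)=\phi(\xi/\lambda)$. Since $\widehat f$ is supported in $\set{\abs{\xi}\le r_1\lambda}$, where $\phi(\cdot/\lambda)\equiv 1$, one has $f=g_\lambda * f$, hence $\dd^\alpha f=(\dd^\alpha g_\lambda)*f$ for every multi-index $\alpha$. Young's inequality with exponents determined by $1+\tfrac1b=\tfrac1c+\tfrac1a$ gives $\norm{\dd^\alpha f}_{L^b}\le\norm{\dd^\alpha g_\lambda}_{L^c}\norm{f}_{L^a}$, and the scaling identity $\norm{\dd^\alpha g_\lambda}_{L^c}=\lambda^{\abs{\alpha}+d(1-1/c)}\norm{\dd^\alpha g}_{L^c}=\lambda^{k+d(1/a-1/b)}\norm{\dd^\alpha g}_{L^c}$ produces exactly the claimed power of $\lambda$. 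It then remains to bound $\sup_{\abs{\alpha}=k}\norm{\dd^\alpha g}_{L^c}$ by $C^k$ uniformly in $c\in[1,\infty]$; this follows from log-convexity of the $L^c$ norms, which reduces matters to the endpoints $c=1$ and $c=\infty$, and there $\dd^\alpha g=\mathcal{F}^{-1}\pare{(i\xi)^\alpha\phi}$ with $\phi$ fixed and compactly supported, so both endpoint norms grow at most geometrically in $k=\abs{\alpha}$.

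For the second (annulus) estimate the upper bound is identical, taking $a=b$ (hence $c=1$) and a cutoff $\Psi\in C_c^\infty(\RR^d)$ that equals $1$ on $\set{r_1\le\abs{\xi}\le r_2}$ and is supported in a slightly larger annulus away from the origin. The lower bound is the genuine content of the lemma, and I would prove it by explicit Fourier inversion. Using the multinomial identity $\abs{\xi}^{2k}=\sum_{\abs{\alpha}=k}\tfrac{k!}{\alpha!}\,\overline{(i\xi)^\alpha}\,(i\xi)^\alpha$ and dividing by $\abs{\xi}^{2k}$ on the annulus, I write $\widehat f=\sum_{\abs{\alpha}=k}\tfrac{k!}{\alpha!}\,m_\alpha\,\widehat{\dd^\alpha f}$ with $m_\alpha(\xi)=\abs{\xi}^{-2k}\,\overline{(i\xi)^\alpha}\,\Psi(\xi/\lambda)$; since $\Psi(\cdot/\lambda)$ vanishes near the origin, each $m_\alpha$ is smooth and compactly supported, so $f=\sum_{\abs{\alpha}=k}\tfrac{k!}{\alpha!}\,K_\alpha*\dd^\alpha f$ with $K_\alpha=\mathcal{F}^{-1}m_\alpha$. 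Young's inequality ($L^1*L^a\to L^a$) then yields $\norm f_{L^a}\le\pare{\sum_{\abs{\alpha}=k}\tfrac{k!}{\alpha!}\norm{K_\alpha}_{L^1}}\sup_{\abs{\alpha}=k}\norm{\dd^\alpha f}_{L^a}$, and the same scaling gives $\norm{K_\alpha}_{L^1}=\lambda^{-k}\norm{\mathcal{F}^{-1}\mu_\alpha}_{L^1}$ with $\mu_\alpha(\eta)=\abs{\eta}^{-2k}\,\overline{(i\eta)^\alpha}\,\Psi(\eta)$.

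The main obstacle is the uniform geometric control of the constants. For the lower bound I must show $\sum_{\abs{\alpha}=k}\tfrac{k!}{\alpha!}\norm{\mathcal{F}^{-1}\mu_\alpha}_{L^1}\le C^k$: the number of terms is $\sum_{\abs{\alpha}=k}\tfrac{k!}{\alpha!}=d^k$, so it suffices that $\norm{\mathcal{F}^{-1}\mu_\alpha}_{L^1}\le C_0^k$ with $C_0$ independent of $\alpha$, $k$ and $\lambda$. Here one uses that on $\supp\Psi$ one has $\abs{\eta}\sim 1$, so that $\abs{\eta^\alpha}\abs{\eta}^{-2k}\lesssim r_1^{-k}$ up to fixed factors, and that each $\eta$-derivative falling on $\mu_\alpha$ costs at most a factor growing linearly in $k$; bounding finitely many Schwartz seminorms of $\mu_\alpha$ by $C_0^k$ then controls $\norm{\mathcal{F}^{-1}\mu_\alpha}_{L^1}$ via the standard weight $\pare{1+\abs{x}^2}^{d}$ argument. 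The analogous, easier $C^k$ bound in the first estimate, together with the uniformity of all these constants in $a,b,c\in[1,\infty]$ and in $\lambda>0$, is precisely what makes the stated inequalities hold with a single constant $C$ depending only on $d$, $r_1$ and $r_2$.
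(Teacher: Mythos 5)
Your proof is correct: the reproducing-kernel identity $f = g_\lambda * f$ combined with Young's inequality and the kernel scaling gives the direct estimates, and the multinomial identity $|\xi|^{2k}=\sum_{|\alpha|=k}\frac{k!}{\alpha!}\,\xi^{2\alpha}$ together with the $d^k$ term count and the geometric bound on $\Vert \mathcal{F}^{-1}\mu_\alpha \Vert_{L^1}$ soundly yields the reverse inequality with the required uniform constant $C^k$. The paper offers no proof of this lemma — it refers the reader to \cite{C1995book} — and your argument is precisely the standard one found in that reference, so there is nothing to compare beyond noting that you have filled in the constant-tracking details (uniformity in $c\in[1,\infty]$ and in $\lambda$) carefully and correctly.
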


Finally to deal with the estimate concerning the product of two distributions, we shall frequently use on Bony's decomposition (see \cite{B1981} ) in the horizontal variable ( $x$ variable ) that for $f,g$ two tempered distributions  :
\begin{equation} \label{eq:Bonybis1}
	fg = T^h_f g + T^h_g f + R^h(f,g),
	\end{equation}
	where
	$$ T^h_f g = \sum_{q} S^h_{q-1} f \Delta_q^h g, \text{ \ } T^h_g f = \sum_{q} S^h_{q-1} g \Delta_q^h f $$ and the rest term satisfied
	
	$$ R^h(f,g) = \sum_{q} \tilde{\Delta}^h_{q} f \Delta_q^h g \text{ \ with \ } \tilde{\Delta}^h_{q}f = \sum_{|q-q'| \leq 1} \Delta_{q'}^h f. $$
	\subsection{Technical lemmas}
	Our main difficulty relies on finding a way to estimate the nonlinear terms, which allows exploiting the smoothing effect given by the above function spaces. Using the method introduced by Chemin in \cite{C2004} (see also \cite{CGP2011}, \cite{PZ2011} or \cite{PZZ2019}), for any $f \in L^2(\mathcal{S})$, we define the following auxiliary function, which allows to control the analyticity of $u$ in the horizontal variable $x$, 
\begin{equation} 
	\label{eq:Anphichp3}
	\left\{\;
	\begin{aligned}
		&f_{\phi}(t,x,y) = e^{\phi(t,D_x)} f(t,x,y) \stackrel{\tiny def}{=} \mathcal{F}^{-1}_h(e^{\phi(t,\xi)} \mathcal{F}_h(f)(t,\xi,y))\\
		&\phi(t,\xi) = (a- \lambda \theta(t))|\xi|.
	\end{aligned}
	\right.
\end{equation}
where the quantity $\theta(t)$, which describes the evolution of the analytic band of $f$, satisfies
\begin{equation}
\label{eq:AnThetabis1} \forall\, t > 0, \ \dot{\theta}(t) \geq 0 \text{ \ \ and \ \ } \theta(0) = 0. 
\end{equation}
\begin{rem}
In the proofs we need to choose that $\theta(t)$ is satisfying an ordinary differential equation 
$$ \dot{\theta}(t) = \| e^{(a-\lambda\theta(t))|D_x|} \pa_y f\|_{\mathcal{B}^\f12} $$
The local existence of $\theta(t)$ of $t$ between $[0,T[$ is given by the Cauchy-Lipschitz theorem. In all our computation we will be in time interval $[0,\tilde{T}^\star[$ where $\tilde{T}^\star$ is maximal life-time of the solution.  
\end{rem}
In what follows, we shall always assume that $t<T^\star$, with $T^\star$ is determined by 
 \begin{equation} \label{eq:Tstarbis1} 
	T^\star \stackrel{\tiny def}{=} \sup\set{t\in[0,\tilde{T}^\star[\ : \ \norm{f_{\phi}}_{\mathcal{B}^{\f12}} \leq \frac{1}{4C^2} \ \mbox{ and } \ \theta(t) \leq  \frac{a}{\lambda}}.
\end{equation}

By virtue of \eqref{eq:Anphichp3} for any  $t<T^\star$, there holds the following convex inequality
 \begin{equation} \label{eq:phiconvexbis1} 
	\phi(t,\xi) \leq \phi(t,\xi-\eta) + \phi(t,\eta) \text{ \ \ } \forall \xi,\eta \in \mathbb{R}.
\end{equation}

	Before starting the obtained result, we need the following lemma to characterize the product $(fg)_\phi$, indeed this product will be useful in all the rest of the paper. 
\begin{cor}\label{lem:fgphibis1}
Let $f\in L^2_x$, $g\in L^2_x$, we define $f^+ = \mathcal{F}_\xi^{-1}(|\mathcal{F}_x(f)|)$ then, we have 
$$ |\widehat{(fg)_\phi}(\xi)| \leq \widehat{f^+_\phi g^+_\phi }(\xi) \ \ \text{ and } \ \   \|f^+\|_{L^2_x} = \|f\|_{L^2_x}$$
\end{cor}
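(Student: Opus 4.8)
The plan is to reduce the pointwise inequality entirely to the subadditivity of the phase $\phi$ recorded in \eqref{eq:phiconvexbis1}, and to settle the norm identity by Plancherel. The latter is immediate and I would dispatch it first: since $\mathcal{F}_x(f^+) = |\widehat{f}|$ by the very definition $f^+ = \mathcal{F}_\xi^{-1}(|\mathcal{F}_x(f)|)$, Plancherel gives $\|f^+\|_{L^2_x} = \big\| |\widehat{f}| \big\|_{L^2_\xi} = \|\widehat{f}\|_{L^2_\xi} = \|f\|_{L^2_x}$, because taking moduli does not change the $L^2$ norm of the Fourier transform.

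For the first inequality I would start on the Fourier side. With the paper's convention, $\mathcal{F}_x(fg) = \widehat{f} \ast \widehat{g}$, so $\widehat{(fg)_\phi}(\xi) = e^{\phi(t,\xi)}(\widehat{f}\ast\widehat{g})(\xi) = e^{\phi(t,\xi)}\int \widehat{f}(\xi-\eta)\,\widehat{g}(\eta)\,d\eta$. Taking moduli and passing the absolute value inside the integral yields $|\widehat{(fg)_\phi}(\xi)| \le e^{\phi(t,\xi)} \int |\widehat{f}(\xi-\eta)|\,|\widehat{g}(\eta)|\,d\eta$; here I use that for $t < T^\star$ one has $a-\lambda\theta(t)\ge 0$ by the constraint $\theta(t)\le a/\lambda$ built into the definition \eqref{eq:Tstarbis1}, so $e^{\phi(t,\xi)}>0$ is a genuine positive weight. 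The key move is then to insert \eqref{eq:phiconvexbis1}: since $r\mapsto e^{r}$ is increasing, $\phi(t,\xi)\le \phi(t,\xi-\eta)+\phi(t,\eta)$ upgrades to $e^{\phi(t,\xi)}\le e^{\phi(t,\xi-\eta)}\,e^{\phi(t,\eta)}$, which lets me split the single weight across the two convolution factors.

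Finally I would identify each factor. By the definition of $f^+$ we have $\mathcal{F}_x(f^+_\phi)(\zeta) = e^{\phi(t,\zeta)}|\widehat{f}(\zeta)|$, and likewise for $g$, both nonnegative. Substituting, the right-hand side becomes exactly $\int \mathcal{F}_x(f^+_\phi)(\xi-\eta)\,\mathcal{F}_x(g^+_\phi)(\eta)\,d\eta = \widehat{f^+_\phi g^+_\phi}(\xi)$, which is itself nonnegative since it is a convolution of nonnegative functions; this is precisely the asserted bound. The argument is essentially mechanical once \eqref{eq:phiconvexbis1} is available, so there is no serious obstacle; the only points needing care are the positivity of the weight $e^{\phi}$ (guaranteed on $[0,T^\star[$) and the fact that the Fourier normalization constant in $\mathcal{F}_x(fg) = c\,\widehat{f}\ast\widehat{g}$ appears identically on both sides and therefore does not affect the inequality. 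The genuine content is entirely carried by the subadditivity of $\phi$, which encodes the analytic-weight structure.
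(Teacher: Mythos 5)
Your proof is correct and takes essentially the same route as the paper's: pass to the Fourier side, move the modulus inside the convolution integral, split the weight via the subadditivity $e^{\phi(\xi)} \leq e^{\phi(\xi-\eta)}e^{\phi(\eta)}$ from \eqref{eq:phiconvexbis1}, and recognize the resulting convolution as $\widehat{f^+_\phi g^+_\phi}(\xi)$, with Plancherel giving $\|f^+\|_{L^2_x} = \|f\|_{L^2_x}$. The only difference is that you make explicit two points the paper leaves implicit (the Plancherel step, which the paper calls trivial, and the harmless role of the Fourier normalization constant), which is fine.
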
 
\begin{proof}
Let us consider $f$, and $g$ two functions in $L^2_x$, we have 
\begin{align*}
|(\widehat{fg})_\phi(\xi)| &= e^{\phi(\xi)} |\widehat{f}(.)*\widehat{g}(.)(\xi)|\\ &\leq e^{\phi(\xi)} \int |\widehat{f}(\xi - \eta)||\widehat{g}(\eta)|d\eta,
\end{align*}
By virtue of the definition of Function $\phi$ we have $e^{\phi(\xi)} >0$ and $ e^{\phi(\xi)} \leq e^{\phi(\xi-\eta)}e^{\phi(\eta)}$, thus 
\begin{align*}
|(\widehat{fg})_\phi(\xi)| &\leq \int e^{\phi(\xi-\eta)}|\widehat{f}(\xi - \eta)| e^{\phi(\eta)}|\widehat{g}(\eta)|d\eta \\
&\leq \int |\widehat{f}_\phi(\xi - \eta)||\widehat{g}_\phi(\eta)|d\eta \\
&\leq |\widehat{f}_\phi| *|\widehat{g}_\phi|(\xi) = \mathcal{F}_{\xi} \big(f^+_\phi g^+_\phi\big) = \widehat{f^+_\phi g^+_\phi }(\xi)
\end{align*}

The second point of the lemma is trivial.
\end{proof}
\begin{lemma}\label{estfg+}
For smooth functions we have 
$$ \| (fg)_\phi \|_{L^2} \leq \| (f_\phi^+) \|_{L^\infty} \| (g^+_\phi)\|_{L^2}. $$
\end{lemma}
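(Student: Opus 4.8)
The plan is to deduce the estimate directly from Corollary \ref{lem:fgphibis1}, combined with Plancherel's theorem and Hölder's inequality in the horizontal variable. First I would apply Plancherel in $x$ to write $\norm{(fg)_\phi}_{L^2} = c\,\norm{\widehat{(fg)_\phi}}_{L^2}$ for the appropriate normalizing constant $c$. By Corollary \ref{lem:fgphibis1} we have the pointwise bound $|\widehat{(fg)_\phi}(\xi)| \leq \widehat{f^+_\phi g^+_\phi}(\xi)$. The essential observation is that both sides are nonnegative: since $\widehat{f^+} = |\widehat{f}| \geq 0$ by definition of $f^+$, and multiplication by $e^{\phi(t,\xi)} > 0$ preserves nonnegativity, the functions $f^+_\phi$ and $g^+_\phi$ have nonnegative Fourier transforms; consequently $\widehat{f^+_\phi g^+_\phi} = \widehat{f^+_\phi} * \widehat{g^+_\phi}$ is a convolution of two nonnegative functions, hence itself nonnegative. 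In particular $\widehat{f^+_\phi g^+_\phi}(\xi) = |\widehat{f^+_\phi g^+_\phi}(\xi)|$, so the pointwise inequality may be squared and integrated without interference from absolute values.

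Next I would square the pointwise bound, integrate over $\xi$, and apply Plancherel once more in the reverse direction to identify $\norm{\widehat{f^+_\phi g^+_\phi}}_{L^2} = c\,\norm{f^+_\phi g^+_\phi}_{L^2}$. Since the normalizing constants on the two sides match, this yields $\norm{(fg)_\phi}_{L^2} \leq \norm{f^+_\phi g^+_\phi}_{L^2}$. A direct application of Hölder's inequality then gives $\norm{f^+_\phi g^+_\phi}_{L^2} \leq \norm{f^+_\phi}_{L^\infty}\,\norm{g^+_\phi}_{L^2}$, which is exactly the claimed bound.

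The only subtle point — and the main (mild) obstacle — is the promotion of the pointwise domination in Corollary \ref{lem:fgphibis1} to an $L^2$ estimate. This step is legitimate precisely because of the nonnegativity of $\widehat{f^+_\phi g^+_\phi}$ established above, which is what permits the clean passage back through Plancherel and identifies the right-hand side as a genuine $L^2$ norm of the product $f^+_\phi g^+_\phi$ rather than merely its transform. Everything else is a routine combination of Plancherel and Hölder.
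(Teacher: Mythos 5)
Your proposal is correct and follows essentially the same route as the paper: Plancherel in the horizontal variable, the pointwise Fourier bound $|\widehat{(fg)_\phi}(\xi)| \leq \widehat{f^+_\phi g^+_\phi}(\xi)$ coming from the convexity inequality $e^{\phi(\xi)} \leq e^{\phi(\xi-\eta)}e^{\phi(\eta)}$ (Corollary \ref{lem:fgphibis1}), a second application of Plancherel to get $\|(fg)_\phi\|_{L^2} \leq \|f^+_\phi g^+_\phi\|_{L^2}$, and finally H\"older's inequality. If anything, your explicit justification of the passage of the pointwise bound through Plancherel (via nonnegativity of $\widehat{f^+_\phi}*\widehat{g^+_\phi}$) is more careful than the paper's intermediate display, which as written conflates this step.
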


\begin{proof}
Let us consider $f$, and $g$ two functions in $L^2_x$, we have by using Plancherel theorem that  
 \begin{align*}
 \| \big( fg \big)_\phi \|_{L^2}^2 &= \int |\mathcal{F}_{\xi} (f g)_\phi(\xi,y)|^2 d\xi \\
 & \leq \int |\int e^{\phi(\xi)} \mathcal{F}_{\xi}(f)(\xi -\eta ) \mathcal{F}_{\xi}(g)(\eta) d\eta |^2 d\xi \\
 &\leq \int \int e^{2\phi(\xi)} |\mathcal{F}_{\xi}(f)(\xi -\eta )|^2 |\mathcal{F}_{\xi}(g)(\eta)|^2 d\eta d\xi.
 \end{align*}
 We recall that the function $\phi$ verify the following inequality $e^{\phi(\xi)} \leq e^{\phi(\xi - \eta)} e^{\phi(\eta)}$, thus
  \begin{align*}
 \| \big( fg \big)_\phi \|_{L^2}^2 &\leq \| f^+_\phi g^+_\phi \|_{L^2}^2\\
 & \leq \| (f^+_\phi) \|_{L^\infty}^2 \| (g^+_\phi) \|_{L^2}^2,
 \end{align*}
\end{proof}

\begin{cor}\label{coro}
For any $f$ and $g$ in $L^2_x$, we have 
$$|\widehat{(T_f g)_\phi}(\xi)| \leq \widehat{ (T_{f^+_\phi} g^+_\phi)}(\xi) \text{ \ and \ } |\widehat{R(f,g)_\phi}(\xi)| \leq \widehat{ R(f^+_\phi,g^+_\phi)}(\xi).$$
\end{cor}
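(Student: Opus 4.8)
The plan is to reduce the statement to a term-by-term application of Corollary \ref{lem:fgphibis1}, exploiting the fact that the Littlewood-Paley cut-offs $\varphi$ and $\psi$ are nonnegative and radial, exactly as is the Fourier multiplier $e^{\phi(\xi)} = e^{(a-\lambda\theta(t))|\xi|}$. The single reusable ingredient I would establish first is the commutation identity: for any $h\in L^2_x$ and any $q$, one has $(\Delta_q^h h)^+_\phi = \Delta_q^h(h^+_\phi)$ and $(S_{q-1}^h h)^+_\phi = S_{q-1}^h(h^+_\phi)$. Indeed, taking the horizontal Fourier transform,
$$\widehat{(\Delta_q^h h)^+_\phi}(\xi) = e^{\phi(\xi)}\,\bigl|\varphi(2^{-q}|\xi|)\widehat{h}(\xi)\bigr| = \varphi(2^{-q}|\xi|)\,e^{\phi(\xi)}\,|\widehat{h}(\xi)| = \widehat{\Delta_q^h(h^+_\phi)}(\xi),$$
where the middle equality uses $\varphi\geq 0$ to pull the nonnegative cut-off out of the modulus. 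The identical computation with $\psi$ in place of $\varphi$ handles $S_{q-1}^h$, and summing over the finitely many indices $|q-q'|\leq 1$ yields $(\tilde{\Delta}^h_q h)^+_\phi = \tilde{\Delta}^h_q(h^+_\phi)$ as well.

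With this in hand, for the paraproduct I would apply Corollary \ref{lem:fgphibis1} to each individual product $F=S_{q-1}^h f$, $G=\Delta_q^h g$, obtaining
$$\bigl|\widehat{(S_{q-1}^h f\,\Delta_q^h g)_\phi}(\xi)\bigr| \leq \widehat{F^+_\phi\,G^+_\phi}(\xi) = \widehat{S_{q-1}^h(f^+_\phi)\,\Delta_q^h(g^+_\phi)}(\xi),$$
the last equality being the commutation identity. Each right-hand side is the Fourier transform of a product of two functions whose spectra are nonnegative (since $\widehat{f^+_\phi}=e^{\phi}|\widehat f|\geq 0$ and likewise for $g^+_\phi$), hence is itself a nonnegative quantity, being a convolution of nonnegative functions. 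This nonnegativity is what licenses bringing the modulus inside the sum over $q$ and adding term by term:
$$\bigl|\widehat{(T_f g)_\phi}(\xi)\bigr| \leq \sum_q \bigl|\widehat{(S_{q-1}^h f\,\Delta_q^h g)_\phi}(\xi)\bigr| \leq \sum_q \widehat{S_{q-1}^h(f^+_\phi)\,\Delta_q^h(g^+_\phi)}(\xi) = \widehat{(T_{f^+_\phi}g^+_\phi)}(\xi).$$
The remainder term is handled by the same three lines, replacing $S_{q-1}^h f$ by $\tilde{\Delta}^h_q f$ and invoking the commutation identity for $\tilde{\Delta}^h_q$, which gives $|\widehat{R(f,g)_\phi}(\xi)| \leq \widehat{R(f^+_\phi,g^+_\phi)}(\xi)$.

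The only genuinely delicate point is the commutation identity, and this is where the nonnegativity and radial symmetry of $\varphi$, $\psi$ and of $e^{\phi}$ are essential: it is precisely the step $|\varphi(2^{-q}|\xi|)\widehat h(\xi)| = \varphi(2^{-q}|\xi|)\,|\widehat h(\xi)|$ that lets the dyadic cut-off pass through the modulus and recombine cleanly with $e^{\phi}$. Everything else is the triangle inequality together with the positivity of the spectra of $f^+_\phi$ and $g^+_\phi$, which is exactly what justifies exchanging $|\cdot|$ with the $q$-summation and identifying the resulting series with the Bony paraproduct, respectively the remainder, of the modified functions $f^+_\phi$ and $g^+_\phi$.
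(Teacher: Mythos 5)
Your proof is correct and is exactly the argument the paper leaves implicit: the paper states this result as an immediate corollary of Corollary \ref{lem:fgphibis1} (and Lemma \ref{estfg+}) with no written proof, and your term-by-term application of that corollary to each Bony piece $S_{q-1}^h f\,\Delta_q^h g$ (resp. $\tilde{\Delta}_q^h f\,\Delta_q^h g$), combined with the commutation identity $(\Delta_q^h h)^+_\phi=\Delta_q^h(h^+_\phi)$, $(S_{q-1}^h h)^+_\phi=S_{q-1}^h(h^+_\phi)$ and summation over $q$, is precisely the intended route. The one hypothesis your commutation step uses, the nonnegativity of the cut-offs $\varphi$ and $\psi$, is not stated explicitly in the paper but holds for the standard dyadic construction of \cite{BCD2011book} that the paper invokes, so your proof is complete as written.
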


 We next present the weighted energy estimate for the linear heat equations
 \begin{lemma}\label{lem:1}
 Let $f$ and $g$ two smooth enough functions on $ \mathbb{R}\times (0,1)$, satisfying the Dirichlet boundary condition \ref{S1eq7}, then we have
 
 \begin{enumerate}
     \item \begin{align}\label{pa_tfg}
\frac{d}{dt}\psca{\Delta_q^h  f_\phi, \Delta_q^h g_\phi }_{L^2} &= \psca{\Delta_q^h(\pa_t f)_\phi, \Delta_q^h g_\phi} + \psca{\Delta_q^h f_\phi, \Delta_q^h (\pa_t g)_\phi}\nonumber \\ &- 2\lambda \dot{\theta}(t) \psca{\Delta_q^h|D_x|^\frac{1}{2} f_\phi, \Delta_q^h |D_x|^\frac{1}{2}  g_\phi}, 
\end{align}  
 
 In particular if $f = g$, we obtain that 
 
 \begin{align}\label{pa_tff}
\frac{1}{2} \frac{d}{dt}\norm{\Delta_q^h  f_\phi}_{L^2}^2 = \psca{\Delta_q^h(\pa_t f)_\phi, \Delta_q^h f_\phi}_{L^2} - \lambda \dot{\theta}(t) \norm{\Delta_q^h|D_x|^\frac{1}{2} f_\phi}_{L^2}^2, 
\end{align}  

\item \begin{align}\label{pa_ttf}
&\psca{\Delta_q^h \Big((\pa_t^2 f)_\phi\Big), \Delta_q^h (\pa_tf)_\phi }_{L^2} = \frac{1}{2} \frac{d}{dt} \| \Delta_q^h (\pa_t f)_\phi\|_{L^2}^2  \nonumber \\ &\quad\quad\quad\quad\quad  +\lambda \dot{\theta}(t) \|\Delta_q^h |D_x|^\frac{1}{2} (\pa_tf)_\phi\|_{L^2}^2, 
\end{align}
 
\item \begin{align}\label{pa_yyf}
&\psca{\Delta_q^h \Big(- \pa_y^2 f_\phi\Big), \Delta_q^h (\pa_tf)_\phi }_{L^2} = \frac{1}{2} \frac{d}{dt} \|\Delta_q^h  \pa_y f_\phi \|_{L^2}^2 
 \nonumber \\ &\quad\quad\quad\quad\quad  + \lambda \dot{\theta}(t) \|\Delta_q^h |D_x|^\frac{1}{2} \pa_yf_\phi\|_{L^2}^2,
\end{align}

\end{enumerate}
 \end{lemma}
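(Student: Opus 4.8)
The plan is to reduce all three identities to a single computation of the time derivative of the weighted function $f_\phi$. Since $\phi(t,\xi)=(a-\lambda\theta(t))|\xi|$, we have $\pa_t\phi(t,\xi)=-\lambda\dot\theta(t)|\xi|$, so differentiating the relation $\widehat{f_\phi}(t,\xi,y)=e^{\phi(t,\xi)}\widehat f(t,\xi,y)$ in time and returning to physical space yields the commutator identity
\[\pa_t f_\phi=(\pa_t f)_\phi-\lambda\dot\theta(t)|D_x|f_\phi.\]
This is the only genuinely new ingredient; the remaining manipulations rest on the facts that $\Delta_q^h$, $e^{\phi(t,D_x)}$ and $|D_x|^{\f12}$ are Fourier multipliers in $x$ (hence mutually commuting and, for the last two, self-adjoint), and that $\pa_y$ commutes with all of them.

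First I would prove \eqref{pa_tfg}. Starting from the Leibniz rule $\frac{d}{dt}\psca{\Delta_q^h f_\phi,\Delta_q^h g_\phi}_{L^2}=\psca{\Delta_q^h \pa_t f_\phi,\Delta_q^h g_\phi}+\psca{\Delta_q^h f_\phi,\Delta_q^h \pa_t g_\phi}$, I substitute the commutator identity in each slot. The terms $\psca{\Delta_q^h(\pa_t f)_\phi,\Delta_q^h g_\phi}$ and $\psca{\Delta_q^h f_\phi,\Delta_q^h(\pa_t g)_\phi}$ are exactly the first two terms of \eqref{pa_tfg}, while the two remaining contributions each equal $-\lambda\dot\theta(t)\psca{\Delta_q^h|D_x|^{\f12}f_\phi,\Delta_q^h|D_x|^{\f12}g_\phi}$: indeed, commuting $|D_x|$ past $\Delta_q^h$ and splitting $|D_x|=|D_x|^{\f12}|D_x|^{\f12}$, the self-adjointness of $|D_x|^{\f12}$ lets me balance one power onto each factor. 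Their sum produces the coefficient $-2\lambda\dot\theta(t)$. Taking $g=f$ immediately yields the special case \eqref{pa_tff}.

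The identities \eqref{pa_ttf} and \eqref{pa_yyf} then follow from \eqref{pa_tff} applied to well-chosen functions. For \eqref{pa_ttf} I apply \eqref{pa_tff} with $f$ replaced by $\pa_t f$ and isolate the inner product $\psca{\Delta_q^h(\pa_t^2 f)_\phi,\Delta_q^h(\pa_t f)_\phi}$, the sign of the $\lambda\dot\theta(t)$ term flipping upon moving it to the other side. For \eqref{pa_yyf} I first integrate by parts in $y$: since $\pa_y$ commutes with the tangential operators, $\Delta_q^h(-\pa_y^2 f_\phi)=-\pa_y^2\Delta_q^h f_\phi$, and the boundary terms at $y=0,1$ vanish because $\pa_t f$ inherits the Dirichlet condition \eqref{S1eq7}, which is preserved by $e^{\phi(t,D_x)}$ and $\Delta_q^h$, so that $\Delta_q^h(\pa_t f)_\phi$ has zero trace on $\pa\mathcal{S}$. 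This turns the left-hand side into $\psca{\Delta_q^h(\pa_y f)_\phi,\Delta_q^h(\pa_t\pa_y f)_\phi}$, and applying \eqref{pa_tff} with $f$ replaced by $\pa_y f$ produces precisely the right-hand side of \eqref{pa_yyf}.

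I expect the only delicate step to be the vanishing of the boundary term in the integration by parts for \eqref{pa_yyf}: one must verify that the tangential operators $e^{\phi(t,D_x)}$ and $\Delta_q^h$, acting solely in the $x$ variable, do not spoil the vanishing of the trace at $y=0$ and $y=1$, so that the Dirichlet condition transfers from $\pa_t f$ to $(\pa_t f)_\phi$ and the boundary contribution drops. Everything else is routine Fourier-multiplier bookkeeping built on the single commutator identity above.
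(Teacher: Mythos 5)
Your proof is correct and follows essentially the same route as the paper: the commutator identity $\pa_t f_\phi=(\pa_t f)_\phi-\lambda\dot\theta(t)|D_x|f_\phi$ combined with the Leibniz rule and self-adjointness of $|D_x|^{\f12}$ for \eqref{pa_tfg}, and then \eqref{pa_ttf} and \eqref{pa_yyf} obtained by applying \eqref{pa_tff} to $\pa_t f$ and (after an integration by parts in $y$ with vanishing boundary terms) to $\pa_y f$, which is exactly what the paper does, up to the cosmetic difference that the paper re-derives \eqref{pa_ttf} by a direct Parseval computation rather than quoting \eqref{pa_tff}. Your attention to why the trace of $\Delta_q^h(\pa_t f)_\phi$ vanishes at $y=0,1$ (the tangential operators $e^{\phi(t,D_x)}$ and $\Delta_q^h$ preserve the Dirichlet condition) is the right justification for the step the paper performs implicitly.
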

 \begin{proof}
To prove the first assertion, we apply the rules of the derivation of a product, we obtain  
   \begin{align*}
\frac{d}{dt}&\psca{\Delta_q^h  f_\phi, \Delta_q^h g_\phi }_{L^2} = \psca{\Delta_q^h \pa_t f_\phi, \Delta_q^h g_\phi}_{L^2}  + \psca{\Delta_q^h f_\phi, \Delta_q^h \pa_t g_\phi}_{L^2} \\ &= \psca{\Delta_q^h(\pa_t f)_\phi, \Delta_q^h g_\phi}_{L^2} - \lambda \dot{\theta}(t) \psca{\Delta_q^h |D_x| f_\phi, \Delta_q^h  g_\phi}_{L^2} \\ & + \psca{\Delta_q^h f_\phi, \Delta_q^h (\pa_t g)_\phi}_{L^2} - \lambda \dot{\theta}(t) \psca{\Delta_q^h f_\phi, \Delta_q^h |D_x|  g_\phi}_{L^2} \\
& = \psca{\Delta_q^h (\pa_t f)_\phi, \Delta_q^h  g_\phi}_{L^2} + \psca{\Delta_q^h f_\phi, \Delta_q^h (\pa_t g)_\phi}_{L^2} - 2\lambda \dot{\theta}(t) \psca{\Delta_q^h |D_x|^\frac{1}{2} f_\phi, \Delta_q^h |D_x|^\frac{1}{2}  g_\phi}_{L^2}.
\end{align*}

 By using the rules of the derivation of a product and Parseval equality, we find 
 \begin{align*}
 &\psca{\Delta_q^h \Big((\pa_t^2 f)_\phi\Big), \Delta_q^h (\pa_tf)_\phi }_{L^2} \\ &= \int \Delta_q^h (e^{\phi(t,|D_x|)}\pa_t^2 f) \overline{\Delta_q^h (e^{\phi(t,|D_x|)} \pa_t f)}dx \\
 &= \int \widehat{\Delta_q^h (e^{\phi(t,|D_x|)}\pa_t^2 f)} \overline{\widehat{\Delta_q^h (e^{\phi(t,|D_x|)} \pa_t f)}}d\xi_h \\
 & =\frac{1}{2} \frac{d}{dt} \int |\widehat{\Delta_q^h (e^{\phi(t,|D_x|)} \pa_t f)}|^2 d\xi_h \\ & \quad\quad\quad+   \frac{1}{2} \int 2\lambda \dot{\theta}(t)  \varphi(2^{-q}\xi) |\xi_h| e^{\phi(t,|\xi_h|)}\pa_t \widehat{f}(\xi) \overline{\varphi(2^{-q}\xi) e^{\phi(t,|\xi_h|)}\pa_t \widehat{f}(\xi)} d\xi_h \\ &= \frac{1}{2} \frac{d}{dt} \|\Delta_q^h (\pa_t f)_\phi\|_{L^2}^2  + \lambda \dot{\theta}(t) \int |\widehat{\Delta_q^h |D_x|^{\frac{1}{2}}(e^{\phi(t,|D_x|)} \pa_t f)}|^2 d\xi_h\\
 & = \frac{1}{2} \frac{d}{dt} \| \Delta_q^h (\pa_t f)_\phi\|_{L^2}^2 +\lambda \dot{\theta}(t) \|\Delta_q^h |D_x|^\frac{1}{2} (\pa_tf)_\phi\|_{L^2}^2.
  \end{align*}
 
  By using integration by parts and  replacing in \eqref{pa_tff} by $\pa_y f$, we can find the estimate 
  \begin{align*}
  &\psca{\Delta_q^h \Big(- \pa_y^2 f_\phi\Big), \Delta_q^h (\pa_tf)_\phi }_{L^2} = \psca{\Delta_q^h \Big(\pa_y f_\phi\Big), \Delta_q^h (\pa_t \pa_y f)_\phi }_{L^2} \\
  & = \frac{1}{2} \frac{d}{dt} \|\Delta_q^h \pa_y f_\phi \|_{L^2}^2 + \lambda \dot{\theta}(t) \|\Delta_q^h |D_x|^\frac{1}{2} \pa_yf_\phi\|_{L^2}^2.
  \end{align*}
  Note that here we use the fact that $$\Delta_q^h(\pa_x^\alpha f_\phi) = \pa_x^\alpha(\Delta_q^h f)_\phi. $$
  
 \end{proof}
 
After starting the proof of the mains lemmas we recall from Plancherel formula, Fubini's the following inequality 
\begin{cor}
Let $f, g$ and $h$ be a smooth functions on $\mathbb{R}\times (0,1)$, we have 
\begin{align*}
\|\Delta_q^h f_\phi \| \leq C \|f_\phi\|_{L^2}
\end{align*}
and 
\begin{align*}
&\big|\psca{\Delta_q^h \big(fg)_\phi\big), \Delta_q^h h_\phi }_{L^2}\big|
\leq \sum_{|q-q'|\leq 4} \big|\psca{\Delta_q^h \big(S_{q'-1}f\Delta_{q'}g)_\phi\big), \Delta_q^h h_\phi }_{L^2}\big| \\ &+ \sum_{|q-q'|\leq 4}\big|\psca{\Delta_q^h \big(S_{q'-1}g\Delta_{q'}f)_\phi\big), \Delta_q^h h_\phi }_{L^2}\big| + \sum_{q'\geq q- 3} \big|\psca{\Delta_q^h \big(\tilde{\Delta}_{q'}f\Delta_{q'}g)_\phi\big), \Delta_q^h h_\phi }_{L^2}\big| , 
\end{align*}
such as
\begin{align*}
 \big|\psca{\Delta_q^h \big(S_{q'-1}f\Delta_{q'}g)_\phi\big), \Delta_q^h h_\phi }_{L^2}\big| &\leq C \| S_{q'-1}f_\phi^+ \|_{L^\infty} \| \Delta_{q'}g_\phi^+ \|_{L^2} \|\Delta_q^h h_\phi^+ \|_{L^2} \\
 &\leq C \| S_{q'-1}f_\phi^+ \|_{L^\infty_h(L^2_v)} \| \Delta_{q'}g_\phi^+ \|_{L^2_h(L^\infty_v)} \|\Delta_q^h h_\phi^+ \|_{L^2} 
\end{align*}
and 
\begin{align*}
 \big|\psca{\Delta_q^h \big(\tilde{\Delta}_{q'}f\Delta_{q'}g)_\phi\big), \Delta_q^h h_\phi }_{L^2}\big| &\leq C \| \tilde{\Delta}_{q'}f_\phi^+ \|_{L^\infty} \| \Delta_{q'}g_\phi^+ \|_{L^2} \|\Delta_q^h h_\phi^+ \|_{L^2} \\
 &\leq C \| \tilde{\Delta}_{q'}f_\phi^+ \|_{L^\infty_h(L^2_v)} \| \Delta_{q'}g_\phi^+ \|_{L^2_h(L^\infty_v)} \|\Delta_q^h h_\phi^+ \|_{L^2} \\
 &\leq C \| \tilde{\Delta}_{q'}f_\phi^+ \|_{L^2_h(L^\infty_v)} \| \Delta_{q'}g_\phi^+ \|_{L^2} \|\Delta_q^h h_\phi^+ \|_{L^\infty_h(L^2_v)}.
\end{align*}
\end{cor}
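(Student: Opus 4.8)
The plan is to dispatch the three assertions in turn, the first being immediate and the other two resting on Bony's decomposition together with the positivity device $f\mapsto f^+$ already set up in Corollary \ref{lem:fgphibis1} and Lemma \ref{estfg+}. For the bound $\|\Delta_q^h f_\phi\|\le C\|f_\phi\|_{L^2}$ I would simply use Plancherel in the horizontal variable: since $\widehat{\Delta_q^h f_\phi}(\xi,y)=\varphi(2^{-q}|\xi|)\widehat{f_\phi}(\xi,y)$ and $\varphi$ is bounded, one gets the estimate with $C=\|\varphi\|_{L^\infty}$, uniformly in $q$. For the splitting into three sums I would apply \eqref{eq:Bonybis1} to write $fg=T^h_fg+T^h_gf+R^h(f,g)$, then apply $e^\phi$, project with $\Delta_q^h$, and use the triangle inequality; the restriction on the summation indices is the standard spectral-support count. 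The paraproduct block $S^h_{q'-1}f\,\Delta^h_{q'}g$ has horizontal Fourier support in an annulus of size $\sim 2^{q'}$, so $\Delta_q^h$ kills it unless $|q-q'|\le 4$, while the remainder block $\tilde\Delta^h_{q'}f\,\Delta^h_{q'}g$ has support in a ball of radius $\sim 2^{q'}$, so $\Delta_q^h$ kills it unless $q'\ge q-3$; both facts follow from the support conditions on $\varphi,\psi$ in Section \ref{sec:1}.

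The heart of the matter is the per-block estimate. The observation I would exploit is that every multiplier in sight, namely $\Delta_q^h$, $S^h_{q'-1}$, $\tilde\Delta^h_{q'}$ and the analytic weight $e^{\phi}$, has a nonnegative symbol, hence commutes with $f\mapsto f^+=\mathcal{F}_\xi^{-1}(|\mathcal{F}_x f|)$; in particular $(S^h_{q'-1}f)^+_\phi=S^h_{q'-1}f^+_\phi$ and $|\widehat{\Delta_q^h h_\phi}|=\widehat{\Delta_q^h h^+_\phi}$. Combining this with Corollary \ref{lem:fgphibis1} for the product term and Corollary \ref{coro} for the paraproduct and remainder terms, I would bound $|\psca{\Delta_q^h(S^h_{q'-1}f\,\Delta^h_{q'}g)_\phi,\Delta_q^h h_\phi}_{L^2}|$ by the sign-definite trilinear integral $\psca{\Delta_q^h(S^h_{q'-1}f^+_\phi\,\Delta^h_{q'}g^+_\phi),\Delta_q^h h^+_\phi}_{L^2}$, in which no cancellation survives.

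What remains is a bookkeeping exercise in Hölder inequalities. The first displayed bound for the paraproduct block follows from Cauchy--Schwarz in the inner product, the uniform $L^2$-boundedness of $\Delta_q^h$, and Lemma \ref{estfg+} in the form $\|A^+_\phi B^+_\phi\|_{L^2}\le\|A^+_\phi\|_{L^\infty}\|B^+_\phi\|_{L^2}$; the second, anisotropic, bound comes from redistributing the vertical integration as $\|A^+_\phi B^+_\phi\|_{L^2(\mathcal{S})}\le\|A^+_\phi\|_{L^\infty_h(L^2_v)}\|B^+_\phi\|_{L^2_h(L^\infty_v)}$, which is just Hölder in $y$ followed by Hölder in $x$. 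The remainder block is handled the same way; the only extra point is that its final estimate uses the alternative split that places $\Delta_q^h h^+_\phi$ in $L^\infty_h(L^2_v)$ and $\tilde\Delta^h_{q'}f^+_\phi$ in $L^2_h(L^\infty_v)$, keeping $\Delta^h_{q'}g^+_\phi$ in the full space $L^2(\mathcal{S})$.

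I expect the only genuinely delicate step to be the reduction to the sign-definite trilinear integral: one must verify that the convexity inequality \eqref{eq:phiconvexbis1} really does permit the weight $e^{\phi(\xi)}\le e^{\phi(\xi-\eta)}e^{\phi(\eta)}$ to be distributed onto the two factors under the convolution integral, and that replacing $f,g,h$ by $f^+,g^+,h^+$ only increases the modulus of each Fourier transform pointwise, so that the triangle inequality under the $\xi$-integration is preserved. Once this positivity step is in place, all remaining inequalities are elementary.
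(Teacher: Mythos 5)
Your proposal is correct and follows exactly the route the paper intends: the paper states this corollary without a written proof, attributing it to ``Plancherel formula, Fubini's,'' and your argument fills that in using precisely the paper's own building blocks --- Plancherel for the uniform $L^2$-boundedness of $\Delta_q^h$, Bony's decomposition \eqref{eq:Bonybis1} with the standard spectral-support count for the three sums, and Corollary \ref{lem:fgphibis1}, Lemma \ref{estfg+} and Corollary \ref{coro} to reduce each block to a sign-definite trilinear integral in the $f^+,g^+,h^+$ variables before applying the isotropic and anisotropic H\"older inequalities. The only implicit assumption, that the Littlewood--Paley symbols $\varphi,\psi$ are nonnegative so that $(S^h_{q'-1}f)^+_\phi=S^h_{q'-1}f^+_\phi$ and $(\Delta^h_{q'}g)^+_\phi=\Delta^h_{q'}g^+_\phi$, is the standard convention and is exactly what the paper's subsequent remark (``we can assume $\hat f\geq 0$'') relies on as well.
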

\begin{rem}
Using this corollary, we can assume from now without losing the generality with function $f$, that $\hat{f}\geq 0$.
\end{rem}
\begin{lemma}
Let $f$ be a smooth function on $\mathbb{R}\times (0,1)$. If $f = 0$ when $y=0$ then we have 
\begin{align}
\|f\|_{L^\infty_v} \leq C \|f\|^\f12_{L^2_v} \|\pa_y f\|^\f12_{L^2_v}.
\end{align}
\end{lemma}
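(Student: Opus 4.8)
The final statement to prove is the one-dimensional interpolation inequality
\[
\|f\|_{L^\infty_v} \leq C \|f\|^{\f12}_{L^2_v} \|\pa_y f\|^{\f12}_{L^2_v},
\]
for a smooth function on $\mathbb{R}\times(0,1)$ vanishing at $y=0$. This is a Ladyzhenskaya/Agmon-type inequality in the vertical variable alone, and the plan is to prove it by the fundamental theorem of calculus together with the Cauchy-Schwarz inequality.

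First I would fix the horizontal variable $x$ and work with $f(\cdot,y)$ as a function of $y\in(0,1)$ only; since the claimed inequality is pointwise in $x$, no interaction between $x$ and $y$ is needed. Using the boundary condition $f(x,0)=0$, I would write for any $y\in(0,1)$
\[
f(x,y)^2 = \int_0^y \pa_y\bigl(f(x,y')^2\bigr)\,dy' = 2\int_0^y f(x,y')\,\pa_y f(x,y')\,dy'.
\]
Taking absolute values and extending the integral to the full interval $(0,1)$ gives
\[
|f(x,y)|^2 \leq 2\int_0^1 |f(x,y')|\,|\pa_y f(x,y')|\,dy'.
\]
The right-hand side no longer depends on $y$, so taking the supremum over $y$ is immediate.

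Next I would apply the Cauchy-Schwarz inequality in $y'$ to the right-hand side, obtaining
\[
\|f\|_{L^\infty_v}^2 \leq 2\,\|f\|_{L^2_v}\,\|\pa_y f\|_{L^2_v}.
\]
Taking square roots yields the claim with $C=\sqrt{2}$. I would remark that the vanishing of $f$ at $y=0$ (equivalently at $y=1$, by the same argument integrating from the other endpoint) is exactly what allows the representation of $f(x,y)^2$ as an integral with no boundary term, and that the Dirichlet condition \eqref{S1eq7} supplies this.

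There is no genuine obstacle here: the estimate is a standard one-dimensional Gagliardo-Nirenberg/Agmon inequality, and the only point requiring a little care is the justification of the fundamental theorem of calculus step, which is legitimate because $f$ is smooth and $f(x,0)=0$. The result will be used pointwise in $x$ to feed the anisotropic product estimates of the preceding corollary, where factors in $L^2_h(L^\infty_v)$ are repeatedly bounded by the $L^2_h(L^2_v)^{1/2}$ and $L^2_h(L^2_v)^{1/2}$ norms of $f$ and $\pa_y f$ after an additional Cauchy-Schwarz in the horizontal variable.
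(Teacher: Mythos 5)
Your proof is correct and follows exactly the paper's own argument: write $f(x,y)^2$ as $2\int_0^y f\,\pa_y f\,dy'$ using the vanishing at $y=0$, then apply Cauchy--Schwarz to obtain the bound with $C=\sqrt{2}$. The only difference is presentational (you make explicit the step of taking absolute values and extending the integral to all of $(0,1)$ before taking the supremum), which the paper leaves implicit.
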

\begin{proof}
Let $f$ be a smooth function on $\mathbb{R}\times (0,1)$, while $f=0$ on $y=0$ then we can write 
\begin{align*}
(f(x,y))^2 = \int_0^y \pa_y (f(.,y'))^2 dy' = 2 \int_0^y f(.,y') \pa_y f(.,y') dy',
\end{align*} 
by using Cauchy-Schwarz inequality, we obtain 
\begin{align*}
(f(x,y))^2 \leq 2 \|f\|_{L^2} \|\pa_y f\|_{L^2},
\end{align*} 
then,
\begin{align*}
\|f\|_{L^\infty_v} \leq \sqrt{2} \|f\|_{L^2}^\f12 \|\pa_y f\|_{L^2}^\f12,
\end{align*} 
\end{proof}
We also apply the following Poincaré inequality for a smooth function $f$ vanishing on the boundary (for $y=0,1$)
$$ \|f\|_{L^\infty_v} \leq  \|\pa_y f\|_{L^2_v} \text{ \ and \ } \|f\|_{L^2_v} \leq  \|\pa_y f \|_{L^2_v}. $$

Now, let us state our lemmas, which we use to prove our results.

\begin{lemma} \label{lem:ABCbis1}
	Let $s\in ]0,1[$, and $A$, $B$ and $C$ be smooth enough functions on $[0,T^*)\times\mathbb{R}\times (0,1)$, with $A$ vanishing on the boundary, let $\phi$ be defined as in \eqref{eq:Anphichp3} with $\dot{\theta}(t) = \Vert \pa_y A_{\phi}(t) \Vert_{\mathcal{B}^{\f12}}$. For any $t \in[0,T^*[$ and for any $B, C \in \tilde{L}^2_{t,\dot{\theta}(t)}(\mathcal{B}^{s+\frac{1}{2}})$, we have
	\begin{align}
	 \int_0^t &\abs{\psca{e^{\mathcal{R} t'} \Delta_q^h (A\pa_x B)_{\phi}, e^{\mathcal{R} t'} \Delta_q^h C_{\phi}}_{L^2}} dt' \nonumber \\ &\lesssim 2^{-2qs}d^2_q \Vert e^{\mathcal{R} t} B_{\phi} \Vert_{\tilde{L}^2_{t,\dot{\theta}(t)}(\mathcal{B}^{s+\frac{1}{2}})} \Vert e^{\mathcal{R} t} C_{\phi} \Vert_{\tilde{L}^2_{t,\dot{\theta}(t)}(\mathcal{B}^{s+\frac{1}{2}})}.
	\end{align}
	with $d_q\in \ell^1(\mathbb{Z})$, $\sum\limits_{q\in\mathbb{Z}}d_q\leq 1$ and where $\mathcal{R}$ is the Poincaré constant.
\end{lemma}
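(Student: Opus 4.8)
The plan is to estimate the trilinear quantity by Bony's decomposition \eqref{eq:Bonybis1} in the horizontal variable, writing
$$ A\pa_x B = T^h_A \pa_x B + T^h_{\pa_x B}A + R^h(A,\pa_x B), $$
and to bound the three contributions separately. Throughout I would first reduce to the case $\widehat{A},\widehat{B},\widehat{C}\ge 0$ by means of Corollary \ref{lem:fgphibis1}, Lemma \ref{estfg+} and Corollary \ref{coro}, so that the weight $e^{\phi}$ is disposed of through the convexity inequality \eqref{eq:phiconvexbis1} and every factor may be replaced by its $f^+_\phi$ counterpart. The two exponential weights $e^{\mathcal{R}t'}$ are attached to the $B$ and $C$ factors (the factor $A$ carrying none), which is precisely what is needed to reconstruct the weighted norms on the right-hand side.

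The decisive structural point, used in every term, is that $A$ vanishes on the boundary, so the Poincaré inequality $\|A\|_{L^\infty_v}\le\|\pa_y A\|_{L^2_v}$ lets me replace $A$ by $\pa_y A$ whenever $A$ is placed in $L^\infty_v$. Combined with the horizontal Bernstein inequality of Lemma \ref{lem:Bernsteinbis1}, a low-frequency factor $S^h_{q'-1}A_\phi$ is controlled in $L^\infty$ by $\sum_{q''\le q'-2}2^{q''/2}\|\Delta^h_{q''}\pa_y A_\phi\|_{L^2}\le\|\pa_y A_\phi\|_{\mathcal{B}^{\f12}}=\dot{\theta}(t')$, while a single high-frequency block $\Delta^h_{q'}A_\phi$ in $L^2_h(L^\infty_v)$ is bounded by $\|\Delta^h_{q'}\pa_y A_\phi\|_{L^2}=2^{-q'/2}e_{q'}(t')\dot{\theta}(t')$ with $(e_{q'})\in\ell^1$. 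In this way $A$ always produces exactly the weight $\dot{\theta}$, the $v$-derivative never falls on $B$ or $C$, and after integrating in time the remaining $B$ and $C$ factors are separated by the Cauchy–Schwarz inequality with respect to the measure $\dot{\theta}(t')\,dt'$, yielding the two $\tilde{L}^2_{t,\dot{\theta}(t)}(\mathcal{B}^{s+\f12})$ norms.

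For the first paraproduct $T^h_A\pa_x B$ the factor $A$ is at low frequency, so I use the full $L^\infty$ bound above via Lemma \ref{estfg+}, estimate $\Delta^h_{q'}\pa_x B_\phi$ in $L^2$ by $2^{q'}\|\Delta^h_{q'}B_\phi\|_{L^2}$ and $\Delta^h_q C_\phi$ in $L^2$; the sum over $|q'-q|\le 4$ is finite and the powers $2^{q'}2^{-(s+\f12)q'}2^{-(s+\f12)q}$ combine into $2^{-2qs}$. For the remaining two pieces $A$ is localised at high frequency and I invoke the anisotropic product estimates established in the corollary above: in $T^h_{\pa_x B}A$ the low-frequency factor $S^h_{q'-1}\pa_x B_\phi$ sits in $L^\infty_h(L^2_v)$ and, since $B$ carries regularity $s+\f12$, it costs $2^{3q''/2}\|\Delta^h_{q''}B_\phi\|_{L^2}$, whose geometric sum over $q''\le q'-2$ converges thanks to $s<1$; in $R^h(A,\pa_x B)$ the factor $C$ is placed in $L^\infty_h(L^2_v)$ at a cost $2^{q/2}$ and $\pa_x B$ in $L^2$ at a cost $2^{q'}$, and the summation over $q'\ge q-3$ converges thanks to $s>0$. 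In both cases $A$ contributes $2^{-q'/2}e_{q'}\dot{\theta}$, and after the time Cauchy–Schwarz the dyadic weights reassemble into $2^{-2qs}d^2_q$.

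The main obstacle is precisely these last two pieces: one must distribute the anisotropic $L^\infty_v/L^2_v$ and $L^\infty_h/L^2_h$ norms so that the $v$-derivative is always charged to $A$ (never to $B$ or $C$, whose right-hand side norms contain no $\pa_y$), while simultaneously keeping the horizontal Bernstein losses — $2^{3q''/2}$ on the low-frequency side of $T^h_{\pa_x B}A$ and $2^{q/2}2^{q'}$ in the remainder — under control. This is exactly where the two-sided restriction $s\in\,]0,1[$ enters: the upper bound $s<1$ makes the low-frequency geometric series in the second paraproduct summable, and the lower bound $s>0$ makes the high-frequency series in the remainder summable. Collecting the three $\ell^1$ sequences into a single generic $d_q$ and the frequency powers into $2^{-2qs}$ then gives the claimed estimate.
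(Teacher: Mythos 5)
Your proposal is correct and follows essentially the same route as the paper's own proof: the identical Bony decomposition $T^h_A\pa_x B + T^h_{\pa_x B}A + R^h(A,\pa_x B)$, the same reduction to nonnegative Fourier transforms via Corollary \ref{coro}, the same charging of the vertical derivative to $A$ through Bernstein, the $H^1_y\hookrightarrow L^\infty_y$ interpolation and the Poincar\'e inequality so that $A$ always yields the weight $\dot{\theta}(t')=\Vert\pa_y A_\phi(t')\Vert_{\mathcal{B}^{\f12}}$, and the same Cauchy--Schwarz in time with respect to $\dot{\theta}(t')\,dt'$ to reassemble the two $\tilde{L}^2_{t,\dot{\theta}(t)}(\mathcal{B}^{s+\f12})$ norms. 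Your identification of where the two endpoints of $s\in\,]0,1[$ enter (the low-frequency geometric series in $T^h_{\pa_x B}A$ needs $s<1$, the high-frequency sum in the remainder needs $s>0$) also matches the paper exactly.
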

\begin{rem}
We can note that $\theta(t)$ verifying an ordinary differential equation 
$$ \dot{\theta}(t) = \| e^{(a-\lambda\theta(t))|D_x|} \pa_y A\|_{\mathcal{B}^\f12} $$
The local existence of $\theta(t)$ of $t$ between $[0,\tilde T^*[$ is given by Cauchy Lipschitz. 
\end{rem}
\begin{proof}
As in \cite{BCD2011book}, using Bony's homogeneous decomposition of $A\pa_x B$  into para-products in the horizontal variable, we can write

$$ A\pa_x B = T^{h}_A \pa_{x} B + T^h_{\pa_x B} A + R^{h}(A,\pa_x B) $$
where,
\begin{align*}
	T_A^h \pa_xB = \sum_{q\in\ZZ} S_{q-1}^h A \DD_q^h \pa_x B \quad\mbox{ and }\quad R^h(A,\pa_x B) = \sum_{\abs{q'-q} \leq 1} \DD_q^h A \DD_{q'}^h \pa_x B = \sum_q \tilde{\DD}_q^h A .\DD_{q}^h \pa_x B.
\end{align*}

We have the following bound

\begin{align*}
	\int_0^t \abs{\psca{e^{\mathcal{R} t'} \Delta_q^h (A\pa_x B)_{\phi}, e^{\mathcal{R} t'} \Delta_q^h C_{\phi}}_{L^2}} dt' \leq M_{1,q} + M_{2,q} + M_{3,q}, 
\end{align*}
where
\begin{align*}
	M_{1,q} &= \int_0^t \abs{\psca{ e^{\mathcal{R} t'} \Delta_q^h ( T^h_{A}\pa_x B)_{\phi}, e^{\mathcal{R} t'} \Delta_q^h C_{\phi}}_{L^2}} dt'\\
	M_{2,q} &= \int_0^t \abs{\psca{ e^{\mathcal{R} t'} \Delta_q^h ( T^h_{\pa_xB} A )_{\phi}, e^{\mathcal{R} t'} \Delta_q^h C_{\phi}}_{L^2}} dt'\\
	M_{3,q} &= \int_0^t \abs{\psca{ e^{\mathcal{R} t'} \Delta_q^h (R^h(A,\pa_xB))_{\phi}, e^{\mathcal{R} t'} \Delta_q^h C_{\phi}}_{L^2}} dt'.
\end{align*}

We start by getting the estimate of the first term $M_{1,q}$, for that we need to use the definition of $T^{h}_{A} \pa_x B$ and  the support properties given in  [\cite{B1981}, Proposition 2.10] and Corollary \ref{coro}. We infer 

\begin{equation} \label{eq:normBCEbis1}
	M_{1,q} \leq \sum_{|q-q'| \leq 4} \int_0^t e^{2\mathcal{R} t'} \Vert S^h_{q'-1} A_{\phi} (t') \Vert_{L^{\infty}}\Vert \Delta_{q'}^h \partial_xB_{\phi} (t') \Vert_{L^2} \Vert \Delta_q^h C_{\phi} (t') \Vert_{L^2} dt'.
\end{equation}

Using  Bernstein Lemma \ref{lem:Bernsteinbis1} we have
 $$\|\Delta_{q'}^h A_\phi \|_{L^\infty_h} \lesssim 2^{\frac{q'}{2}} \|\Delta_{q'}^h A_\phi\|_{L^2_h}.$$
 Taking the $L^\infty_v$ norm  and using that $\|f\|_{L^p_vL^q_h}\leq \|f\|_{L^q_hL^p_v}$, for $q\geq p\geq 1$, we obtain
  $$\|\Delta_{q'}^h A_\phi \|_{L^\infty} \lesssim 2^{\frac{q'}{2}} \|\Delta_{q'}^h A_\phi\|_{L^\infty_vL^2_h}\lesssim 2^{\frac{q'}{2}} \|\Delta_{q'}^h A_\phi\|_{L^2_hL^\infty_v}.$$

While using the inclusion $H^1_y\hookrightarrow L^\infty_y$, 
$$\|\Delta_{q'}^h A_\phi\|_{L^\infty_v} \lesssim \|\Delta_{q'}^h A_\phi\|_{L^2_v}^\f12 \|\Delta_{q'}^h \pa_y A_\phi\|_{L^2_v}^\f12, $$
and the Poincaré inequality on the interval $\lbrace 0<y<1 \rbrace$ on $A$ (as we have that $A = 0$ when y = 0,1)
 $$ \|\Delta_{q'}^h A_\phi\|_{L^2} \lesssim \|\Delta_{q'}^h \pa_y A_\phi \|_{L^2},$$ 
 we obtain
\begin{align} \label{eq:normB12bis1}
	\Vert \Delta^h_q A_{\phi}(t') \Vert_{L^{\infty}} &\lesssim 2^{\frac{q}{2}} \Vert \Delta_q^h A_{\phi}(t') \Vert_{L^2_h(L^{\infty}_v)} \nonumber \\ 
	&\lesssim 2^{\frac{q}{2}} \|\Delta_q^h A_\phi(t') \|^\f12_{L^2} \Vert \Delta_q^h \pa_y A_{\phi}(t') \Vert_{L^2}^\f12 \nonumber \\
	&\lesssim 2^{\frac{q}{2}}\Vert \Delta_q^h \pa_y A_{\phi}(t') \Vert_{L^2} \lesssim d_q(A_\phi) \Vert  \pa_y A_{\phi}(t') \Vert_{\mathcal{B}^{\f12}}.
\end{align}

 Here and in all that follows, we always denote $(d_q(A_\phi))_{q\in\ZZ}$ to be a generic element of $\ell^{1}(\ZZ) $ so that $\sum\limits_{q\in\ZZ} d_q(A_\phi) \leq 1$. 
 
 Then,
\begin{align*}
	\Vert S^h_{q'-1} A_{\phi} (t') \Vert_{L^{\infty}} \lesssim  \Vert  \pa_y A_{\phi}(t') \Vert_{\mathcal{B}^{\f12}},
\end{align*}
and replacing this result in our estimate \eqref{eq:normBCEbis1}, and combining with H\"older inequality, imply that 

\begin{align*}
	M_{1,q} &\lesssim \sum_{|q-q'| \leq 4}  \int_0^t \Vert \pa_y A_{\phi}(t') \Vert_{\mathcal{B}^{\f12}} e^{\mathcal{R} t'} \Vert \Delta_{q'}^h \pa_x B_{\phi}(t') \Vert_{L^2} e^{\mathcal{R} t'} \Vert \Delta_q^h C_{\phi}(t') \Vert_{L^2} dt'\\
	&\lesssim \sum_{|q-q'| \leq 4} 2^{q'} \int_0^t \Vert \pa_y A_{\phi}(t') \Vert_{\mathcal{B}^{\f12}}^\frac{1}{2} e^{\mathcal{R} t'} \Vert \Delta_{q'}^h B_{\phi}(t') \Vert_{L^2} \Vert \pa_y A_{\phi}(t') \Vert_{\mathcal{B}^{\f12}}^\frac{1}{2} e^{\mathcal{R} t'} \Vert \Delta_q^h C_{\phi}(t') \Vert_{L^2} dt'\\
	&\lesssim \sum_{|q-q'| \leq 4} 2^{q'} \left(\int_0^t \Vert \pa_y A_{\phi}(t') \Vert_{\mathcal{B}^{\f12}} e^{2\mathcal{R} t'} \Vert \Delta_{q'}^h B_{\phi} \Vert_{L^2}^2 dt'\right)^{\frac{1}{2}} \left(\int_0^t \Vert  \pa_y A_{\phi}(t') \Vert_{\mathcal{B}^{\f12}} e^{2\mathcal{R} t'} \Vert \Delta_q^h C_{\phi} \Vert_{L^2}^2 dt'\right)^{\frac{1}{2}}.
\end{align*}
We recall that $\dot{\theta}(t) = \Vert  \pa_y A_{\phi}(t) \Vert_{\mathcal{B}^{\f12}}$, and using Definition \ref{def:CLweightbis12}, we have 

\begin{align*}
	\left(\int_0^t \Vert \pa_y A_{\phi}(t') \Vert_{\mathcal{B}^{\f12}} e^{2\mathcal{R} t'} \Vert \Delta_q^h C_{\phi} \Vert_{L^2}^2 dt'\right)^{\frac{1}{2}} &= \left(\int_0^t \dot{\theta}(t') e^{2\mathcal{R} t'} \Vert \Delta_q^h C_{\phi} \Vert_{L^2}^2 dt'\right)^{\frac{1}{2}} \\ &\lesssim 2^{-q(s+\frac{1}{2})} d_q(C_\phi) \Vert e^{\mathcal{R}t} C_{\phi} \Vert_{\tilde{L}^{2}_{t,\dot{\theta}(t)}(\mathcal{B}^{s+\frac{1}{2}})}.
\end{align*}
Then,
\begin{align} \label{eq:I1qbis1}
	M_{1,q} \lesssim 2^{-2qs} d_{q} ^2\Vert e^{\mathcal{R} t} B_{\phi} \Vert_{\tilde{L}^{2}_{t,\dot{\theta}(t)}(\mathcal{B}^{s+\frac{1}{2}})} \Vert e^{\mathcal{R} t} C_{\phi} \Vert_{\tilde{L}^{2}_{t,\dot{\theta}(t)}(\mathcal{B}^{s+\frac{1}{2}})},
\end{align}
where 
$$ d^2_{q} = d_q(C_\phi) \left( \sum_{|q-q'| \leq 4} d_{q'}(B_\phi) 2^{(q-q')(s-\frac{1}{2})} \right) .$$

\medskip

Similarly, by Lemma \ref{lem:fgphibis1} and considering the support properties to the Fourier transform given in [\cite{B1981}, Proposition 2.10] of the terms in $T^h_{\pa_xB}A$, we obtain 

\begin{align*}
M_{2,q} (t) &\lesssim \int_0^t \abs{\psca{ e^{\mathcal{R} t'} \Delta_q^h ( T^h_{\pa_xB} A )_{\phi}, e^{\mathcal{R} t'} \Delta_q^h C_{\phi}}_{L^2}} dt' \\
&\lesssim \sum_{|q-q'|\leq 4} \int_0^t e^{2\mathcal{R} t'}\Vert S^h_{q'-1} \pa_x B_\phi \Vert_{L^\infty_h(L^2_v)} \Vert \Delta_{q'}^h A_\phi \Vert_{L^2_h(L^\infty_v)} \Vert \Delta_q^h C_\phi \Vert_{L^2} dt'.
\end{align*}

As in \eqref{eq:normB12bis1}, we can write
\begin{equation*} 
	\Vert \Delta_{q'}^h A_{\phi} \Vert_{L^2_h(L^{\infty}_v)} \lesssim \Vert \Delta_{q'}^h \pa_y A_{\phi} \Vert_{L^2} \lesssim 2^{-\frac{q'}{2}} d_{q'}
	(A_\phi) \Vert  \pa_y A_{\phi}(t') \Vert_{\mathcal{B}^{\f12}},
\end{equation*}
and using Bernstein inequality we have
$$\Vert S^h_{q'-1} \pa_x B_\phi(t') \Vert_{L^\infty_h(L^2_v)} \lesssim \sum_{l\leq q'-2} 2^l 2^{\frac{l}{2}} \Vert \Delta_l^h B_\phi \Vert_{L^2}. $$
By using Hölder inequality, we have 
\begin{align*}
	 M_{2,q} &\lesssim  \sum_{|q-q'|\leq 4} \int_0^t 2^{-\frac{q'}{2}} d_{q'}(A_\phi) e^{\mathcal{R} t'}\Vert S^h_{q'-1} \pa_x B_\phi(t') \Vert_{L^\infty_h(L^2_v)} \Vert  \pa_y A_{\phi}(t')\Vert_{\mathcal{B}^{\f12}}e^{\mathcal{R} t'}\Vert \Delta_q^h C_\phi(t') \Vert_{L^2} dt'\\
	&\lesssim  \sum_{|q-q'|\leq 4} \int_0^t 2^{-\frac{q'}{2}} e^{\mathcal{R} t'} \sum_{l\leq q'-2} 2^l 2^{\frac{l}{2}} \Vert \Delta_l^h B_\phi \Vert_{L^2} \Vert  \pa_y A_{\phi}(t) \Vert_{\mathcal{B}^{\f12}}e^{\mathcal{R} t'}\Vert \Delta_q^h C_\phi \Vert_{L^2} dt'\\
	&\lesssim \sum_{|q-q'|\leq 4}  2^{\frac{-q'}{2}} \sum_{l \leq q'-2} \left(\int_0^t e^{2\mathcal{R} t'}2^{3l}  \Vert \Delta_l^h B_{\phi} \Vert_{L^{2}}^2 \Vert \pa_y A_{\phi} \Vert_{\mathcal{B}^{\f12}} dt'\right)^\frac{1}{2} \\ & \quad\quad\quad\quad\quad\quad \quad\quad\quad\quad\quad \times  \left( \int_0^t e^{2\mathcal{R} t'} \Vert \Delta_{q}^h C_{\phi} \Vert_{L^{2}}^2 \Vert \pa_y A_{\phi} \Vert_{\mathcal{B}^{\f12}}  dt'\right)^\frac{1}{2}.
\end{align*}

Yet we observe from Definition \ref{def:CLweightbis12}, and $s<1$ we have 
 \begin{align}\label{nac}
 & \sum_{l \lesssim q'-2} \left(\int_0^t2^{3l} e^{2\mathcal{R} t'} \Vert \Delta_l^h B_{\phi} \Vert_{L^{2}}^2 \Vert \pa_y A_{\phi} \Vert_{\mathcal{B}^{\f12}} dt'\right)^\frac{1}{2} \nonumber \\ & \quad \lesssim  \sum_{l\leq q'-2} 2^{\frac{3l}{2}} \left(\int_0^t \Vert \pa_y A_{\phi} \Vert_{\mathcal{B}^{\f12}} e^{2\mathcal{R} t'} \Vert \Delta_l^h  B_{\phi}(t') \Vert_{L^{2}}^2 dt'\right)^{\frac{1}{2}}\nonumber \\
 &\lesssim  \sum_{l \lesssim q'-2} 2^{\frac{3l}{2}} 2^{-l(s+\f12)} 2^{l(s+\f12)} \left(\int_0^t  \Vert \pa_y A_{\phi} \Vert_{\mathcal{B}^{\f12}} e^{2\mathcal{R} t'} \Vert \Delta_l^h  B_{\phi}(t') \Vert_{L^{2}}^2 dt'\right)^{\frac{1}{2}} \nonumber \\
 &\lesssim 2^{q'(1-s)} \sum_{l\leq q'-2} \tilde{d}_l(B_\phi) 2^{(q'-l)(s-1)}  \Vert e^{\mathcal{R} t} B_{\phi} \Vert_{\tilde{L}^{2}_{t,\dot{\theta}(t)}(\mathcal{B}^{s+\frac{1}{2}})} \nonumber \\
  & \lesssim 2^{q'(1-s)} d_{q'}(B_\phi)  \Vert e^{\mathcal{R} t} B_{\phi} \Vert_{\tilde{L}^{2}_{t,\dot{\theta}(t)}(\mathcal{B}^{s+\frac{1}{2}})}.
 \end{align}
 where $$ d_{q'}(B_\phi) = \sum_{l\leq q'-2} 2^{-(q'-l)(1-s)}\tilde{d}_l(B_\phi) \in \ell^1(\ZZ). $$ 
 We remark that $d_{q'}(B_\phi) \in \ell^1(\ZZ)$ because $s<1$ and  then $d_{q'}(B_\phi)$ is a convolution between summable sequences.
 So that it comes out
 \begin{align}\label{eq:I2qbis1}
  M_{2,q} \lesssim d_{q}^2 2^{-2qs} \Vert e^{\mathcal{R} t} B_{\phi} \Vert_{\tilde{L}^2_{t,\dot{\theta}(t)}(\mathcal{B}^{s+\frac{1}{2}})}  \Vert e^{\mathcal{R} t} C_{\phi} \Vert_{\tilde{L}^2_{t,\dot{\theta}(t)}(\mathcal{B}^{s+\frac{1}{2}})}
 \end{align}
 
where
\begin{align*}
	d_{q}^2 = d_q(C_\phi) \left(\sum_{|q-q'|\leq 4}{\tilde d}_{q'}(C_\phi) 2^{(q -q')(s-\frac{1}{2})}\right),
\end{align*}
with $d_q$ a summable sequence.

To end this proof, it remains to estimate $M_{3,q}$(is the rest term). Using the definition of $R^h(A,\pa_x B)$, the support properties given in [\cite{B1981}, Proposition 2.10], and Bernstein lemma \ref{lem:Bernsteinbis1}, we can write
\begin{align*}
	M_{3,q} &= \int_0^t \abs{\psca{ e^{\mathcal{R} t'} \Delta_q^h (R^h(A,\pa_xB))_{\phi}, e^{\mathcal{R} t'} \Delta_q^h C_{\phi}}_{L^2}} dt'\\ &\lesssim \sum_{q'\geq q-3}\int_0^t e^{2\mathcal{R} t'} \Vert \tilde{\DD}^h_{q'} A_{\phi}\Vert_{L^{2}_h(L^{\infty}_v)} \Vert \Delta_{q'}^h \pa_x B_{\phi}  \Vert_{L^{2}} \Vert \Delta_{q}^h  C_{\phi} \Vert_{L^{\infty}_h(L^2_v)}dt' .
\end{align*}
By using Bernstein Lemma \ref{lem:Bernsteinbis1}, we can find that 
\begin{align}\label{estimate:Cphi}
 \Vert \Delta_{q'}^h \pa_x B_{\phi}  \Vert_{L^{2}}\lesssim 2^{q'} \Vert \Delta_{q'}^h  B_{\phi}  \Vert_{L^{2}}\quad\text{and}\quad\| \Delta_{q}^h  C_{\phi} \|_{L^{\infty}_h(L^2_v)} \lesssim 2^{\frac{q}{2}} \|\Delta_q^h C_\phi \|_{L^2_h(L^2_v)},
\end{align} 
and by using Poincaré inequality on $A$ (having that $A=0$ when $y=0,1$), we obtain
\begin{align}\label{estimate:Aphi1}
 \Vert \tilde{\DD}^h_{q'} A_{\phi}\Vert_{L^{2}_h(L^{\infty}_v)} &\lesssim  \Vert \tilde{\DD}^h_{q'} A_{\phi}\Vert_{L^2_h(L^2_v)}^\f12 \Vert \tilde{\DD}^h_{q'} \pa_y A_{\phi}\Vert_{L^2_h(L^2_v)}^\f12  \nonumber\\
 &\lesssim \Vert \tilde{\DD}^h_{q'} \pa_y A_{\phi}\Vert_{L^{2}}^\f12 \Vert \tilde{\DD}^h_{q'} \pa_y A_{\phi}\Vert_{L^{2}}^\f12 =\Vert \tilde{\DD}^h_{q'} \pa_y A_{\phi}\Vert_{L^{2}}\nonumber \\
 &\lesssim d_{q'}(A_\phi) 2^{-\frac{q'}{2}} \|\pa_y A_\phi\|_{\mathcal{B}^\f12}  .
\end{align}
Then, we replace on \eqref{eq:I2qbis1} we achieve 
\begin{align*}
M_{3,q} &\lesssim 2^{\frac{q}{2}} \sum_{q'\geq q-3}\int_0^t e^{2\mathcal{R} t'}   2^{q'(1-\frac{1}{2})}  \Vert \pa_y A_{\phi}\Vert_{\mathcal{B}^{\f12}} \Vert \Delta_{q'}^h B_{\phi}  \Vert_{L^{2}} \Vert \Delta_{q}^h C_{\phi} \Vert_{L^{2}} dt'  \\
	&\lesssim 2^{\frac{q}{2}} \sum_{q'\geq q-3} \int_0^t e^{2\mathcal{R} t'} 2^{\frac{q'}{2}}  \Vert \pa_y A_{\phi}\Vert_{\mathcal{B}^{\f12}} \Vert \Delta_{q'}^h B_{\phi}  \Vert_{L^{2}} \Vert \Delta_{q}^h C_{\phi} \Vert_{L^{2}}dt'.
\end{align*}
Since $s< 1 $, we have by using Hölder inequality 
\begin{align*}
	 M_{3,q} &\lesssim 2^{\frac{q}{2}} \sum_{q'\geq q-3}\int_0^t  2^{\frac{q'}{2}}  \Vert \pa_y A_{\phi}\Vert_{\mathcal{B}^{\f12}} \Vert \Delta_{q'}^h  e^{\mathcal{R} t'} B_{\phi}  \Vert_{L^{2}} \Vert \Delta_{q}^h e^{\mathcal{R} t'} C_{\phi} \Vert_{L^{2}}dt' \\
	&\lesssim  2^{\frac{q}{2}} \sum_{q'\geq q-3} 2^{\frac{q'}{2}} \left(\int_0^t  \Vert e^{\mathcal{R} t'} \Delta_{q'}^h B_{\phi} \Vert_{L^{2}}^2 \Vert \pa_y A_{\phi} \Vert_{\mathcal{B}^{\f12}} dt'\right)^\frac{1}{2} \\ & \quad\quad\quad\quad\quad\quad \quad\quad\quad\quad\quad \times  \left( \int_0^t \Vert e^{\mathcal{R} t'} \Delta_{q}^h C_{\phi} \Vert_{L^{2}}^2 \Vert \pa_y A_{\phi} \Vert_{\mathcal{B}^{\f12}}  dt'\right)^\frac{1}{2}  \\
	&\lesssim d_q(C_\phi) 2^{-2qs} \Vert e^{\mathcal{R} t} B_\phi \Vert_{\tilde{L}^2_{t,\dot{\theta}(t)}(\mathcal{B}^{s+\frac{1}{2}})}  \Vert e^{\mathcal{R} t} C_\phi \Vert_{\tilde{L}^2_{t,\dot{\theta}(t)}(\mathcal{B}^{s+\frac{1}{2}})} \left(  \sum_{q'\geq q-3} d_{q'}(B_\phi) 2^{(q-q')s} \right)dt'.
\end{align*}
 Then,
\begin{align} \label{eq:I3qbis1}
  M_{3,q} \lesssim d_{q}^2 2^{-2qs} \Vert e^{\mathcal{R} t} B_\phi \Vert_{\tilde{L}^2_{t,\dot{\theta}(t)}(\mathcal{B}^{s+\frac{1}{2}})}  \Vert e^{\mathcal{R} t} C_\phi \Vert_{\tilde{L}^2_{t,\dot{\theta}(t)}(\mathcal{B}^{s+\frac{1}{2}})}.
\end{align}
where
\begin{align*}
d_{q}^2 = d_q(C_\phi) \left(  \sum_{q'\geq k-3} d_{q'}(B_\phi) 2^{(q-q')s} \right)
\end{align*}
with $d_q$ a summable  sequence of positive numbers. Lemma \ref{lem:ABCbis1} is then proved by summing Estimates \eqref{eq:I1qbis1}, \eqref{eq:I2qbis1} and \eqref{eq:I3qbis1}. \hfill

\end{proof}

\begin{lemma} \label{lem:Apa_yBCbis1}
Let A,B and C be smooth functions on $[0,T^*) \times \mathbb{R}\times (0,1)$, with $A$ vanishing on the boundary, $s\in ]0,1[$, and $\phi$ be defined as in \eqref{eq:Anphichp3}, with $\dot{\theta} (t) = \Vert \pa_y A_{\phi}(t) \Vert_{\mathcal{B}^{\f12}} $. For any $t \in[0,T^*)$ and for any $B,C \in \tilde{L}^2_{t,\dot{\theta}(t)}(\mathcal{B}^{s+\frac{1}{2}})$, we have
	\begin{align}
	 \int_0^t &\abs{\psca{e^{\mathcal{R} t'} \Delta_q^h \Big(\int_0^y \pa_xBdy'\cdot\pa_y A\Big)_{\phi}, e^{\mathcal{R} t'} \Delta_q^h C_{\phi}}_{L^2}} dt' \nonumber \\ &\lesssim {d}^2_q 2^{-2qs} \Vert e^{\mathcal{R} t} B_{\phi} \Vert_{\tilde{L}^2_{t,\dot{\theta}(t)}(\mathcal{B}^{s+\frac{1}{2}})}\Vert e^{\mathcal{R} t} C_{\phi} \Vert_{\tilde{L}^2_{t,\dot{\theta}(t)}(\mathcal{B}^{s+\frac{1}{2}})}.
	\end{align}
\end{lemma}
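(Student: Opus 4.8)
The plan is to repeat almost verbatim the argument of Lemma \ref{lem:ABCbis1}, substituting the nonlocal factor $\widetilde{B} := \int_0^y \pa_x B\, dy'$ for the role played there by $\pa_x B$, and the factor $\pa_y A$ for the role played by $A$. Since $\phi(t,D_x)$ acts only in the horizontal variable, it commutes with both $\int_0^y\cdot\,dy'$ and $\pa_x$, so that $\widetilde{B}_\phi = \int_0^y \pa_x B_\phi\, dy'$. Setting $\dot{\theta}(t) = \Vert \pa_y A_\phi(t)\Vert_{\mathcal{B}^{\f12}}$ as prescribed, I would first use Bony's horizontal decomposition
$$ \widetilde{B}\,\pa_y A = T^h_{\widetilde{B}}\pa_y A + T^h_{\pa_y A}\widetilde{B} + R^h(\widetilde{B},\pa_y A), $$
and bound the left-hand side by $M_{1,q}+M_{2,q}+M_{3,q}$, the three contributions of the para-products and the remainder, exactly as in the previous proof.

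The one genuinely new ingredient is the control of the nonlocal factor. On the unit interval the pointwise bound $\big\Vert \int_0^y g\,dy'\big\Vert_{L^\infty_v}\le \Vert g\Vert_{L^1_v}\le \Vert g\Vert_{L^2_v}$ (Cauchy--Schwarz) gives, after a horizontal Bernstein estimate,
$$ \Vert \Delta_{q'}^h \widetilde{B}_\phi\Vert_{L^2_h(L^\infty_v)}\lesssim \Vert \Delta_{q'}^h \pa_x B_\phi\Vert_{L^2}\lesssim 2^{q'}\Vert \Delta_{q'}^h B_\phi\Vert_{L^2}, \qquad \Vert S^h_{q'-1}\widetilde{B}_\phi\Vert_{L^\infty}\lesssim \sum_{l\le q'-2}2^{\frac{3l}{2}}\Vert \Delta_l^h B_\phi\Vert_{L^2}. $$
These are precisely the bounds enjoyed by $\pa_x B$ in Lemma \ref{lem:ABCbis1}, so the $\widetilde{B}$-factor behaves just as $\pa_x B$ did. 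The crucial organizational choice --- and the point that must not be overlooked --- is that \emph{the $L^\infty_v$ norm must always be routed onto $\widetilde{B}$, never onto $\pa_y A$}: taking $L^\infty_v$ of $\pa_y A$ would cost an extra vertical derivative $\pa_y^2 A$, which is not controlled. Accordingly $\pa_y A$ is kept throughout in $L^2$ or in $L^\infty_h(L^2_v)$ (horizontal Bernstein only), where it supplies the weight via $\Vert \Delta_{q'}^h \pa_y A_\phi\Vert_{L^2}=2^{-\frac{q'}{2}}d_{q'}\dot{\theta}$ and $\Vert S^h_{q'-1}\pa_y A_\phi\Vert_{L^\infty_h(L^2_v)}\lesssim \dot{\theta}$.

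With these bounds in hand the three terms reduce to estimates already carried out. The term $M_{2,q}$, in which $\pa_y A$ is the low-frequency factor, is structurally identical to $M_{1,q}$ of Lemma \ref{lem:ABCbis1}: $S^h_{q'-1}\pa_y A_\phi$ produces the full weight $\dot{\theta}$ and $\Delta_{q'}^h\widetilde{B}_\phi$ produces $2^{q'}\Vert\Delta_{q'}^h B_\phi\Vert_{L^2}$. The term $M_{1,q}$, in which $\widetilde{B}$ is the low-frequency factor, mirrors $M_{2,q}$ of the previous proof: it generates the summed bound $\sum_{l\le q'-2}2^{\frac{3l}{2}}\Vert\Delta_l^h B_\phi\Vert_{L^2}$, whose resummation against the weighted Chemin--Lerner norm requires $s<1$ as in \eqref{nac}, while $\pa_y A$ supplies $2^{-\frac{q'}{2}}d_{q'}\dot{\theta}$. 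Finally $M_{3,q}$ reproduces the remainder estimate, whose sum $\sum_{q'\ge q-3}2^{(q-q')s}$ converges because $s>0$. In each case I would factor $\dot{\theta}=\dot{\theta}^{\f12}\dot{\theta}^{\f12}$, apply the Cauchy--Schwarz inequality in time against the weight $\dot{\theta}$, and recognise the two factors as the $\tilde{L}^2_{t,\dot{\theta}(t)}(\mathcal{B}^{s+\f12})$ norms of $B_\phi$ and $C_\phi$; summing the resulting $\ell^1$-convolution sequences over $q'$ yields $M_{i,q}\lesssim d_q^2\,2^{-2qs}\Vert e^{\mathcal{R} t}B_\phi\Vert_{\tilde{L}^2_{t,\dot{\theta}(t)}(\mathcal{B}^{s+\f12})}\Vert e^{\mathcal{R} t}C_\phi\Vert_{\tilde{L}^2_{t,\dot{\theta}(t)}(\mathcal{B}^{s+\f12})}$, and adding the three completes the proof.

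The main obstacle, then, is not any single estimate but the bookkeeping around the nonlocal integral: one must verify that $\widetilde{B}=\int_0^y\pa_x B\,dy'$ inherits precisely the anisotropic $L^2_h(L^\infty_v)$ and $L^\infty$ bounds of $\pa_x B$, up to the harmless loss absorbed by the unit-interval Cauchy--Schwarz inequality, and that the $L^\infty_v$ norm is consistently placed on this factor. Once this is arranged, the proof is a direct transcription of Lemma \ref{lem:ABCbis1}.
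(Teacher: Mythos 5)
Your proposal is correct and takes essentially the same route as the paper's own proof: the same Bony decomposition of $\pa_y A\cdot\int_0^y\pa_x B\,dy'$, the same unit-interval bound $\Vert\int_0^y g\,dy'\Vert_{L^\infty_v}\le\Vert g\Vert_{L^2_v}$ to make $\int_0^y\pa_x B\,dy'$ behave like $\pa_x B$ in $L^2_h(L^\infty_v)$ and $L^\infty$, the same placement of $\pa_y A$ in $L^2$-based spaces to generate the weight $\dot\theta$, and the same resummation conditions ($s<1$ for the para-product with $\int_0^y\pa_x B\,dy'$ in low frequency, $s>0$ for the remainder). The key anisotropic estimates you isolate are exactly those in the paper's treatment of $N_{1,q}$, $N_{2,q}$ and $N_{3,q}$.
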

\begin{proof}
As in \cite{BCD2011book}, using homogeneous Bony's decomposition of $(\int_0^y \pa_xBdy')\cdot\pa_y A$ into para-products  in the horizontal variable and remainders, we can write

$$ \pa_y A \cdot\int_0^y \pa_xBdy' = T^{h}_{\pa_yA} \int_0^y \pa_{x} Bdy' + T^h_{\int_0^y\pa_x Bdy'} \pa_yA + R^{h}(\pa_yA,\int_0^y \pa_x Bdy') $$
where
\begin{align*}
	T_{\pa_yA}^h \int_0^y\pa_xBdy' &= \sum_{q\in\ZZ} S_{q-1}^h \pa_yA\cdot \DD_q^h \int_0^y\pa_x Bdy'  \\ R^h(\pa_yA,\int_0^y\pa_x Bdy') &= \sum_{\abs{q'-q} \leq 1} \DD_q^h \pa_yA \cdot\DD_{q'}^h \int_0^y\pa_x Bdy'\\
	&= \sum_{q} \tilde{\DD}_q^h \pa_yA \cdot\DD_{q}^h \int_0^y\pa_x Bdy'.
\end{align*}

We replace this decomposition and we obtain the following bound of 
\begin{align*}
	\int_0^t \abs{\psca{e^{\mathcal{R} t'} \Delta_q^h (\pa_yA\cdot\int_0^y\pa_x Bdy')_{\phi}, e^{\mathcal{R} t'} \Delta_q^h C_{\phi}}_{L^2}} dt' \leq N_{1,q} + N_{2,q} + N_{3,q}, 
\end{align*}
where
\begin{align*}
	N_{1,q} &= \int_0^t \abs{\psca{ e^{\mathcal{R} t'} \Delta_q^h ( T^{h}_{\pa_yA} \int_0^y \pa_{x} Bdy')_{\phi}, e^{\mathcal{R} t'} \Delta_q^h C_{\phi}}_{L^2}} dt'\\
	N_{2,q} &= \int_0^t \abs{\psca{ e^{\mathcal{R} t'} \Delta_q^h ( T^h_{\int_0^y\pa_x Bdy'} \pa_yA )_{\phi}, e^{\mathcal{R} t'} \Delta_q^h C_{\phi}}_{L^2}} dt'\\
	N_{3,q} &= \int_0^t \abs{\psca{ e^{\mathcal{R} t'} \Delta_q^h (R^{h}(\pa_yA,\int_0^y \pa_x Bdy'))_{\phi}, e^{\mathcal{R} t'} \Delta_q^h C_{\phi}}_{L^2}} dt'.
\end{align*}

We start by getting the estimate of the first term $N_{1,q}$, for that we need to use the definition of $T^{h}_{\pa_yA} \int_0^y\pa_x Bdy'$, the support properties given in  [\cite{B1981}, Proposition 2.10] and  and again thanks to the Corollary \ref{coro} we infer 

\begin{equation} \label{eq:normpa_yABCbis1}
	N_{1,q} \leq \sum_{|q-q'| \leq 4} \int_0^t e^{2\mathcal{R} t'} \Vert S^h_{q'-1} \pa_y A_{\phi} (t') \Vert_{L^\infty_h(L^{2}_v)}\Vert \Delta_{q'}^h \int_0^y\partial_xB_{\phi} (t')dy' \Vert_{L^2_h(L^\infty_v)} \Vert \Delta_q^h C_{\phi} (t') \Vert_{L^2} dt'.
\end{equation}

We have by applying Bernstein Lemma \ref{lem:Bernsteinbis1} that
\begin{align} \label{eq:normB12bisbis1}
	\Vert \Delta^h_q \pa_y A_{\phi}(t') \Vert_{L^\infty_h(L^2_v)} &\lesssim 2^{\frac{q}{2}} \Vert \Delta_q^h \pa_y A_{\phi}(t') \Vert_{L^2} \nonumber \\ & \lesssim d_q(A_\phi) \Vert  \pa_y A_{\phi}(t') \Vert_{\mathcal{B}^{\f12}},
\end{align}
where $(d_q(t))_{q\in\ZZ}$ is a generic element of $\ell^{1}(\ZZ) $ such that  $\sum d_q(t) \leq 1$. Then 

\begin{align*}
	\Vert S^h_{q'-1} \pa_y A_{\phi} (t') \Vert_{L^\infty_h(L^2_v)} \lesssim  \Vert  \pa_y A_{\phi}(t') \Vert_{\mathcal{B}^{\f12}},
\end{align*}
and 
\begin{align*}
    \Vert \Delta_{q'}^h \int_0^y\partial_xB_{\phi} (t')dy' \Vert_{L^2_h(L^\infty_v)} \lesssim 2^{q'} \|\Delta_{q'}^h \int_0^y B_\phi dy'\|_{L^2_h(L^\infty_v)}\lesssim 2^{q'} \|\Delta_{q'}^h B_\phi \|_{L^2},
\end{align*}
where we have used that $\|\int_0^y fdy'\|_{L^\infty_y}\leq \int_0^1|f|dy'\leq \|f\|_{L^2_y}$. As a result, it comes out  

\begin{align*}
	N_{1,q} &\leq \sum_{|q-q'| \leq 4} 2^{q'}\int_0^t e^{2\mathcal{R} t'} \Vert  \pa_y A_{\phi}(t') \Vert_{\mathcal{B}^{\f12}}\|\Delta_{q'}^h B_\phi \|_{L^2} \Vert \Delta_q^h C_{\phi} (t') \Vert_{L^2} dt' \\
	&\lesssim \sum_{|q-q'| \leq 4} 2^{q'}\int_0^t e^{2\mathcal{R} t'} \dot{\theta}(t') \|\Delta_{q'}^h B_\phi \|_{L^2} \Vert \Delta_q^h C_{\phi} (t') \Vert_{L^2} dt' \\
	&\lesssim \sum_{|q-q'| \leq 4} 2^{q'} \left(\int_0^t e^{2\mathcal{R} t'} \dot{\theta}(t') \|\Delta_{q'}^h B_\phi \|_{L^2}^2 dt' \right)^\frac{1}{2} \times \left( \int_0^t e^{2\mathcal{R} t'} \dot{\theta}(t') \|\Delta_{q}^h C_\phi \|_{L^2}^2 dt'\right)^\frac{1}{2}
\end{align*}
where in the last step we used the Hölder inequality, and we note that $\dot{\theta}(t) = \Vert  \pa_y A_{\phi}(t') \Vert_{\mathcal{B}^{\f12}}$, using definition \ref{def:CLweightbis12}, we achieve
\begin{align*}
\left(\int_0^t e^{2\mathcal{R} t'} \dot{\theta}(t') \|\Delta_{q'}^h B_\phi \|_{L^2}^2 dt' \right)^\frac{1}{2} \lesssim d_{q'}(B_\phi) 2^{-q'(s+\f12)} \|e^{\mathcal{R}t} B_\phi \|_{\tilde{L}^2_{t,\dot{\theta}(t)}(\mathcal{B}^{s+\f12})}.
\end{align*}
and
\begin{align*}
\left(\int_0^t e^{2\mathcal{R} t'} \dot{\theta}(t') \|\Delta_{q'}^h C_\phi \|_{L^2}^2 dt' \right)^\frac{1}{2} \lesssim d_{q'}(C_\phi) 2^{-q'(s+\f12)} \|e^{\mathcal{R}t} C_\phi \|_{\tilde{L}^2_{t,\dot{\theta}(t)}(\mathcal{B}^{s+\f12})}.
\end{align*}
Then,
\begin{align} \label{eq:N1qbis1}
	N_{1,q} \lesssim 2^{-2qs} {d}^2_{q} \Vert e^{\mathcal{R} t} B_{\phi} \Vert_{\tilde{L}^{2}_{t,\dot{\theta}(t)}(\mathcal{B}^{s+\frac{1}{2}})} \Vert e^{\mathcal{R} t} C_{\phi} \Vert_{\tilde{L}^{2}_{t,\dot{\theta}(t)}(\mathcal{B}^{s+\frac{1}{2}})},
\end{align}
where 
$$ {d}_{q}^2 = d_q(C_\phi) \left( \sum_{|q-q'| \leq 4} d_{q'}(B_\phi) 2^{(q-q')(s-\frac{1}{2})} \right) $$

\medskip

Along the same way for $s<1$, we obtain 

\begin{align*}
N_{2,q} (t) &\leq \int_0^t \abs{\psca{ e^{\mathcal{R} t'} \Delta_q^h ( T^h_{\int_0^t\pa_xBdy'} \pa_y A )_{\phi}, e^{\mathcal{R} t'} \Delta_q^h C_{\phi}}_{L^2}} dt' \\
&\lesssim \sum_{|q-q'|\leq 4} \int_0^t e^{2\mathcal{R} t'}\Vert S^h_{q'-1} \int_0^y\pa_x B_\phi dy' \Vert_{L^\infty} \Vert \Delta_{q'}^h \pa_yA_\phi \Vert_{L^2} \Vert \Delta_q^h C_\phi \Vert_{L^2} dt' \\
&\lesssim \sum_{|q-q'|\leq 4} 2^{-\frac{q'}{2}} \sum_{l \lesssim q'-2} \left(\int_0^t e^{2\mathcal{R} t'} \dot{\theta}(t') \Vert \Delta_l^h \int_0^y \pa_x B_\phi \Vert_{L^\infty}^2  dt'\right)^\frac{1}{2} \times  \left( \int_0^t e^{2\mathcal{R} t'} \dot{\theta}(t') \Vert \Delta_{q}^h C_{\phi} \Vert_{L^{2}}^2 dt'\right)^\frac{1}{2}.
\end{align*}
By using the fact that $\|\int_0^y fdy'\|_{L^\infty_y}\leq \int_0^1|f|dy'\leq \|f\|_{L^2_y}$, we have
\begin{align*}
    \Vert \Delta_{l}^h \int_0^y\partial_xB_{\phi} (t')dy' \Vert_{L^\infty_h(L^\infty_v)} \lesssim 2^{l} \|\Delta_{l}^h \int_0^y B_\phi dy'\|_{L^\infty_h(L^\infty_v)}\lesssim 2^{l} \|\Delta_{l}^h B_\phi \|_{L^\infty_h(L^2_v)},
\end{align*}
and we combine with Bernstein's lemma \ref{lem:Bernsteinbis1}, we can find
\begin{align*}
    \Vert \Delta_{l}^h \int_0^y\partial_xB_{\phi} (t')dy' \Vert_{L^\infty_h(L^\infty_v)}\lesssim 2^{l} \|\Delta_{l}^h B_\phi \|_{L^\infty_h(L^2_v)} \lesssim 2^{\frac{3l}{2}} \|\Delta_{l}^h B_\phi \|_{L^2}.
\end{align*}
We observe from Definition \ref{def:CLweightbis12}, and we use the same thing as in \eqref{nac}, then for $s<1$ we have 
 \begin{align}\label{eq;BBBbis1}
 &  \sum_{l \lesssim q'-2} \left(\int_0^t e^{2\mathcal{R} t'} \dot{\theta}(t') \Vert \Delta_l^h \int_0^y \pa_x B_\phi \Vert_{L^\infty}^2  dt'\right)^\frac{1}{2} \\
 &\lesssim  \sum_{l \lesssim q'-2} \left(\int_0^t e^{2\mathcal{R} t'} \dot{\theta}(t')2^{3l} \Vert \Delta_l^h B_\phi \Vert_{L^2}^2  dt'\right)^\frac{1}{2} \nonumber \\
 &\lesssim \sum_{l \lesssim q'-2} 2^{\frac{3l}{2}} 2^{-l(s+\f12)} 2^{l(s+\f12)} \left(\int_0^t e^{2\mathcal{R} t'} \dot{\theta}(t') \Vert \Delta_l^h B_\phi \Vert_{L^2}^2  dt'\right)^\frac{1}{2}\nonumber \\
 &\lesssim 2^{q'(1-s)} \sum_{l \lesssim q'-2} \tilde{d}_l(B_\phi) 2^{(q'-l)(s-1)}  \Vert B_\phi \Vert_{\tilde{L}^2_{t,\dot{\theta}(t)}(\mathcal{B}^{s+\f12})} \nonumber \\
 &  \lesssim  2^{q'(1-s)}d_{q'}(B_\phi) \Vert e^{\mathcal{R} t} B_{\phi} \Vert_{\tilde{L}^{2}_{t,\dot{\theta}(t)}(\mathcal{B}^{s+\frac{1}{2}})} \nonumber.
 \end{align}
 where $$ d_{q'}(B_\phi) = \sum_{l\leq q'-2} 2^{-(q'-l)(1-s)}\tilde{d}_l(B_\phi) \in \ell^1(\ZZ). $$
 So that it comes out
 \begin{align}\label{eq:N2qbis1}
  N_{2,q} \leq {d}_{q}^2 2^{-2qs} \Vert e^{\mathcal{R} t} B_{\phi} \Vert_{\tilde{L}^2_{t,\dot{\theta}(t)}(\mathcal{B}^{s+\frac{1}{2}})} \Vert e^{\mathcal{R} t} C_{\phi} \Vert_{\tilde{L}^2_{t,\dot{\theta}(t)}(\mathcal{B}^{s+\frac{1}{2}})}
 \end{align}
 
where
\begin{align*}
	{d}_{q}^2 = d_q(C_\phi) \left(\sum_{|q-q'|\leq 4}d_{q'}(B_\phi) 2^{(q -q')(s-\frac{1}{2})}\right)
\end{align*}
is a suitable sequence of positive constants.

To end this proof, it remains to estimate $N_{3,q}$(is the rest term). Using the support properties given in [\cite{B1981}, Proposition 2.10], the definition of $R^h(\pa_yA,\int_0^y\pa_x Bdy')$ and Bernstein Lemma \ref{lem:Bernsteinbis1}, we can write
\begin{align*}
	N_{3,q} &\leq \sum_{q'\geq q-3}\int_0^t e^{2\mathcal{R} t'} \Vert \tilde{\DD}^h_{q'} \pa_yA_{\phi}\Vert_{L^{2}} \Vert \Delta_{q'}^h \int_0^y\pa_x B_{\phi}dy'  \Vert_{L^{2}_h(L^{\infty}_v)} \Vert \Delta_{q}^h  C_{\phi} \Vert_{L^{\infty}_h(L^2_v)}dt' \\
	&\leq 2^{\frac{q}{2}} \sum_{q'\geq q-3}\int_0^t e^{2\mathcal{R} t'}   2^{q'(1-\frac{1}{2})}  \Vert \pa_y A_{\phi}\Vert_{\mathcal{B}^{\f12}} \Vert \Delta_{q'}^h B_{\phi}  \Vert_{L^{2}} \Vert \Delta_{q}^h C_{\phi} \Vert_{L^{2}} dt'  \\
	&\leq 2^{\frac{q}{2}} \sum_{q'\geq q-3} \int_0^t e^{2\mathcal{R} t'} 2^{\frac{q'}{2}}  \Vert \pa_y A_{\phi}\Vert_{\mathcal{B}^{\f12}} \Vert \Delta_{q'}^h B_{\phi}  \Vert_{L^{2}} \Vert \Delta_{q}^h C_{\phi} \Vert_{L^{2}}dt'.
\end{align*}
Since $s< 1 $, we have by applying Hölder inequality 
\begin{align*}
	 N_{3,q} &\leq 2^{\frac{q}{2}} \sum_{q'\geq q-3}\int_0^t  2^{\frac{q'}{2}}  \Vert \pa_y A_{\phi}\Vert_{\mathcal{B}^{\f12}} \Vert \Delta_{q'}^h  e^{\mathcal{R} t'} B_{\phi}  \Vert_{L^{2}} \Vert \Delta_{q}^h e^{\mathcal{R} t'} C_{\phi} \Vert_{L^{2}}dt' \\
	&\leq  2^{\frac{q}{2}} \sum_{q'\geq q-3} 2^{\frac{q'}{2}} \left(\int_0^t \Vert \pa_y A_{\phi} \Vert_{\mathcal{B}^{\f12}} \Vert e^{\mathcal{R} t'} \Delta_{q'}^h B_{\phi} \Vert_{L^{2}}^2  dt'\right)^\frac{1}{2} \\ & \quad\quad\quad\quad\quad\quad \quad\quad\quad\quad\quad \times  \left( \int_0^t \Vert \pa_y A_{\phi} \Vert_{\mathcal{B}^{\f12}} \Vert e^{\mathcal{R} t'} \Delta_{q}^h C_{\phi} \Vert_{L^{2}}^2 dt'\right)^\frac{1}{2}.
\end{align*}
Definition \ref{def:CLweightbis12} tells us
\begin{align*}
 \left( \int_0^t \Vert \pa_y A_{\phi} \Vert_{\mathcal{B}^{\f12}} \Vert e^{\mathcal{R} t'} \Delta_{q}^h C_{\phi} \Vert_{L^{2}}^2 dt'\right)^\frac{1}{2} \lesssim d_q(C_\phi) 2^{-q(s+\f12)} \Vert e^{\mathcal{R} t} C_\phi \Vert_{\tilde{L}^2_{t,\dot{\theta}(t)}(\mathcal{B}^{s+\frac{1}{2}})}.
\end{align*}
Then we obtain
\begin{align} \label{eq:N3qbis1}
	 N_{3,q} \lesssim {d}_{q}^2 2^{-2qs} \Vert e^{\mathcal{R} t} B_\phi \Vert_{\tilde{L}^2_{t,\dot{\theta}(t)}(\mathcal{B}^{s+\frac{1}{2}})}  \Vert e^{\mathcal{R} t} C_\phi \Vert_{\tilde{L}^2_{t,\dot{\theta}(t)}(\mathcal{B}^{s+\frac{1}{2}})}.
\end{align}
where
\begin{align*}
{d}_{q} ^2= d_q(C_\phi) \left(  \sum_{q'\geq q-3} d_{q'}(B_\phi) 2^{(q-q')s} \right)
\end{align*}
is a suitable sequence of positive constants.

Lemma \ref{lem:Apa_yBCbis1} is then proved by summing Estimates \eqref{eq:N1qbis1}, \eqref{eq:N2qbis1} and \eqref{eq:N3qbis1}. \hfill
\end{proof}

\begin{lemma} \label{lem:BBBbis1}
	For any $s\in ]0,1[$ and $t\leq T^{\ast}$, and $\phi$ be defined as in \eqref{eq:Anphichp3}, with $$\dot{\theta}(t) =  \Vert \pa_y A_{\phi}(t) \Vert_{\mathcal{B}^{\f12}} + \eps \Vert \pa_y B_{\phi}(t) \Vert_{\mathcal{B}^{\f12}}.$$ Then, for any $t \in[ 0, T^*)$ and for any $A \in \tilde{L}^2_{t,\dot{\theta}(t)}(\mathcal{B}^{s+\frac{1}{2}})$ that satisfies  $\pa_x A = -\pa_y B$, $A$ and $B$ vanishing on the boundary (so that $B(t,x,y) = - \int_0^y \pa_x A(t,x,s) ds$), we have
	\begin{multline*}
		\eps^2 \int_0^t \abs{ \psca{e^{\mathcal{R}t'}\Delta_q^h (B\pa_y B)_{\phi}, e^{\mathcal{R}t'}\Delta_q^h C_{\phi}}_{L^2}}dt' \\ \lesssim d_q^2 2^{-2qs} \Big( \| e^{\mathcal{R}t} A_{\phi} \|_{\tilde{L}^2_{t,\dot{\theta(t)}}(\mathcal{B}^{s+\frac{1}{2}})} +\| e^{\mathcal{R}t} \eps B_{\phi} \|_{\tilde{L}^2_{t,\dot{\theta(t)}}(\mathcal{B}^{s+\frac{1}{2}})}\Big) \| e^{\mathcal{R}t} \eps C_{\phi} \|_{\tilde{L}^2_{t,\dot{\theta(t)}}(\mathcal{B}^{s+\frac{1}{2}})}.
	\end{multline*}
\end{lemma}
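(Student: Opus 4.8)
The plan is to exploit the divergence--free relation $\pa_x A = -\pa_y B$ (built into the hypotheses precisely so it can be used) in order to trade the vertical derivative $\pa_y B$, which the target norms cannot absorb, for the horizontal derivative $\pa_x A$, whose cost is recovered from the $\mathcal{B}^{s+\f12}$ regularity via Bernstein. Concretely, since $\pa_y B = -\pa_x A$, I would rewrite the nonlinearity as
\begin{equation*}
	\eps^2 B\pa_y B = -\,\eps\,(\eps B)\,\pa_x A,
\end{equation*}
so that, after pulling the remaining factor $\eps$ out of the bracket,
\begin{equation*}
	\eps^2\int_0^t \abs{\psca{e^{\mathcal{R}t'}\Delta_q^h (B\pa_y B)_{\phi}, e^{\mathcal{R}t'}\Delta_q^h C_{\phi}}_{L^2}}\,dt'
	= \eps\int_0^t \abs{\psca{e^{\mathcal{R}t'}\Delta_q^h ((\eps B)\pa_x A)_{\phi}, e^{\mathcal{R}t'}\Delta_q^h C_{\phi}}_{L^2}}\,dt'.
\end{equation*}
The integrand on the right is now exactly of the type handled by Lemma \ref{lem:ABCbis1}, with the boundary--vanishing factor played by $\eps B$ (it vanishes at $y=0,1$ since $B$ does), the differentiated factor played by $A$, and the third factor $C$ unchanged.

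The one point requiring care is the weight, and it is exactly what forces the $\eps$ to sit with $B$ in the vanishing slot. Lemma \ref{lem:ABCbis1} is phrased with $\dot\theta(t)=\norm{\pa_y(\text{vanishing factor})_{\phi}}_{\mathcal{B}^{\f12}}$, here equal to $\norm{\pa_y(\eps B)_{\phi}}_{\mathcal{B}^{\f12}}=\eps\norm{\pa_y B_{\phi}}_{\mathcal{B}^{\f12}}$, whereas the present $\dot\theta$ is the larger quantity $\norm{\pa_y A_{\phi}}_{\mathcal{B}^{\f12}}+\eps\norm{\pa_y B_{\phi}}_{\mathcal{B}^{\f12}}$. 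In the proof of Lemma \ref{lem:ABCbis1} this density enters only through the low--frequency control $\norm{S^h_{q'-1}(\eps B)_{\phi}}_{L^\infty}\lesssim \eps\norm{\pa_y B_{\phi}}_{\mathcal{B}^{\f12}}$ and as the density of the time--weighted Chemin--Lerner norms. Since $\eps\norm{\pa_y B_{\phi}(t)}_{\mathcal{B}^{\f12}}\leq \dot\theta(t)$ pointwise, every such occurrence can be majorized by $\dot\theta(t)$, and because this density enters the time integrals monotonically, replacing it by the larger $\dot\theta$ only enlarges the norms $\norm{e^{\mathcal{R}t}\,\cdot\,}_{\tilde{L}^2_{t,\dot\theta(t)}(\mathcal{B}^{s+\f12})}$; thus Lemma \ref{lem:ABCbis1} applies with the present $\dot\theta$. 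Had I kept $B$ without the $\eps$ in the vanishing slot, I could not dominate $\norm{\pa_y B_{\phi}}_{\mathcal{B}^{\f12}}$ by $\dot\theta$ for small $\eps$, and the argument would break.

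Invoking Lemma \ref{lem:ABCbis1} then yields
\begin{equation*}
	\eps\int_0^t \abs{\psca{e^{\mathcal{R}t'}\Delta_q^h ((\eps B)\pa_x A)_{\phi}, e^{\mathcal{R}t'}\Delta_q^h C_{\phi}}_{L^2}}\,dt'
	\lesssim \eps\,2^{-2qs}d_q^2\,\norm{e^{\mathcal{R}t}A_{\phi}}_{\tilde{L}^2_{t,\dot\theta(t)}(\mathcal{B}^{s+\f12})}\norm{e^{\mathcal{R}t}C_{\phi}}_{\tilde{L}^2_{t,\dot\theta(t)}(\mathcal{B}^{s+\f12})}.
\end{equation*}
To finish I would merely rearrange the scalar $\eps$: write $\eps\norm{e^{\mathcal{R}t}C_{\phi}}_{\tilde{L}^2_{t,\dot\theta(t)}(\mathcal{B}^{s+\f12})}=\norm{e^{\mathcal{R}t}\eps C_{\phi}}_{\tilde{L}^2_{t,\dot\theta(t)}(\mathcal{B}^{s+\f12})}$ and bound $\norm{e^{\mathcal{R}t}A_{\phi}}_{\tilde{L}^2_{t,\dot\theta(t)}(\mathcal{B}^{s+\f12})}\leq \norm{e^{\mathcal{R}t}A_{\phi}}_{\tilde{L}^2_{t,\dot\theta(t)}(\mathcal{B}^{s+\f12})}+\norm{e^{\mathcal{R}t}\eps B_{\phi}}_{\tilde{L}^2_{t,\dot\theta(t)}(\mathcal{B}^{s+\f12})}$, which recovers the claimed inequality. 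Only the $A_\phi$ summand is genuinely produced; the $\eps B_\phi$ summand is slack, kept to match the symmetric form used when this lemma is combined with the others in the energy estimate.

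The conceptual crux — and the place where one must not be careless — is thus the passage through the constraint rather than any new hard analysis. A direct Bony decomposition of $B\pa_y B$, bounding $\norm{S^h_{q'-1}B_\phi}_{L^\infty}$ by $\norm{\pa_y B_\phi}_{\mathcal{B}^{\f12}}$ and $\Delta_{q'}^h\pa_y B_\phi$ in $L^2$, would leave a right--hand side involving $\norm{\pa_y B_\phi}_{\tilde L^2_{t,\dot\theta(t)}(\mathcal{B}^{s+\f12})}$, a quantity absent from the controlled norms. It is exactly the substitution $\pa_y B=-\pa_x A$ that converts this un--absorbable vertical derivative into a horizontal one at the cost of a single power of $\eps$, and that single power is precisely what the factor $\eps C_\phi$ on the right absorbs. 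All the genuine work is already contained in Lemma \ref{lem:ABCbis1}; the present statement reduces to correct bookkeeping of the constraint, of the weight $\dot\theta$, and of the powers of $\eps$.
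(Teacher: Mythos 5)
Your proposal is correct, and it takes a genuinely more economical route than the paper. Both arguments hinge on the same crux, the substitution $\pa_y B=-\pa_x A$, but the paper does \emph{not} reduce to Lemma \ref{lem:ABCbis1}: it redoes a full Bony decomposition of $B\pa_y B$ into $T^h_B\pa_y B$, $T^h_{\pa_y B}B$ and $R^h(B,\pa_y B)$ (the terms $L_{1,q},L_{2,q},L_{3,q}$), applies the substitution separately inside each piece, and, in the last two pieces, extracts the weight from the $A$ factor (via $\Vert S^h_{q'-1}\pa_x A_\phi\Vert_{L^\infty}\lesssim 2^{q'}\Vert\pa_y A_\phi\Vert_{\mathcal{B}^{1/2}}\leq 2^{q'}\dot\theta$) while keeping $\eps B$ in the norm slot; this mixed distribution is exactly why the paper's right-hand side carries both $\Vert e^{\mathcal{R}t}A_\phi\Vert$ and $\Vert e^{\mathcal{R}t}\eps B_\phi\Vert$. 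You instead substitute once, globally, writing $\eps^2 B\pa_y B=-\eps(\eps B)\pa_x A$, and invoke Lemma \ref{lem:ABCbis1} as a black box with vanishing factor $\eps B$ and differentiated factor $A$; your weight bookkeeping is sound, and in fact you do not even need to re-inspect that lemma's proof: apply it with its natural weight $w_1(t)=\eps\Vert\pa_y B_\phi(t)\Vert_{\mathcal{B}^{1/2}}$ and then use that the norms $\Vert\cdot\Vert_{\tilde L^2_{t,w}(\mathcal{B}^{s+1/2})}$ are monotone increasing in the weight $w$, together with $w_1\leq\dot\theta$ pointwise, to pass to the $\dot\theta$-weighted norms. (One minor imprecision: the weight enters the proof of Lemma \ref{lem:ABCbis1} not only through the low-frequency control of the vanishing factor but also through the dyadic-block bounds $\Vert\Delta_{q'}^h(\eps B)_\phi\Vert_{L^2_h(L^\infty_v)}\lesssim 2^{-q'/2}d_{q'}\,\eps\Vert\pa_y B_\phi\Vert_{\mathcal{B}^{1/2}}$ in the $M_{2,q}$ and $M_{3,q}$ terms; since each occurrence is majorized pointwise in time by $\dot\theta$, your conclusion is unaffected.) What each approach buys: yours is shorter, avoids duplicating the para-product analysis, and yields the strictly sharper bound in which only $\Vert e^{\mathcal{R}t}A_\phi\Vert_{\tilde L^2_{t,\dot\theta}(\mathcal{B}^{s+1/2})}$ appears, so the $\eps B_\phi$ summand of the statement is indeed redundant slack; the paper's version is self-contained within the lemma and naturally produces the symmetric two-term bound as stated.
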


\begin{proof}
As in \cite{BCD2011book}, using Bony's homogeneous decomposition of the term $B\pa_y B$ into para-products  in the horizontal variable and remainders, we can write

$$ B\pa_yB = T^{h}_{\pa_yB}B + T^h_{B} \pa_yB + R^{h}(\pa_yB, B) $$

We replace, we obtain the following bound of $\int_0^t \abs{\psca{e^{\mathcal{R} t'} \Delta_q^h (B\pa_y B )_{\phi}, e^{\mathcal{R} t'} \Delta_q^h C_{\phi}}_{L^2}} dt'$

\begin{align*}
	\int_0^t \abs{\psca{e^{\mathcal{R} t'} \Delta_q^h (B\pa_y B)_{\phi}, e^{\mathcal{R} t'} \Delta_q^h C_{\phi}}_{L^2}} dt' \leq L_{1,q} + L_{2,q} + L_{3,q}, 
\end{align*}
where
\begin{align*}
	L_{1,q} &= \int_0^t \abs{\psca{ e^{\mathcal{R} t'} \Delta_q^h ( T^{h}_{B} \pa_y B)_{\phi}, e^{\mathcal{R} t'} \Delta_q^h C_{\phi}}_{L^2}} dt'\\
	L_{2,q} &= \int_0^t \abs{\psca{ e^{\mathcal{R} t'} \Delta_q^h ( T^h_{\pa_y B} B )_{\phi}, e^{\mathcal{R} t'} \Delta_q^h C_{\phi}}_{L^2}} dt'\\
	L_{3,q} &= \int_0^t \abs{\psca{ e^{\mathcal{R} t'} \Delta_q^h (R^{h}(B,\pa_yB))_{\phi}, e^{\mathcal{R} t'} \Delta_q^h C_{\phi}}_{L^2}} dt'.
\end{align*}

We start by getting the estimate of the first term $L_{1,q}$. Due to $\pa_y B = -\pa_x A$, one has

\begin{align}\label{esti:L&q}
\eps^2 L_{1,q} &\lesssim \eps^2 \sum_{|q'-q|\leq 4 }\int_0^t e^{2\mathcal{R} t'} \Vert S^h_{q'-1} B_{\phi}(t) \Vert_{L^{\infty}} \Vert \Delta_{q'}^h \pa_x A_{\phi} (t)\Vert_{L^2} \Vert \Delta_q^h C_{\phi}(t) \Vert_{L^2}.
\end{align}
By using Bernstein Lemma \ref{lem:Bernsteinbis1}, we have 
\begin{align*}
\Vert S^h_{q'-1} B_{\phi}(t') \Vert_{L^{\infty}_h} \lesssim \sum_{l\leq q'-2} 2^{\frac{l}{2}}\|\Delta_l^h B_\phi(t')\|_{L^2_h}.
\end{align*}
Taking the $L^\infty_v$ norm and using that $ \|f\|_{L^p_vL^q_h} \leq \|f\|_{L^q_h L^p_v}$, for $p\geq q \geq 1$ we obtain
\begin{align*}
\Vert S^h_{q'-1} B_{\phi}(t') \Vert_{L^{\infty}} \lesssim \sum_{l\leq q'-2} 2^{\frac{l}{2}}\|\Delta_l^h B_\phi(t')\|_{L^\infty_v(L^2_h)} \lesssim \sum_{l\leq q'-2} 2^{\frac{l}{2}}\|\Delta_l^h B_\phi(t')\|_{L^2_h(L^\infty_v)}.
\end{align*} 
While using the inclusion $H^1_y \hookrightarrow L^\infty_v$,
\begin{align*}
\|\Delta_l^h B_\phi(t')\|_{L^\infty_v} \lesssim \|\Delta_l^h B_\phi(t') \|_{L^2_v}^\f12 \|\Delta_l^h \pa_y B_\phi(t') \|_{L^2_v}^\f12,
\end{align*}
and Poincaré inequality on the interval $\lbrace 0 < y< 1 \rbrace$ on $B$ ( we use the fact that $B=0$ when $y=0,1$) gives
\begin{align}\label{estim:222}
\Vert S^h_{q'-1} B_{\phi}(t') \Vert_{L^{\infty}} \lesssim \sum_{l\leq q'-2} 2^{\frac{l}{2}} \|\Delta_l^h \pa_y B_\phi(t') \|_{L^2} \lesssim \|\pa_y B_\phi(t') \|_{\mathcal{B}^\f12}
\end{align} 
Then, we replace in \eqref{esti:L&q}, we get by using Hölder inequality 
\begin{align*}
\eps^2 L_{1,q} &\lesssim \eps^2 \sum_{|q'-q|\leq 4 } \int_0^t e^{2\mathcal{R} t'}  \|\pa_y B_\phi (t')\|_{\mathcal{B}^\f12} 2^{q'} \Vert \Delta_{q'}^h e^{\mathcal{R}t'}A_{\phi} (t)\Vert_{L^2} \Vert \Delta_q^h C_{\phi}(t) \Vert_{L^2} \\
	&\lesssim \eps \sum_{|q'-q|\leq 4 } 2^{q'}\Big(\int_0^t   \eps \|\pa_y B_\phi (t')\|_{\mathcal{B}^\f12} \Vert \Delta_{q'}^h A_{\phi} (t')\Vert_{L^2}^2 dt' \Big)^\frac{1}{2} \\ & \quad\quad\quad\quad\quad\quad \times \Big(\int_0^t \eps\Vert \pa_y B_{\phi} (t')\Vert_{\mathcal{B}^{\f12}} \Vert \Delta_q^h e^{\mathcal{R}t'} C_{\phi}(t') \Vert_{L^2}^2  dt' \Big)^\frac{1}{2}
\end{align*}
Hence we deduce from the Definition \ref{def:wCLspacesbis1} that 
\begin{align*}
\Big(\int_0^t   \eps \|\pa_y B_\phi (t')\|_{\mathcal{B}^\f12} \Vert \Delta_{q'}^h e^{\mathcal{R}t'} A_{\phi} (t')\Vert_{L^2}^2 dt' \Big)^\frac{1}{2} \lesssim d_{q'}(A_\phi) 2^{-q'(s+\f12)} \|e^{\mathcal{R}t} A_\phi \|_{\tilde{L}^2_{t,\dot{\theta}(t)}(\mathcal{B}^{s+\f12})}.
\end{align*}
Then,
\begin{align}\label{eq:L1qbis1}
 L_{1,q} \lesssim d_{q}^2 2^{-2qs} \| e^{\mathcal{R}t} A_{\phi} \|_{\tilde{L}^2_{t,\dot{\theta(t)}}(\mathcal{B}^{s+\frac{1}{2}})}\| e^{\mathcal{R}t} \eps C_{\phi} \|_{\tilde{L}^2_{t,\dot{\theta(t)}}(\mathcal{B}^{s+\frac{1}{2}})}
\end{align}
where 
$$d_q^2 =d_q(C_\phi) \sum_{|q'-q|\leq 4} d_{q'}(A_\phi)2^{(q-q')(s-\f12)} $$
\medskip

Now we move to get the estimate of the second term $L_{2,q}$. Due to $\pa_y B_\phi = -\pa_x A_\phi$, we can achieve 
\begin{align*}
L_{2,q} (t) &\leq \int_0^t \abs{\psca{ e^{\mathcal{R} t'} \Delta_q^h ( T^h_{\pa_yB} B )_{\phi}, e^{\mathcal{R} t'} \Delta_q^h C_{\phi}}_{L^2}} dt' \\
&\lesssim \sum_{|q-q'|\leq 4} \int_0^t e^{2\mathcal{R} t'}\Vert S^h_{q'-1} \pa_x A_\phi \Vert_{L^\infty} \Vert \Delta_{q'}^h B_\phi \Vert_{L^2} \Vert \Delta_q^h C_\phi \Vert_{L^2} dt'.
\end{align*}
Along the same way as we did to estimate $L_{1,q}$, we can obtain by using \eqref{estim:222} that
$$ \Vert S^h_{q'-1} \pa_x A_\phi(t') \Vert_{L^\infty} \lesssim 2^{q'} \|\pa_y A_\phi(t') \|_{\mathcal{B}^\f12}. $$ 
Then,
\begin{align*}
L_{2,q} (t) &\lesssim \sum_{|q-q'|\leq 4} 2^{q'} \int_0^t e^{2\mathcal{R} t'}\Vert \pa_y A_\phi \Vert_{\mathcal{B}^\f12} \Vert \Delta_{q'}^h B_\phi \Vert_{L^2} \Vert \Delta_q^h C_\phi \Vert_{L^2} dt' \\
&\lesssim \sum_{|q-q'|\leq 4} 2^{q'} \Big( \int_0^t \Vert \pa_y A_\phi \Vert_{\mathcal{B}^\f12} \Vert e^{\mathcal{R}t'} \Delta_{q'}^h B_\phi \Vert_{L^2}^2 dt'\Big)^\frac{1}{2} \\
& \quad\quad\quad\quad \quad\quad\quad\quad \times \Big( \int_0^t \Vert \pa_y A_\phi \Vert_{\mathcal{B}^\f12} \Vert e^{\mathcal{R}t'} \Delta_{q'}^h C_\phi \Vert_{L^2}^2 dt'\Big)^\frac{1}{2},
\end{align*}

in the last step we use Hölder inequality. Then thanks to Definition \ref{def:wCLspacesbis1}, we arrive at
\begin{align*}
 \Big( \int_0^t \Vert \pa_y A_\phi \Vert_{\mathcal{B}^\f12} \Vert e^{\mathcal{R}t'} \Delta_{q'}^h B_\phi \Vert_{L^2}^2 dt'\Big)^\frac{1}{2} \lesssim d_{q'}(B_\phi) 2^{-q'(s+\f12)} \| e^{\mathcal{R}t'} B_\phi \|_{\tilde{L}^2_{t,\dot{\theta}(t)}(\mathcal{B}^{s+\f12})}.
\end{align*}
We conclude our estimate by 
\begin{align}\label{eq:L2qbis1}
 \eps^2 L_{2,q} \lesssim d_q^2 2^{-2qs} \| e^{\mathcal{R}t} \eps B_{\phi} \|_{\tilde{L}^2_{t,\dot{\theta(t)}}(\mathcal{B}^{s+\frac{1}{2}})}\| e^{\mathcal{R}t} \eps C_{\phi} \|_{\tilde{L}^2_{t,\dot{\theta(t)}}(\mathcal{B}^{s+\frac{1}{2}})}
\end{align}

where 
$$ d_q^2 = d_{q}(C_\phi) \left( \sum_{|q-q'|\leq 4} d_{q'}(B_\phi) 2^{(q-q')(s-\f12)}\right) $$

To end this proof, it remains to estimate $L_{3,q}$ (is the rest term). Due to $\pa_y B = -\pa_x A$, we get, by applying lemma \ref{lem:Bernsteinbis1} that

\begin{align*}
L_{3,q} &\lesssim \sum_{q'\geq q-3} \int_0^t e^{2\mathcal{R} t'}\Vert \tilde{\Delta}_{q'}^h \pa_x A_\phi \Vert_{L^2_h(L^\infty_v)} \Vert \Delta_{q'}^h B_\phi \Vert_{L^2} \Vert \Delta_q^h C_\phi \Vert_{L^\infty_h(L^2_v)} dt' \\
&\lesssim  \sum_{q'\geq q-3} 2^\frac{q'}{2} \int_0^t e^{2\mathcal{R} t'}\Vert \pa_y A_\phi \Vert_{\mathcal{B}^\f12} \Vert \Delta_{q'}^h B_\phi \Vert_{L^2} 2^\frac{q}{2} \Vert \Delta_q^h C_\phi \Vert_{L^2} dt' \\
&\lesssim 2^\frac{q}{2} \sum_{q'\geq q-3} 2^\frac{q'}{2} \Big( \int_0^t \Vert \pa_y A_\phi \Vert_{\mathcal{B}^\f12} \Vert e^{\mathcal{R}t'} \Delta_{q'}^h B_\phi \Vert_{L^2}^2 dt'\Big)^\frac{1}{2} \\
& \quad\quad\quad\quad \quad\quad\quad\quad \times \Big( \int_0^t \Vert \pa_y A_\phi \Vert_{\mathcal{B}^\f12} \Vert e^{\mathcal{R}t'} \Delta_{q'}^h C_\phi \Vert_{L^2}^2 dt'\Big)^\frac{1}{2}.
\end{align*}

which together with Definition \ref{def:CLweightbis12} and $s <1$ ensures that

\begin{align}\label{eq:L3qbis1}
\eps^2 L_{3,q} \lesssim d_q 2^{-2qs} \| e^{\mathcal{R}t} \eps B_{\phi} \|_{\tilde{L}^2_{t,\dot{\theta(t)}}(\mathcal{B}^{s+\frac{1}{2}})} \| e^{\mathcal{R}t} \eps C_{\phi} \|_{\tilde{L}^2_{t,\dot{\theta(t)}}(\mathcal{B}^{s+\frac{1}{2}})}
\end{align}
where 
$$ d_q^2 = d_q(C_\phi) \left( \sum_{q'\geq q-3} d_{q'}(B_\phi) 2^{(q-q')s} \right) $$
Lemma \ref{lem:BBBbis1} is then proved by summing Estimates \eqref{eq:L1qbis1}, \eqref{eq:L2qbis1} and \eqref{eq:L3qbis1}. \hfill
\end{proof}

\section{Global existence of the perturbed hydrostatic system \eqref{eq:HPNhydrolimit} }\label{sec:3}

The goal of this section is to prove the global well-posedness of the limit system of the Perturbed Navier-Stokes equation, we remark that the local smooth solution of the limit system follows a standard parabolic regularization method similar to the Perturbation NS system, First, we remark that the Dirichlet boundary condition $$ (u,v)_{/y=0} = (u,v)_{/y=1} =0,$$ and the incompressible condition $\pa_x u +\pa_y v =0$ imply that :
\begin{align}\label{vvbis1}
    v(t,x,y) &= \int_0^y \pa_y v(t,x,s) ds = - \int_0^y \pa_x u(t,x,s)ds
\end{align}
Due to the compatibility condition $\pa_x \int_0^1 u_0 dy = 0 $, we deduce from $\pa_x u +\pa_y v =0$ that 
\begin{align}\label{paxubis1}
    \pa_x\int_0^1u(t,x,y)\, dy = -\int_0^1 \dd_y v(t,x,y)\, dy = v(t,x,1) - v(t,x,0) = 0,
\end{align}
which together with the fact: $u(t,x,y) \rightarrow 0$ as $|x| \rightarrow \infty$, ensure that 
\begin{align}\label{incompatofubis1}
    \int_0^1 u(t,x,y) dy = 0.
\end{align}

 Then by integrating the equations $\pa_t^2 u + \pa_tu+u\pa_x u+v\pa_yu-\pa_y^2u+\pa_xp=0$ and, for $y\in[0,1]$ and using the fact that $\pa_y p=0$, we obtain 
 \begin{align}\label{pressureestimatebis1}
 \pa_x p = \pa_y u(t,x,1) - \pa_yu(t,x,0) - \pa_x \int_0^1 (u)^2(t,x,y) dy
 \end{align}

In view of System \eqref{eq:HPNhydrolimit}, we can transform it like a equation of order one in time, so if we define $V = (u,\pa_tu)$, Then $V$ satisfy the following equation
\begin{equation}
	\label{eq;hydroVbis1}
	\left\{\;
	\begin{aligned}
		&\partial_t V + A(D)V =-\begin{pmatrix}
		0 \\
		u\pa_xu + v\pa_y u + \pa_x p
		\end{pmatrix} \\
		&\pa_y p = 0\\
		&\pa_x u + \pa_y v = 0	 \\
		&(u,v)/_{y=0} = (u,v)/_{y=1} = 0	
	\end{aligned}
	\right.
\end{equation}
where 
$$ V = \begin{pmatrix}
		u \\
		\pa_tu
		\end{pmatrix} \text{ \ \ and \ \ } A(D) = \begin{pmatrix}
		0 &  -1 \\
		-\pa_y^2 & 1
		\end{pmatrix} $$ 
		
Then in view of \eqref{eq:Anphichp3} we observe that $V_\phi$ verifies 
		\begin{equation}
	\label{eq:hydroVphibis1}
	\left\{\;
	\begin{aligned}
		&\partial_t V_\phi + \lambda \dot{\theta}(t) |D_x| V_\phi + A(D)V_\phi =-\begin{pmatrix}
		0 \\
		(u\pa_xu)_{\phi} + (v\pa_y u)_{\phi} + \pa_x p_{\phi}
		\end{pmatrix} \\
		&\pa_y p_\phi = 0\\
		&\pa_x u_\phi + \pa_y v_\phi = 0	 \\
		&(u_\phi,v_\phi)/_{y=0} = (u_\phi,v_\phi)/_{y=1} = 0	
	\end{aligned}
	\right.
\end{equation}

Where $|D_x|$ denotes  the Fourier multiplier with symbol $|\xi|$.
 
The main idea of this technique consists in the fact that if we differentiate, with respect to the time variable a function of the type $e^{\phi(t,D_x)} u(t,x,y)$, we obtain an additional ``good term'' which plays the smoothing role. More precisely, we have
\begin{displaymath}
	\frac{d}{dt} \pare{e^{\phi(t,D_x)} V(t,x,y)} = -\dot{\theta}(t) \abs{D_x} e^{\phi(t,D_x)} V(t,x,y) + e^{\phi(t,D_x)} \dd_t V(t,x,y),
\end{displaymath}
where $-\dot{\theta}(t) \abs{D_x} e^{\phi(t,D_x)} u(t,x,y)$ gives a smoothing effect if $\dot{\theta}(t) \geq 0$. This smoothing effect allows to obtain our global existence and stability results in the analytic framework.

\paragraph{\textbf{Proof of global well-posedness of system \eqref{eq:HPNhydrolimit} }}
By applying the dyadic operator in the horizontal variable $\Delta_q^h$ to \eqref{eq:hydroVphibis1} and taking the $L^2$ inner product of the resulting equation with $\Delta_q^h (V_\phi)$ we obtain 
\begin{multline} \label{S3eq1bis1}
	 \psca{\Delta_q^h \pa_t V_\phi, \Delta_q^h V_{\phi}}_{L^2} + \lambda \dot{\theta}(t) \psca{\Delta_q^h |D_x| V_\phi, \Delta_q^h V_{\phi}}_{L^2} + \psca{\Delta_q^h A(D) V_\phi, \Delta_q^h V_{\phi}}_{L^2} \\
	= -\psca{\Delta_q^h (u\pa_x u + v\pa_y u)_{\phi}, \Delta_q^h (\pa_tu)_{\phi}}_{L^2} - \psca{\Delta_q^h \pa_x p_{\phi}, \Delta_q^h (\pa_tu)_{\phi}}_{L^2}.
\end{multline}

In what follows, we shall use the technical lemmas in Section \ref{sec:1}, to handle term by term in Estimate \eqref{S3eq1bis1}.

By applying the result of Lemma \ref{lem:1}, we find that 
\begin{align}\label{esti:Vphi}
&\psca{\Delta_q^h \pa_t V_\phi, \Delta_q^h V_{\phi}}_{L^2} + \lambda \dot{\theta}(t) \psca{\Delta_q^h |D_x| V_\phi, \Delta_q^h V_{\phi}}_{L^2} = \psca{\Delta_q^h \pa_t u_\phi, \Delta_q^h u_{\phi}}_{L^2} \nonumber\\
& + \psca{\Delta_q^h \pa_t (\pa_tu)_\phi, \Delta_q^h (\pa_tu)_{\phi}}_{L^2} + \lambda \dot{\theta}(t) \Big(\psca{\Delta_q^h |D_x| u_\phi, \Delta_q^h u_{\phi}}_{L^2}+ \psca{\Delta_q^h |D_x| (\pa_tu)_\phi, \Delta_q^h (\pa_tu)_{\phi}}_{L^2} \Big) \nonumber \\
& = \frac{1}{2}\frac{d}{dt}\Big(\|\Delta_q^h u_\phi \|_{L^2}^2+ \|\Delta_q^h (\pa_tu)_\phi \|_{L^2}^2  \Big) + \lambda \dot{\theta}(t) \Big(\|\Delta_q^h |D_x|^\frac{1}{2}u_\phi\|_{L^2}^2 + \|\Delta_q^h |D_x|^\frac{1}{2}(\pa_tu)_\phi\|_{L^2}^2 \Big) ,
\end{align}
and by using the fact that $(\pa_t u)_\phi = \pa_tu_\phi + \lambda \dot{\theta}(t)|D_x| u_\phi$, we have 
\begin{align}\label{esti:A(D)}
 \psca{\Delta_q^h A(D) V_\phi, \Delta_q^h V_{\phi}}_{L^2} &=  -\psca{\Delta_q^h (\pa_tu)_\phi, \Delta_q^h u_{\phi}}_{L^2} -  \psca{\Delta_q^h \pa_y^2 u_\phi, \Delta_q^h (\pa_tu)_{\phi}}_{L^2}+ \psca{\Delta_q^h (\pa_t u)_\phi, \Delta_q^h (\pa_tu)_{\phi}}_{L^2} \nonumber \\
 & =  -\frac{1}{2} \frac{d}{dt} \|\Delta_q^h u_\phi \|_{L^2}^2 - \lambda \dot{\theta}(t) \|\Delta_q^h |D_x|^\frac{1}{2}u_\phi\|_{L^2}^2 \nonumber \\ & + \frac{1}{2}\frac{d}{dt}\|\Delta_q^h \pa_yu_\phi \|_{L^2}^2 +  \lambda \dot{\theta}(t) \|\Delta_q^h |D_x|^\frac{1}{2}\pa_y u_\phi\|_{L^2}^2+\|\Delta_q^h (\pa_tu)_\phi\|_{L^2}^2.
\end{align}
 
 We sum the two equality \eqref{esti:Vphi} and \eqref{esti:A(D)}, we obtain 
 \begin{align}\label{S1eq3bis1}
&\psca{\Delta_q^h \pa_t V_\phi, \Delta_q^h V_{\phi}}_{L^2} + \lambda \dot{\theta}(t) \psca{\Delta_q^h |D_x| V_\phi, \Delta_q^h V_{\phi}}_{L^2} + \psca{\Delta_q^h A(D) V_\phi, \Delta_q^h V_{\phi}}_{L^2} =  \frac{1}{2}\frac{d}{dt}\Big( \|\Delta_q^h (\pa_tu)_\phi \|_{L^2}^2  \nonumber \\ &+ \|\Delta_q^h \pa_yu_\phi \|_{L^2}^2 \Big) + \lambda \dot{\theta}(t) \Big( \|\Delta_q^h |D_x|^\frac{1}{2}(\pa_tu)_\phi\|_{L^2}^2 + \|\Delta_q^h |D_x|^\frac{1}{2}\pa_y u_\phi\|_{L^2}^2 \Big) +\|\Delta_q^h (\pa_tu)_\phi\|_{L^2}^2.
 \end{align}
Now, for the pressure term, using the Dirichlet boundary condition $(u,v)\vert_{y=0} =(u,v)\vert_{y=1} = 0$, and the incompressibility condition $\pa_x u +\pa_y v =0$, by performing an integration by parts, we get 
\begin{align*}
	D = \abs{\psca{\Delta_q^h \pa_x p_{\phi}, \Delta_q^h (\pa_tu)_{\phi}}_{L^2}} &= \abs{ \psca{\Delta_q^h  p_{\phi}, \Delta_q^h (\pa_t \pa_xu)_{\phi}}_{L^2}}\\ & = \abs{\psca{\Delta_q^h  p_{\phi}, \Delta_q^h (\pa_t\pa_y v)_{\phi}}_{L^2}} \\ &= \abs{ \psca{\Delta_q^h  \pa_y p_{\phi}, \Delta_q^h  (\pa_tv)_{\phi}}_{L^2}}  = 0,
\end{align*}
In the last step we use the fact that $\pa_y p_\phi =0$. Thus,
\begin{align*}
	D = \abs{\psca{\Delta_q^h \pa_x p_{\phi}, \Delta_q^h (\pa_tu)_{\phi}}_{L^2}} = 0.
\end{align*}

While due to $u/_{y=0} = u/_{y=1} = 0$, by applying Poincar\'e inequality, we have

\begin{align}\label{poincarebis1}
 k \| \Delta^h u_\phi \|_{L^2}^2 \leq  \| \Delta_q^h \pa_y u_\phi \|_{L^2}^2,
\end{align}
where $k$ is the Poincaré constant.
Then, multiplying \eqref{S1eq3bis1} by $e^{2\mathcal{R}t}$ ($\mathcal{R}$ is a constant smaller than Poincaré constant denoted by $k$), we achieve
\begin{multline} \label{S3eq2bis1}
	 \frac{1}{2}\frac{d}{dt}\Big( \|e^{\mathcal{R} t}\Delta_q^h (\pa_tu)_\phi \|_{L^2}^2 + \|e^{\mathcal{R} t}\Delta_q^h \pa_yu_\phi \|_{L^2}^2 \Big)+ \|e^{\mathcal{R} t}\Delta_q^h (\pa_tu)_\phi\|_{L^2}^2 \\ +  \lambda \dot{\theta}(t)  \|e^{\mathcal{R} t}\Delta_q^h |D_x|^\frac{1}{2}(\pa_t u)_\phi\|_{L^2}^2 +\lambda \dot{\theta}(t) \|e^{\mathcal{R} t}\Delta_q^h |D_x|^\frac{1}{2}\pa_y u_\phi\|_{L^2}^2 
\\	\leq \mathcal{R}\left(  \|e^{\mathcal{R} t}\Delta_q^h (\pa_tu)_\phi \|_{L^2}^2 +\|e^{\mathcal{R} t}\Delta_q^h \pa_yu_\phi \|_{L^2}^2 \right) + \abs{\psca{\Delta_q^h (u\pa_x u)_{\phi}, e^{2\mathcal{R} t}\Delta_q^h (\pa_tu)_{\phi}}_{L^2}} \\ + \abs{\psca{\Delta_q^h ( v\pa_y u)_{\phi}, e^{2\mathcal{R} t}\Delta_q^h (\pa_tu)_{\phi}}_{L^2}} .
\end{multline}
In what follows, we shall always assume that $t <T^\star$, where $T^\star$ given by 
\begin{equation} \label{eq:Tstarbisbis11} 
	T^\star \stackrel{\tiny def}{=} \sup\set{t>0,\  \ \ \theta(t) \leq  \frac{a}{\lambda}}.
\end{equation}
Next, we note that 
\begin{equation*} 
	\left\{\;
	\begin{aligned}
		&J^q_1(t) = \abs{\psca{\Delta_q^h (u\pa_x u)_{\phi}, e^{2\mathcal{R} t}\Delta_q^h (\pa_tu)_{\phi}}_{L^2}}\\
		&J^q_2(t) = \abs{\psca{\Delta_q^h (v\pa_y u)_{\phi}, e^{2\mathcal{R} t}\Delta_q^h(\pa_tu)_{\phi}}_{L^2}}.
	\end{aligned}
	\right.
\end{equation*}

To estimate those two non linear terms we need to use Lemmas \ref{lem:ABCbis1} and \ref{lem:Apa_yBCbis1}. In view of Lemma \ref{lem:ABCbis1}, we replace $C_\phi$ by $(\pa_tu)_\phi$ and $ A=B= u$, then we conclude the following estimate of $J^q_1$ for any $t<T^\star$
\begin{align}\label{estim:Jq1bis1}
 \int_0^t J^q_1(t') dt' = & \int_0^t \abs{\psca{e^{\mathcal{R} t'}\Delta_q^h (u\pa_x u)_{\phi}, e^{\mathcal{R} t'}\Delta_q^h(\pa_tu)_{\phi}}_{L^2}} dt' \nonumber \\ &\leq C 2^{-2qs} d_q^2 \Vert e^{\mathcal{R} t} u_{\phi} \Vert_{\tilde{L}^2_{t,\dot{\theta}(t)}(\mathcal{B}^{s+\frac{1}{2}})} \Vert e^{\mathcal{R} t} (\pa_tu)_{\phi} \Vert_{\tilde{L}^2_{t,\dot{\theta}(t)}(\mathcal{B}^{s+\frac{1}{2}})},
\end{align}

where $\dot{\theta}(t) = \|\pa_y u_\phi\|_{\mathcal{B}^\f12}$.

In view of Lemma \ref{lem:Apa_yBCbis1}, we replace $C_\phi$ by $(\pa_tu)_\phi$, $ A= u$ and $B=v$, then we conclude the following estimate of $J^q_2$ for any $t<T^\star$
\begin{align}\label{estim:Jq2bis1}
\int_0^t J^q_2(t') dt'  = & \abs{\psca{e^{\mathcal{R} t'}\Delta_q^h (v\pa_y u)_{\phi}, e^{\mathcal{R} t'}\Delta_q^h (\pa_tu)_{\phi}}_{L^2}} \nonumber \\ &\leq C 2^{-2qs} d_q^2 \Vert e^{\mathcal{R} t} u_{\phi} \Vert_{\tilde{L}^2_{t,\dot{\theta}(t)}(\mathcal{B}^{s+\frac{1}{2}})} \Vert e^{\mathcal{R} t} (\pa_tu)_{\phi} \Vert_{\tilde{L}^2_{t,\dot{\theta}(t)}(\mathcal{B}^{s+\frac{1}{2}})}, 
\end{align}

where $\dot{\theta}(t) = \|\pa_y u_\phi\|_{\mathcal{B}^\f12}$.

Then we deduce from $\eqref{S3eq2bis1}$ by integrating with respect to the time interval, that 
\begin{multline} \label{S3eq3bis11}
	 \int_0^t \frac{1}{2}\frac{d}{dt}\Big( \|e^{\mathcal{R} t'}\Delta_q^h (\pa_tu)_\phi(t') \|_{L^2}^2 + \|e^{\mathcal{R} t}\Delta_q^h \pa_yu_\phi(t') \|_{L^2}^2 \Big) dt'+ \int_0^t \|e^{\mathcal{R} t'}\Delta_q^h (\pa_tu)_\phi(t')\|_{L^2}^2 dt'\\ +  \lambda \int_0^t \dot{\theta}(t')  \|e^{\mathcal{R} t'}\Delta_q^h |D_x|^\frac{1}{2}(\pa_t u)_\phi(t')\|_{L^2}^2 dt' +\lambda \int_0^t \dot{\theta}(t') \|e^{\mathcal{R} t'}\Delta_q^h |D_x|^\frac{1}{2}\pa_y u_\phi(t') \|_{L^2}^2 dt' 
\\	\leq \mathcal{R}\left(  \|e^{\mathcal{R} t}\Delta_q^h (\pa_tu)_\phi \|_{L^2}^2 +\|e^{\mathcal{R} t}\Delta_q^h \pa_yu_\phi \|_{L^2}^2 \right) +  C2^{-2qs} d_q^2 \Vert e^{\mathcal{R} t} u_{\phi} \Vert_{\tilde{L}^2_{t,\dot{\theta}(t)}(\mathcal{B}^{s+\frac{1}{2}})} \Vert e^{\mathcal{R} t} (\pa_tu)_{\phi} \Vert_{\tilde{L}^2_{t,\dot{\theta}(t)}(\mathcal{B}^{s+\frac{1}{2}})}
\end{multline}

We multiply the estimate \eqref{S3eq3bis11} by $2$, we obtain 

\begin{multline} \label{S3eq3bis1}
	 \int_0^t \frac{d}{dt}\Big( \|e^{\mathcal{R} t'}\Delta_q^h (\pa_tu)_\phi(t') \|_{L^2}^2 + \|e^{\mathcal{R} t}\Delta_q^h \pa_yu_\phi(t') \|_{L^2}^2 \Big) dt'+ 2\int_0^t \|e^{\mathcal{R} t'}\Delta_q^h (\pa_tu)_\phi(t')\|_{L^2}^2 dt'\\ +  2\lambda \int_0^t \dot{\theta}(t')  \|e^{\mathcal{R} t'}\Delta_q^h |D_x|^\frac{1}{2}(\pa_t u)_\phi(t')\|_{L^2}^2 dt' +2\lambda \int_0^t \dot{\theta}(t') \|e^{\mathcal{R} t'}\Delta_q^h |D_x|^\frac{1}{2}\pa_y u_\phi(t') \|_{L^2}^2 dt' 
\\	\leq 2\mathcal{R}\left(  \|e^{\mathcal{R} t}\Delta_q^h (\pa_tu)_\phi \|_{L^2}^2 +\|e^{\mathcal{R} t}\Delta_q^h \pa_yu_\phi \|_{L^2}^2 \right) +  2C2^{-2qs} d_q^2 \Vert e^{\mathcal{R} t} u_{\phi} \Vert_{\tilde{L}^2_{t,\dot{\theta}(t)}(\mathcal{B}^{s+\frac{1}{2}})} \Vert e^{\mathcal{R} t} (\pa_tu)_{\phi} \Vert_{\tilde{L}^2_{t,\dot{\theta}(t)}(\mathcal{B}^{s+\frac{1}{2}})}
\end{multline}

Now we still have to get some information of the norm $\|\pa_y u_\phi\|_{\mathcal{B}^\f12}$, for that we need to apply the dyadic operator $\Delta_q^h$ to the equation 
\begin{equation}\label{eq:Hnsbis1}
    e^{\phi(t,|D_x|)} (\pa_t^2 u + \pa_t u + u\pa_xu+v\pa_yu-\pa_y^2u +\pa_x p) = 0,
\end{equation}
and then, we take the $L^2$ inner product of the resulting equation \eqref{eq:Hnsbis1} with $\Delta_q^h u_\phi$, we obtain  
\begin{multline} \label{S3eq1bis1bis1}
	 \psca{\Delta_q^h (\pa_t^2u)_\phi, \Delta_q^h  u_{\phi}}_{L^2} + \psca{\Delta_q^h (\pa_tu)_\phi, \Delta_q^h  u_{\phi}}_{L^2} - \psca{\Delta_q^h \pa_y^2u_\phi, \Delta_q^h u_{\phi}}_{L^2} \\
	= -\psca{\Delta_q^h (u\pa_x u + v\pa_y u)_{\phi}, \Delta_q^h u_{\phi}}_{L^2} - \psca{\Delta_q^h \pa_x p_{\phi}, \Delta_q^h u_{\phi}}_{L^2}.
\end{multline}

In what follows, we shall use again the technical lemmas in Section \ref{sec:1}, to handle term by term in the estimate \eqref{S3eq1bis1bis1}. We start by the term $$I_1 =\psca{\Delta_q^h (\pa_t^2u)_\phi, \Delta_q^h u_{\phi}}_{L^2} \text{ \ and \ } I_2 = \psca{\Delta_q^h (\pa_tu)_\phi, \Delta_q^h u_{\phi}}_{L^2} $$
so by using integration by parts, we find 
\begin{align*}
    I_1 = \frac{d}{dt
    }\int \Delta_q^h (\pa_t u)_\phi &\Delta_q^h  u_\phi dX - \int \Delta_q^h (\pa_t u)_\phi \Delta_q^h (\pa_tu)_\phi dX \\ &+ 2\lambda \dot{\theta}(t)\int \Delta_q^h |D_x| (\pa_tu)_\phi \Delta_q^h u_\phi dX, 
\end{align*}
and 
\begin{align*}
    I_2 = \frac{1}{2}\frac{d}{dt} \|\Delta_q^h u_\phi\|_{L^2}^2 + \lambda \dot{\theta}(t) \|\Delta_q^h |D_x|^\frac{1}{2} u_\phi \|_{L^2}^2.
\end{align*}

Whereas due to the boundary condition, and by integrating by part, we achieve 
\begin{align*}
    \psca{\Delta_q^h (-\pa_y^2u_\phi), \Delta_q^h u_{\phi}}_{L^2} = \psca{\Delta_q^h \pa_yu_\phi, \Delta_q^h \pa_y u_{\phi}}_{L^2} = \|\Delta_q^h  \pa_yu_\phi \|_{L^2}^2.
\end{align*}

Now, by using the Dirichlet boundary condition $(u,v)\vert_{y=0} =(u,v)\vert_{y=1} = 0$, and the incompressibility condition $\pa_x u +\pa_y v =0$ and the relation $\pa_y p_\phi = 0$, we can find by integrating by parts the estimate of the pressure  
\begin{align*}
	 \abs{\psca{\Delta_q^h \pa_x p_{\phi}, \Delta_q^h u_{\phi}}_{L^2}} &= \abs{ \psca{\Delta_q^h  p_{\phi}, \Delta_q^h \pa_xu_{\phi}}_{L^2}}\\ & = \abs{\psca{\Delta_q^h  p_{\phi}, \Delta_q^h \pa_y v_{\phi}}_{L^2}} \\&= \abs{ \psca{\Delta_q^h  \pa_y p_{\phi}, \Delta_q^h  v_{\phi}}_{L^2}}  = 0.
\end{align*}
Then by using Lemma \ref{lem:Bernsteinbis1} and by multiplying \eqref{S3eq1bis1bis1} by $e^{2\mathcal{R}t}$, we achieve
\begin{align} \label{S3eq1bis3bis1}
	 \frac{d}{dt}\int &e^{2\mathcal{R}t}\Delta_q^h (\pa_t u)_\phi \Delta_q^h  u_\phi dX - \int e^{2\mathcal{R}t}\Delta_q^h (\pa_t u)_\phi \Delta_q^h (\pa_tu)_\phi dX \nonumber + 2\lambda \dot{\theta}(t)\int e^{2\mathcal{R}t} \Delta_q^h |D_x| (\pa_tu)_\phi \Delta_q^h u_\phi dX \\  &+ \frac{1}{2}\frac{d}{dt} \|e^{\mathcal{R}t}\Delta_q^h u_\phi\|_{L^2}^2 + \lambda \dot{\theta}(t) \|e^{\mathcal{R}t}\Delta_q^h |D_x|^\frac{1}{2} u_\phi \|_{L^2}^2 + \|e^{\mathcal{R}t}\Delta_q^h  \pa_yu_\phi \|_{L^2}^2 \nonumber \\
	 & = 2\mathcal{R}\int e^{2\mathcal{R}t}\Delta_q^h (\pa_t u)_\phi \Delta_q^h  u_\phi dx + \mathcal{R} \|e^{\mathcal{R}t}\Delta_q^h u_\phi\|_{L^2}^2 \nonumber   \\ & -\psca{\Delta_q^h (u\pa_x u)_{\phi}, e^{2\mathcal{R}t}\Delta_q^h u_{\phi}}_{L^2}-\psca{\Delta_q^h (v\pa_y u)_{\phi}, e^{2\mathcal{R}t}\Delta_q^h u_{\phi}}_{L^2}.       
\end{align}

Next, we note that 
\begin{equation*} 
	\left\{\;
	\begin{aligned}
		&L^q_1(t) = \abs{\psca{\Delta_q^h (u\pa_x u)_{\phi}, e^{2\mathcal{R} t}\Delta_q^h u_{\phi}}_{L^2}}\\
		&L^q_2(t) = \abs{\psca{\Delta_q^h (v\pa_y u)_{\phi}, e^{2\mathcal{R} t}\Delta_q^h u_{\phi}}_{L^2}} \\
		&L^3_q (t)= \abs{2\lambda \dot{\theta}(t)\int e^{2\mathcal{R}t} \Delta_q^h |D_x| (\pa_tu)_\phi \Delta_q^h u_\phi dX} .
	\end{aligned}
	\right.
\end{equation*}

In view, of lemma \ref{lem:ABCbis1}-\ref{lem:Apa_yBCbis1}, we can deduce for $t< T^\star$ that
\begin{align}\label{esti:L_q1bis1}
\int_0^t L^q_1(t') dt'  &= \int_0^t \abs{\psca{\Delta_q^h (u\pa_x u)_{\phi}, e^{2\mathcal{R} t'}\Delta_q^h u_{\phi}}_{L^2}} dt' \nonumber \\
&\leq C2^{-2qs}d_q^2 \Vert e^{\mathcal{R} t} u_{\phi} \Vert_{\tilde{L}^2_{t,\dot{\theta}(t)}(\mathcal{B}^{s+\frac{1}{2}})}^2
\end{align}
and
\begin{align}\label{esti:L_q2bis1}
\int_0^t L^q_2(t') dt'  &= \int_0^t \abs{\psca{\Delta_q^h (v\pa_y u)_{\phi}, e^{2\mathcal{R} t'}\Delta_q^h u_{\phi}}_{L^2}} dt' \nonumber \\
&\leq C2^{-2qs}d_q^2 \Vert e^{\mathcal{R} t} u_{\phi} \Vert_{\tilde{L}^2_{t,\dot{\theta}(t)}(\mathcal{B}^{s+\frac{1}{2}})}^2.
\end{align}

Then we still have to estimate $L^3_q$, therefore by cutting the derivative $|D_x|$ into two half derivative, we achieve the 
\begin{align}\label{esti:L_q3bis1}
  L^3_q(t)&=  \abs{2\lambda \dot{\theta}(t)\int e^{2\mathcal{R}t'} \Delta_q^h |D_x| (\pa_tu)_\phi \Delta_q^h u_\phi dX}  \nonumber \\
 &\leq 2\lambda \dot{\theta}(t)\int \abs{ e^{\mathcal{R}t}\Delta_q^h |D_x|^\frac{1}{2} (\pa_tu)_\phi} \abs{ e^{\mathcal{R}t'}\Delta_q^h |D_x|^\frac{1}{2} u_\phi} dX \nonumber \\
 &\leq  \lambda \dot{\theta}(t) \Vert e^{\mathcal{R} t} \Delta_q^h |D_x|^\frac{1}{2}(\pa_tu)_{\phi} \Vert_{L^2}^2 + {\lambda} \dot{\theta}(t) \Vert e^{\mathcal{R} t} \Delta_q^h |D_x|^\frac{1}{2}  u_{\phi} \Vert_{L^2}^2,
\end{align}

 Next, by taking the integration over time of \eqref{S3eq1bis3bis1}, and we sum it with $\eqref{S3eq3bis1}$, we gather that
\begin{multline} \label{estimate:2222}
\int_0^t \frac{d}{dt}\Big( \frac{1}{2} \|e^{\mathcal{R} t'}\Delta_q^h (u + \pa_tu)_\phi \|_{L^2}^2 + \|e^{\mathcal{R} t'} \Delta_q^h \pa_yu_\phi \|_{L^2}^2 + \frac{1}{2} \|e^{\mathcal{R} t'}\Delta_q^h (\pa_tu)_\phi \|_{L^2}^2\Big)dt' + \int_0^t \|e^{\mathcal{R} t'}\Delta_q^h (\pa_tu)_\phi\|_{L^2}^2 dt' \\  + \int_0^t \lambda \dot{\theta}(t') \|e^{\mathcal{R} t'}\Delta_q^h |D_x|^\f12 u_\phi\|_{L^2}^2 dt'+ \int_0^t \|e^{\mathcal{R} t'}\Delta_q^h\pa_yu_\phi\|_{L^2}^2 dt' +  \int_0^t 2\lambda \dot{\theta}(t') \|e^{\mathcal{R} t'}\Delta_q^h |D_x|^\f12(\pa_t u)_\phi\|_{L^2}^2 dt' \\ + \int_0^t  2\lambda \dot{\theta}(t') \|e^{\mathcal{R} t'} \Delta_q^h |D_x|^\f12\pa_y u_\phi\|_{L^2}^2 dt' 	\leq  \mathcal{R}\int_0^t \Big[\|e^{\mathcal{R} t'}\Delta_q^h (u + \pa_tu)_\phi \|_{L^2}^2 + 2\|e^{\mathcal{R} t'} \Delta_q^h \pa_yu_\phi \|_{L^2}^2 \\
 + \|e^{\mathcal{R} t'}\Delta_q^h (\pa_tu)_\phi \|_{L^2}^2 \Big] dt' +  \lambda \int_0^t \dot{\theta}(t') \Vert e^{\mathcal{R} t'} \Delta_q^h |D_x|^\frac{1}{2}(\pa_tu)_{\phi} \Vert_{L^2}^2 dt'+ {\lambda}\int_0^t \dot{\theta}(t') \Vert e^{\mathcal{R} t'} \Delta_q^h |D_x|^\frac{1}{2} u_{\phi} \Vert_{L^2}^2 dt'
 \\  + 2 C d_q^2 2^{-2qs} \Big( \Vert e^{\mathcal{R} t} (\pa_tu)_{\phi} \Vert_{\tilde{L}^2_{t,\dot{\theta}(t)}(\mathcal{B}^{s+\frac{1}{2}})}^2  + \Vert e^{\mathcal{R} t} u_{\phi} \Vert_{\tilde{L}^2_{t,\dot{\theta}(t)}(\mathcal{B}^{s+\frac{1}{2}})}^2 \Big).
\end{multline} 

We begin with by observing that the term in the square brackets of the right-hand side in \eqref{estimate:2222} can be absorbed by the dissipation $\int_0^t \|e^{\mathcal{R} t'}\Delta_q^h (\pa_tu)_\phi\|_{L^2}^2 dt' + \int_0^t \|e^{\mathcal{R} t'}\Delta_q^h\pa_yu_\phi\|_{L^2}^2 dt'$. Indeed, since the value of $\mathcal{R}$ is smaller than $\min\lbrace \frac{1}{8},\frac{k}{8} \rbrace$, we have that 
\begin{align*}
\mathcal{R} &\Big[\|e^{\mathcal{R} t'}\Delta_q^h (u + \pa_tu)_\phi \|_{L^2}^2 + 2\|e^{\mathcal{R} t'} \Delta_q^h \pa_yu_\phi \|_{L^2}^2 + \|e^{\mathcal{R} t'}\Delta_q^h (\pa_tu)_\phi \|_{L^2}^2 \Big] \\ &\leq \mathcal{R} \Big[\|e^{\mathcal{R} t'}\Delta_q^h u _\phi \|_{L^2}^2 + 2\|e^{\mathcal{R} t'} \Delta_q^h \pa_yu_\phi \|_{L^2}^2 + 2\|e^{\mathcal{R} t'}\Delta_q^h (\pa_tu)_\phi \|_{L^2}^2 \Big] \\
&\leq \frac{k}{8} \|e^{\mathcal{R} t'}\Delta_q^h u _\phi \|_{L^2}^2 + \frac{1}{4} \|e^{\mathcal{R} t'} \Delta_q^h \pa_yu_\phi \|_{L^2}^2 +\frac{1}{4} \|e^{\mathcal{R} t'}\Delta_q^h (\pa_tu)_\phi \|_{L^2}^2.
\end{align*}
To absorb this last term, we shall then invoke the Poincaré inequality in $y\in(0,1)$ : $k \|\Delta_q^h u_\phi \|_{L^2} \leq \|\Delta_q^h \pa_y u_\phi\|_{L^2}$. Thus 
\begin{align}\label{esti:Poincaré}
\mathcal{R} &\Big[\|e^{\mathcal{R} t'}\Delta_q^h (u + \pa_tu)_\phi \|_{L^2}^2 + 2\|e^{\mathcal{R} t'} \Delta_q^h \pa_yu_\phi \|_{L^2}^2 + \|e^{\mathcal{R} t'}\Delta_q^h (\pa_tu)_\phi \|_{L^2}^2 \Big] \nonumber \\
&\leq \frac{1}{2}\Big(\|e^{\mathcal{R} t'} \Delta_q^h \pa_yu_\phi \|_{L^2}^2 + \|e^{\mathcal{R} t'}\Delta_q^h (\pa_tu)_\phi \|_{L^2}^2\Big). 
\end{align}
We replace the obtained result \eqref{esti:Poincaré} in \eqref{estimate:2222}, we deduce that 

\begin{multline} \label{estimate:22224}
\int_0^t \frac{d}{dt}\Big( \frac{1}{2} \|e^{\mathcal{R} t'}\Delta_q^h (u + \pa_tu)_\phi \|_{L^2}^2 + \|e^{\mathcal{R} t'} \Delta_q^h \pa_yu_\phi \|_{L^2}^2 + \frac{1}{2} \|e^{\mathcal{R} t'}\Delta_q^h (\pa_tu)_\phi \|_{L^2}^2\Big)dt' \\ + \f12\int_0^t \|e^{\mathcal{R} t'}\Delta_q^h (\pa_tu)_\phi\|_{L^2}^2 dt' + \f12\int_0^t \|e^{\mathcal{R} t'}\Delta_q^h\pa_yu_\phi\|_{L^2}^2 dt'\\ +  \int_0^t \lambda \dot{\theta}(t') \|e^{\mathcal{R} t'}\Delta_q^h |D_x|^\f12(\pa_t u)_\phi\|_{L^2}^2 dt' + \int_0^t 2\lambda \dot{\theta}(t') \|e^{\mathcal{R} t'} \Delta_q^h |D_x|^\f12\pa_y u_\phi\|_{L^2}^2 dt'  \\\leq  2 C d_q^2 2^{-2qs} \Big(\Vert e^{\mathcal{R} t} (\pa_tu)_{\phi} \Vert_{\tilde{L}^2_{t,\dot{\theta}(t)}(\mathcal{B}^{s+\frac{1}{2}})}^2  + \Vert e^{\mathcal{R} t} u_{\phi} \Vert_{\tilde{L}^2_{t,\dot{\theta}(t)}(\mathcal{B}^{s+\frac{1}{2}})}^2 \Big).
\end{multline} 
 
 In the last estimate \eqref{estimate:22224}, we remark that the term $\int_0^t \lambda \dot{\theta}(t') \|e^{\mathcal{R} t'} \Delta_q^h |D_x|^\f12 u_\phi\|_{L^2}^2 dt'$ has disappeared because of the second term that comes out of the estimate \eqref{esti:L_q3bis1}. We can make it appear by using Poincaré inequality on the term $\int_0^t 2\lambda \dot{\theta}(t') \|e^{\mathcal{R} t'} \Delta_q^h |D_x|^\f12\pa_y u_\phi\|_{L^2}^2 dt'$, we have 
 \begin{align*}
 \int_0^t &2\lambda \dot{\theta}(t') \|e^{\mathcal{R} t'} \Delta_q^h |D_x|^\f12\pa_y u_\phi\|_{L^2}^2 dt' \\
 &\geq \lambda\Big(k\int_0^t  \dot{\theta}(t') \|e^{\mathcal{R} t'} \Delta_q^h |D_x|^\f12 u_\phi\|_{L^2}^2 dt' + \int_0^t \dot{\theta}(t') \|e^{\mathcal{R} t'} \Delta_q^h |D_x|^\f12\pa_y u_\phi\|_{L^2}^2 dt'\Big).
\end{align*}

Now we we multiply the obtained result in \eqref{estimate:2222} by $2^{2qs}$ for $s\in]0,1[$ and summing with respect to $q \in \ZZ$, we find that for $t<T^\star$

\begin{multline} \label{S3eq6bis1}
	 \Big( \frac{1}{2} \|e^{\mathcal{R} t}(u + \pa_tu)_\phi \|_{\tilde{L}_t^\infty(\mathcal{B}^{s})} + \|e^{\mathcal{R} t} \pa_yu_\phi \|_{\tilde{L}_t^\infty(\mathcal{B}^{s})} + \frac{1}{2} \|e^{\mathcal{R} t}(\pa_tu)_\phi \|_{\tilde{L}_t^\infty(\mathcal{B}^{s})}\Big) \\+ \frac{1}{2}\Big(\|e^{\mathcal{R} t}(\pa_tu)_\phi\|_{\tilde{L}_t^2(\mathcal{B}^{s})} + \|e^{\mathcal{R} t}\pa_yu_\phi\|_{\tilde{L}_t^2(\mathcal{B}^{s})} \Big) + \sqrt{k\lambda} \|e^{\mathcal{R} t} u_\phi\|_{\tilde{L}^2_{t,\dot{\theta}(t)}(\mathcal{B}^{s+\frac{1}{2}})} +  \sqrt{\lambda}  \|e^{\mathcal{R} t}(\pa_t u)_\phi\|_{\tilde{L}^2_{t,\dot{\theta}(t)}(\mathcal{B}^{s+\frac{1}{2}})}  \\ +  \sqrt{\lambda} \|e^{\mathcal{R} t}\pa_y u_\phi\|_{\tilde{L}^2_{t,\dot{\theta}(t)}(\mathcal{B}^{s+\frac{1}{2}})} 	\leq C\Big( \|e^{a|D_x|}\pa_yu_0 \|_{\mathcal{B}^{s}} + \|e^{a|D_x|}(u_0+u_1) \|_{\mathcal{B}^{s}}+  \|e^{a|D_x|} u_1 \|_{\mathcal{B}^{s}} \Big)\\ + \sqrt{2}C \Vert e^{\mathcal{R} t} (\pa_tu)_{\phi} \Vert_{\tilde{L}^2_{t,\dot{\theta}(t)}(\mathcal{B}^{s+\frac{1}{2}})} + \sqrt{2}C \Vert e^{\mathcal{R} t} u_{\phi} \Vert_{\tilde{L}^2_{t,\dot{\theta}(t)}(\mathcal{B}^{s+\frac{1}{2}})}.
\end{multline}

Taking $\lambda \geq 4C^2$ and $k\lambda \geq 8C^2$ in the above inequality leads to 

\begin{multline} \label{S3eq7bis1}
	\mathcal{E}_{s,\frac{\lambda}{4}}(u)(t) \leq C\|e^{a|D_x|}\pa_yu_0 \|_{\mathcal{B}^{s}} +C\|e^{a|D_x|}(u_0+u_1) \|_{\mathcal{B}^{s}} \\ + C \|e^{a|D_x|} u_1 \|_{\mathcal{B}^{s}}, \text{ \ for \ } t<T^\star .
\end{multline}
where 
\begin{align*}
\mathcal{E}_{s,\frac{\lambda}{4}}(u)(t) &= \Big( \frac{1}{2}\|e^{\mathcal{R} t}(u + \pa_tu)_\phi \|_{\tilde{L}_t^\infty(\mathcal{B}^{s})} + \|e^{\mathcal{R} t} \pa_yu_\phi \|_{\tilde{L}_t^\infty(\mathcal{B}^{s})} \Big) \\
&+ \f12\|e^{\mathcal{R} t}(\pa_tu)_\phi\|_{\tilde{L}_t^2(\mathcal{B}^{s})} + \f12\|e^{\mathcal{R} t}\pa_yu_\phi\|_{\tilde{L}_t^2(\mathcal{B}^{s})} \\
& + \frac{\sqrt{k\lambda}}{2} \|e^{\mathcal{R} t} u_\phi\|_{\tilde{L}^2_{t,\dot{\theta}(t)}(\mathcal{B}^{s+\frac{1}{2}})} +  \frac{\sqrt{\lambda}}{2}  \|e^{\mathcal{R} t}(\pa_t u)_\phi\|_{\tilde{L}^2_{t,\dot{\theta}(t)}(\mathcal{B}^{s+\frac{1}{2}})}+  \frac{\sqrt{\lambda}}{2} \|e^{\mathcal{R} t}\pa_y u_\phi\|_{\tilde{L}^2_{t,\dot{\theta}(t)}(\mathcal{B}^{s+\frac{1}{2}})}
\end{align*}

We recall that we already defined $\dot{\theta}(t) = \Vert \pa_y u_{\phi}(t) \Vert_{\mathcal{B}^{\f12}}$ with $\theta(0) = 0$. Then, for any $0 < t < T^\star$, Inequality \eqref{S3eq7bis1} yields 

\begin{align*}
	\theta (t) &= \int_0^t \Vert \pa_y u_{\phi}(t') \Vert_{\mathcal{B}^\f12} dt' \\ &\leq \int_0^t  e^{-\mathcal{R}t'} \Vert e^{\mathcal{R}t'} \pa_y u_{\phi}(t') \Vert_{\mathcal{B}^\f12} dt' \\
	&\leq \left( \int_0^t  e^{-2\mathcal{R}t'} dt' \right)^{\frac{
	1}{2}} \times \left( \int_0^t \Vert e^{\mathcal{R}t'} \pa_y u_{\phi}(t') \Vert_{\mathcal{B}^\f12}^2  dt' \right)^{\frac{
	1}{2}} \\
	&\leq C \norm{e^{\mathcal{R} t} \pa_y u_{\phi}}_{\tilde{L}^2_t(\mathcal{B}^{\f12})} \\
	&\leq C \left( \|e^{a|D_x|}\pa_yu_0 \|_{\mathcal{B}^{\f12}} +\|e^{a|D_x|}(u_0+u_1) \|_{\mathcal{B}^{\f12}} + \|e^{a|D_x|} u_1 \|_{\mathcal{B}^{\f12}}^2 \right) \\ &< \frac{a}{2\lambda}
\end{align*}
We deduce from the continuity argument that $T^\star = +\infty$ and we have \eqref{S3eq7bis1} is valid for any $t \in \RR_+$.

\section{Propagation of the regularity and of the vorticity of the hyperbolic Prandtl equation \eqref{eq:HPNhydrolimit}} \label{sec:5}
In this section, we present first a proposition states the propagation for any $\mathcal{B}^s$ regularity on the solution of the perturbed hyperbolic Navier-Stokes equations \eqref{eq:HPNhydrolimit}. The second proposition allows us to control two derivatives in the normal direction $\pa_y^2$ in any $\mathcal{B}^s$, despite the difficulties raised by the boundary conditions. Those propositions will be useful in the last section when we prove the global convergence of the Theorem $\ref{th:33}$. In what follows, we shall always assume that $t <T^\star_a$, where $T^\star_a$ given by 
\begin{equation} \label{eq:T0starbisbis11} 
	T^\star_a \stackrel{\tiny def}{=} \sup\set{t>0,\  \ \ \theta(t) \leq  \frac{a}{\lambda}}.
\end{equation}

\begin{prop}\label{prop:1bis1}
We assume that the condition \eqref{eq:condini} is satisfied, then for any $s>0$, there exist a small constant $C$, such that for $$\lambda = C(1 + \| e^{a|D_x|}\pa_y u_0\|_{\mathcal{B}^{\f32}} + \|e^{a|D_x|}(u_0+u_1) \|_{\mathcal{B}^{\f32}} + \|e^{a|D_x|}u_1 \|_{\mathcal{B}^{\f32}}),$$ we have 
\begin{multline} \label{Velocity}
	 \Big( \frac{1}{2}\|e^{\mathcal{R} t} (u + \pa_tu)_\phi \|_{\tilde{L}_t^\infty(\mathcal{B}^{s})} + \frac{1}{2}\|e^{\mathcal{R} t} (\pa_tu)_\phi \|_{\tilde{L}_t^\infty(\mathcal{B}^{s})} + \|e^{\mathcal{R} t} \pa_y u_\phi \|_{\tilde{L}_t^\infty(\mathcal{B}^{s})} \Big)\\ + \f12\|e^{\mathcal{R} t}(\pa_tu)_\phi\|_{\tilde{L}_t^2(\mathcal{B}^{s})} + \f12\|e^{\mathcal{R} t}\pa_y u_\phi\|_{\tilde{L}_t^2(\mathcal{B}^{s})} \\ \leq C\|e^{a|D_x|}\pa_yu_0 \|_{\mathcal{B}^{s}} +C\|e^{a|D_x|} (u_0+u_1) \|_{\mathcal{B}^{s}} + C\|e^{a|D_x|} u_1 \|_{\mathcal{B}^{s}}, \text{ \ for \ } t<T^\star_a .
\end{multline}
\end{prop}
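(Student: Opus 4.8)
The plan is to reproduce, now at the general regularity level $\mathcal{B}^s$, the two coupled energy estimates already carried out for $\mathcal{B}^{\f12}$ in Section \ref{sec:3}. Setting $V=(u,\pa_t u)$ and recalling \eqref{eq:hydroVphibis1}, I would first apply $\Delta_q^h$ and take the $L^2$ inner product with $\Delta_q^h V_\phi$, reproducing \eqref{S3eq1bis1}; separately I would take the $L^2$ inner product of \eqref{eq:Hnsbis1} with $\Delta_q^h u_\phi$, reproducing \eqref{S3eq1bis1bis1}. Lemma \ref{lem:1} turns all the linear contributions (coming from $\pa_t^2 u$, $\pa_t u$, $-\pa_y^2 u$ and the term $\lambda\dot\theta|D_x|V_\phi$) into time derivatives of $\|\Delta_q^h(\pa_t u)_\phi\|_{L^2}^2$, $\|\Delta_q^h\pa_y u_\phi\|_{L^2}^2$, $\|\Delta_q^h u_\phi\|_{L^2}^2$ plus the good dissipative terms $\lambda\dot\theta\,\||D_x|^{\f12}\cdot\|_{L^2}^2$; the pressure contribution vanishes by the integration by parts used for $D$ in Section \ref{sec:3}, i.e. via $\pa_y p_\phi=0$, $\pa_x u_\phi=-\pa_y v_\phi$ and the Dirichlet condition (cf. \eqref{pressureestimatebis1}).

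The two nonlinear terms $u\pa_x u$ and $v\pa_y u$ are then estimated by Lemma \ref{lem:ABCbis1} and Lemma \ref{lem:Apa_yBCbis1}, taking $A=B=u$ (resp. $A=u$, $B=v$) and $C=\pa_t u$ for the first energy identity and $C=u$ for the second. The structural point that makes propagation to arbitrary $s$ possible is that in both lemmas the whole dependence on the first factor $A$ is absorbed into the weight $\dot\theta(t)=\|\pa_y u_\phi\|_{\mathcal{B}^{\f12}}$, which lives at the fixed low level $\mathcal{B}^{\f12}$ \emph{independently of} $s$; the outputs are the products $2^{-2qs}d_q^2\|e^{\mathcal{R}t}u_\phi\|_{\tilde L^2_{t,\dot\theta}(\mathcal{B}^{s+\f12})}\|e^{\mathcal{R}t}(\pa_t u)_\phi\|_{\tilde L^2_{t,\dot\theta}(\mathcal{B}^{s+\f12})}$ and their analogue with $u_\phi$. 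Since Lemmas \ref{lem:ABCbis1}--\ref{lem:Apa_yBCbis1} are proved only for $s\in\,]0,1[$, I would treat this range directly and then reach every $s>0$ through the derivative characterization of $\mathcal{B}^s$, using that $\pa_x$ commutes with both $\Delta_q^h$ and the analytic weight (as noted after \eqref{pa_yyf}).

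Multiplying by $e^{2\mathcal{R}t}$, integrating in time, multiplying by $2^{2qs}$ and summing over $q\in\mathbb{Z}$, I would assemble the functional $\mathcal{E}_{s,\frac{\lambda}{4}}(u)(t)$ exactly as in \eqref{S3eq6bis1}. The $\mathcal{R}$-terms are absorbed into the $\pa_y u_\phi$ and $(\pa_t u)_\phi$ dissipation by the Poincaré inequality \eqref{poincarebis1}, precisely as in \eqref{esti:Poincaré}, and the two nonlinear contributions $\sqrt2\,C\|e^{\mathcal{R}t}(\pa_t u)_\phi\|_{\tilde L^2_{t,\dot\theta}(\mathcal{B}^{s+\f12})}$ and $\sqrt2\,C\|e^{\mathcal{R}t}u_\phi\|_{\tilde L^2_{t,\dot\theta}(\mathcal{B}^{s+\f12})}$ are absorbed into the $\sqrt\lambda$-weighted dissipative terms on the left. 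A fixed lower bound $\lambda\gtrsim C^2$ already suffices for this absorption; the larger prescription $\lambda=C(1+\|e^{a|D_x|}\pa_y u_0\|_{\mathcal{B}^{\f32}}+\|e^{a|D_x|}(u_0+u_1)\|_{\mathcal{B}^{\f32}}+\|e^{a|D_x|}u_1\|_{\mathcal{B}^{\f32}})$ is imposed so that the continuation criterion $\theta(t)\le a/\lambda$ of \eqref{eq:T0starbisbis11} can be closed uniformly at this regularity, which is what the downstream convergence argument (Theorem \ref{th:33}) requires.

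The main obstacle is exactly this last consistency of the bootstrap. Enlarging $\lambda$ shrinks the admissible analytic band $a/\lambda$, so one must check that
$$\theta(t)=\int_0^t\|\pa_y u_\phi(t')\|_{\mathcal{B}^{\f12}}\,dt'\le C\,\|e^{\mathcal{R}t}\pa_y u_\phi\|_{\tilde L^2_t(\mathcal{B}^{\f12})}$$
stays below $a/\lambda$ on $[0,T^\star_a)$. The right-hand side is controlled, through the $\mathcal{B}^{\f12}$ estimate of Section \ref{sec:3}, by the $\mathcal{B}^{\f12}$ norm of the data, whereas $\lambda$ scales with the $\mathcal{B}^{\f32}$ norm; hence closing $\theta(t)<a/\lambda$ demands the combined smallness of the $\mathcal{B}^{\f12}$ and $\mathcal{B}^{\f32}$ data, which is precisely the content of condition \eqref{smallnn}. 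Under it, a standard continuity argument gives $T^\star_a=+\infty$, so that \eqref{Velocity} holds for all $t$.
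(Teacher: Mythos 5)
Your overall frame is correct as far as it goes: the paper does exactly what you describe for the linear part (the two coupled energy identities \eqref{S3eq2bis1} and \eqref{S3eq1bis3bis1} redone at level $\mathcal{B}^s$, pressure killed by incompressibility and the Dirichlet condition, $\mathcal{R}$-terms absorbed via Poincar\'e). The gap is in how you pass from $s\in\,]0,1[$ to all $s>0$, and in your explanation of the choice of $\lambda$. The paper never uses the derivative characterization of $\mathcal{B}^s$. It observes instead that in Lemmas \ref{lem:ABCbis1} and \ref{lem:Apa_yBCbis1} only the two paraproduct pieces $T^h_{\pa_x u}u$ and $T^h_{v}\pa_y u$ are responsible for the restriction $s<1$, and it re-estimates exactly these two pieces for arbitrary $s>0$: for the first one, \eqref{eq:normB12bis1} gives $\Vert S^h_{q'-1}\pa_x u_\phi\Vert_{L^\infty}\lesssim 2^{q'}\Vert\pa_y u_\phi\Vert_{\mathcal{B}^{\f12}}$, so the low frequencies are absorbed into the weight $\dot\theta$ and one gets \eqref{vtkbis1}; for the second one, $\Vert S^h_{q'-1}v_\phi\Vert_{L^\infty}\lesssim 2^{q'/2}\Vert u_\phi\Vert^{\f12}_{\mathcal{B}^{\f32}}\Vert\pa_y u_\phi\Vert^{\f12}_{\mathcal{B}^{\f12}}$, which produces the extra contribution $\Vert u_\phi\Vert^{\f12}_{\tilde{L}^\infty_t(\mathcal{B}^{\f32})}\Vert e^{\mathcal{R}t}\pa_y u_\phi\Vert_{\tilde{L}^2_t(\mathcal{B}^{s})}$ in \eqref{4.17bis1}. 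This term is the whole point of the proposition: after Young's inequality it can only be absorbed if $\lambda\geq 4C^2\bigl(2+\Vert\pa_y u_\phi\Vert_{\tilde{L}^\infty_t(\mathcal{B}^{\f32})}\bigr)$, cf.\ \eqref{4.19bis1} and \eqref{S4eq7bis1}, and the bootstrap is closed by applying the resulting estimate at $s=\f32$ to bound $\Vert\pa_y u_\phi\Vert_{\tilde{L}^\infty_t(\mathcal{B}^{\f32})}$ by the data. So your claim that ``a fixed lower bound $\lambda\gtrsim C^2$ already suffices for this absorption'' and that the $\mathcal{B}^{\f32}$-dependent prescription of $\lambda$ is only there for the continuation criterion and for Theorem \ref{th:33} is wrong: at general $s$ the nonlinear bounds are no longer those of Lemmas \ref{lem:ABCbis1}--\ref{lem:Apa_yBCbis1}, and the large $\lambda$ is what absorbs the new $\mathcal{B}^{\f32}$-weighted terms.

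Your proposed alternative for $s\geq 1$ (the derivative characterization, applying $\pa_x^k$) is stated in one line but would not go through as written: $\pa_x^k(u\pa_x u)$ and $\pa_x^k(v\pa_y u)$ generate Leibniz cross terms $\pa_x^j u\,\pa_x^{k-j+1}u$ and $\pa_x^j v\,\pa_x^{k-j}\pa_y u$ with $1\le j\le k$, which do not have the structure $A\pa_x B$ or $\bigl(\int_0^y\pa_x B\,dy'\bigr)\pa_y A$ with all the high regularity on a single factor and the other factor entering only through $\dot\theta$ at the $\mathcal{B}^{\f12}$ level; estimating them forces mixed bounds of exactly the type the paper derives (with $\Vert u_\phi\Vert_{\mathcal{B}^{\f32}}$ appearing), so the difficulty you sidestep reappears, $\lambda$ independent of the data cannot work, and this step is missing from your argument. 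Finally, the proposition assumes only \eqref{eq:condini} and asserts the estimate for $t<T^\star_a$; the paper's proof requires finiteness, not smallness, of the $\mathcal{B}^{\f32}$ data and never invokes \eqref{smallnn} --- bringing \eqref{smallnn} in conflates this proposition with the hypotheses of Theorem \ref{th:33}.
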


\noindent \emph{Proof of Proposition $\ref{prop:1bis1}$}. We first deduce from Lemma \ref{lem:ABCbis1} that for any $s>0$ we have by replacing $A= B=u$ and $C= \pa_t u$
 \ben \label{vtkbis11}
\int_0^t &|\psca{ \Delta_q^h e^{\mathcal{R}t'}(T^h_u \pa_x u + R^h(u,\pa_x u))_{\phi}, e^{\mathcal{R}t'}\Delta_q^h (\pa_tu)_{\phi}}_{L^2}| dt' \nonumber \\ &\lesssim d_q^2 2^{-2qs} \Vert e^{\mathcal{R}t} u_\phi \Vert_{\tilde{L}^2_{t,\dot{\theta}(t)}(\mathcal{B}^{s+\frac{1}{2}})} \Vert e^{\mathcal{R}t} (\pa_tu)_\phi \Vert_{\tilde{L}^2_{t,\dot{\theta}(t)}(\mathcal{B}^{s+\frac{1}{2}})}.
\een
Then we still have to proof only that 
\begin{align*}
\int_0^t &|\psca{ \Delta_q^h e^{\mathcal{R}t'}(T_{\pa_x u}^hu)_{\phi}, e^{\mathcal{R}t'}\Delta_q^h (\pa_tu)_{\phi}}_{L^2}| dt' \\&\lesssim d_q^2 2^{-2qs} \Vert e^{\mathcal{R}t} u_\phi \Vert_{\tilde{L}^2_{t,\dot{\theta}(t)}(\mathcal{B}^{s+\frac{1}{2}})} \Vert e^{\mathcal{R}t} (\pa_tu)_\phi \Vert_{\tilde{L}^2_{t,\dot{\theta}(t)}(\mathcal{B}^{s+\frac{1}{2}})}, \text{ \ \ for \ \ } s>0.
\end{align*}
Indeed, in view of \eqref{eq:normB12bis1}, we infer
\begin{align*}
    \int_0^t |&\psca{ \Delta_q^h e^{\mathcal{R}t'}(T_{\pa_x u}^hu)_{\phi}, e^{\mathcal{R}t'}\Delta_q^h (\pa_tu)_{\phi}}_{L^2}| dt' \\
    &\lesssim \sum_{|q'-q| \leq4} \int_0^t e^{2\mathcal{R}t'}\Vert S_{q'-1}^h \pa_x u_\phi(t') \Vert_{L^\infty} \Vert \Delta_{q'}^h u_\phi (t') \Vert_{L^2} \Vert \Delta_q^h (\pa_tu)_\phi(t') \Vert_{L^2} dt' \\
    &\lesssim \sum_{|q'-q| \leq4}  2^{q'} \int_0^t e^{2\mathcal{R}t'}\Vert \pa_y u_\phi(t') \Vert_{\mathcal{B}^\f12} \Vert \Delta_{q'}^h u_\phi(t') \Vert_{L^2}\Vert \Delta_q^h (\pa_tu)_\phi(t') \Vert_{L^2} dt' \\
    &\lesssim \sum_{|q'-q| \leq4} 2^{q'} \left( \int_0^t \Vert \pa_y u_\phi(t') \Vert_{\mathcal{B}^\f12} e^{2\mathcal{R}t'}\Vert \Delta_{q'}^h u_\phi(t') \Vert_{L^2}^2 dt' \right)^\frac{1}{2} \\
     & \ \ \  \ \ \ \ \ \ \ \ \ \ \  \times \left(\int_0^t \Vert \pa_y u_\phi(t') \Vert_{\mathcal{B}^\f12} e^{2\mathcal{R}t'}\Vert \Delta_q^h (\pa_tu)_\phi(t') \Vert_{L^2}^2 dt' \right)^\frac{1}{2},
\end{align*}
which leads to 

 \begin{align} \label{vtkbis1}
\int_0^t |\psca{ \Delta_q^h (u\pa_x u)_{\phi}, e^{2\mathcal{R}t'}\Delta_q^h (\pa_tu)_{\phi}}_{L^2}| dt' \lesssim d_q^2 2^{-2qs} \Vert e^{\mathcal{R}t} u_\phi \Vert_{\tilde{L}^2_{t,\dot{\theta}(t)}(\mathcal{B}^{s+\frac{1}{2}})} \Vert e^{\mathcal{R}t} (\pa_tu)_\phi \Vert_{\tilde{L}^2_{t,\dot{\theta}(t)}(\mathcal{B}^{s+\frac{1}{2}})}.
\end{align}

While it follows from the proof of Lemma $\ref{lem:Apa_yBCbis1}$ that
\beno
\int_0^t |\psca{ \Delta_q^h (T^h_{\pa_yu}v + R^h(v,\pa_yu))_{\phi},e^{2\mathcal{R}t'} \Delta_q^h (\pa_tu)_{\phi}}_{L^2}| dt'  \\ \lesssim d_q^2 2^{-2qs} \Vert e^{\mathcal{R}t} u_\phi \Vert_{\tilde{L}^2_{t,\dot{\theta}(t)}(\mathcal{B}^{s+\frac{1}{2}})} \Vert e^{\mathcal{R}t} (\pa_tu)_\phi \Vert_{\tilde{L}^2_{t,\dot{\theta}(t)}(\mathcal{B}^{s+\frac{1}{2}})},
\eeno
so we have yet to determine the estimate of $\int_0^t |\psca{ \Delta_q^h (T^h_{v}\pa_yu)_{\phi}, e^{2\mathcal{R}t'}\Delta_q^h (\pa_t u)_{\phi}}_{L^2}| dt'$, we have 
$$ \Vert \Delta_q^h v_\phi(t) \Vert_{L^\infty} \lesssim d_q(u_\phi) 2^\frac{q}{2} \Vert u_\phi(t) \Vert_{\mathcal{B}^{\f32}}^\frac{1}{2} \Vert \pa_y u_\phi(t) \Vert_{\mathcal{B}^\f12}^\frac{1}{2},$$so that
$$ \Vert S_{q'-1}^h v_\phi(t') \Vert_{L^\infty} \lesssim 2^\frac{q}{2} \Vert u_\phi(t) \Vert_{\mathcal{B}^{\f32}}^\frac{1}{2} \Vert \pa_y u_\phi(t) \Vert_{\mathcal{B}^\f12}^\frac{1}{2}, $$
which implies that
\begin{align*}
    \int_0^t &|\psca{ \Delta_q^h (T^h_{v}\pa_yu)_{\phi}, e^{2\mathcal{R}t'}\Delta_q^h (\pa_tu)_{\phi}}_{L^2}| dt' \\
    &\lesssim \sum_{|q'-q|\leq4} \int_0^t e^{2\mathcal{R}t'}\Vert S_{q'-1}^h v_\phi(t') \Vert_{L^\infty} \Vert \Delta_{q'}^h \pa_y u_\phi(t') \Vert_{L^2} \Vert \Delta_q^h (\pa_tu)_\phi(t') \Vert_{L^2}dt' \\
    &\lesssim \sum_{|q'-q|\leq4} 2^\frac{q'}{2} \Vert u_\phi \Vert_{L^\infty_t(\mathcal{B}^{\f32})}^\frac{1}{2} \Vert e^{\mathcal{R}t} \Delta_{q'}^h \pa_y u_\phi \Vert_{L^2_t(L^2)} \left( \int_0^t \Vert  \pa_y u_\phi(t') \Vert_{\mathcal{B}^\f12} \Vert \Delta_q^h e^{\mathcal{R}t} (\pa_tu)_\phi(t') \Vert_{L^2}^2dt' \right)^\frac{1}{2} \\
    &\lesssim d_q^2 2^{-2qs} \Vert u_\phi \Vert_{\tilde{L}^\infty_t(\mathcal{B}^{\f32})}^\frac{1}{2} \Vert e^{\mathcal{R}t}\pa_y u_\phi \Vert_{\tilde{L}^2_t(\mathcal{B}^s)} \Vert e^{\mathcal{R}t}(\pa_tu)_\phi \Vert_{\tilde{L}^2_{t,\dot{\theta}(t)}(\mathcal{B}^{s+\frac{1}{2}})}.
\end{align*}
By summing all the terms we obtain
\begin{align} \label{4.17bis1}
\int_0^t &|\psca{ \Delta_q^h (v\pa_yu)_{\phi}, \Delta_q^h (\pa_tu)_{\phi}}_{L^2}| dt' \lesssim d_q^2 2^{-2qs} \Vert e^{\mathcal{R}t} (\pa_tu)_\phi \Vert_{\tilde{L}^2_{t,\dot{\theta}(t)}(\mathcal{B}^{s+\frac{1}{2}})} \nonumber \\  &\times \left( \Vert e^{\mathcal{R}t} u_\phi \Vert_{\tilde{L}^2_{t,\dot{\theta}(t)}(\mathcal{B}^{s+\frac{1}{2}})} +  \Vert u_\phi \Vert_{\tilde{L}^\infty_t(\mathcal{B}^{\f32})}^\frac{1}{2} \Vert e^{\mathcal{R}t} \pa_y u_\phi \Vert_{\tilde{L}^2_t(\mathcal{B}^s)} \right).
\end{align}

Along the same way we can obtain 

\ben \label{esti:111bis1}
\int_0^t |\psca{ \Delta_q^h (u\pa_x u)_{\phi}, e^{2\mathcal{R}t'}\Delta_q^h u_{\phi}}_{L^2}| dt' \lesssim d_q^2 2^{-2qs} \Vert e^{\mathcal{R}t} u_\phi \Vert_{\tilde{L}^2_{t,\dot{\theta}(t)}(\mathcal{B}^{s+\frac{1}{2}})}^2.
\een
and
\begin{align} \label{esti:222bis1}
\int_0^t &|\psca{ \Delta_q^h (v\pa_yu)_{\phi},e^{2\mathcal{R}t'} \Delta_q^h u_{\phi}}_{L^2}| dt' \lesssim d_q^2 2^{-2qs} \Vert e^{\mathcal{R}t} u_\phi \Vert_{\tilde{L}^2_{t,\dot{\theta}(t)}(\mathcal{B}^{s+\frac{1}{2}})} \nonumber \\  &\times \left( \Vert e^{\mathcal{R}t} u_\phi \Vert_{\tilde{L}^2_{t,\dot{\theta}(t)}(\mathcal{B}^{s+\frac{1}{2}})} +  \Vert u_\phi \Vert_{\tilde{L}^\infty_t(\mathcal{B}^{\f32})}^\frac{1}{2} \Vert e^{\mathcal{R}t}\pa_y u_\phi \Vert_{\tilde{L}^2_t(\mathcal{B}^s)} \right).
\end{align}
By virtue of $\eqref{vtkbis1}$, $\eqref{4.17bis1}$ $(\ref{esti:111bis1})$ and $(\ref{esti:222bis1})$, we deduce from $(\ref{S3eq2bis1})$ and \eqref{S3eq1bis3bis1} that for $t<T^\star_a$

\begin{multline} \label{S4eq6bis1}
	 \Big( \frac{1}{2}\|e^{\mathcal{R} t}(u + \pa_tu)_\phi \|_{\tilde{L}_t^\infty(\mathcal{B}^{s})} +\frac{1}{2}\|e^{\mathcal{R} t}(\pa_tu)_\phi \|_{\tilde{L}_t^\infty(\mathcal{B}^{s})} + \|e^{\mathcal{R} t} \pa_yu_\phi \|_{\tilde{L}_t^\infty(\mathcal{B}^{s})} \Big)+ \f12\|e^{\mathcal{R} t}(\pa_tu)_\phi\|_{\tilde{L}_t^2(\mathcal{B}^{s})} \\ +\sqrt{k\lambda} \|e^{\mathcal{R} t} u_\phi\|_{\tilde{L}^2_{t,\dot{\theta}(t)}(\mathcal{B}^{s+\frac{1}{2}})} + \f12\|e^{\mathcal{R} t}\pa_yu_\phi\|_{\tilde{L}_t^2(\mathcal{B}^{s})} +  \sqrt{\lambda}  \|e^{\mathcal{R} t}(\pa_t u)_\phi\|_{\tilde{L}^2_{t,\dot{\theta}(t)}(\mathcal{B}^{s+\frac{1}{2}})} +  \sqrt{\lambda} \|e^{\mathcal{R} t}\pa_y u_\phi\|_{\tilde{L}^2_{t,\dot{\theta}(t)}(\mathcal{B}^{s+\frac{1}{2}})} 
\\	\leq C \Big(\|e^{a|D_x|}\pa_yu_0 \|_{\mathcal{B}^{s}} +\|e^{a|D_x|}(u_0+u_1) \|_{\mathcal{B}^{s}} +  \|e^{a|D_x|}u_1 \|_{\mathcal{B}^{s}} \Big)  \\ + \sqrt{2}C \Vert e^{\mathcal{R} t} u_{\phi} \Vert_{\tilde{L}^2_{t,\dot{\theta}(t)}(\mathcal{B}^{s+\frac{1}{2}})} + \sqrt{2}C\Vert e^{\mathcal{R} t} u_{\phi} \Vert_{\tilde{L}^2_{t,\dot{\theta}(t)}(\mathcal{B}^{s+\frac{1}{2}})}^\f12 \Vert e^{\mathcal{R} t} (\pa_tu)_{\phi} \Vert_{\tilde{L}^2_{t,\dot{\theta}(t)}(\mathcal{B}^{s+\frac{1}{2}})}^\f12 \\ 
+ \sqrt{2}C\Vert u_\phi \Vert_{\tilde{L}^2_{t,\dot{\theta}}(\mathcal{B}^{s+\frac{1}{2}})}^\f12\Vert \pa_yu_\phi \Vert_{\tilde{L}^\infty_t(\mathcal{B}^{\f32})}^\frac{1}{4} \Vert \pa_y u_\phi \Vert_{\tilde{L}^2_t(\mathcal{B}^s)}^\f12 + \sqrt{2}C\Vert (\pa_tu)_\phi \Vert_{\tilde{L}^2_{t,\dot{\theta}}(\mathcal{B}^{s+\frac{1}{2}})}^\f12\Vert \pa_yu_\phi \Vert_{\tilde{L}^\infty_t(\mathcal{B}^{\f32})}^\frac{1}{4} \Vert \pa_y u_\phi \Vert_{\tilde{L}^2_t(\mathcal{B}^s)}^\f12.
\end{multline}
Applying Young's inequality yields
\begin{align*}
 C\Vert u_\phi \Vert_{\tilde{L}^\infty_t(\mathcal{B}^{\f32})}^\frac{1}{4} \Vert &e^{\mathcal{R}t} \pa_y u_\phi \Vert_{\tilde{L}^2_t(\mathcal{B}^s)}^\f12 \Vert e^{\mathcal{R}t} u_\phi \Vert_{\tilde{L}^2_{t,\dot{\theta}(t)}(\mathcal{B}^{s+\frac{1}{2}})}^\f12\\
&\leq C \Vert u_\phi \Vert_{\tilde{L}^\infty_t(\mathcal{B}^{\f32})}^\f12 \Vert e^{\mathcal{R}t} u_\phi \Vert_{\tilde{L}^2_{t,\dot{\theta}(t)}(\mathcal{B}^{s+\frac{1}{2}})} + \frac{1}{10} \Vert e^{\mathcal{R}t} \pa_y u_\phi \Vert_{\tilde{L}^2_t(\mathcal{B}^s)} .
\end{align*}

Then we achieve
\begin{multline} \label{S4eq7bis1}
	 \Big( \frac{1}{2}\|e^{\mathcal{R} t}(u + \pa_tu)_\phi \|_{\tilde{L}_t^\infty(\mathcal{B}^{s})}+ \frac{1}{2}\|e^{\mathcal{R} t}( \pa_tu)_\phi \|_{\tilde{L}_t^\infty(\mathcal{B}^{s})} + \|e^{\mathcal{R} t} \pa_yu_\phi \|_{\tilde{L}_t^\infty(\mathcal{B}^{s})} \Big)+ \f12\|e^{\mathcal{R} t}(\pa_tu)_\phi\|_{\tilde{L}_t^2(\mathcal{B}^{s})} \\ +\sqrt{k\lambda} \|e^{\mathcal{R} t} u_\phi\|_{\tilde{L}^2_{t,\dot{\theta}(t)}(\mathcal{B}^{s+\frac{1}{2}})} + c\|e^{\mathcal{R} t}\pa_yu_\phi\|_{\tilde{L}_t^2(\mathcal{B}^{s})} +  \sqrt{\lambda}  \|e^{\mathcal{R} t}(\pa_t u)_\phi\|_{\tilde{L}^2_{t,\dot{\theta}(t)}(\mathcal{B}^{s+\frac{1}{2}})} +  \sqrt{\lambda} \|e^{\mathcal{R} t}\pa_y u_\phi\|_{\tilde{L}^2_{t,\dot{\theta}(t)}(\mathcal{B}^{s+\frac{1}{2}})}
\\	\leq C\|e^{a|D_x|}\pa_yu_0 \|_{\mathcal{B}^{s}} +C\|e^{a|D_x|}(u_0+u_1) \|_{\mathcal{B}^{s}} +C\|e^{a|D_x|}u_1 \|_{\mathcal{B}^{s}} \\ + C (2 + \Vert \pa_yu_\phi \Vert_{\tilde{L}^\infty_t(\mathcal{B}^{\f32})}^\f12 )\Vert e^{\mathcal{R} t} (u,\pa_tu)_{\phi} \Vert_{\tilde{L}^2_{t,\dot{\theta}(t)}(\mathcal{B}^{s+\frac{1}{2}})}.
\end{multline}
Therefore if we take
\begin{align}
 &\lambda \geq 4C^2(2+ \Vert \pa_y u_\phi \Vert_{\tilde{L}^\infty_t(\mathcal{B}^{\f32})}) \nonumber \\
 &k\lambda \geq 8C^2(2+ \Vert \pa_y u_\phi \Vert_{\tilde{L}^\infty_t(\mathcal{B}^{\f32})}),
\end{align}
we obtain
\begin{align*}
     \mathcal{E}_{s,\frac{\lambda}{4}}(u)(t) \leq C\Big( \|e^{a|D_x|}\pa_yu_0 \|_{\mathcal{B}^{s}} +\|e^{a|D_x|}(u_0+u_1) \|_{\mathcal{B}^{s}} + \|e^{a|D_x|}u_1 \|_{\mathcal{B}^{s}} \Big),
\end{align*}
where 
\begin{align*}
\mathcal{E}_{s,\frac{\lambda}{4}}(u)(t) &= \Big( \frac{1}{2}\|e^{\mathcal{R} t}(u + \pa_tu)_\phi \|_{\tilde{L}_t^\infty(\mathcal{B}^{s})} + \|e^{\mathcal{R} t} \pa_yu_\phi \|_{\tilde{L}_t^\infty(\mathcal{B}^{s})} \Big) \\
&+ \f12\|e^{\mathcal{R} t}(\pa_tu)_\phi\|_{\tilde{L}_t^2(\mathcal{B}^{s})} + c\|e^{\mathcal{R} t}\pa_yu_\phi\|_{\tilde{L}_t^2(\mathcal{B}^{s})} \\
& + \frac{\sqrt{k\lambda}}{2} \|e^{\mathcal{R} t} u_\phi\|_{\tilde{L}^2_{t,\dot{\theta}(t)}(\mathcal{B}^{s+\frac{1}{2}})} +  \frac{\sqrt{\lambda}}{2}  \|e^{\mathcal{R} t}(\pa_t u)_\phi\|_{\tilde{L}^2_{t,\dot{\theta}(t)}(\mathcal{B}^{s+\frac{1}{2}})}+  \frac{\sqrt{\lambda}}{2} \|e^{\mathcal{R} t}\pa_y u_\phi\|_{\tilde{L}^2_{t,\dot{\theta}(t)}(\mathcal{B}^{s+\frac{1}{2}})}
\end{align*}
which in particular implies that under the condition \eqref{eq:condini}, there hold 
$$ \Vert \pa_y u_\phi \Vert_{\tilde{L}^\infty_t(\mathcal{B}^{\f32})} \leq \sqrt{C}\Big(\|e^{a|D_x|}\pa_yu_0 \|_{\mathcal{B}^{\f32}} +\|e^{a|D_x|}(u_0+u_1) \|_{\mathcal{B}^{\f32}} +\|e^{a|D_x|}u_1 \|_{\mathcal{B}^{\f32}}\Big). $$
Then by taking $\lambda = C(2 + \Vert e^{a|D_x|}  \pa_y u_0\Vert_{\mathcal{B}^{\f32}} +  \Vert e^{a|D_x|}  (u_0 + u_1)\Vert_{\mathcal{B}^{\f32}} + \|e^{a|D_x|}u_1 \|_{\mathcal{B}^{\f32}} )$, therefore the condition of the proposition is satisfied and then the proposition is proved for any $t>0$.
\begin{prop}\label{prop:2bis1}
We assume that the condition \eqref{eq:condini} is satisfied, then for any $s>0$, there exist a small constant $C$, such that for $$\lambda = C(1 + \| e^{a|D_x|}\pa_y u_0\|_{\mathcal{B}^{\f32}} + \|e^{a|D_x|}(u_0+u_1) \|_{\mathcal{B}^{\f32}} + \|e^{a|D_x|}u_1 \|_{\mathcal{B}^{\f32}}),$$ we have 
\begin{multline} \label{Vorticity}
	 \Big( \frac{1}{2}\|e^{\mathcal{R} t}\pa_y (u + \pa_tu)_\phi \|_{\tilde{L}_t^\infty(\mathcal{B}^{s})} + \frac{1}{2}\|e^{\mathcal{R} t}\pa_y (\pa_tu)_\phi \|_{\tilde{L}_t^\infty(\mathcal{B}^{s})} + \|e^{\mathcal{R} t} \pa_y^2 u_\phi \|_{\tilde{L}_t^\infty(\mathcal{B}^{s})} \Big) \\ + \f12 \|e^{\mathcal{R} t}\pa_y(\pa_tu)_\phi\|_{\tilde{L}_t^2(\mathcal{B}^{s})} + \frac{1}{2}\|e^{\mathcal{R} t}\pa_y^2 u_\phi\|_{\tilde{L}_t^2(\mathcal{B}^{s})}  \leq C\|e^{a|D_x|}\pa_y^2u_0 \|_{\mathcal{B}^{s}} \\ +C\|e^{a|D_x|}\pa_y (u_0+u_1) \|_{\mathcal{B}^{s}} +C\|e^{a|D_x|}\pa_y u_1 \|_{\mathcal{B}^{s}} \\
	 \|e^{a|D_x|} u_0\|_{\mathcal{B}^{s+2}} + \|e^{a|D_x|} u_0\|_{\mathcal{B}^{s+1}} + \|e^{a|D_x|} u_1\|_{\mathcal{B}^{s+1}}, \text{ \ for \ } t<T^\star .
\end{multline}
\end{prop}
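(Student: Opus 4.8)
The plan is to differentiate the first equation of \eqref{eq:HPNhydrolimit} once in the normal variable and to rerun the weighted energy scheme of Section \ref{sec:3} at the level of $w:=\pa_y u$. Setting $w=\pa_y u$ and using the incompressibility relation $\pa_y v=-\pa_x u$ together with $\pa_y p=0$, one checks that the pressure term disappears after one $y$--derivative and that the ``bad'' contribution $(\pa_x u)(\pa_y u)$ cancels, so that $w$ solves the \emph{same} type of equation as $u$, but with no forcing:
\begin{equation*}
	\pa_t^2 w+\pa_t w+u\pa_x w+v\pa_y w-\pa_y^2 w=0 .
\end{equation*}
I would then apply $\Delta_q^h$, conjugate by $e^{\phi}$ keeping the \emph{same} weight $\dot\theta(t)=\|\pa_y u_\phi\|_{\mathcal{B}^{\f12}}$ as in Section \ref{sec:3} (so that all time--weighted norms stay consistent with the previous sections), and take $L^2$ inner products with $\Delta_q^h(\pa_t w)_\phi$ and with $\Delta_q^h w_\phi$, reproducing at the level of $W=(\pa_y u,\pa_t\pa_y u)$ the two identities \eqref{S3eq3bis1} and \eqref{S3eq1bis3bis1}.

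The first new point is that $w=\pa_y u$ no longer vanishes on $\{y=0,1\}$, so the Dirichlet--Poincar\'e inequality used throughout Section \ref{sec:1} is not directly available. I would replace it by the observation that $w$ has zero vertical average, since $\int_0^1\pa_y u\,dy=u|_{y=1}-u|_{y=0}=0$, a property preserved both by $\Delta_{q'}^h$ and by $e^{\phi}$; the Poincar\'e--Wirtinger inequality then yields $\|\Delta_{q'}^h w_\phi\|_{L^2_v}\lesssim\|\Delta_{q'}^h\pa_y w_\phi\|_{L^2_v}$ and $\|\Delta_{q'}^h w_\phi\|_{L^\infty_v}\lesssim\|\Delta_{q'}^h\pa_y w_\phi\|_{L^2_v}$, i.e.\ exactly the bounds that drive Lemmas \ref{lem:ABCbis1}--\ref{lem:BBBbis1}. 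With this substitute, the transport terms are controlled as in Proposition \ref{prop:1bis1}: the term $u\pa_x w$ is handled by Lemma \ref{lem:ABCbis1} with $A=u$ (which \emph{does} vanish on the boundary, so the smoothing weight stays $\|\pa_y u_\phi\|_{\mathcal{B}^{\f12}}$), while $v\pa_y w$ is reorganized exactly as the term $v\pa_y u$ in \eqref{4.17bis1}: its low--high piece produces a factor $\|u_\phi\|_{\tilde L^\infty_t(\mathcal{B}^{\f32})}^{\f12}$, which, together with the top--order factor $\pa_y^2 u$ placed in the dissipative norm, forces on the right--hand side of \eqref{Vorticity} the higher--order data norms $\|e^{a|D_x|}u_0\|_{\mathcal{B}^{s+2}}$, $\|e^{a|D_x|}u_0\|_{\mathcal{B}^{s+1}}$ and $\|e^{a|D_x|}u_1\|_{\mathcal{B}^{s+1}}$, all controlled by Proposition \ref{prop:1bis1} at the regularity levels $s+1$ and $s+2$.

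The hard part will be the boundary terms generated when integrating $-\pa_y^2 w$ by parts, which no longer vanish: they are $\int_{\mathbb{R}}\big[\pa_y^2 u\,\pa_t\pa_y u\big]_{y=0}^{y=1}dx$ and $\int_{\mathbb{R}}\big[\pa_y^2 u\,\pa_y u\big]_{y=0}^{y=1}dx$. To treat them I would evaluate the momentum equation on $\{y=0,1\}$, where $u=\pa_t u=v=0$, to obtain the trace identity $\pa_y^2 u|_{y=0}=\pa_y^2 u|_{y=1}=\pa_x p$, and then invoke the explicit pressure formula \eqref{pressureestimatebis1}, $\pa_x p=\pa_y u|_{y=1}-\pa_y u|_{y=0}-\pa_x\!\int_0^1 u^2\,dy$, to express each boundary contribution through traces of $\pa_y u$ and $\pa_t\pa_y u$ and through $\pa_x\!\int_0^1u^2\,dy$. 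These traces I would bound by the one--dimensional interpolation $\|f|_{y=0,1}\|\lesssim\|f\|_{L^2_v}^{\f12}\|\pa_y f\|_{L^2_v}^{\f12}$, absorbing the top--order factors into the dissipation $\tfrac12\|e^{\mathcal{R} t}\pa_y^2 u_\phi\|_{\tilde L^2_t(\mathcal{B}^{s})}^2$ via Young's inequality and estimating the remaining lower--order factors by Proposition \ref{prop:1bis1}. This bookkeeping of the boundary and pressure contributions, rather than the interior energy estimate, is where I expect the genuine difficulty to lie.

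Finally, after multiplying by $2^{2qs}$, summing over $q\in\ZZ$ and choosing $\lambda$ (and $k\lambda$) large enough to absorb all the $\sqrt{\lambda}$--weighted nonlinear and boundary contributions into the left--hand side, I would obtain \eqref{Vorticity} on $[0,T^\star)$ and then extend it to all $t>0$ by the same continuity argument as at the end of Section \ref{sec:3}, using that $\theta(t)$ remains below $a/\lambda$ under the smallness assumption \eqref{eq:condini}.
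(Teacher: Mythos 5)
Your proposal follows essentially the same route as the paper's proof: the same $y$-differentiated equation for $w=\pa_y u$ with the cancellation $\pa_y u\,\pa_x u-\pa_x u\,\pa_y u=0$, the same replacement of the Dirichlet--Poincar\'e inequality by the zero-vertical-average property of $w$, the same boundary treatment via the trace identity $\pa_y w|_{y=0,1}=\pa_x p$ combined with the pressure formula \eqref{pressureestimatebis1} and the interpolation bound $\|f\|_{L^\infty_v}\lesssim\|f\|_{L^2_v}^{1/2}\|\pa_y f\|_{L^2_v}^{1/2}$, and the same choice of large $\lambda$ plus continuity argument. No substantive differences to report.
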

\begin{proof} of Proposition \ref{prop:2bis1}.
We start our proof by applying the partial derivative on $y$ ($\pa_y$) to \eqref{eq:HPNhydrolimit}, we obtain 
$$ \pa_t^2 \pa_y u + \pa_t \pa_yu + \pa_y(u\pa_xu) + \pa_y(v\pa_yu) - \pa_y^3 u + \pa_x\pa_yp = 0.$$ 

Due to the free divergence condition of $U$ (it mean that $\pa_xu+\pa_yv=0$), we have 
\begin{align*}
  \pa_y(u\pa_xu) + \pa_y(v\pa_yu) &= \pa_yu\pa_xu+ u\pa_x\pa_yu + \pa_yv\pa_yu + v\pa_y^2u \\
  &= \pa_yu\pa_xu+ u\pa_x\pa_yu - \pa_xu\pa_yu + v\pa_y^2u \\ &= u\pa_x\pa_yu + v\pa_y^2u,
\end{align*}
so our equation becomes
\begin{align}\label{eq:Hns22bis1}
    \pa_t^2 w + \pa_t w + u\pa_xw + v\pa_yw - \pa_y^2w + \pa_xq = 0
\end{align}
where $w = \pa_yu$ and $q = \pa_yp$. from which, using that $- \pa_yw + \pa_xp$ is vanishing on the boundary, we get, by using a similar derivation of \eqref{S3eq2bis1} and \eqref{S3eq1bis3bis1}, it mean that we do the scalar product of our equation with $\Delta_q^h (w + 2\pa_tw)_\phi $, so that
\begin{multline} \label{Vort:S4eq1bis1}
	 \frac{d}{dt}\Big( \|e^{\mathcal{R} t}\Delta_q^h (\pa_tw)_\phi \|_{L^2}^2 + \|e^{\mathcal{R} t}\Delta_q^h \pa_yw_\phi \|_{L^2}^2 \Big)+ 2\|e^{\mathcal{R} t}\Delta_q^h (\pa_tw)_\phi\|_{L^2}^2 \\ +  2\lambda \dot{\theta}(t')  \|e^{\mathcal{R} t}\Delta_q^h |D_x|^\frac{1}{2}(\pa_t w)_\phi\|_{L^2}^2 +2\lambda \dot{\theta}(t') \|e^{\mathcal{R} t}\Delta_q^h |D_x|^\frac{1}{2}\pa_y w_\phi\|_{L^2}^2 
\\	\leq  2 \Big( \abs{\psca{\Delta_q^h (u\pa_x w)_{\phi}, e^{2\mathcal{R} t}\Delta_q^h(\pa_tw)_{\phi}}}_{L^2} + \abs{\psca{\Delta_q^h ( v\pa_y w)_{\phi}, e^{2\mathcal{R} t}\Delta_q^h (\pa_tw)_{\phi}}}_{L^2}\Big) \\
+ 2\mathcal{R}\Big( \|e^{\mathcal{R} t}\Delta_q^h (\pa_tw)_\phi \|_{L^2}^2 + \|e^{\mathcal{R} t}\Delta_q^h \pa_yw_\phi \|_{L^2}^2 \Big) + 2\abs{\int_{\mathbb{R}}\Delta_q^h \pa_xp_{\phi}\cdot e^{2\mathcal{R} t}\Delta_q^h (\pa_tw(t,x,1) - \pa_tw(t,x,0))_{\phi} dx},
\end{multline}
and 
\begin{align} \label{vORT:S4eq2bis1}
	 \frac{d}{dt} \mathcal{A}(w_\phi)(t)  &+ \frac{1}{2}\frac{d}{dt} \|e^{\mathcal{R}t}\Delta_q^h w_\phi\|_{L^2}^2 + \lambda \dot{\theta}(t) \|e^{\mathcal{R}t}\Delta_q^h |D_x|^\frac{1}{2} w_\phi \|_{L^2}^2 + \|e^{\mathcal{R}t}\Delta_q^h  \pa_yw_\phi \|_{L^2}^2 \\ &= -\psca{\Delta_q^h (u\pa_x w)_{\phi}, e^{2\mathcal{R}t}\Delta_q^h w_{\phi})}_{L^2} -\psca{\Delta_q^h (v\pa_y w)_{\phi}, e^{2\mathcal{R}t}\Delta_q^h w_{\phi})}_{L^2} + \mathcal{R}\Big( \|e^{\mathcal{R}t}\Delta_q^h w_\phi\|_{L^2}^2  \nonumber \\ &+ \int e^{2\mathcal{R}t}\Delta_q^h (\pa_t w)_\phi \Delta_q^h  w_\phi dX \Big) -  \int_{\mathbb{R}}\Delta_q^h \pa_xp_{\phi}\cdot e^{2\mathcal{R} t}\Delta_q^h (w(t,x,1)-w(t,x,0))_{\phi} dx\nonumber       
\end{align}
where 
\begin{align*}
\mathcal{A}(w_\phi)(t) &= \int e^{2\mathcal{R}t}\Delta_q^h (\pa_t w)_\phi \Delta_q^h  w_\phi dX - \int e^{2\mathcal{R}t}\Delta_q^h (\pa_t w)_\phi \Delta_q^h (\pa_tw)_\phi dX \\&+ 2\lambda \dot{\theta}(t)\int e^{2\mathcal{R}t} \Delta_q^h |D_x| (\pa_tw)_\phi \Delta_q^h w_\phi dX
\end{align*}
Now, We start to estimate the pressure term, for date we denote 
$$ K_q =  \int_{\mathbb{R}} \Delta_q^h\pa_xp_{\phi}\cdot \Delta_q^h (\pa_tw(t,x,1) - \pa_tw(t,x,0))_{\phi}dx . $$
In view of \eqref{pressureestimatebis1} and Lemma \ref{lem:1}, we write 
\begin{align*}
 K_q &=   \int_{\mathbb{R}} \Delta_q^h\pa_xp_{\phi}\cdot \Delta_q^h (\pa_tw(t,x,1) - \pa_tw(t,x,0))_{\phi}dx \\
 & =   \int_{\mathbb{R}} \Delta_q^h \Big((w_\phi(t,x,1)-w_\phi(t,x,0)\Big) \cdot  \Delta_q^h \Big(\pa_tw(t,x,1) - \pa_tw(t,x,0)\Big)_{\phi} dx  \\
 & - \int_{\mathbb{R}} \Delta_q^h \pa_x (\int_0^1 (u)^2_\phi dy)  \cdot  \Delta_q^h \Big(\pa_tw(t,x,1) - \pa_tw(t,x,0)\Big)_{\phi} dx \\
 & = \frac{1}{2}\frac{d}{dt}\int_{\mathbb{R}}  \Delta_q^h \Big((w(t,x,1)-w(t,x,0)\Big)_\phi^2 dx + \lambda \dot{\theta}(t)\int_{\mathbb{R}}   \Delta_q^h|D_x| \Big((w(t,x,1)-w(t,x,0)\Big)_\phi^2 dx \\
 & -  \int_{\mathbb{R}} \Delta_q^h \pa_x (\int_0^1 (u)^2_\phi dy)  \cdot \Delta_q^h \Big(\pa_tw(t,x,1) - \pa_tw(t,x,0)\Big)_{\phi} dx \\ &-\mathcal{R}\int_{\mathbb{R}}  \Delta_q^h \Big((w(t,x,1)-w(t,x,0)\Big)_\phi^2 dx  
\end{align*}

If we use Lemma \ref{lem:1} to the following quantity $\int_{\mathbb{R}} \Delta_q^h\pa_x (\int_0^1 (u)^2 dy)  \cdot e^{2\phi} \Delta_q^h \Big(\pa_tw(t,x,1) - \pa_tw(t,x,0)\Big)_{\phi} dx $, we can obtain that  
\begin{align*}
    &\int_{\mathbb{R}} \Delta_q^h e^{\phi} \pa_x (\int_0^1 (u)^2 dy)  \cdot  \Delta_q^h \Big(\pa_t w(t,x,1) - \pa_t w(t,x,0)\Big)_{\phi} dx \\& = \frac{d}{dt} \int_{\mathbb{R}} \Delta_q^h e^{\phi}\pa_x (\int_0^1 (u)^2 dy)  \cdot e^{\phi} \Delta_q^h \Big(w(t,x,1) - w(t,x,0)\Big) dx \\
    & - 2\int_{\mathbb{R}} \Delta_q^h e^{\phi} \pa_x (\int_0^1 (u\pa_tu) dy)  \cdot e^{\phi} \Delta_q^h \Big(w(t,x,1) - w(t,x,0)\Big) dx \\
    & + 2\lambda \dot{\theta}(t)\int_{\mathbb{R}} \Delta_q^h e^{\phi} |D_x|\pa_x(\int_0^1 (u)^2 dy)  \cdot e^{\phi} \Delta_q^h \Big(w(t,x,1) - w(t,x,0)\Big) dx .
\end{align*}

So we multiply our equation by $e^{2\mathcal{R}t}$, we achieve that 
\begin{align*}
    &|e^{2\mathcal{R}t}K_q(t)| \leq \frac{1}{2} \frac{d}{dt} \|\Delta_q^h e^{\mathcal{R}t} w_\phi\|_{L^\infty_v(L^2_h)}^2 + \lambda \dot{\theta}(t) \|\Delta_q^h e^{\mathcal{R}t} |D_x|^\frac{1}{2} w_\phi\|_{L^\infty_v(L^2_h)}^2 \\
    & + \frac{d}{dt} |\int_{\mathbb{R}} e^{2\mathcal{R}t}\Delta_q^h e^{\phi} \pa_x (\int_0^1 (u)^2 dy)  \cdot e^{\phi} \Delta_q^h \Big(w(t,x,1) - w(t,x,0)\Big) dx| \\
    & + 2\int_{\mathbb{R}} |e^{2\mathcal{R}t}\Delta_q^h e^{\phi}  \pa_x (\int_0^1 (u\pa_tu) dy)  \cdot e^{\phi} \Delta_q^h \Big(w(t,x,1) - w(t,x,0)\Big)| dx \\
    & + 2\lambda \dot{\theta}(t)\int_{\mathbb{R}} |e^{2\mathcal{R}t}\Delta_q^h e^{\phi}  |D_x| \pa_x(\int_0^1 (u)^2 dy)  \cdot e^{\phi} \Delta_q^h \Big(w(t,x,1) - w(t,x,0)\Big)| dx \\
    &+ 2\mathcal{R}|\int_{\mathbb{R}} e^{2\mathcal{R}t}\Delta_q^h e^{\phi} \pa_x (\int_0^1 (u)^2 dy)  \cdot e^{\phi} \Delta_q^h \Big(w(t,x,1) - w(t,x,0)\Big) dx|\\
    & + \mathcal{R}|\int_{\mathbb{R}}  \Delta_q^h \Big((w(t,x,1)-w(t,x,0)\Big)_\phi^2 dx|.
\end{align*}

from which, we infer 

\begin{align*}
    &|e^{2\mathcal{R}t}K_q(t)| \leq \frac{1}{2} \frac{d}{dt} \Big(\f12 \|\Delta_q^h e^{\mathcal{R}t} w_\phi\|_{L^\infty_v(L^2_h)}^2 +  \|\Delta_q^h e^{\mathcal{R}t} \pa_x (u^2)_\phi\|_{L^1_v(L^2_h)}^2 \Big) \\ &+  \Big({\lambda \dot{\theta}(t)}\|\Delta_q^h e^{\mathcal{R}t} |D_x|^\frac{1}{2} w_\phi\|_{L^\infty_v(L^2_h)}^2 + {\lambda \dot{\theta}(t)} \|\Delta_q^h e^{\mathcal{R}t} |D_x|^\frac{1}{2}\pa_x(u^2)_\phi\|_{L^1_v(L^2_h)}^2\Big) \\
   & + C \Big( \|\Delta_q^h e^{\mathcal{R}t} w_\phi\|_{L^\infty_v(L^2_h)}^2 + \|\Delta_q^h e^{\mathcal{R}t} (u\pa_tu)_\phi\|_{L^1_v(L^2_h)}^2 + \lambda \dot{\theta}(t)\|\Delta_q^h e^{\mathcal{R}t} |D_x|^\frac{1}{2} w_\phi\|_{L^\infty_v(L^2_h)}^2 \Big) \\
   &+ \mathcal{R} \Big( \f32  \|\Delta_q^h e^{\mathcal{R}t} w_\phi\|_{L^\infty_v(L^2_h)}^2 + \|\Delta_q^h e^{\mathcal{R}t}\pa_x(u^2)_\phi\|_{L^1_v(L^2_h)}^2 \Big).
\end{align*}
By applying Bony decomposition, we have for any $s>0$
\begin{align*}
\|e^{\mathcal{R}t} \Delta_q^h\pa_x(u^2)_\phi \|_{L^2_t(L^1_v(L^2_h))} & \lesssim \|e^{\mathcal{R}t} \Delta_q^h \pa_x(u^2)_\phi \|_{L^2_t(L^2)} \\
&\lesssim d_q 2^{-qs} \| u_\phi \|_{L^2_t(L^\infty)} \|e^{\mathcal{R}t}  u_\phi\|_{\tilde{L}^\infty_t(\mathcal{B}^{s+1})} \\
&\lesssim  d_q 2^{-qs} \| \pa_y u_\phi \|_{L^2_t(\mathcal{B}^\f12)} \|e^{\mathcal{R}t} u_\phi\|_{\tilde{L}^\infty_t(\mathcal{B}^{s+1})} 
\end{align*}
and 
\begin{align*}
\|e^{\mathcal{R}t} &\Delta_q^h\pa_x(u\pa_tu)_\phi \|_{L^2_t(L^1_v(L^2_h))} \lesssim \|e^{\mathcal{R}t} \Delta_q^h \pa_x(u\pa_tu)_\phi \|_{L^2_t(L^2)} \\
&\lesssim d_q 2^{-qs} \Big( \| u_\phi \|_{L^2_t(L^\infty)} \|e^{\mathcal{R}t}  \pa_t u_\phi\|_{\tilde{L}^\infty_t(\mathcal{B}^{s+1})} + \| \pa_t u_\phi \|_{L^2_t(L^\infty)} \|e^{\mathbb{R}t} u_\phi\|_{\tilde{L}^\infty_t(\mathcal{B}^{s+1})} \Big) \\
&\lesssim  d_q 2^{-qs} \Big( \| \pa_y u_\phi \|_{L^2_t(\mathcal{B}^\f12)} \|e^{\mathcal{R}t} \pa_t u_\phi\|_{\tilde{L}^\infty_t(\mathcal{B}^{s+1})} + \| \pa_t\pa_y u_\phi \|_{L^2_t(\mathcal{B}^\f12)} \|e^{\mathcal{R}t} u_\phi\|_{\tilde{L}^\infty_t(\mathcal{B}^{s+1})} \Big).
\end{align*}

While notice that 
$$ \int_0^1 \Delta_q^h \pa_y u_\phi(t,x,y) dy = 0, $$
then for any fixed $(t,x)\in \mathbb{R}^+ \times \mathbb{R}$, there exist $Y_0^q(t,x)$ so that $\Delta_q^h \pa_y u_\phi(t,x,Y_0^q(t,x))= 0.$ So we have 
$$ (\Delta_q^h \pa_y u_\phi(t,x,y))^2 \leq 2\| \Delta_q^h \pa_y u_\phi \|_{L^2_v} \| \Delta_q^h \pa_y^2 u_\phi \|_{L^2_v}, $$
which implies that 
$$ \|\Delta_q^h \pa_y u_\phi(t,x,y)) \|_{L^\infty_v(L^2_h)} \leq 2\| \Delta_q^h \pa_y u_\phi \|_{L^2} \| \Delta_q^h \pa_y^2 u_\phi \|_{L^2}.$$

As a result, it comes out
\begin{align}\label{Pressureestiwpa_tubis1}
    &\int_0^t |e^{2\mathcal{R}t}K_q(t')|dt' \leq d_q^2 2^{-2qs} \Big(C\mathtt{E}(0) + C\| e^{\mathcal{R}t} w_\phi\|_{L^\infty_t(\mathcal{B}^s)}^2 + \frac{1}{4}\| e^{\mathcal{R}t} \pa_y w_\phi\|_{L^\infty_t(\mathcal{B}^s)}^2 \\ & +  \frac{1}{2}\|\pa_y u_\phi \|_{L^\infty_t(\mathcal{B}^\f12)}^2\| e^{\mathcal{R}t} u_\phi\|_{L^\infty_t(\mathcal{B}^{s+1})}^2 + \lambda\|\pa_y u_\phi \|_{L^\infty_t(\mathcal{B}^{\f32})}^2\| e^{\mathcal{R}t} \pa_y u_\phi\|_{\tilde{L}^2_{t,\dot{\theta}(t)}(\mathcal{B}^{s+\frac{1}{2}})}^2  + C \| e^{\mathcal{R}t} w_\phi\|_{L^2_t(\mathcal{B}^s)}^2 \nonumber  \\
   & +\frac{1}{4}\| e^{\mathcal{R}t} \pa_yw_\phi\|_{L^2_t(\mathcal{B}^s)}^2 + \| \pa_y u_\phi \|_{\tilde{L}^2_t(\mathcal{B}^\f12)}^2 \|e^{\mathcal{R}t} \pa_t u_\phi\|_{\tilde{L}^\infty_t(\mathcal{B}^{s+1})}^2 + \| \pa_t \pa_y u_\phi \|_{\tilde{L}^2_t(\mathcal{B}^\f12)}^2 \|e^{\mathcal{R}t} u_\phi\|_{\tilde{L}^\infty_t(\mathcal{B}^{s+1})}^2\nonumber \\ &+ C\lambda \| e^{\mathcal{R}t} \pa_y u_\phi\|_{\tilde{L}^2_{t,\dot{\theta}(t)}(\mathcal{B}^{s+\frac{1}{2}})}^2 +\frac{\lambda}{4}\| e^{\mathcal{R}t} \pa_y^2 u_\phi\|_{\tilde{L}^2_{t,\dot{\theta}(t)}(\mathcal{B}^{s+\frac{1}{2}})}^2 \nonumber \\
   &\frac{3\mathcal{R}}{8} \| e^{\mathcal{R}t} w_\phi\|_{\tilde{L}^2_{t}(\mathcal{B}^{s})}^2 +\frac{3\mathcal{R}}{8}\| e^{\mathcal{R}t} \pa_y w_\phi\|_{\tilde{L}^2_{t}(\mathcal{B}^{s})}^2 + \mathcal{R} \| \pa_y u_\phi \|_{\tilde{L}^2_t(\mathcal{B}^\f12)}^2 \|e^{\mathcal{R}t} u_\phi\|_{\tilde{L}^\infty_t(\mathcal{B}^{s+1})}^2 \Big) \nonumber 
\end{align}
where 
\begin{align*}
\mathtt{E}(0) = \| e^{a|D_x|} w_0\|_{\mathcal{B}^s}^2 + \| e^{a|D_x|} \pa_y w_0\|_{\mathcal{B}^s}^2 + \|e^{a|D_x|}\pa_y u_0 \|_{\mathcal{B}^\f12}^2\| e^{a|D_x|} u_0\|_{\mathcal{B}^{s+1}}^2
\end{align*}
Along the same way we obtain 
\begin{align}\label{Pressureesti:wbis1}
    &\int_0^t |e^{2\mathcal{R}t'}\int_{\mathbb{R}} \Delta_q^h\pa_xp_{\phi}\cdot \Delta_q^h (w(t,x,1) - w(t,x,0))_{\phi}dx|dt' \\ &\leq  d_q^2 2^{-2qs} \Big(C\| e^{\mathcal{R}t} w_\phi\|_{\tilde{L}^2_t(\mathcal{B}^s)}^2  +  C\|u_\phi \|_{\tilde{L}^2_t(\mathcal{B}^\f12)}^2\| e^{\mathcal{R}t} u_\phi\|_{L^\infty_t(\mathcal{B}^{s+1})}^2
    +\frac{1}{4}\| e^{\mathcal{R}t} \pa_yw_\phi\|_{\tilde{L}^2_t(\mathcal{B}^s)}^2 \Big) \nonumber.
\end{align}

It follows from the proof of Lemma \ref{lem:ABCbis1}, for any $s>0$
\begin{align*} \int_0^t &\abs{\psca{\Delta_q^h(T^h_u \pa_xw + R^h(u,\pa_xw))_\phi,e^{\mathcal{R}t'}\Delta_q^h (\pa_tw)_\phi}_{L^2}} dt' \\ &\leq C\tilde{d}_q 2^{-2qs} \|e^{\mathcal{R}t} w_\phi \|_{\tilde{L}^2_{t,\dot{\theta}}(\mathcal{B}^{s+\frac{1}{2}})} \|e^{\mathcal{R}t} (\pa_tw)_\phi \|_{\tilde{L}^2_{t,\dot{\theta}}(\mathcal{B}^{s+\frac{1}{2}})} .
\end{align*}
While we deduce from Lemma \ref{lem:Bernsteinbis1} and Definition \ref{def:CLweightbis12}
\begin{align*}
    \int_0^t |&\psca{ \Delta_q^h (T_{\pa_x w}^hu)_{\phi}, \Delta_q^h (\pa_tw)_{\phi}}_{L^2}| dt' \\
    &\lesssim \sum_{|q'-q| \leq4} \int_0^t \Vert S_{q'-1}^h \pa_x w_\phi(t') \Vert_{L^\infty_h(L^2_v)} \Vert \Delta_{q'}^h u_\phi (t') \Vert_{L^2_h(L^\infty_v)} \Vert \Delta_q^h (\pa_tw)_\phi(t') \Vert_{L^2} dt' \\
    &\lesssim \sum_{|q'-q| \leq4}  2^{q'} \int_0^t \Vert w_\phi(t') \Vert_{\mathcal{B}^\f12} \Vert \Delta_{q'}^h \pa_y u_\phi(t') \Vert_{L^2}\Vert \Delta_q^h (\pa_tw)_\phi(t') \Vert_{L^2} dt' \\
    &\lesssim \sum_{|q'-q| \leq4} 2^{q'} \left( \int_0^t \Vert w_\phi(t') \Vert_{\mathcal{B}^\f12} \Vert \Delta_{q'}^h w_\phi(t') \Vert_{L^2}^2 dt' \right)^\frac{1}{2} \\
     & \ \ \  \ \ \ \ \ \ \ \ \ \ \  \times \left(\int_0^t \Vert w_\phi(t') \Vert_{\mathcal{B}^\f12} \Vert \Delta_q^h (\pa_tw)_\phi(t') \Vert_{L^2} dt' \right)^\frac{1}{2} \\
     &\leq C d_q^2 2^{-2qs} \|e^{\mathcal{R}t} w_\phi \|_{\tilde{L}^2_{t,\dot{\theta}}(\mathcal{B}^{s+\frac{1}{2}})} \|e^{\mathcal{R}t} (\pa_tw)_\phi \|_{\tilde{L}^2_{t,\dot{\theta}}(\mathcal{B}^{s+\frac{1}{2}})}.
\end{align*}
Then we conclude for any $s>0$
\begin{align}\label{estim:upa_xwpa_twbis1}
\int_0^t |&\psca{ \Delta_q^h (u\pa_x w)_{\phi}, e^{\mathcal{R}t'}\Delta_q^h (\pa_tw)_{\phi}}_{L^2}| dt' \\
&\leq C d_q^2 2^{-2qs} \|e^{\mathcal{R}t} w_\phi \|_{\tilde{L}^2_{t,\dot{\theta}}(\mathcal{B}^{s+\frac{1}{2}})} \|e^{\mathcal{R}t} (\pa_tw)_\phi \|_{\tilde{L}^2_{t,\dot{\theta}}(\mathcal{B}^{s+\frac{1}{2}})}. \nonumber
\end{align}
In the same way we have 
\begin{align}\label{estim:upa_xwwbis1}
\int_0^t |&\psca{ \Delta_q^h (u\pa_x w)_{\phi}, e^{\mathcal{R}t'}\Delta_q^h w_{\phi}}_{L^2}| dt' \\
&\leq C d_q^2 2^{-2qs} \|e^{\mathcal{R}t} w_\phi \|_{\tilde{L}^2_{t,\dot{\theta}}(\mathcal{B}^{s+\frac{1}{2}})}^2. \nonumber
\end{align}

On the other hand, we deduce from Lemma \ref{lem:Bernsteinbis1} that for any $s>0$
\begin{align*}
 \int_0^t &|\psca{ \Delta_q^h (T^h_{v}\pa_yw)_{\phi}, \Delta_q^h (\pa_tw)_{\phi}}_{L^2}| dt' \\
    &\lesssim \sum_{|q'-q|\leq4} \int_0^t \Vert S_{q'-1}^h v_\phi(t') \Vert_{L^\infty} \Vert \Delta_{q'}^h \pa_y w_\phi(t') \Vert_{L^2} \Vert \Delta_q^h (\pa_tw)_\phi(t') \Vert_{L^2}dt' \\
    &\lesssim \sum_{|q'-q|\leq4} 2^\frac{q'}{2} \Vert u_\phi \Vert_{L^\infty_t(\mathcal{B}^{\f32})}^\frac{1}{2} \Vert \Delta_{q'}^h \pa_y w_\phi \Vert_{L^2_t(L^2)} \left( \int_0^t \Vert \pa_y u_\phi(t(') \Vert_{\mathcal{B}^\f12} \Vert \Delta_q^h (\pa_tw)_\phi(t') \Vert_{L^2}dt' \right)^\frac{1}{2} \\
    &\leq C d_q^2 2^{-2qs} \Vert u_\phi \Vert_{\tilde{L}^\infty_t(\mathcal{B}^{\f32})}^\frac{1}{2} \Vert \pa_y w_\phi \Vert_{\tilde{L}^2_t(\mathcal{B}^s)} \Vert (\pa_tw)_\phi \Vert_{\tilde{L}^2_{t,\dot{\theta}(t)}(\mathcal{B}^{s+\frac{1}{2}})}.
\end{align*} 
In the same way we obtain that 
\begin{align*}
 \int_0^t &|\psca{ \Delta_q^h R^h(v,\pa_yw)_{\phi}, \Delta_q^h (\pa_tw)_{\phi}}_{L^2}| dt' \\
    &\leq C d_q^2 2^{-2qs} \Vert u_\phi \Vert_{\tilde{L}^\infty_t(\mathcal{B}^{\f32})}^\frac{1}{2} \Vert \pa_y w_\phi \Vert_{\tilde{L}^2_t(\mathcal{B}^s)} \Vert (\pa_tw)_\phi \Vert_{\tilde{L}^2_{t,\dot{\theta}(t)}(\mathcal{B}^{s+\frac{1}{2}})}.
\end{align*} 
Finally, we use Lemma \ref{lem:ABCbis1}, we find  
\begin{align*}
 \int_0^t |&\psca{ \Delta_q^h (T^h_{\pa_yw}v)_{\phi}, \Delta_q^h (\pa_tw)_{\phi}}_{L^2}| dt' \\&\lesssim \sum_{|q'-q|\leq4} \int_0^t \Vert S_{q'-1}^h \pa_yw_\phi(t') \Vert_{L^\infty_h(L^2_v)} \Vert \Delta_{q'}^h v_\phi(t') \Vert_{L^2_h(L^\infty_v)} \Vert \Delta_q^h (\pa_tw)_\phi(t') \Vert_{L^2}dt' \\
    &\lesssim \sum_{|q'-q|\leq4} \int_0^t \|\pa_y w_\phi\|_{\mathcal{B}^\f12} \Vert \Delta_{q'}^h \pa_x u_\phi \Vert_{L^2} \|\Delta_q^h (\pa_tw)_\phi\|_{L^2} dt'\\
    &\leq C d_q^2 2^{-2qs} \Vert \pa_y w_\phi \Vert_{\tilde{L}^2_t(\mathcal{B}^\f12)} \Vert  u_\phi \Vert_{L^\infty_t(\mathcal{B}^{s+1})} \| (\pa_tw)_\phi\|_{\tilde{L}^2t(\mathcal{B}^{s)}}.
\end{align*}
Then by summing we deduce that 
\begin{align}\label{eq:S4eq5bis1}
\int_0^t |&\psca{ \Delta_q^h (v\pa_yw)_{\phi}, \Delta_q^h (\pa_tw)_{\phi}}_{L^2}| dt' \leq C d_q^2 2^{-2qs} \Vert \pa_y w_\phi \Vert_{\tilde{L}^2_t(\mathcal{B}^\f12)} \Vert  u_\phi \Vert_{L^\infty_t(\mathcal{B}^{s+1})} \| (\pa_tw)_\phi\|_{\tilde{L}^2_t(\mathcal{B}^{s)}} \nonumber\\
& + C d_q^2 2^{-2qs} \Vert u_\phi \Vert_{\tilde{L}^\infty_t(\mathcal{B}^{\f32})}^\frac{1}{2} \Vert \pa_y w_\phi \Vert_{\tilde{L}^2_t(\mathcal{B}^s)} \Vert (\pa_tw)_\phi \Vert_{\tilde{L}^2_{t,\dot{\theta}(t)}(\mathcal{B}^{s+\frac{1}{2}})} 
\end{align}
Along the same way we can found that 
\begin{align*}
 \int_0^t &|\psca{ \Delta_q^h(T^h_v\pa_yw +  R^h(v,\pa_yw))_{\phi}, \Delta_q^h w_{\phi}}_{L^2}| dt' \\
    &\lesssim d_q^2 2^{-2qs} \Vert u_\phi \Vert_{\tilde{L}^\infty_t(\mathcal{B}^{\f32})}^\frac{1}{2} \Vert \pa_y w_\phi \Vert_{\tilde{L}^2_t(\mathcal{B}^s)} \Vert w_\phi \Vert_{\tilde{L}^2_{t,\dot{\theta}(t)}(\mathcal{B}^{s+\frac{1}{2}})}.
\end{align*} 
Then we still have to estimate $T^h_{\pa_yw}v$, so by integration by part we can obtain 
\begin{align*}
 \int_0^t |\psca{ \Delta_q^h (T^h_{\pa_yw}v)_{\phi}, \Delta_q^h w_{\phi}}_{L^2}&| dt' \leq \int_0^t |\psca{ \Delta_q^h (T^h_{w}\pa_y v)_{\phi}, \Delta_q^h w_{\phi}}_{L^2}| dt' \\
 & + \int_0^t |\psca{ \Delta_q^h (T^h_{w}v)_{\phi}, \Delta_q^h \pa_yw_{\phi}}_{L^2}| dt'.
\end{align*}
Due to the free divergence $\pa_xu + \pa_yv = 0$ we deduce 
\begin{align*}
 \int_0^t &|\psca{ \Delta_q^h (T^h_{w}\pa_yv)_{\phi}, \Delta_q^h w_{\phi}}_{L^2}| dt' \\
    &\lesssim \sum_{|q'-q|\leq4} \int_0^t \Vert S_{q'-1}^h w_\phi(t') \Vert_{L^\infty_h(L^2_v)} \Vert \Delta_{q'}^h \pa_xu_\phi(t') \Vert_{L^2_h(L^\infty_v)} \Vert \Delta_q^h w_\phi(t') \Vert_{L^2}dt' \\
    &\lesssim \sum_{|q'-q|\leq4} 2^{q'}\Vert \pa_y u_\phi \Vert_{\mathcal{B}^\f12} \Vert \Delta_{q'}^h w_\phi \Vert_{L^2} \Vert \Delta_q^h w_\phi(t') \Vert_{L^2}dt'  \\
    &\leq C d_q^2 2^{-2qs} \Vert w_\phi \Vert_{\tilde{L}^2_{t,\dot{\theta}(t)}(\mathcal{B}^{s+\frac{1}{2}})}^2.
\end{align*} 
While we observe that
\begin{align*}
 \int_0^t |&\psca{ \Delta_q^h (T^h_{w}v)_{\phi}, \Delta_q^h \pa_y w_{\phi}}_{L^2}| dt' \\&\lesssim \sum_{|q'-q|\leq4} \int_0^t \Vert S_{q'-1}^h w_\phi(t') \Vert_{L^\infty_h(L^2_v)} \Vert \Delta_{q'}^h v_\phi(t') \Vert_{L^2_h(L^\infty_v)} \Vert \Delta_q^h \pa_yw_\phi(t') \Vert_{L^2}dt' \\
    &\lesssim \sum_{|q'-q|\leq4} \int_0^t \Vert w_\phi \Vert_{\mathcal{B}^\f12} \Vert \Delta_{q'}^h \pa_x u_\phi \Vert_{L^2} \|\Delta_q^h \pa_yw_\phi\|_{L^2} dt'\\
    &\leq C d_q^2 2^{-2qs} \Vert w_\phi \Vert_{\tilde{L}^2_t(\mathcal{B}^\f12)}\Vert u_\phi \Vert_{\tilde{L}^\infty_t(\mathcal{B}^{s+1})} \| \pa_yw_\phi\|_{\tilde{L}^2_t(\mathcal{B}^s)}.
\end{align*}
By summarizing the above estimates, we obtain
\begin{align}\label{eq:S4eq6bis1}
\int_0^t |&\psca{ \Delta_q^h (v\pa_yw)_{\phi}, \Delta_q^h w_{\phi}}_{L^2}| dt' \leq C d_q^2 2^{-2qs} \Vert u_\phi \Vert_{\tilde{L}^\infty_t(\mathcal{B}^{\f32})}^\frac{1}{2} \Vert \pa_y w_\phi \Vert_{\tilde{L}^2_t(\mathcal{B}^s)} \Vert w_\phi \Vert_{\tilde{L}^2_{t,\dot{\theta}(t)}(\mathcal{B}^{s+\frac{1}{2}})} \\
&+  C d_q^2 2^{-2qs}\Big( \Vert w_\phi \Vert_{\tilde{L}^2_t(\mathcal{B}^\f12)}\Vert u_\phi \Vert_{\tilde{L}^\infty_t(\mathcal{B}^{s+1})} \| \pa_yw_\phi\|_{\tilde{L}^2_t(\mathcal{B}^s)} + \Vert w_\phi \Vert_{\tilde{L}^2_{t,\dot{\theta}(t)}(\mathcal{B}^{s+\frac{1}{2}})}^2 \Big) \nonumber
\end{align}

By inserting the resulting estimates \eqref{Pressureestiwpa_tubis1}-\eqref{eq:S4eq6bis1} in \eqref{Vort:S4eq1bis1} + \eqref{vORT:S4eq2bis1} and multiplying by $2^{2qs}$ for $s>0$, and then integrating over time, and summing with respect to $q \in \ZZ$, we find that for $t<T^\star$

\begin{multline} \label{S4eq6bis1bis1}
	 \Big( \frac{1}{2}\|e^{\mathcal{R} t}(w + \pa_tw)_\phi \|_{\tilde{L}_t^\infty(\mathcal{B}^{s})} +\frac{1}{2}\|e^{\mathcal{R} t}(\pa_tw)_\phi \|_{\tilde{L}_t^\infty(\mathcal{B}^{s})} + \|e^{\mathcal{R} t} \pa_yw_\phi \|_{\tilde{L}_t^\infty(\mathcal{B}^{s})} \Big)+ \f12\|e^{\mathcal{R} t}(\pa_tw)_\phi\|_{\tilde{L}_t^2(\mathcal{B}^{s})} \\ +\sqrt{k\lambda} \|e^{\mathcal{R} t} w_\phi\|_{\tilde{L}^2_{t,\dot{\theta}(t)}(\mathcal{B}^{s+\frac{1}{2}})} + \f12\|e^{\mathcal{R} t}\pa_yw_\phi\|_{\tilde{L}_t^2(\mathcal{B}^{s})} +  \sqrt{\lambda}  \|e^{\mathcal{R} t}(\pa_t w)_\phi\|_{\tilde{L}^2_{t,\dot{\theta}(t)}(\mathcal{B}^{s+\frac{1}{2}})} +  \sqrt{\lambda} \|e^{\mathcal{R} t}\pa_y w_\phi\|_{\tilde{L}^2_{t,\dot{\theta}(t)}(\mathcal{B}^{s+\frac{1}{2}})} 
\\	\leq C\Big(\mathcal{L}(0)+   \Vert e^{\mathcal{R} t} w_{\phi} \Vert_{\tilde{L}^2_{t,\dot{\theta}(t)}(\mathcal{B}^{s+\frac{1}{2}})} +  \Vert e^{\mathcal{R} t} w_{\phi} \Vert_{\tilde{L}^2_{t,\dot{\theta}(t)}(\mathcal{B}^{s+\frac{1}{2}})}^\f12 \Vert e^{\mathcal{R} t} (\pa_tw)_{\phi} \Vert_{\tilde{L}^2_{t,\dot{\theta}(t)}(\mathcal{B}^{s+\frac{1}{2}})}^\f12 \\ 
+ \Vert w_\phi \Vert_{\tilde{L}^2_{t,\dot{\theta}}(\mathcal{B}^{s+\frac{1}{2}})}^\f12\Vert \pa_yu_\phi \Vert_{\tilde{L}^\infty_t(\mathcal{B}^{\f32})}^\frac{1}{4} \Vert \pa_y w_\phi \Vert_{\tilde{L}^2_t(\mathcal{B}^s)}^\f12 + \Vert (\pa_tw)_\phi \Vert_{\tilde{L}^2_{t,\dot{\theta}}(\mathcal{B}^{s+\frac{1}{2}})}^\f12\Vert \pa_yu_\phi \Vert_{\tilde{L}^\infty_t(\mathcal{B}^{\f32})}^\frac{1}{4} \Vert \pa_y w_\phi \Vert_{\tilde{L}^2_t(\mathcal{B}^s)}^\f12 \\
 + \| \pa_tu_\phi \|_{\tilde{L}^2_t(\mathcal{B}^\f12)} \|e^{\mathcal{R}t} u_\phi\|_{\tilde{L}^\infty_t(\mathcal{B}^{s+1})} + \| \pa_y u_\phi \|_{\tilde{L}^2_t(\mathcal{B}^\f12)} \|e^{\mathcal{R}t} \pa_t u_\phi\|_{\tilde{L}^\infty_t(\mathcal{B}^{s+1})} + \| \pa_y u_\phi \|_{\tilde{L}^2_t(\mathcal{B}^\f12)} \|e^{\mathcal{R}t} u_\phi\|_{\tilde{L}^\infty_t(\mathcal{B}^{s+2})}\Big) \\
  {\sqrt{\lambda}} \big( C\|e^{\mathcal{R}t} w_\phi \|_{\tilde{L}^2_{t,\dot{\theta}(t)}(\mathcal{B}^{s+\frac{1}{2}})} + \f12\|e^{\mathcal{R}t} \pa_y w_\phi \|_{\tilde{L}^2_{t,\dot{\theta}(t)}(\mathcal{B}^{s+\frac{1}{2}})}\big) + C\sqrt{\lambda} \|\pa_y u_\phi\|_{\tilde{L}^\infty_t(\mathcal{B}^\f32)}  \|e^{\mathcal{R}t} w_\phi \|_{\tilde{L}^2_{t,\dot{\theta}(t)}(\mathcal{B}^{s+\frac{1}{2}})} \\
  + C\| e^{\mathcal{R}t} w_\phi\|_{\tilde{L}^2_t(\mathcal{B}^s)} + \frac{1}{8}\| e^{\mathcal{R}t} \pa_y w_\phi\|_{\tilde{L}^2_t(\mathcal{B}^s)} + C\| e^{\mathcal{R}t} w_\phi\|_{\tilde{L}^\infty_t(\mathcal{B}^s)} + \frac{1}{8}\| e^{\mathcal{R}t} \pa_y w_\phi\|_{\tilde{L}^\infty_t(\mathcal{B}^s)}
\end{multline}
where \begin{align*}
\mathcal{L}(0) = \mathtt{E}(0) + \|e^{a|D_x|}\pa_yw_0 \|_{\mathcal{B}^{s}} +\|e^{a|D_x|}(w_0+w_1) \|_{\mathcal{B}^{s}} + \|e^{a|D_x|}w_1 \|_{\mathcal{B}^{s}}.
\end{align*}
Applying Young's inequality yields
\begin{align*}
 C\Vert \pa_y u_\phi \Vert_{\tilde{L}^\infty_t(\mathcal{B}^{\f32})}^\frac{1}{4} \Vert &e^{\mathcal{R}t'} \pa_y w_\phi \Vert_{\tilde{L}^2_t(\mathcal{B}^s)}^\f12 \Vert e^{\mathcal{R}t'} w_\phi \Vert_{\tilde{L}^2_{t,\dot{\theta}(t)}(\mathcal{B}^{s+\frac{1}{2}})}^\f12\\
&\leq C \Vert \pa_yu_\phi \Vert_{\tilde{L}^\infty_t(\mathcal{B}^{\f32})}^\f12 \Vert e^{\mathcal{R}t'} w_\phi \Vert_{\tilde{L}^2_{t,\dot{\theta}(t)}(\mathcal{B}^{s+\frac{1}{2}})} + \frac{1}{8} \Vert e^{\mathcal{R}t'} \pa_y w_\phi \Vert_{\tilde{L}^2_t(\mathcal{B}^s)} .
\end{align*}

Then we achieve
\begin{multline} \label{S4eq7bis1bis1}
	 \Big( \frac{1}{2} \|e^{\mathcal{R} t}(w + \pa_tw)_\phi \|_{\tilde{L}_t^\infty(\mathcal{B}^{s})} +  \frac{1}{2} \|e^{\mathcal{R} t}(\pa_tw)_\phi \|_{\tilde{L}_t^\infty(\mathcal{B}^{s})} +\f34 \|e^{\mathcal{R} t} \pa_yw_\phi \|_{\tilde{L}_t^\infty(\mathcal{B}^{s})} \Big)+ \f12 \|e^{\mathcal{R} t}(\pa_tw)_\phi\|_{\tilde{L}_t^2(\mathcal{B}^{s})} \\ +\sqrt{k\lambda} \|e^{\mathcal{R} t} w_\phi\|_{\tilde{L}^2_{t,\dot{\theta}(t)}(\mathcal{B}^{s+\frac{1}{2}})} + \f14 \|e^{\mathcal{R} t}\pa_yw_\phi\|_{\tilde{L}_t^2(\mathcal{B}^{s})} +  \sqrt{\lambda}  \|e^{\mathcal{R} t}(\pa_t w)_\phi\|_{\tilde{L}^2_{t,\dot{\theta}(t)}(\mathcal{B}^{s+\frac{1}{2}})}  +  \frac{\sqrt{\lambda}}{2} \|e^{\mathcal{R} t}\pa_y w_\phi\|_{\tilde{L}^2_{t,\dot{\theta}(t)}(\mathcal{B}^{s+\frac{1}{2}})} \\ \leq \sqrt{C}\Big(\mathcal{L}(0) + \Big(\sqrt{2} + \Vert \pa_yu_\phi \Vert_{\tilde{L}^\infty_t(\mathcal{B}^{\f32})}^\f12 \Big) \Vert e^{\mathcal{R} t} w_{\phi} \Vert_{\tilde{L}^2_{t,\dot{\theta}(t)}(\mathcal{B}^{s+\frac{1}{2}})} + \sqrt{\lambda}\big(1 +  \Vert \pa_yu_\phi \Vert_{\tilde{L}^\infty_t(\mathcal{B}^{\f32})}\big) \Vert e^{\mathcal{R} t} w_{\phi} \Vert_{\tilde{L}^2_{t,\dot{\theta}(t)}(\mathcal{B}^{s+\frac{1}{2}})} \\ + \big(\sqrt{2} + \Vert \pa_y u_\phi \Vert_{\tilde{L}^\infty_t(\mathcal{B}^{\f32})}^\f12 \big)\Vert e^{\mathcal{R} t} (\pa_y w,\pa_tw)_{\phi} \Vert_{\tilde{L}^2_{t,\dot{\theta}(t)}(\mathcal{B}^{s+\frac{1}{2}})}\\
+ \|\pa_t u_\phi\|_{\tilde{L}_{t}^2(\mathcal{B}^\f12)}\|e^{\mathcal{R}t} u_\phi\|_{\tilde{L}^\infty_t(\mathcal{B}^{s+1})} + \|e^{\mathcal{R}t} (\pa_tu)_\phi\|_{\tilde{L}^\infty_t(\mathcal{B}^{s+1})}+ \|e^{\mathcal{R}t} u_\phi\|_{\tilde{L}^\infty_t(\mathcal{B}^{s+2})}\Big).
\end{multline}
Therefore if we take
\begin{equation}
	\label{4.19bis1}
	\left\{\;
	\begin{aligned}
		\lambda &\geq 8C \Big(2 + \Vert \pa_yu_\phi \Vert_{\tilde{L}^\infty_t(\mathcal{B}^{\f32})} \Big) \\
		k\lambda & \geq 8C \Big(2 + \Vert \pa_y u_\phi \Vert_{\tilde{L}^\infty_t(\mathcal{B}^{\f32})} ),
	\end{aligned}
	\right.
\end{equation}
 we obtain by using the 
\begin{align*}
    \mathcal{E}_{s,\frac{\lambda}{4}}(w)(t) &\leq C \Big( \mathcal{L}(0)+ \|e^{a|D_x|} u_0\|_{\mathcal{B}^{s+2}} + \|e^{a|D_x|} u_0\|_{\mathcal{B}^{s+1}} + \|e^{a|D_x|} u_1\|_{\mathcal{B}^{s+1}} \\&+ \sqrt{\lambda}\big(1 +  \Vert \pa_yu_\phi \Vert_{\tilde{L}^\infty_t(\mathcal{B}^{\f32})}\big) \Vert e^{\mathcal{R} t} w_{\phi} \Vert_{\tilde{L}^2_{t,\dot{\theta}(t)}(\mathcal{B}^{s+\frac{1}{2}})} \Big)
\end{align*}
where 
\begin{align*}
 \mathcal{E}_{s,\frac{\lambda}{4}}(w)(t) &= \Big( \frac{1}{2} \|e^{\mathcal{R} t}(w + \pa_tw)_\phi \|_{\tilde{L}_t^\infty(\mathcal{B}^{s})} +  \frac{1}{2} \|e^{\mathcal{R} t}(\pa_tw)_\phi \|_{\tilde{L}_t^\infty(\mathcal{B}^{s})} +\f34 \|e^{\mathcal{R} t} \pa_yw_\phi \|_{\tilde{L}_t^\infty(\mathcal{B}^{s})} \Big) \\ &+ \f12 \|e^{\mathcal{R} t}(\pa_tw)_\phi\|_{\tilde{L}_t^2(\mathcal{B}^{s})} +\frac{\sqrt{k\lambda}}{4} \|e^{\mathcal{R} t} w_\phi\|_{\tilde{L}^2_{t,\dot{\theta}(t)}(\mathcal{B}^{s+\frac{1}{2}})} + \f14 \|e^{\mathcal{R} t}\pa_yw_\phi\|_{\tilde{L}_t^2(\mathcal{B}^{s})} \\ &+  \frac{\sqrt{\lambda}}{4}  \|e^{\mathcal{R} t}(\pa_t w)_\phi\|_{\tilde{L}^2_{t,\dot{\theta}(t)}(\mathcal{B}^{s+\frac{1}{2}})}  +  \frac{\sqrt{\lambda}}{4} \|e^{\mathcal{R} t}\pa_y w_\phi\|_{\tilde{L}^2_{t,\dot{\theta}(t)}(\mathcal{B}^{s+\frac{1}{2}})}
\end{align*}

Then, taking $\lambda = 8C(2 + \Vert e^{a|D_x|}  \pa_y u_0\Vert_{\mathcal{B}^{\f32}} +  \Vert e^{a|D_x|}  (u_0 + u_1)\Vert_{\mathcal{B}^{\f32}} + \|e^{a|D_x|} u_1\|_{\mathcal{B}^{\f32}} ),$ therefore the condition of the proposition is satisfied and then the proposition is proved.
\end{proof}

As a matter of fact, it remains to present the estimate of $ \Vert \Delta_q^h (\pa_t^2u)_\phi \Vert_{L^2}$, this estimate will serve us in the proof of the last theorem \ref{th:33}. Indeed by applying $\Delta_q^h$ to \eqref{eq:Hnsbis1} and take the $L^2$ inner product of resulting equation with $\Delta_q^h (\pa_t^2u)_\phi$. That yields 
\begin{align*}
\Vert \Delta_q^h (\pa_t^2u)_\phi \Vert_{L^2}^2 &= \psca{\Delta_q^h \pa_y^2 u_\phi,\Delta_q^h (\pa_t^2u)_\phi }_{L^2} - \psca{\Delta_q^h (\pa_t u)_\phi,\Delta_q^h (\pa_t^2u)_\phi }_{L^2}  -  \psca{\Delta_q^h (u\pa_xu)_\phi,\Delta_q^h (\pa_t^2u)_\phi }_{L^2} \\ & - \psca{ \Delta_q^h (v\pa_yu)_\phi,\Delta_q^h (\pa_t^2u)_\phi }_{L^2} - \psca{ \Delta_q^h \pa_x p_\phi , \Delta_q^h (\pa_t^2u)_\phi }_{L^2}.
\end{align*}
The fact that $(\pa_tu)_\phi = \pa_t u_\phi + \lambda \dot{\theta}(t) |D_x|u_\phi$ implies 
$$  \psca{\Delta_q^h (\pa_t u)_\phi,\Delta_q^h (\pa_t^2u)_\phi }_{L^2} = \frac{1}{2} \frac{d}{dt} \Vert \Delta_q^h (\pa_tu)_\phi \Vert_{L^2}^2 + \lambda \dot{\theta}(t) 2^q \Vert \Delta_q^h (\pa_tu)_\phi \Vert_{L^2}^2, $$
from which, we deduce that 
\begin{align*}
\Vert \Delta_q^h (\pa_t^2u)_\phi \Vert_{L^2}^2 + \frac{1}{2} \frac{d}{dt}\Vert \Delta_q^h (\pa_t u)_\phi \Vert_{L^2}^2 \leq  I_1 + I_2 + I_3 + I_4,
\end{align*}
where
\begin{align*}
I_1 &= \abs{\psca{ \Delta_q^h \pa_y^2u_\phi,\Delta_q^h (\pa_t^2u)_\phi }_{L^2}}\\
I_2 &= \abs{\psca{ \Delta_q^h (u\pa_xu)_\phi,\Delta_q^h (\pa_t^2u)_\phi }_{L^2}}\\
I_3 &=  \abs{\psca{  \Delta_q^h (v\pa_yu)_\phi,\Delta_q^h (\pa_t^2u)_\phi }_{L^2}}\\
I_4 &= \abs{\psca{  \Delta_q^h \pa_x p_\phi , \Delta_q^h (\pa_t^2u)_{\phi} }_{L^2}}.
\end{align*}

Since $\dd_x u + \dd_y v = 0$, using \eqref{vvbis1} and integrations by parts, we find 
\begin{align*}
I_4 = \abs{\psca{  \Delta_q^h \pa_x p_\phi , \Delta_q^h (\pa_t^2u)_\phi }_{L^2}} =0. 
\end{align*}

For $I_1$, $I_2$ and $I_3$ we have 
\begin{align*}
    I_2 &= \abs{\psca{  \Delta_q^h \pa_y^2 u_\phi,\Delta_q^h (\pa_t^2u)_\phi }_{L^2}} \leq  C\Vert \Delta_q^h (\pa_y^2u)_\phi \Vert_{L^2}^2  + \frac{1}{10} \Vert \Delta_q^h (\pa_t^2u)_\phi \Vert_{L^2}^2 \\ 
	I_2 &= \abs{\psca{  \Delta_q^h (u\pa_xu)_\phi,\Delta_q^h (\pa_t^2u)_\phi }_{L^2}} \leq  C\Vert \Delta_q^h (u\pa_xu)_\phi \Vert_{L^2}^2  + \frac{1}{10} \Vert \Delta_q^h (\pa_t^2u)_\phi \Vert_{L^2}^2 \\
	I_3 &= \abs{\psca{  \Delta_q^h (v\pa_yu)_\phi,\Delta_q^h (\pa_t^2u)_\phi }_{L^2}} \leq C\Vert \Delta_q^h (v\pa_yu)_\phi \Vert_{L^2}^2  + \frac{1}{10} \Vert \Delta_q^h (\pa_t^2u)_\phi \Vert_{L^2}^2.
\end{align*}
Then, we deduce that
\begin{align*}
	\Vert \Delta_q^h (\pa_t^2u)_\phi \Vert_{L^2}^2 + \frac{1}{2} \frac{d}{dt}\Vert \Delta_q^h (\pa_t u)_\phi \Vert_{L^2}^2 \leq C\left(\Vert \Delta_q^h (u\pa_xu)_\phi \Vert_{L^2}^2  + \Vert \Delta_q^h (v\pa_yu)_\phi \Vert_{L^2}^2 + \Vert \Delta_q^h \pa_y^2u_\phi \Vert_{L^2}^2 \right).
\end{align*}
Multiplying the result by $e^{2\mathcal{R}t}$ and integrating over $[0,t]$, we get 
\begin{multline*}
	\Vert e^{\mathcal{R}t}\Delta_q^h (\pa_t^2u)_\phi \Vert_{L^2_t(L^2)}^2 + \frac{1}{2} \Vert e^{\mathcal{R}t} \Delta_q^h (\pa_t u)_\phi \Vert_{L^\infty_t(L^2)}^2 \\ 
	\leq C \Big(\Vert \Delta_q^h e^{a|D_x|} u_1 \Vert_{L^2}^2   + \Vert e^{\mathcal{R}t}\Delta_q^h (u\pa_xu)_\phi \Vert_{L^2_t(L^2)}^2+\Vert e^{\mathcal{R}t}\Delta_q^h (v\pa_yu)_\phi \Vert_{L^2_t(L^2)}^2 + \Vert e^{\mathcal{R}t}\Delta_q^h \pa_y^2u_\phi \Vert_{L^2_t(L^2)}^2 \Big).
\end{multline*}
Multiplying the above inequality by $2^{3q}$, then taking the square root of the resulting estimate, and finally summing up the obtained equations with respect to $q \in \mathbb{Z},$ we obtain
\begin{multline} \label{5.14bis1}
	\Vert e^{\mathcal{R}t} (\pa_t^2u)_\phi \Vert_{\tilde{L}^2_t(\mathcal{B}^{\f32})} + \frac{1}{4} \Vert e^{\mathcal{R}t} (\pa_t u)_\phi \Vert_{\tilde{L}^\infty_t(\mathcal{B}^{\f32})}\leq C \Big(\Vert e^{a|D_x|} u_1 \Vert_{\mathcal{B}^{\f32}}  \\ 
	+ \Vert e^{\mathcal{R}t} (u\pa_xu)_\phi \Vert_{\tilde{L}^2_t(\mathcal{B}^{\f32})}+\Vert e^{\mathcal{R}t} (v\pa_yu)_\phi \Vert_{\tilde{L}^2_t(\mathcal{B}^{\f32})} + \Vert e^{\mathcal{R}t} \pa_y^2u_\phi \Vert_{\tilde{L}^2_t(\mathcal{B}^{\f32})} \Big).
\end{multline}
Next, to deal the nonlinear terms in \eqref{5.14bis1}, we need the following lemma that give the estimates of the terms $\Vert e^{\mathcal{R}t} (u\pa_xu)_\phi \Vert_{\tilde{L}^2_t(\mathcal{B}^{\f32})} $ and $\Vert e^{\mathcal{R}t} (v\pa_yu)_\phi \Vert_{\tilde{L}^2_t(\mathcal{B}^{\f32})} $, the proof will be given in the appendix \ref{app2}. 
\begin{lemma}\label{lem:estinonlinéaire}
\begin{align}\label{estimUU1}
	\Vert e^{\mathcal{R}t} (u\pa_xu)_\phi \Vert_{\tilde{L}^2_t(\mathcal{B}^{\f32})} &\leq C \Vert  u_\phi \Vert_{\tilde{L}^\infty(\mathcal{B}^\f12)} \Vert e^{\mathcal{R}t} \pa_y u_\phi \Vert_{\tilde{L}^2_t(\mathcal{B}^{\f52})}; 
\end{align}
and
\begin{align}\label{estimUV1}
	\Vert e^{\mathcal{R}t} (v\pa_yu)_\phi \Vert_{\tilde{L}^2_t(\mathcal{B}^{\f32})} &\leq C \Vert  u_\phi \Vert_{\tilde{L}^\infty(\mathcal{B}^\f12)} \Vert e^{\mathcal{R}t} \pa_y u_\phi \Vert_{\tilde{L}^2_t(\mathcal{B}^{\f52})} + \Vert  u_\phi \Vert_{\tilde{L}^\infty(\mathcal{B}^{\f52})} \Vert e^{\mathcal{R}t} \pa_y u_\phi \Vert_{\tilde{L}^2_t(\mathcal{B}^\f12)}.
\end{align}
\end{lemma}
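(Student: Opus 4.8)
The plan is to prove both inequalities by the same Littlewood--Paley machinery already used for Lemmas \ref{lem:ABCbis1}--\ref{lem:BBBbis1}, but now at the higher regularities $\mathcal{B}^{\f32}$ and $\mathcal{B}^{\f52}$, which forces me to redo the frequency bookkeeping from scratch, since those lemmas are only stated for $s\in\,]0,1[$. In each case I would first invoke Corollary \ref{coro} to reduce to nonnegative partial Fourier transforms, so that the exponential weight $e^{\phi}$ may be split across products through the convexity inequality \eqref{eq:phiconvexbis1}. The time integrability is then handled by Hölder, placing one factor in $\tilde{L}^\infty_t$ and the other in $\tilde{L}^2_t$, which is exactly the split appearing on the right-hand sides of \eqref{estimUU1} and \eqref{estimUV1}.

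For \eqref{estimUU1}, I would use Bony's horizontal decomposition
$$ u\pa_x u = T^h_u \pa_x u + T^h_{\pa_x u} u + R^h(u,\pa_x u). $$
Applying $\Delta_q^h$ and the support localisation from [\cite{B1981}, Proposition 2.10], each block reduces to a sum over $|q-q'|\le 4$ (para-products) or $q'\geq q-3$ (remainder). The amalgamated low-frequency factor built from $u$ is estimated in $L^\infty$ by Bernstein (Lemma \ref{lem:Bernsteinbis1}) in the horizontal variable, gaining $2^{\cdot/2}$, followed by the Agmon inequality $\|\cdot\|_{L^\infty_v}\lesssim \|\cdot\|_{L^2_v}^{\f12}\|\pa_y\cdot\|_{L^2_v}^{\f12}$ in the vertical variable, which I would finally bound by $\|u_\phi\|_{\mathcal{B}^{\f12}}$; the high-frequency factor $\Delta_{q'}^h \pa_x u_\phi$ is kept in $L^2$, the derivative $\pa_x$ contributing a factor $2^{q'}$. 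Converting one vertical derivative through Poincaré/Agmon and reorganising the powers of $2^{q'}$ so that the output carries weight $2^{3q/2}$, the high factor is absorbed into $\|e^{\mathcal{R}t}\pa_y u_\phi\|_{\tilde{L}^2_t(\mathcal{B}^{\f52})}$. Summation in $q'$ then produces a convolution of two $\ell^1$ sequences whose convergence I must check at these indices.

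For \eqref{estimUV1}, I would first substitute $v=-\int_0^y \pa_x u\,dy'$ from \eqref{vvbis1} and Bony-decompose $v\pa_y u$ as in Lemma \ref{lem:Apa_yBCbis1}. When $v$ is the low (summed) factor, I control $S^h_{q'-1}v_\phi$ in $L^\infty$ using $\|\int_0^y f\,dy'\|_{L^\infty_v}\le \|f\|_{L^2_v}$ together with Bernstein, which yields a $\|u_\phi\|_{\mathcal{B}^{\f12}}$-type bound, while $\Delta_{q'}^h\pa_y u_\phi$ in $L^2$ feeds $\|\pa_y u_\phi\|_{\mathcal{B}^{\f52}}$; this produces the first term on the right. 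When $\pa_y u$ is the low factor and $v$ the high factor, the derivative count shifts by one, so that $v$ (hence $u$) is measured in $\mathcal{B}^{\f52}$ and $\pa_y u$ in $\mathcal{B}^{\f12}$, giving the second term. The remainder $R^h(v,\pa_y u)$ is handled exactly as the rest term in Lemma \ref{lem:Apa_yBCbis1}, using the extra horizontal Bernstein gain $2^{q/2}$ on the $C$-slot.

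The main obstacle is precisely that these estimates sit outside the range $s\in\,]0,1[$ covered by the earlier lemmas: at $s=\f32$ and $s=\f52$ the geometric factors $2^{(q-q')(s\mp 1/2)}$ and $2^{(q-q')s}$ appearing after summation no longer automatically define summable sequences, so I must verify convergence of each frequency sum by hand and place the half-derivative gain correctly. A delicate secondary point is tracking the nonlocal primitive $v=\int_0^y\pa_x u\,dy'$ in the $L^\infty_v$ and $L^2_v$ slots without destroying the boundary-vanishing structure that makes Poincaré available; keeping the bookkeeping ``one extra $\pa_y$, half a horizontal derivative less'' consistent across the three para-product pieces is where the care lies. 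The complete computation is carried out in Appendix \ref{app2}.
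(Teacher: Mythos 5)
Your plan coincides with the paper's own proof in Appendix \ref{app2}: the same horizontal Bony decomposition, Bernstein together with the vertical Agmon/Poincar\'e inequalities to trade the $L^\infty_v$ slot for one $\pa_y$ landing on the high-frequency factor, the $\tilde{L}^\infty_t\times\tilde{L}^2_t$ H\"older split, the substitution $v=-\int_0^y\pa_x u\,dy'$, and the by-hand summability checks at the indices $\f32$ and $\f52$ (which go through since the paraproduct sums are finite and the remainder sum only needs a positive exponent). The only cosmetic difference is bookkeeping in \eqref{estimUV1}: the paper extracts the second right-hand term from the remainder $R^h(v,\pa_y u)$ and bounds $T^h_{\pa_y u}v$ like the first term, whereas you obtain the second term from $T^h_{\pa_y u}v$ and recycle the first for the remainder; both distributions of regularity are admissible, since both terms are available on the right-hand side.
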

Inserting the above estimates into \eqref{5.14bis1} and then using the smallness condition $\norm{u_{\phi}}_{\mathcal{B}^{\f12}} \leq \frac{1}{4C^2}$ and propositions \ref{prop:1bis1} and \ref{prop:2bis1}, we finally obtain 
\begin{align*}
\Vert e^{\mathcal{R}t} (\pa_tu)_\phi \Vert_{\tilde{L}^2_t(\mathcal{B}^{\f32})} +  \Vert e^{\mathcal{R}t}  (\pa_t u)_\phi \Vert_{\tilde{L}^\infty_t(\mathcal{B}^{\f32})}\leq C\Big( \Vert e^{a|D_x|} u_1 \Vert_{\mathcal{B}^{\f32}} + \Vert e^{a|D_x|} \pa_y u_0 \Vert_{\mathcal{B}^{\f52}} \\ + \Vert e^{a|D_x|} (u_0 + u_1) \Vert_{\mathcal{B}^{\f52}} + \Vert e^{a|D_x|} \pa_y^2 u_0 \Vert_{\mathcal{B}^{\f32}} + \Vert e^{a|D_x|} \pa_y(u_0 + u_1) \Vert_{\mathcal{B}^{\f32}} \Big).
\end{align*}
\section{Global well-posedness of system \eqref{eq:HPNhydroPE} }\label{sec:4}
The goal of this section is to prove the Theorem and to establish the global well-posedness of the system $(\ref{eq:HPNhydroPE})$ with small analytic data. As in Section 2, for any locally bounded function $\Theta$ on $\mathbb{R}_+ \times \mathbb{R}$ and any $u\in L^2(\mathcal{S})$, we define the analyticity in the horizontal variable $x$ by means of the following auxiliary function
\begin{align}\label{analybis}
	u_{\Theta}^{\eps}(t,x,y) = \mathcal{F}^{-1}_{\xi\rightarrow x}(\eps^{\Theta(t,\xi)}\widehat{u}^{\eps}(t,\xi,y)).
\end{align}
The width of the analyticity band $\Theta$ is defined by
\begin{equation*}
	\Theta(t,\xi) = (a- \lambda \tau(t))|\xi|,
\end{equation*}
where $\lambda > 0$ with be precised later and $\tau(t)$ will be chosen in such a way that $\Theta(t,\xi) > 0$, for any $(t,\xi) \in \RR_+ \times \RR$ and $\dot{\Theta}(t) = \Theta'(t) = -\lambda \dot{\tau}(t) \leq 0$. In our paper, we will choose
\begin{eqnarray}\label{bandbis1}
	\dot{\tau}(t) = \Vert \pa_y u_{\Theta}^{\eps}(t) \Vert_{\mathcal{B}^{\f12}} + \eps \Vert \pa_y v_{\Theta}^{\eps}(t) \Vert_{\mathcal{B}^{\f12}} \text{ \ \ \ \ with \ \ \ \ } \tau (0) = 0. 
\end{eqnarray}
In what follows, for the sake of the simplicity, we will neglect the script $\eps$ and write $(u_{\Theta},v_{\Theta})$ instead of $(u_{\Theta}^\epsilon,v_{\Theta}^\epsilon)$. In view of the system \eqref{eq:HPNhydroPE}, we can transform it like a equation of order one in time, so if we define $U = (u,\pa_tu)$ and $V= (v,\pa_tv)$, Then $U$ and $V$ satisfy the following equation
\begin{equation}
	\label{eq:hydroPEbis1bis1}
	\left\{\;
	\begin{aligned}
		&\partial_t U + A_\eps(D)U =-\begin{pmatrix}
		0 \\
		u\pa_xu + v\pa_y u + \pa_x p
		\end{pmatrix} \\
		&\eps^2 \Big(\partial_t V + B_\eps(D)V \Big) =-\begin{pmatrix}
		0 \\
		\eps^2(u\pa_xv + v\pa_y v) + \pa_y p
		\end{pmatrix} \\
		&\pa_x u + \pa_y v = 0	 \\
		&(u,v)/_{y=0} = (u,v)/_{y=1} = 0	
	\end{aligned}
	\right.
\end{equation}
where 
$$ U = \begin{pmatrix}
		u \\
		\pa_tu
		\end{pmatrix} \text{ \ \ and \ \ } A_\eps(D) = \begin{pmatrix}
		0 &  -1 \\
		-\eps^2 \pa_x^2 -\pa_y^2 & 1
		\end{pmatrix} $$ 
		and 
		$$ V = \begin{pmatrix}
		v \\
		\pa_tv
		\end{pmatrix} \text{ \ \ and \ \ } B_\eps(D) = \begin{pmatrix}
		0 &  -1 \\
		-\eps^2 \pa_x^2 -\pa_y^2 & 1
		\end{pmatrix} $$ 
		
Then in view of \eqref{eq:Anphichp3} we observe that $(U,V)_\Theta$ verifies 
		\begin{equation}
	\label{eq:hydroPEVphiUphibis1}
	\left\{\;
	\begin{aligned}
		&\partial_t U_\Theta + \lambda \dot{\tau}(t) |D_x| U_\Theta + A_\eps(D)U_\Theta =-\begin{pmatrix}
		0 \\
		(u\pa_x u)_{\Theta} + (v\pa_y u)_{\Theta} + \pa_x p_{\Theta}
		\end{pmatrix} \\
		&\eps^2 \Big(\partial_t V_\Theta + \lambda \dot{\tau}(t) |D_x| V_\Theta + B_\eps(D)V_\Theta \Big) =-\begin{pmatrix}
		0 \\
		\eps^2(u\pa_xv + v\pa_y v)_\Theta + \pa_y p_\Theta
		\end{pmatrix}\\
		&\pa_x u_\Theta + \pa_y v_\Theta = 0	 \\
		&(u_\Theta,v_\Theta)/_{y=0} = (u_\Theta,v_\Theta)/_{y=1} = 0	
	\end{aligned}
	\right.
\end{equation}

Where $|D_x|$ denote the Fourier multiplier of the symbol $|\xi|$. In what follows, we recall that we use "C" to denote a generic positive constant which can change from line to line.

By applying the dyadic operator in the horizontal variable $\Delta_q^h$ to \eqref{eq:hydroPEVphiUphibis1} and taking the $L^2$ inner product of the resulting equation with $\Delta_q^h U_\Theta$ and $\Delta_q^h V_\Theta$ we obtain 

\begin{multline} \label{S8eq1bis1}
	 \psca{\Delta_q^h \pa_t U_{\Theta}, \Delta_q^h U_{\Theta}}_{L^2} + \lambda \dot{\tau}(t) \psca{\Delta_q^h |D_x| U_{\Theta}, \Delta_q^h U_{\Theta}}_{L^2} + \psca{\Delta_q^h A_\eps(D) U_{\Theta}, \Delta_q^h U_{\Theta}}_{L^2} \\
	= -\psca{\Delta_q^h (u\pa_x u + v\pa_y u)_{\Theta}, \Delta_q^h (\pa_tu)_{\phi}}_{L^2} - \psca{\Delta_q^h \pa_x p_{\phi}, \Delta_q^h (\pa_tu)_{\phi}}_{L^2}
\end{multline}
and
\begin{multline} \label{S8eq2bis1}
	 \eps^2\psca{\Delta_q^h \pa_t V_\Theta, \Delta_q^h V_{\Theta}}_{L^2} + \eps^2\lambda \dot{\tau}(t) \psca{\Delta_q^h |D_x|V_\Theta, \Delta_q^h V_{\Theta}}_{L^2} + \eps^2\psca{\Delta_q^h B_\eps(D) V_\Theta, \Delta_q^h V_{\Theta}}_{L^2} \\
	= -\eps^2 \psca{\Delta_q^h (u\pa_x v + v\pa_y v)_{\Theta}, \Delta_q^h (\pa_tv)_{\Theta}}_{L^2} -  \psca{\Delta_q^h \pa_y p_{\Theta}, \Delta_q^h (\pa_tv)_{\Theta}}_{L^2}.
\end{multline}

As in \eqref{S1eq3bis1} and by using Lemma \ref{lem:1}, we gather 
\begin{align}\label{estimate:UTheta}
&\psca{\Delta_q^h \pa_t U_{\Theta}, \Delta_q^h U_{\Theta}}_{L^2} + \lambda \dot{\tau}(t) \psca{\Delta_q^h |D_x| U_{\Theta}, \Delta_q^h U_{\Theta}}_{L^2} = \psca{\Delta_q^h \pa_t u_{\Theta}, \Delta_q^h u_{\Theta}}_{L^2} \nonumber\\ &+ \psca{\Delta_q^h \pa_t (\pa_tu)_{\Theta}, \Delta_q^h (\pa_tu)_{\Theta}}_{L^2} + \lambda \dot{\tau}(t)\Big( \psca{\Delta_q^h |D_x| u_{\Theta}, \Delta_q^h u_{\Theta}}_{L^2} + \psca{\Delta_q^h |D_x| (\pa_tu)_{\Theta}, \Delta_q^h (\pa_tu)_{\Theta}}_{L^2} \Big) \nonumber\\
&= \frac{1}{2}\frac{d}{dt} \Big( \|\Delta_q^h u_\Theta \|_{L^2}^2 + \|\Delta_q^h (\pa_tu)_\Theta \|_{L^2}^2 \Big) + \lambda \dot{\tau}(t) \Big( \| \Delta_q^h |D_x|^\f12 u_\Theta \|_{L^2}^2 + \|\Delta_q^h |D_x|^\f12 (\pa_tu)_\Theta \|_{L^2}^2 \Big),
\end{align} 
and 
\begin{align}\label{estimate:VTheta}
&\eps^2 \psca{\Delta_q^h \pa_t V_{\Theta}, \Delta_q^h V_{\Theta}}_{L^2} + \eps^2 \lambda \dot{\tau}(t) \psca{\Delta_q^h |D_x| V_{\Theta}, \Delta_q^h V_{\Theta}}_{L^2} = \eps^2\psca{\Delta_q^h \pa_t v_{\Theta}, \Delta_q^h v_{\Theta}}_{L^2} \nonumber\\ &+ \eps^2\psca{\Delta_q^h \pa_t (\pa_tv)_{\Theta}, \Delta_q^h (\pa_tv)_{\Theta}}_{L^2} + \eps^2 \lambda \dot{\tau}(t)\Big( \psca{\Delta_q^h |D_x| v_{\Theta}, \Delta_q^h v_{\Theta}}_{L^2} + \psca{\Delta_q^h |D_x| (\pa_tv)_{\Theta}, \Delta_q^h (\pa_tv)_{\Theta}}_{L^2} \Big)\nonumber \\
&= \frac{1}{2}\frac{d}{dt} \Big( \|\Delta_q^h \eps v_\Theta \|_{L^2}^2 + \|\Delta_q^h \eps (\pa_tv)_\Theta \|_{L^2}^2 \Big) + \lambda \dot{\tau}(t) \Big( \| \Delta_q^h |D_x|^\f12 \eps v_\Theta \|_{L^2}^2 + \|\Delta_q^h |D_x|^\f12 \eps (\pa_tv)_\Theta \|_{L^2}^2 \Big).
\end{align} 
Next, we use that $ (\pa_t u)_\Theta = \pa_t u_\Theta + \lambda \dot{\tau}|D_x| u_\Theta$ and $(\pa_tv)_\Theta = \pa_t v_\Theta + \lambda \dot{\tau}(t) |D_x|v_\Theta $
\begin{align*}
&\psca{\Delta_q^h A_\eps(D) U_{\Theta}, \Delta_q^h U_{\Theta}}_{L^2} = -\psca{\Delta_q^h  (\pa_tu)_{\Theta}, \Delta_q^h u_{\Theta}}_{L^2} -\eps^2\psca{\Delta_q^h \pa_x^2 u_{\Theta}, \Delta_q^h (\pa_tu)_{\Theta}}_{L^2} \\ &- \psca{\Delta_q^h \pa_y^2 u_{\Theta}, \Delta_q^h (\pa_tu)_{\Theta}}_{L^2} + \psca{\Delta_q^h (\pa_tu)_{\Theta}, \Delta_q^h (\pa_tu)_{\Theta}}_{L^2}
\end{align*}
and
\begin{align*}
&\eps^2 \psca{\Delta_q^h B_\eps(D) V_{\Theta}, \Delta_q^h V_{\Theta}}_{L^2} = -\eps^2\psca{\Delta_q^h  (\pa_tv)_{\Theta}, \Delta_q^h v_{\Theta}}_{L^2} -\eps^4\psca{\Delta_q^h \pa_x^2 v_{\Theta}, \Delta_q^h (\pa_tv)_{\Theta}}_{L^2} \\ &- \eps^2\psca{\Delta_q^h \pa_y^2 v_{\Theta}, \Delta_q^h v_{\Theta}}_{L^2} + \eps^2\psca{\Delta_q^h (\pa_tv)_{\Theta}, \Delta_q^h v_{\Theta}}_{L^2}.
\end{align*}
We use again the result of Lemma \ref{lem:1}, we obtain 
\begin{align}\label{estimate:A(D)1}
&\psca{\Delta_q^h A_\eps(D) U_{\Theta}, \Delta_q^h U_{\Theta}}_{L^2} = -\f12 \frac{d}{dt}\|\Delta_q^h u_\Theta \|_{L^2}^2 - \lambda \dot{\tau}(t) \|\Delta_q^h |D_x|^\f12 u_\Theta \|_{L^2}^2   + \frac{1}{2} \frac{d}{dt} \|\Delta_q^h \eps \pa_x u_\Theta \|_{L^2}^2  \nonumber\\ &+ \lambda \dot{\tau}(t) \|\Delta_q^h |D_x|^\f12 \eps\pa_x u_\Theta\|_{L^2}^2 +  \frac{1}{2} \frac{d}{dt} \|\Delta_q^h \pa_y u_\Theta \|_{L^2}^2 + \lambda \dot{\tau}(t) \|\Delta_q^h |D_x|^\f12 \pa_y u_\Theta\|_{L^2}^2 + \|\Delta_q^h (\pa_t u)_\Theta \|_{L^2}^2,
\end{align}
and
\begin{align}\label{estimate:B(D)}
&\eps^2 \psca{\Delta_q^h B_\eps(D) V_{\Theta}, \Delta_q^h V_{\Theta}}_{L^2} = -\f12 \frac{d}{dt}\|\Delta_q^h \eps v_\Theta \|_{L^2}^2 - \lambda \dot{\tau}(t) \|\Delta_q^h |D_x|^\f12 \eps v_\Theta \|_{L^2}^2   + \frac{\eps^2}{2} \frac{d}{dt} \|\Delta_q^h \eps \pa_x v_\Theta \|_{L^2}^2  \nonumber\\ &+ \eps^2 \lambda \dot{\tau}(t) \|\Delta_q^h |D_x|^\f12 \eps\pa_x v_\Theta\|_{L^2}^2 +  \frac{1}{2} \frac{d}{dt} \|\Delta_q^h \eps \pa_y v_\Theta \|_{L^2}^2 + \lambda \dot{\tau}(t) \|\Delta_q^h |D_x|^\f12 \eps \pa_y v_\Theta\|_{L^2}^2 + \|\Delta_q^h \eps (\pa_t v)_\Theta \|_{L^2}^2.
\end{align}
We sum \eqref{estimate:UTheta} with \eqref{estimate:A(D)1}, we achieve 
\begin{align}\label{estimate:A(D)1bis}
&\psca{\Delta_q^h \pa_t U_{\Theta}, \Delta_q^h U_{\Theta}}_{L^2} + \lambda \dot{\tau}(t) \psca{\Delta_q^h |D_x| U_{\Theta}, \Delta_q^h U_{\Theta}}_{L^2} + \psca{\Delta_q^h A_\eps(D) U_{\Theta}, \Delta_q^h U_{\Theta}}_{L^2} \nonumber \\ 
&= \frac{1}{2}\frac{d}{dt} \Big(\|\Delta_q^h (\pa_tu)_\Theta \|_{L^2}^2 + \|\Delta_q^h \eps \pa_x u_\Theta \|_{L^2}^2 +  \|\Delta_q^h \pa_y u_\Theta \|_{L^2}^2 \Big) + \|\Delta_q^h (\pa_t u)_\Theta \|_{L^2}^2 \nonumber \\
& + \lambda \dot{\tau}(t) \Big( \|\Delta_q^h |D_x|^\f12 (\pa_tu)_\Theta \|_{L^2}^2 + \|\Delta_q^h |D_x|^\f12 \eps\pa_x u_\Theta\|_{L^2}^2 + \|\Delta_q^h |D_x|^\f12 \pa_y u_\Theta\|_{L^2}^2 \Big).
\end{align}
Next we sum \eqref{estimate:VTheta} with \eqref{estimate:B(D)}, we obtain 
\begin{align}\label{estimate:B(D)bis}
&\eps^2 \psca{\Delta_q^h \pa_t V_{\Theta}, \Delta_q^h V_{\Theta}}_{L^2} + \eps^2 \lambda \dot{\tau}(t) \psca{\Delta_q^h |D_x| V_{\Theta}, \Delta_q^h V_{\Theta}}_{L^2} + \eps^2 \psca{\Delta_q^h B_\eps(D) V_{\Theta}, \Delta_q^h V_{\Theta}}_{L^2} \nonumber \\ 
& = \frac{1}{2}\frac{d}{dt} \Big( \|\Delta_q^h \eps (\pa_tv)_\Theta \|_{L^2}^2 \eps^2 \|\Delta_q^h \eps \pa_x v_\Theta \|_{L^2}^2 + \|\Delta_q^h \eps \pa_y v_\Theta \|_{L^2}^2  \Big) + \|\Delta_q^h \eps (\pa_t v)_\Theta \|_{L^2}^2  \nonumber\\
& + \lambda \dot{\tau}(t) \Big( \|\Delta_q^h |D_x|^\f12 \eps (\pa_tv)_\Theta \|_{L^2}^2 + \eps^2 \|\Delta_q^h |D_x|^\f12 \eps\pa_x v_\Theta\|_{L^2}^2 + \|\Delta_q^h |D_x|^\f12 \eps \pa_y v_\Theta\|_{L^2}^2 \Big).
\end{align}
By using the Dirichlet boundary condition $(u,v)\vert_{y=0} =(u,v)\vert_{y=1} = 0$, and the incompressibility condition $\pa_x u +\pa_y v =0$ and the relation, we can perform integration by parts, we get 
\begin{align*}
	\abs{\psca{\Delta_q^h \nabla p_{\Theta}, \Delta_q^h (\pa_tu,\pa_tv)_{\Theta}}_{L^2}} = 0.
\end{align*}

We insert the resulting equality \eqref{estimate:A(D)1bis} in \eqref{S8eq1bis1} and \eqref{estimate:B(D)bis} in \eqref{S8eq2bis1}, and then multiplying by $e^{2\mathcal{R}t}$, we achieve
\begin{multline} \label{S8eq3bis1}
	 \frac{1}{2}\frac{d}{dt}\Big( \|e^{\mathcal{R} t}\Delta_q^h (\pa_tu)_\Theta \|_{L^2}^2 + \|e^{\mathcal{R} t}\Delta_q^h \pa_yu_\Theta \|_{L^2}^2 + \|e^{\mathcal{R} t}\Delta_q^h \eps(\pa_tv)_\Theta \|_{L^2}^2 + \|e^{\mathcal{R} t}\Delta_q^h \eps\pa_yv_\Theta \|_{L^2}^2 \\+ \eps^4\|e^{\mathcal{R} t}\Delta_q^h \pa_xv_\Theta \|_{L^2}^2 + \eps^2\|e^{\mathcal{R} t}\Delta_q^h \pa_xu_\Theta \|_{L^2}^2  \Big) +   \|e^{\mathcal{R} t}\Delta_q^h (\pa_tu)_\Theta\|_{L^2}^2 + \|e^{\mathcal{R} t}\Delta_q^h \eps(\pa_tv)_\Theta\|_{L^2}^2 \\ +  \lambda \dot{\tau}(t')  \|e^{\mathcal{R} t}\Delta_q^h |D_x|^\frac{1}{2}(\pa_t u)_\Theta\|_{L^2}^2  + \eps^2\lambda \dot{\tau}(t')  \|e^{\mathcal{R} t}\Delta_q^h |D_x|^\frac{1}{2}(\pa_t v)_\Theta\|_{L^2}^2 +\eps^2\lambda \dot{\tau}(t') \|e^{\mathcal{R} t}\Delta_q^h |D_x|^\frac{1}{2}\pa_x u_\Theta\|_{L^2}^2 \\  +\lambda \dot{\tau}(t') \|e^{\mathcal{R} t}\Delta_q^h |D_x|^\frac{1}{2}\pa_y u_\Theta\|_{L^2}^2 + \lambda \dot{\tau}(t') \|e^{\mathcal{R} t}\Delta_q^h |D_x|^\frac{1}{2}\pa_y v_\Theta\|_{L^2}^2 + \eps^4\lambda \dot{\tau}(t') \|e^{\mathcal{R} t}\Delta_q^h |D_x|^\frac{1}{2}\pa_x v_\Theta\|_{L^2}^2 
\\	\leq \mathcal{R} \Big( \|e^{\mathcal{R} t}\Delta_q^h (\pa_tu)_\Theta \|_{L^2}^2 + \|e^{\mathcal{R} t}\Delta_q^h \pa_yu_\Theta \|_{L^2}^2 + \|e^{\mathcal{R} t}\Delta_q^h \eps(\pa_tv)_\Theta \|_{L^2}^2 + \|e^{\mathcal{R} t}\Delta_q^h \eps\pa_yv_\Theta \|_{L^2}^2 \\+ \eps^4\|e^{\mathcal{R} t}\Delta_q^h \pa_xv_\Theta \|_{L^2}^2 + \eps^2\|e^{\mathcal{R} t}\Delta_q^h \pa_xu_\Theta \|_{L^2}^2  \Big) +  \abs{\psca{\Delta_q^h (u\pa_x u)_{\Theta}, e^{2\mathcal{R} t}\Delta_q^h (\pa_tu)_{\Theta}}}_{L^2} \\ + \abs{\psca{\Delta_q^h ( v\pa_y u)_{\Theta}, e^{2\mathcal{R} t}\Delta_q^h (\pa_tu)_{\Theta}}}_{L^2}
 + \eps^2\abs{\psca{\Delta_q^h (u\pa_x v)_{\Theta}, e^{2\mathcal{R} t}\Delta_q^h (\pa_tv)_{\Theta}}_{L^2}} \\ +\eps^2\abs{\psca{\Delta_q^h (v\pa_y v)_{\Theta}, e^{2\mathcal{R} t}\Delta_q^h (\pa_tv)_{\Theta}}_{L^2}}.
\end{multline}

In what follows, we shall always assume that $t <T^\star_1$, where $T^\star_1$ given by 
\begin{equation} \label{eq:T1starbisbis11} 
	T^\star_1 \stackrel{\tiny def}{=} \sup\set{t>0,\  \ \ \tau(t) \leq  \frac{a}{\lambda}}.
\end{equation}

Then we deduce from Lemmas \ref{lem:ABCbis1}-\ref{lem:BBBbis1} for any $t<T^\star_1$, and integrating over time \eqref{S8eq3bis1}, that 
\begin{multline} \label{S8eq4bis1bis1}
	 \int_0^t \frac{1}{2}\frac{d}{dt}\Big( \|e^{\mathcal{R} t'}\Delta_q^h (\pa_tu,\eps\pa_tv)_\Theta \|_{L^2}^2 + \|e^{\mathcal{R} t'}\Delta_q^h (\pa_yu,\eps\pa_yv)_\Theta \|_{L^2}^2 + \eps^2\|e^{\mathcal{R} t'}\Delta_q^h (\pa_xu,\eps\pa_xv)_\Theta \|_{L^2}^2 \Big) dt' \\ + \int_0^t \Big(\|e^{\mathcal{R} t'}\Delta_q^h (\pa_tu)_\Theta\|_{L^2}^2  + \|e^{\mathcal{R} t'}\Delta_q^h \eps(\pa_tv)_\Theta\|_{L^2}^2 \Big) dt' + \lambda \int_0^t \dot{\tau}(t')  \|e^{\mathcal{R} t'}\Delta_q^h |D_x|^\frac{1}{2}(\pa_t u,\eps\pa_tv)_\Theta\|_{L^2}^2 dt'\\ +  \lambda \int_0^t \dot{\tau}(t') \|e^{\mathcal{R} t'}\Delta_q^h |D_x|^\frac{1}{2}(\pa_y u,\eps \pa_yv)_\Theta\|_{L^2}^2 dt' +\eps^2 \lambda \int_0^t \dot{\tau}(t') \|e^{\mathcal{R} t'}\Delta_q^h |D_x|^\frac{1}{2}(\pa_x u,\eps\pa_xv)_\Theta\|_{L^2}^2 dt'
\\	\leq \mathcal{R} \Big(  \|e^{\mathcal{R} t'}\Delta_q^h (\pa_tu,\eps\pa_tv)_\Theta \|_{L^2}^2 + \|e^{\mathcal{R} t'}\Delta_q^h (\pa_yu,\eps\pa_yv)_\Theta \|_{L^2}^2 + \eps^2\|e^{\mathcal{R} t'}\Delta_q^h (\pa_xu,\eps\pa_xv)_\Theta \|_{L^2}^2 \Big) \\ +  C2^{-2qs} d_q^2 \Big(\Vert e^{\mathcal{R} t} u_{\Theta} \Vert_{\tilde{L}^2_{t,\dot{\tau}(t)}(\mathcal{B}^{s+\frac{1}{2}})} \Vert e^{\mathcal{R} t} (\pa_tu)_{\Theta} \Vert_{\tilde{L}^2_{t,\dot{\tau}(t)}(\mathcal{B}^{s+\frac{1}{2}})} \\ + \Vert e^{\mathcal{R} t}  u_{\Theta} \Vert_{\tilde{L}^2_{t,\dot{\tau}(t)}(\mathcal{B}^{s+\frac{1}{2}})} \Vert e^{\mathcal{R} t} (\eps\pa_tv)_{\Theta} \Vert_{\tilde{L}^2_{t,\dot{\tau}(t)}(\mathcal{B}^{s+\frac{1}{2}})}  \Big).
\end{multline}

We multiply the estimate \eqref{S8eq4bis1bis1} by $2$, we have  

\begin{multline} \label{S8eq4bis1}
	 \int_0^t \frac{d}{dt}\Big( \|e^{\mathcal{R} t'}\Delta_q^h (\pa_tu,\eps\pa_tv)_\Theta \|_{L^2}^2 + \|e^{\mathcal{R} t'}\Delta_q^h (\pa_yu,\eps\pa_yv)_\Theta \|_{L^2}^2 + \eps^2\|e^{\mathcal{R} t'}\Delta_q^h (\pa_xu,\eps\pa_xv)_\Theta \|_{L^2}^2 \Big) dt' \\ + 2\int_0^t \Big(\|e^{\mathcal{R} t'}\Delta_q^h (\pa_tu)_\Theta\|_{L^2}^2  + \|e^{\mathcal{R} t'}\Delta_q^h \eps(\pa_tv)_\Theta\|_{L^2}^2 \Big) dt' + 2\lambda \int_0^t \dot{\tau}(t')  \|e^{\mathcal{R} t'}\Delta_q^h |D_x|^\frac{1}{2}(\pa_t u,\eps\pa_tv)_\Theta\|_{L^2}^2 dt'\\ +  2\lambda \int_0^t \dot{\tau}(t') \|e^{\mathcal{R} t'}\Delta_q^h |D_x|^\frac{1}{2}(\pa_y u,\eps \pa_yv)_\Theta\|_{L^2}^2 dt' +2\eps^2 \lambda \int_0^t \dot{\tau}(t') \|e^{\mathcal{R} t'}\Delta_q^h |D_x|^\frac{1}{2}(\pa_x u,\eps\pa_xv)_\Theta\|_{L^2}^2 dt'
\\	\leq 2\mathcal{R} \Big(  \|e^{\mathcal{R} t'}\Delta_q^h (\pa_tu,\eps\pa_tv)_\Theta \|_{L^2}^2 + \|e^{\mathcal{R} t'}\Delta_q^h (\pa_yu,\eps\pa_yv)_\Theta \|_{L^2}^2 + \eps^2\|e^{\mathcal{R} t'}\Delta_q^h (\pa_xu,\eps\pa_xv)_\Theta \|_{L^2}^2 \Big) \\ +  2C2^{-2qs} d_q^2 \Big(\Vert e^{\mathcal{R} t} u_{\Theta} \Vert_{\tilde{L}^2_{t,\dot{\tau}(t)}(\mathcal{B}^{s+\frac{1}{2}})} \Vert e^{\mathcal{R} t} (\pa_tu)_{\Theta} \Vert_{\tilde{L}^2_{t,\dot{\tau}(t)}(\mathcal{B}^{s+\frac{1}{2}})} \\ + \Vert e^{\mathcal{R} t}  u_{\Theta} \Vert_{\tilde{L}^2_{t,\dot{\tau}(t)}(\mathcal{B}^{s+\frac{1}{2}})} \Vert e^{\mathcal{R} t} (\eps\pa_tv)_{\Theta} \Vert_{\tilde{L}^2_{t,\dot{\tau}(t)}(\mathcal{B}^{s+\frac{1}{2}})}  \Big).
\end{multline}

Now we still have to get some information of the norm $\|\pa_y u_\phi\|_{\mathcal{B}^\f12}$ and  $\|\pa_x u_\phi\|_{\mathcal{B}^\f12}$, for that we need to apply the dyadic operator $\Delta_q^h$ to the equation 
\begin{align}\label{eq:Hypstandbis1}
    e^{\Theta(t,|D_x|)} \Big(\pa_t^2 u + \pa_t u + u\pa_xu+v\pa_yu-\pa_y^2u - \eps^2 \pa_x^2u +\pa_x p \Big) &= 0 \nonumber\\
    e^{\Theta(t,|D_x|)} \Big(\eps^2(\pa_t^2 v + \pa_t v + u\pa_xv+v\pa_yv-\pa_y^2v-\eps^2\pa_xv) +\pa_y p\Big) &= 0,
\end{align}
and then, we take the $L^2$ inner product of the resulting equation \eqref{eq:Hypstandbis1} with $\Delta_q^h u_\Theta$ and $\Delta_q^h v_\Theta$, we obtain  
\begin{multline} \label{S8eq1bis1bis1}
	 \psca{\Delta_q^h (\pa_t^2u)_\Theta, \Delta_q^h  u_{\Theta}}_{L^2} + \psca{\Delta_q^h (\pa_tu)_\Theta, \Delta_q^h  u_{\Theta}}_{L^2} \\- \psca{\Delta_q^h \pa_y^2u_\Theta, \Delta_q^h u_{\Theta}}_{L^2} -\eps^2 \psca{\Delta_q^h \pa_x^2u_\Theta, \Delta_q^h u_{\Theta}}_{L^2}  
	\\= -\psca{\Delta_q^h (u\pa_x u + v\pa_y u)_{\Theta}, \Delta_q^h u_{\Theta}}_{L^2} - \psca{\Delta_q^h \pa_x p_{\Theta}, \Delta_q^h u_{\Theta}}_{L^2},
\end{multline}
and 
\begin{multline} \label{S8eq1bis2bis1}
	 \psca{\Delta_q^h (\eps \pa_t^2v)_\Theta, \Delta_q^h  \eps v_{\Theta}}_{L^2} + \psca{\Delta_q^h (\eps \pa_tv)_\Theta, \Delta_q^h  \eps v_{\Theta}}_{L^2} \\- \psca{\Delta_q^h \eps \pa_y^2v_\Theta, \Delta_q^h \eps v_{\Theta}}_{L^2}  -\eps^2 \psca{\Delta_q^h \eps \pa_x^2v_\Theta, \Delta_q^h \eps v_{\Theta}}_{L^2} 
	 \\= -\eps^2 \psca{\Delta_q^h (u\pa_x v + v\pa_y v)_{\Theta}, \Delta_q^h v_{\Theta}}_{L^2} - \psca{\Delta_q^h \pa_y p_{\Theta}, \Delta_q^h v_{\Theta}}_{L^2},
\end{multline}

In what follows, we shall use again the technical lemmas in Section \ref{sec:1}, to handle term by term in the estimate \eqref{S8eq1bis1bis1} and \eqref{S8eq1bis2bis1}. We start by the complicate term $ I_1 = \psca{\Delta_q^h (\pa_t^2u)_\Theta, \Delta_q^h  u_{\Theta}}_{L^2}$ and $I_2= \psca{\Delta_q^h (\eps \pa_t^2v)_\Theta, \Delta_q^h  \eps v_{\Theta}}_{L^2}$, so by using integration by parts, we find 
\begin{align*}
    I_1 &= \frac{d}{dt
    }\int \Delta_q^h (\pa_t u)_\Theta \Delta_q^h  u_\Theta dX - \int \Delta_q^h (\pa_t u)_\Theta \Delta_q^h (\pa_tu)_\Theta dX \\ &\quad\quad\quad\quad\quad + 2\lambda \dot{\tau}(t)\int \Delta_q^h |D_x| (\pa_tu)_\Theta \Delta_q^h u_\Theta dX \\
    I_2 &= \frac{d}{dt
    }\int \Delta_q^h (\eps \pa_t v)_\Theta \Delta_q^h  \eps v_\Theta dX - \int \Delta_q^h (\eps \pa_t v)_\Theta \Delta_q^h (\eps\pa_tv)_\Theta dX \\ &\quad\quad\quad\quad\quad + 2\lambda \dot{\tau}(t)\int \Delta_q^h |D_x| (\eps\pa_tv)_\Theta \Delta_q^h \eps v_\Theta dX
\end{align*}

Whereas due to the boundary condition, and by integrating by part, we achieve 
\begin{align*}
    &-\psca{\Delta_q^h (\pa_y^2u_\Theta + \eps^2 \pa_x^2 u_\Theta), \Delta_q^h u_{\Theta}}_{L^2} = \|\Delta_q^h  \pa_yu_\Theta \|_{L^2}^2 + \eps^2\|\Delta_q^h  \pa_xu_\Theta \|_{L^2}^2\\
    &\psca{\Delta_q^h (-\eps \pa_y^2v_\Theta - \eps^3 \pa_x^2 v_\Theta), \eps\Delta_q^h v_{\Theta}}_{L^2} = \|\Delta_q^h  \eps\pa_yv_\Theta \|_{L^2}^2 + \eps^2 \|\Delta_q^h  \eps\pa_xv_\Theta \|_{L^2}^2.
\end{align*}

Now, by using the Dirichlet boundary condition $(u,v)\vert_{y=0} =(u,v)\vert_{y=1} = 0$, and the incompressibility condition $\pa_x u +\pa_y v =0$, we can find by integrating by parts the estimate of the pressure  
\begin{align*}
	 \abs{\psca{\Delta_q^h \nabla p_{\Theta}, \Delta_q^h (u,v)_{\Theta}}_{L^2}} = \abs{ \psca{\Delta_q^h  p_{\Theta}, \Delta_q^h \text{ div } (u,v)_{\Theta}}} = 0.
\end{align*}

Then by multiplying \eqref{S8eq1bis1bis1} and \eqref{S8eq1bis2bis1} by $e^{2\mathcal{R}t}$, and integrating the resulting inequality over time, we achieve
\begin{multline} \label{S8eq1bis3bis1}
	 \int \bigg( \frac{d}{dt} \Big( e^{2\mathcal{R}t}\Delta_q^h (\pa_t u)_\Theta \Delta_q^h  u_\Theta \Big) -  e^{2\mathcal{R}t}\Delta_q^h (\pa_t u)_\Theta \Delta_q^h (\pa_tu)_\Theta  + 2\lambda \dot{\tau}(t) e^{2\mathcal{R}t} \Delta_q^h |D_x| (\pa_tu)_\Theta \Delta_q^h u_\Theta \bigg) dX \\
	 + \int \bigg( \frac{d}{dt} \Big( e^{2\mathcal{R}t}\Delta_q^h (\eps \pa_t v)_\Theta \Delta_q^h  (\eps v)_\Theta \Big) - e^{2\mathcal{R}t}\Delta_q^h (\eps\pa_t v)_\Theta \Delta_q^h (\eps\pa_tv)_\Theta + 2\lambda \dot{\tau}(t) e^{2\mathcal{R}t} \Delta_q^h |D_x| (\eps\pa_tv)_\Theta \Delta_q^h (\eps v)_\Theta \bigg) dX  \\
	   + \frac{1}{2}\frac{d}{dt} \|e^{\mathcal{R}t}\Delta_q^h u_\Theta\|_{L^2}^2 + \frac{1}{2}\frac{d}{dt} \|e^{\mathcal{R}t}\Delta_q^h \eps v_\Theta\|_{L^2}^2 + \lambda \dot{\tau}(t) \|e^{\mathcal{R}t}\Delta_q^h |D_x|^\frac{1}{2} (u_\Theta,\eps v_\Theta) \|_{L^2}^2 + \|e^{\mathcal{R}t}\Delta_q^h  \pa_yu_\Theta \|_{L^2}^2 \\
	    + \|e^{\mathcal{R}t}\Delta_q^h  \pa_y \eps v_\Theta \|_{L^2}^2 + \eps^2\|e^{\mathcal{R}t}\Delta_q^h  \pa_xu_\Theta \|_{L^2}^2+ \eps^2\|e^{\mathcal{R}t}\Delta_q^h  \pa_x \eps v_\Theta \|_{L^2}^2	   \\
	    = \mathcal{R}\Big(2 \int e^{2\mathcal{R}t}\Delta_q^h (\pa_t u)_\Theta \Delta_q^h  u_\Theta dX + 2\int e^{2\mathcal{R}t}\Delta_q^h (\eps\pa_t v)_\Theta \Delta_q^h  (\eps v)_\Theta dX + \|e^{\mathcal{R}t} \Delta_q^h (u_\Theta,\eps v_\Theta) \|_{L^2}^2\Big)
	    \\ -\psca{\Delta_q^h (u\pa_x u + v\pa_y u)_{\Theta}, e^{2\mathcal{R}t}\Delta_q^h u_{\Theta}}_{L^2} - \eps^2 \psca{\Delta_q^h (u\pa_x v + v\pa_y v)_{\Theta}, e^{2\mathcal{R}t}\Delta_q^h v_{\Theta}}_{L^2}.       
\end{multline}

In view, of Lemma \ref{lem:ABCbis1}-\ref{lem:Apa_yBCbis1} for $t<T^\star_1$, and by summing $ \eqref{S8eq4bis1}$ with \eqref{S8eq1bis3bis1}, we obtain

\begin{multline} \label{S8eq5bis1}
	    \mathbf{B}(u,v)(t) \leq \mathcal{R}\int_0^t \Big[ \|e^{\mathcal{R}t'} \Delta_q^h \big(\pa_tu +u,\eps(\pa_t v+v)\big)_\Theta \|_{L^2}^2 + 2 \|e^{\mathcal{R}t'}\Delta_q^h \pa_y(u,\eps v)_\Theta \|_{L^2}^2 \\
	+ 2\eps^2 \|e^{\mathcal{R}t'} \Delta_q^h \pa_x (u,\eps v)_\Theta \|_{L^2}^2 + \|e^{\mathcal{R}t'} \Delta_q^h (\pa_t u,\eps \pa_t v)_\Theta \|_{L^2}^2 \Big] dt' + \lambda \int_0^t \dot{\tau}(t') \Vert e^{\mathcal{R} t}|D_x|^\f12 (u,\eps v)_{\Theta} \Vert_{L^2}^2 dt' \\
	+ \lambda \int_0^t \dot{\tau}(t') \Vert e^{\mathcal{R} t}|D_x|^\f12 (\pa_t u,\eps \pa_t v)_{\Theta} \Vert_{L^2}^2 dt' + 2C2^{-2qs} d_q^2 \bigg(\Vert e^{\mathcal{R} t} u_{\Theta} \Vert_{\tilde{L}^2_{t,\dot{\tau}(t)}(\mathcal{B}^{s+\frac{1}{2}})}\\ \times  \Big( \Vert e^{\mathcal{R} t} (\pa_tu)_{\Theta} \Vert_{\tilde{L}^2_{t,\dot{\tau}(t)}(\mathcal{B}^{s+\frac{1}{2}})}  +   \Vert e^{\mathcal{R} t} (\eps\pa_tv)_{\Theta} \Vert_{\tilde{L}^2_{t,\dot{\tau}(t)}(\mathcal{B}^{s+\frac{1}{2}})} \Big) + \Vert e^{\mathcal{R} t}  (u_{\Theta},\eps v_\Theta) \Vert_{\tilde{L}^2_{t,\dot{\tau}(t)}(\mathcal{B}^{s+\frac{1}{2}})}^2 \Bigg). 
\end{multline}
where 
\begin{align*}
\mathbf{B}(u,v)(t) &= \int_0^t \frac{d}{dt} \Big( \f12 \|e^{\mathcal{R} t'}\Delta_q^h \big(\pa_t u+u,\eps (\pa_t v+v)\big)_\Theta \|^2_{L^2} + \f12 \|e^{\mathcal{R} t'}\Delta_q^h \big(\pa_t u,\eps (\pa_t v)\big)_\Theta \|^2_{L^2} \\ & + \|e^{\mathcal{R} t'}\Delta_q^h \pa_y(u,\eps v)_\Theta \|_{L^2}^2 + \eps^2 \|e^{\mathcal{R} t'}\Delta_q^h \pa_x(u,\eps v)_\Theta \|_{L^2}^2 \Big)dt'  + \int_0^t \Big(\|e^{\mathcal{R} t'}\Delta_q^h (\pa_tu,\eps \pa_t v)_\Theta\|_{L^2}^2 \\ & +  \|e^{\mathcal{R}t'}\Delta_q^h  \pa_y(u,\eps v)_\Theta \|_{L^2}^2 + \eps^2 \|e^{\mathcal{R}t'}\Delta_q^h  \pa_x(u,\eps v)_\Theta \|_{L^2}^2  + \lambda \dot{\tau}(t') \|e^{\mathcal{R}t'}\Delta_q^h |D_x|^\frac{1}{2} (u,\eps v)_\Theta \|_{L^2}^2\\  &  + 2\lambda \big( \dot{\tau}(t')  \|e^{\mathcal{R} t'}\Delta_q^h |D_x|^\frac{1}{2}(\pa_t u, \eps \pa_t v)_\Theta\|_{L^2}^2 +  \dot{\tau}(t') \|e^{\mathcal{R} t'}\Delta_q^h |D_x|^\frac{1}{2}(\pa_y u,\eps \pa_yv)_\Theta\|_{L^2}^2 \big) \\ &+2 \eps^2 \lambda  \dot{\tau}(t') \|e^{\mathcal{R} t'}\Delta_q^h |D_x|^\frac{1}{2}(\pa_x u,\eps\pa_xv)_\Theta\|_{L^2}^2 \Big)dt'
\end{align*}

We begin with by observing that the term in the square brackets in \eqref{S8eq5bis1} can be absorbed by the dissipation $$\int_0^t \|e^{\mathcal{R} t'}\Delta_q^h (\pa_tu,\eps \pa_t v)_\Theta\|_{L^2}^2 dt' + \int_0^t \|e^{\mathcal{R} t'}\Delta_q^h(\pa_yu,\eps\pa_y v)_\Theta\|_{L^2}^2 dt' + \eps^2 \int_0^t \|e^{\mathcal{R} t'}\Delta_q^h(\pa_xu,\eps \pa_x v)_\Theta\|_{L^2}^2 dt' $$. Indeed, since the value of $\mathcal{R}$ is smaller than $\min\lbrace \frac{1}{8},\frac{k}{8} \rbrace$, we have that 
\begin{align*}
\mathcal{R} &\Big[ \|e^{\mathcal{R}t'} \Delta_q^h \big(\pa_tu +u,\eps(\pa_t v+v)\big)_\Theta \|_{L^2}^2 + 2 \|e^{\mathcal{R}t'}\Delta_q^h \pa_y(u,\eps v)_\Theta \|_{L^2}^2 
	+ 2\eps^2 \|e^{\mathcal{R}t'} \Delta_q^h \pa_x (u,\eps v)_\Theta \|_{L^2}^2 \\ & + \|e^{\mathcal{R}t'} \Delta_q^h (\pa_t u,\eps \pa_t v)_\Theta \|_{L^2}^2 \Big] \leq \mathcal{R} \Big[ \|e^{\mathcal{R}t'} \Delta_q^h \big(u,\eps v\big)_\Theta \|_{L^2}^2 + 2 \|e^{\mathcal{R}t'}\Delta_q^h \pa_y(u,\eps v)_\Theta \|_{L^2}^2 
	\\
& + 2\eps^2 \|e^{\mathcal{R}t'} \Delta_q^h \pa_x (u,\eps v)_\Theta \|_{L^2}^2 + 2\|e^{\mathcal{R}t'} \Delta_q^h (\pa_t u,\eps \pa_t v)_\Theta \|_{L^2}^2 \Big] \leq \frac{k}{8}  \|e^{\mathcal{R}t'} \Delta_q^h \big(u,\eps v\big)_\Theta \|_{L^2}^2 \\ & + \frac{1}{4}\|e^{\mathcal{R}t'}\Delta_q^h \pa_y(u,\eps v)_\Theta \|_{L^2}^2 +\frac{\eps^2}{4}  \|e^{\mathcal{R}t'} \Delta_q^h \pa_x (u,\eps v)_\Theta \|_{L^2}^2 + \frac{1}{4}\|e^{\mathcal{R}t'} \Delta_q^h (\pa_t u,\eps \pa_t v)_\Theta \|_{L^2}^2.
\end{align*}
To absorb those terms, we shall then invoke the Poincaré inequality in $y\in(0,1)$ : $k \|\Delta_q^h (u,\eps v)_\Theta \|_{L^2} \leq \|\Delta_q^h \pa_y (u,\eps v)_\Theta\|_{L^2}$. Thus 
\begin{multline}\label{esti:Poincaré1}
\mathcal{R} \Big[ \|e^{\mathcal{R}t'} \Delta_q^h \big(\pa_tu +u,\eps(\pa_t v+v)\big)_\Theta \|_{L^2}^2 + 2 \|e^{\mathcal{R}t'}\Delta_q^h \pa_y(u,\eps v)_\Theta \|_{L^2}^2 
	\\ + 2\eps^2 \|e^{\mathcal{R}t'} \Delta_q^h \pa_x (u,\eps v)_\Theta \|_{L^2}^2  + \|e^{\mathcal{R}t'} \Delta_q^h (\pa_t u,\eps \pa_t v)_\Theta \|_{L^2}^2 \Big] 
\\ \leq \frac{1}{2}\Big(\|e^{\mathcal{R} t'} \Delta_q^h \pa_y(u,\eps v)_\Theta \|_{L^2}^2 + \|e^{\mathcal{R} t'}\Delta_q^h (\pa_tu,\eps \pa_t v)_\Theta \|_{L^2}^2 + \eps^2 \|e^{\mathcal{R} t'}\Delta_q^h(\pa_xu,\eps \pa_x v)_\Theta\|_{L^2}^2 \Big). 
\end{multline}
We replace the obtained result \eqref{esti:Poincaré1} in \eqref{S8eq5bis1}, we deduce that

\begin{multline} \label{S8eq5bis12}
	\mathbf{F}(u,v)(t) \leq 2C2^{-2qs} d_q^2 \bigg(\Vert e^{\mathcal{R} t} u_{\Theta} \Vert_{\tilde{L}^2_{t,\dot{\tau}(t)}(\mathcal{B}^{s+\frac{1}{2}})} \times  \Big( \Vert e^{\mathcal{R} t} (\pa_tu)_{\Theta} \Vert_{\tilde{L}^2_{t,\dot{\tau}(t)}(\mathcal{B}^{s+\frac{1}{2}})} \\ +   \Vert e^{\mathcal{R} t} (\eps\pa_tv)_{\Theta} \Vert_{\tilde{L}^2_{t,\dot{\tau}(t)}(\mathcal{B}^{s+\frac{1}{2}})} \Big) + \Vert e^{\mathcal{R} t}  (u_{\Theta},\eps v_\Theta) \Vert_{\tilde{L}^2_{t,\dot{\tau}(t)}(\mathcal{B}^{s+\frac{1}{2}})}^2 \Bigg),
\end{multline}
where 
\begin{multline}\label{Formule:F}
\mathbf{F}(u,v)(t) = \int_0^t \frac{d}{dt} \Big( \f12 \|e^{\mathcal{R} t'}\Delta_q^h \big(\pa_t u+u,\eps (\pa_t v+v)\big)_\Theta \|^2_{L^2} + \f12 \|e^{\mathcal{R} t'}\Delta_q^h \big(\pa_t u,\eps (\pa_t v)\big)_\Theta \|^2_{L^2} \\  + \|e^{\mathcal{R} t'}\Delta_q^h \pa_y(u,\eps v)_\Theta \|_{L^2}^2 + \eps^2 \|e^{\mathcal{R} t'}\Delta_q^h \pa_x(u,\eps v)_\Theta \|_{L^2}^2 \Big)dt'  +  \int_0^t \Big(\f12 \|e^{\mathcal{R} t'}\Delta_q^h (\pa_tu,\eps \pa_t v)_\Theta\|_{L^2}^2 \\  +  \f12\|e^{\mathcal{R}t'}\Delta_q^h  \pa_y(u,\eps v)_\Theta \|_{L^2}^2 + \frac{\eps^2}{2} \|e^{\mathcal{R}t'}\Delta_q^h  \pa_x(u,\eps v)_\Theta \|_{L^2}^2  + k\lambda \dot{\tau}(t') \|e^{\mathcal{R}t'}\Delta_q^h |D_x|^\frac{1}{2} (u,\eps v)_\Theta \|_{L^2}^2  \\ + \lambda \dot{\tau}(t')  \|e^{\mathcal{R} t'}\Delta_q^h |D_x|^\frac{1}{2}(\pa_t u, \eps \pa_t v)_\Theta\|_{L^2}^2 + \lambda \dot{\tau}(t') \|e^{\mathcal{R} t'}\Delta_q^h |D_x|^\frac{1}{2}(\pa_y u,\eps \pa_yv)_\Theta\|_{L^2}^2  \\+ \eps^2 \lambda \dot{\tau}(t') \|e^{\mathcal{R} t'}\Delta_q^h |D_x|^\frac{1}{2}(\pa_x u,\eps\pa_xv)_\Theta\|_{L^2}^2 \Big)dt'.
\end{multline}
If we look at the third line of the equality $\eqref{Formule:F}$, we remark that we have a new term $$k\lambda \int_0^t \dot{\tau}(t') \|e^{\mathcal{R}t'}\Delta_q^h |D_x|^\frac{1}{2} (u,\eps v)_\Theta \|_{L^2}^2 dt' ,$$ this term handle from the Poincaré inequality applied to the following term $$ \lambda \int_0^t \dot{\tau}(t') \|e^{\mathcal{R} t'}\Delta_q^h |D_x|^\frac{1}{2}(\pa_y u,\eps \pa_yv)_\Theta\|_{L^2}^2 dt'. $$

Multiplying \eqref{S8eq5bis1} by $2^{2qs}$ for $s\in]0,1[$ and then integrating over time, and summing with respect to $q \in \ZZ$, we find that for $t<T^\star_1$

\begin{multline} \label{S8eq6bis1}
	 \mathbf{E}_{s,\lambda,k}(u,v)(t)	\leq C\Big( \|e^{a|D_x|}\pa_y(u_0,\eps v_0) \|_{\mathcal{B}^{s}} + \eps \|e^{a|D_x|}\pa_x(u_0,\eps v_0) \|_{\mathcal{B}^{s}}\\ +\|e^{a|D_x|}(u_1,\eps  v_1) \|_{\mathcal{B}^{s}} +\|e^{a|D_x|}(u_0+u_1,\eps (v_0 + v_1)) \|_{\mathcal{B}^{s}} \\ + \sqrt{2} C\Vert e^{\mathcal{R} t} ((\pa_t u)_{\Theta},\eps (\pa_tv)_{\Theta}) \Vert_{\tilde{L}^2_{t,\dot{\tau}(t)}(\mathcal{B}^{s+\frac{1}{2}})} + \sqrt{2}C \Vert e^{\mathcal{R} t} (u,\eps v)_{\Theta} \Vert_{\tilde{L}^2_{t,\dot{\tau}(t)}(\mathcal{B}^{s+\frac{1}{2}})},
\end{multline}
where 
\begin{align*}
\mathbf{E}_{s,\lambda,k}(u,v)(t) &=  \Big( \frac{1}{2}\|e^{\mathcal{R} t}(u + \pa_tu,\eps (v+\pa_tv) )_\Theta \|_{\tilde{L}_t^\infty(\mathcal{B}^{s})} + \|e^{\mathcal{R} t} \pa_y(u,\eps v)_\Theta \|_{\tilde{L}_t^\infty(\mathcal{B}^{s})} \\ &+ \eps \|e^{\mathcal{R} t} \pa_x(u,\eps v)_\Theta \|_{\tilde{L}_t^\infty(\mathcal{B}^{s})} \Big) + \frac{1}{2}\|e^{\mathcal{R} t}( \pa_tu,\eps\pa_tv)_\Theta \|_{\tilde{L}_t^\infty(\mathcal{B}^{s})} \\&+\f12\|e^{\mathcal{R} t}(\pa_tu,\eps\pa_tv)_\Theta\|_{\tilde{L}_t^2(\mathcal{B}^{s})} +\sqrt{k\lambda} \|e^{\mathcal{R} t} (u,\eps v)_\Theta \|_{\tilde{L}^2_{t,\dot{\tau}(t)}(\mathcal{B}^{s+\frac{1}{2}})}\\& + \f12\|e^{\mathcal{R} t}\pa_y(u,\eps v)_\Theta\|_{\tilde{L}_t^2(\mathcal{B}^{s})}  +\frac{\eps}{2} \|e^{\mathcal{R} t}\pa_x(u,\eps v)_\Theta\|_{\tilde{L}_t^2(\mathcal{B}^{s})}+  \sqrt{\lambda}  \|e^{\mathcal{R} t}(\pa_t u,\eps \pa_tv)_\Theta \|_{\tilde{L}^2_{t,\dot{\tau}(t)}(\mathcal{B}^{s+\frac{1}{2}})} \\ &+  \sqrt{\lambda} \|e^{\mathcal{R} t}\pa_y (u,\eps v)_\Theta \|_{\tilde{L}^2_{t,\dot{\tau}(t)}(\mathcal{B}^{s+\frac{1}{2}})} + \sqrt{\lambda} \|e^{\mathcal{R} t}\pa_x (u,\eps v)_\Theta\|_{\tilde{L}^2_{t,\dot{\tau}(t)}(\mathcal{B}^{s+\frac{1}{2}})}
\end{align*}

Taking $\lambda \geq 2C^2$ in the above inequality leads to 

\begin{multline} \label{S8eq7bis1}
	 \Big( \f12 \|e^{\mathcal{R} t}(u + \pa_tu,\eps(v+ \pa_t v))_\Theta \|_{\tilde{L}_t^\infty(\mathcal{B}^{s})} +  \frac{1}{2}\|e^{\mathcal{R} t}( \pa_tu,\eps\pa_tv)_\Theta \|_{\tilde{L}_t^\infty(\mathcal{B}^{s})} + \|e^{\mathcal{R} t} \pa_y(u,\eps v)_\Theta \|_{\tilde{L}_t^\infty(\mathcal{B}^{s})} \\ + \eps \|e^{\mathcal{R} t} \pa_x(u,\eps v)_\Theta \|_{\tilde{L}_t^\infty(\mathcal{B}^{s})}\Big) + \f12 \|e^{\mathcal{R} t}(\pa_tu,\eps \pa_tv)_\Theta\|_{\tilde{L}_t^2(\mathcal{B}^{s})}  + \f12 \|e^{\mathcal{R} t}\pa_y(u,\eps v)_\Theta\|_{\tilde{L}_t^2(\mathcal{B}^{s})} \\+ \frac{\eps}{2} \|e^{\mathcal{R} t}\pa_x(u,\eps v)_\Theta\|_{\tilde{L}_t^2(\mathcal{B}^{s})}   \leq C\Big(\|e^{a|D_x|}\pa_y(u_0,\eps v_0) \|_{\mathcal{B}^{s}} + \eps \|e^{a|D_x|}\pa_x(u_0,\eps v_0) \|_{\mathcal{B}^{s}} \\ + \|e^{a|D_x|}(u_1,\eps  v_1) \|_{\mathcal{B}^{s}} +\|e^{a|D_x|}(u_0+u_1,\eps (v_0 + v_1)) \|_{\mathcal{B}^{s}}\Big), \text{ \ for \ } t<T^\star_1 .
\end{multline}

We recall that we already defined $\dot{\tau}(t) = \Vert \pa_y u_{\Theta}^{\eps}(t) \Vert_{\mathcal{B}^{\f12}} + \eps \Vert \pa_y v_{\Theta}^{\eps}(t) \Vert_{\mathcal{B}^{\f12}} \text{ with } \tau (0) = 0 $. Then, for any $0 < t < T^\star_1$, Inequality \eqref{S3eq7bis1} yields for $s=\f12$

\begin{align*}
	\tau (t) &= \int_0^t \Vert \pa_y u_{\Theta}(t') \Vert_{\mathcal{B}^\f12} + \eps \Vert \pa_y v_{\Theta}(t') \Vert_{\mathcal{B}^{\f12}} dt' \\ &\leq \int_0^t  e^{-\mathcal{R}t'} \Vert e^{\mathcal{R}t'} \pa_y (u,\eps v)_{\Theta}(t') \Vert_{\mathcal{B}^\f12} dt' \\
	&\leq \left( \int_0^t  e^{-2\mathcal{R}t'} dt' \right)^{\frac{
	1}{2}} \left( \int_0^t \Vert e^{\mathcal{R}t'} \pa_y (u,\eps v)_{\Theta}(t') \Vert_{\mathcal{B}^\f12}^2  dt' \right)^{\frac{
	1}{2}} \\
	&\leq C \norm{e^{\mathcal{R} t} \pa_y (u,\eps v)_{\Theta}}_{\tilde{L}^2_t(\mathcal{B}^{\f12})} \\
	&\leq C \Big( \|e^{a|D_x|}\pa_y(u_0,\eps v_0) \|_{\mathcal{B}^{\frac{1}{2}}} + \eps \|e^{a|D_x|}\pa_x(u_0,\eps v_0)\|_{\mathcal{B}^\f12} + \|e^{a|D_x|}(u_0+u_1,\eps (v_0 + v_1)) \|_{\mathcal{B}^{\f12}} \\ &+ \|e^{a|D_x|}(u_1,\eps v_1) \|_{\mathcal{B}^{\f12}} \Big) < \frac{a}{2\lambda}
\end{align*}
A continuity argument implies that $T^\star_1 = +\infty$ and we have \eqref{S3eq7bis1} is valid for any $t \in \RR_+$.

\section{The convergence to the perturbed hydrostatic Navier-Stokes equations}\label{sec:6}

In this section, we justify the limit from the scaled perturbed anisotropic Navier-Stokes system to the perturbed hydrostatic Navier-Stokes system in a 2D thin domain. As in the sections 3 and 4, the main idea will be to obtain a control of the difference between the two solutions in analytic spaces, by using energy estimates with exponential weights in the Fourier variable. As previously, the exponent of the exponential weight is depending on time but shall take into account now the "loss of the analyticity" for both solutions, of the re-scaled perturbed Navier-Stokes system and respectively of the perturbed hydrostatic Navier-Stokes equations. To this end, we introduce  

\begin{align} \label{eq:psiphiqbis1}
	\left\{
	\begin{aligned}
		R^{1,\eps} = u^\eps - u, \\
		R^{2,\eps} = v^\eps - v, \\
		q^\eps     = p^\eps-p.
	\end{aligned}
		\right.
\end{align}

Then, systems \eqref{eq:HPNhydroPE} and \eqref{eq:HPNhydrolimit} imply that $(R^{1,\eps},R^{2,\eps},q_{\eps})$ verifies 
\begin{equation}\label{S6eq1bis1}
\quad\left\{\begin{array}{l}
\displaystyle \pa_t^2 R^{1,\eps}+ \partial_t R^{1,\eps} - \eps^2\partial_x^2 R^{1,\eps} -\partial_y^2 R^{1,\eps} + \partial_x q^\eps= F^{1,\eps}\ \ \mbox{in} \ \mathcal{S}\times ]0,\infty[,\\
\displaystyle \eps^2\left(\pa_t^2 R^{2,\eps} + \partial_t R^{2,\eps} - \eps^2\partial_x^2 R^{2,\eps}-\partial_y^2 R^{2,\eps} \right)+\partial_y q^\eps = F^{2,\eps},\\
\displaystyle \partial_x R^{1,\eps}+\partial_y R^{2,\eps}=0 \\
\displaystyle \left( R^{1,\eps}, R^{2,\eps}\right)|_{t=0}=\left(u_0^\eps-u_0, v_0^{\eps} - v_0 \right),\\
\displaystyle \pa_t \left( R^{1,\eps}, R^{2,\eps}\right)|_{t=0}=\left(u_1^\eps-u_1, v_1^{\eps} - v_1 \right),\\
\displaystyle \left( R^{1,\eps}, R^{2,\eps}\right)|_{y=0}=\left( R^{1,\eps}, R^{2,\eps} \right)|_{y=1} = 0,
\end{array}\right.
\end{equation}
where the remaining terms $F^{i,\eps}$, with $ i =1,2$, are determined by   
\begin{align} \label{eq:Fibis1}
	\left\{
	\begin{aligned}
		F^{1,\eps} &= \eps^2 \pa_x^2 u -(u^{\eps} \pa_x u^{\eps} - u\pa_xu )- (v^\eps \pa_y u^\eps - v\pa_y u), \\
		F^{2,\eps} &= - \eps^2\left(\pa_t^2 v + \pa_t v -\eps^2 \pa_x^2 v -\pa_y^2 v + u^{\eps} \pa_x v^{\eps} + v^\eps \pa_y v^\eps \right).
	\end{aligned}
	\right.
\end{align}

As $ (R^{1,\eps},R^{2,\eps})$ satisfies the boundary condition and also the free divergence, therefore these two conditions allows us to write

\begin{align}\label{PR2bis1}
    R^{2,\eps}(t,x,y) &= \int_0^y \pa_y R^{2,\eps}(t,x,s) ds = - \int_0^y \pa_x R^{1,\eps}(t,x,s)ds 
\end{align}

If we replace $y$ by $1$ in \eqref{PR2bis1}, we deduce from the incompressibility condition $\partial_x R^{1,\eps}+\partial_y  R^{2,\eps}=0 $ that

\begin{align*}
	\pa_x\int_0^1R^{1,\eps}(t,x,y)\, dy = -\int_0^1 \dd_y R^{2,\eps}(t,x,y)\, dy = R^{2,\eps}(t,x,1) - R^{2,\eps}(t,x,0) = 0.
\end{align*}

In what follows, for simplicity, we shall neglect the subscript $\eps$ in $(R^{1,\eps},R^{2,\eps},q^\eps,)$. In view of the system \eqref{S6eq1bis1}, we can transform it like a equation of order one in time, so if we define $G = (R^1,\pa_tR^1)$ and $H= (R^2,\pa_tR^2)$, Then $G$ and $H$ satisfy the following equation
\begin{equation}
	\label{eq:hydroPEGHBIS1bis1}
	\left\{\;
	\begin{aligned}
		&\partial_t G + A_\eps(D)G =\begin{pmatrix}
		0 \\
		F^1 - \pa_x q
		\end{pmatrix} \\
		&\eps^2 \Big(\partial_t H + B_\eps(D)H \Big) =\begin{pmatrix}
		0 \\
		F^2 - \pa_y q
		\end{pmatrix} \\
		&\pa_x R^1 + \pa_y R^2 = 0	 \\
		&(R^1,R^2)/_{y=0} = (R^1,R^2)/_{y=1} = 0	
	\end{aligned}
	\right.
\end{equation}
where 
$$ G = \begin{pmatrix}
		R^1 \\
		\pa_tR^1
		\end{pmatrix} \text{ \ \ and \ \ } A_\eps(D) = \begin{pmatrix}
		0 &  -1 \\
		-\eps^2 \pa_x^2 -\pa_y^2 & 1
		\end{pmatrix} $$ 
		and 
		$$ H = \begin{pmatrix}
		R^2 \\
		\pa_tR^2
		\end{pmatrix} \text{ \ \ and \ \ } B_\eps(D) = \begin{pmatrix}
		0 &  -1 \\
		-\eps^2 \pa_x^2 -\pa_y^2 & 1
		\end{pmatrix} .$$ 
		
In view of \eqref{eq:Anphichp3}, we define for any suitable function $f$
\begin{eqnarray}\label{analytiqueff**}
f_{\varphi}(t,x,y) = \mathcal{F}^{-1}_{\xi \rightarrow x}\left(e^{\varphi(t,\xi)} \widehat{f}(t,\xi,y)\right) \text{ where \  } \varphi(t,\xi) =\left(a-\mu \eta(t)\right)|\xi|,
\end{eqnarray}
where $\mu \geq \lambda$ will be determined later, and $\eta(t)$ is given by 
$$ \eta(t) = \int_0^t \left( \Vert (\pa_y u^{\eps}_{\Theta}, \eps \pa_x u^{\eps}_{\Theta})(t') \Vert_{\mathcal{B}^{\f12}} + \Vert \pa_y u_{\phi} (t') \Vert_{\mathcal{B}^{\f12}} \right) dt'. $$
We can observe that, if we take $c_0$ and $c_1$ small enough in Theorems \ref{th:11} and \ref{th:22} then $\varphi(t) \geq 0$ and 
$$ 0 \leq \varphi(t,\xi) \leq min\left(\phi(t,\xi),\Theta(t,\xi)\right) .$$ 

Then in view of \eqref{analytiqueff**}, we observe that $(G,H)_\varphi$ verifies 
		\begin{equation}
	\label{eq:hydroPEGHvarphibis1}
	\left\{\;
	\begin{aligned}
		&\partial_t G_\varphi + \mu \dot{\eta}(t) |D_x| G_\varphi + A_\eps(D)G_\varphi =\begin{pmatrix}
		0 \\
		F^1_\varphi + \pa_x q_{\varphi}
		\end{pmatrix} \\
		&\eps^2 \Big(\partial_t H_\varphi + \mu \dot{\eta}(t) |D_x| H_\varphi + B_\eps(D)H_\varphi \Big) =-\begin{pmatrix}
		0 \\
		F^2_\varphi + \pa_y q_\varphi
		\end{pmatrix}\\
		&\pa_x R^1_\varphi + \pa_y R^2_\varphi = 0	 \\
		&(R^1_\varphi,R^2_\varphi)/_{y=0} = (R^1_\varphi,R^2_\varphi)/_{y=1} = 0	
	\end{aligned}
	\right.
\end{equation}

Where $|D_x|$ denote the Fourier multiplier of the symbol $|\xi|$. In what follows, we recall that we use "C" to denote a generic positive constant which can change from line to line. Thanks to Theorems \ref{th:11} and \ref{th:22}, Propositions \ref{prop:1bis1} and \ref{prop:2bis1}, we deduce for $0<s<1$ that 
\begin{multline} \label{5Mbis1}
	\Vert u^\eps_\Theta \Vert_{\tilde{L}^\infty(\mathbb{R}^+;\mathcal{B}^\f12)} + \Vert (u + \pa_t u )_\phi \Vert_{\tilde{L}^\infty(\mathbb{R}^+;\mathcal{B}^\f12 \cap \mathcal{B}^{\f72})} +\Vert \pa_y^2 u_\phi \Vert_{\tilde{L}^2(\mathbb{R}^+;\mathcal{B}^\f12 \cap \mathcal{B}^{\f72})} \\ + \Vert \pa_y u_\phi \Vert_{\tilde{L}^2(\mathbb{R}^+;\mathcal{B}^\f12 \cap \mathcal{B}^{\f72})}  + \Vert \pa_t^2 u_\phi \Vert_{\tilde{L}^2(\mathbb{R}^+;\mathcal{B}^{\f32})} \leq M,
\end{multline}
where $u^\eps_\Theta$ and $u_\phi$ are respectively determined by \eqref{analybis} and \eqref{eq:Anphichp3} and $M \geq 1$ is a constant independent to $\eps$.

\textit{Proof of the theorem \ref{th:33}} We apply the dyadic operator in the horizontal variable $\Delta_q^h$ to \eqref{eq:hydroPEGHvarphibis1} and taking the $L^2$ inner product of the resulting equation with $\Delta_q^h G_\varphi$ and $\Delta_q^h H_\varphi$ we obtain 

\begin{multline} \label{S9eq1bis1}
	 \psca{\Delta_q^h \pa_t G_{\varphi}, \Delta_q^h G_{\varphi}}_{L^2} + \mu \dot{\eta}(t) \psca{\Delta_q^h |D_x| G_{\varphi}, \Delta_q^h G_{\varphi}}_{L^2} + \psca{\Delta_q^h A_\eps(D) G_{\varphi}, \Delta_q^h G_{\varphi}}_{L^2} \\
	= \psca{\Delta_q^h (F^1)_{\varphi}, \Delta_q^h (\pa_tR^1)_{\varphi})}_{L^2} - \psca{\Delta_q^h \pa_x q_{\varphi}, \Delta_q^h (\pa_t R^1)_{\varphi}}_{L^2}
\end{multline}
and
\begin{multline} \label{S9eq2bis1}
	  \psca{\Delta_q^h \pa_t H_{\varphi}, \Delta_q^h H_{\varphi}}_{L^2} + \mu \dot{\eta}(t) \psca{\Delta_q^h |D_x| H_{\varphi}, \Delta_q^h H_{\varphi}}_{L^2} + \psca{\Delta_q^h B_\eps(D) H_{\varphi}, \Delta_q^h H_{\varphi}}_{L^2} \\
	= -\psca{\Delta_q^h (F^2)_{\varphi}, \Delta_q^h (\pa_tR^1)_{\varphi})}_{L^2} - \psca{\Delta_q^h \pa_y q_{\varphi}, \Delta_q^h (\pa_t R^1)_{\varphi}}_{L^2}.
\end{multline}

Due to the free divergence condition, we have  
\begin{align*}
	\abs{\psca{\Delta_q^h \nabla q_{\varphi}, \Delta_q^h (\pa_tR^1,\pa_tR^2)_{\varphi}}_{L^2}} = 0.
\end{align*}

Then by using Lemma \ref{lem:Bernsteinbis1}, we achieve
\begin{multline} \label{S9eq3bis1}
	 \frac{1}{2}\frac{d}{dt}\Big( \| \Delta_q^h (\pa_tR^1)_\varphi \|_{L^2}^2 + \| \Delta_q^h \pa_yR^1_\varphi \|_{L^2}^2 + \| \Delta_q^h \eps(\pa_tR^2)_\varphi \|_{L^2}^2 + \| \Delta_q^h \eps\pa_yR^2_\varphi \|_{L^2}^2 \\+ \eps^4\| \Delta_q^h \pa_xR^2_\varphi \|_{L^2}^2 + \eps^2\| \Delta_q^h \pa_xR^1_\varphi \|_{L^2}^2  \Big) +   \| \Delta_q^h (\pa_tR^1)_\varphi\|_{L^2}^2 + \| \Delta_q^h \eps(\pa_tR^2)_\varphi\|_{L^2}^2 \\ +  \mu \dot{\eta}(t')  \| \Delta_q^h |D_x|^\frac{1}{2}(\pa_t R^1)_\varphi\|_{L^2}^2  + \eps^2\mu \dot{\eta}(t')  \| \Delta_q^h |D_x|^\frac{1}{2}(\pa_t R^2)_\varphi\|_{L^2}^2 +\eps^2\mu \dot{\eta}(t') \| \Delta_q^h |D_x|^\frac{1}{2}\pa_x R^1_\varphi\|_{L^2}^2 \\  +\mu \dot{\eta}(t') \| \Delta_q^h |D_x|^\frac{1}{2}\pa_y R^1_\varphi\|_{L^2}^2 + \mu \dot{\eta}(t') \| \Delta_q^h |D_x|^\frac{1}{2}\pa_y R^2_\varphi\|_{L^2}^2 + \eps^4\mu \dot{\eta}(t') \| \Delta_q^h |D_x|^\frac{1}{2}\pa_x R^2_\varphi\|_{L^2}^2 
\\	\leq  \abs{\psca{\Delta_q^h (F^1)_{\varphi}, \Delta_q^h (\pa_tR^1)_{\varphi})}}_{L^2} + \abs{\psca{\Delta_q^h (F^2)_{\varphi}, \Delta_q^h (\pa_tR^2)_{\varphi}}}_{L^2}.
\end{multline}

Now we still have to take the inner product in $L^2$ with  $\Delta_q^h R^1_\varphi$ and $\Delta_q^h R^2_\varphi$ to the equation 
\begin{align}\label{eq:HypstandRRbis1}
    &e^{\varphi(t,|D_x|)} (\pa_t^2 R^1 + \pa_t R^1 -\pa_y^2R^1 - \eps^2 \pa_x^2 R^1 +\pa_x p -F^1)  = 0 \nonumber\\
    &e^{\varphi(t,|D_x|)} \Big(\eps^2(\pa_t^2 R^2 + \pa_t R^2 -\pa_y^2R^2-\eps^2\pa_xR^2) +\pa_y p - F^2 \Big) = 0,
\end{align}
 we obtain  
\begin{multline} \label{S9eq1bis1bis1}
	 \psca{\Delta_q^h (\pa_t^2R^1)_\varphi, \Delta_q^h  R^1_{\varphi}}_{L^2} + \psca{\Delta_q^h (\pa_tR^1)_\varphi, \Delta_q^h  R^1_{\varphi}}_{L^2} - \psca{\Delta_q^h \pa_y^2R^1_\varphi, \Delta_q^h R^1_{\varphi}}_{L^2} \\-\eps^2 \psca{\Delta_q^h \pa_x^2R^1_\varphi, \Delta_q^h R^1_{\varphi}}_{L^2}  
	= \psca{\Delta_q^h (F^1)_{\varphi}, \Delta_q^h R^1_{\varphi})}_{L^2} - \psca{\Delta_q^h \pa_x q_{\varphi}, \Delta_q^h R^1_{\varphi}}_{L^2},
\end{multline}
and 
\begin{multline} \label{S9eq1bis2bis1}
	 \psca{\Delta_q^h (\eps \pa_t^2R^2)_\varphi, \Delta_q^h  \eps R^2_{\varphi}}_{L^2} + \psca{\Delta_q^h (\eps \pa_t R^2)_\varphi, \Delta_q^h  \eps R^2_{\varphi}}_{L^2} - \psca{\Delta_q^h \eps \pa_y^2R^2_\varphi, \Delta_q^h \eps R^2_{\varphi}}_{L^2}\\  -\eps^2 \psca{\Delta_q^h \eps \pa_x^2R^2_\varphi, \Delta_q^h \eps R^2_{\varphi}}_{L^2} 
	 = \psca{F^2)_{\varphi}, \Delta_q^h R^2_{\varphi})}_{L^2} - \psca{\Delta_q^h \pa_y q_{\varphi}, \Delta_q^h R^2_{\varphi}}_{L^2},
\end{multline}

In what follows, we shall use again the technical lemmas in Section \ref{sec:1}, to handle term by term in the estimate \eqref{S9eq1bis1bis1} and \eqref{S9eq1bis2bis1}. We start by the complicate term $ I_1 = \psca{\Delta_q^h (\pa_t^2R^1)_\varphi, \Delta_q^h  R^1_{\varphi}}_{L^2}$ and $I_2= \psca{\Delta_q^h (\eps \pa_t^2R^2)_\varphi, \Delta_q^h  \eps R^2_{\varphi}}_{L^2}$, so by using integration by parts, we find 
\begin{align*}
    I_1 &= \frac{d}{dt
    }\int \Delta_q^h (\pa_t R^1)_\varphi \Delta_q^h  R^1_\varphi dx - \int \Delta_q^h (\pa_t R^1)_\varphi \Delta_q^h (\pa_tR^1)_\varphi dx \\ &\quad\quad\quad\quad\quad + 2\mu \dot{\eta}(t)\int \Delta_q^h |D_x| (\pa_tR^1)_\varphi \Delta_q^h R^1_\varphi dx \\
    I_2 &= \frac{d}{dt
    }\int \Delta_q^h (\eps \pa_t R^2)_\varphi \Delta_q^h  \eps R^2_\varphi dx - \int \Delta_q^h (\eps \pa_t R^2)_\varphi \Delta_q^h (\eps\pa_tR^2)_\varphi dx \\ &\quad\quad\quad\quad\quad + 2\mu \dot{\eta}(t)\int \Delta_q^h |D_x| (\eps\pa_tR^2)_\varphi \Delta_q^h \eps R^2_\varphi dx
\end{align*}

Then by using Lemma \ref{lem:Bernsteinbis1}, we achieve
\begin{align} \label{S9eq1bis3bis1}
	 &\frac{d}{dt}\int \Delta_q^h (\pa_t R^1)_\Theta \Delta_q^h  R^1_\Theta dx - \int \Delta_q^h (\pa_t R^1)_\Theta \Delta_q^h (\pa_tR^1)_\Theta dx \nonumber \\ &+ 2\mu \dot{\eta}(t)\int  \Delta_q^h |D_x| (\pa_tR^1)_\Theta \Delta_q^h R^1_\Theta dx+ 
	 \frac{d}{dt}\int \Delta_q^h (\pa_t R^2)_\Theta \Delta_q^h  R^2_\Theta dx \nonumber \\
	   &- \int \Delta_q^h (\pa_t R^2)_\Theta \Delta_q^h (\pa_tR^2)_\Theta dx + 2\mu \dot{\eta}(t)\int  \Delta_q^h |D_x| (\pa_tR^2)_\Theta \Delta_q^h R^2_\Theta dx \nonumber \\
	   &+ \frac{1}{2}\frac{d}{dt} \|\Delta_q^h R^1_\Theta\|_{L^2}^2 + \frac{1}{2}\frac{d}{dt} \|\Delta_q^h \eps R^2_\Theta\|_{L^2}^2 + \mu \dot{\tau}(t) \|\Delta_q^h |D_x|^\frac{1}{2} (R^1_\Theta,R^2_\Theta) \|_{L^2}^2 \nonumber\\
	   &+ \|\Delta_q^h  \pa_yR^1_\varphi \|_{L^2}^2  + \|\Delta_q^h  \pa_y \eps R^2_\varphi \|_{L^2}^2 + \eps^2\|\Delta_q^h  \pa_xR^1_\Theta \|_{L^2}^2+ \eps^2\|\Delta_q^h  \pa_x \eps R^2_\Theta \|_{L^2}^2	   
	    \\ &= \psca{\Delta_q^h (F^1)_{\varphi}, \Delta_q^h R^1_{\varphi})}_{L^2} + \psca{\Delta_q^h (F^2)_{\varphi}, \Delta_q^h R^2_{\varphi})}_{L^2}. \nonumber       
\end{align}
Now we claim that
\begin{multline}
\int_0^t \abs{2\mu \dot{\eta}(t)\int  \Big(\Delta_q^h |D_x| (\pa_tR^1)_\Theta \Delta_q^h R^1_\Theta + \Delta_q^h |D_x| (\pa_tR^2)_\Theta \Delta_q^h R^2_\Theta\Big) dx} \\ \leq 2^{-q} d_q^2 \mu\Big( \Vert (\pa_tR^1,\eps\pa_t R^2)_{\Theta} \Vert_{\tilde{L}^2_{t,\dot{\eta}}(\mathcal{B}^{1})} \Vert  \pa_y (R^1,\eps R^2)_{\Theta} \Vert_{\tilde{L}^2_{t,\dot{\eta}}(\mathcal{B}^{1})} \Big),
\end{multline} 
\begin{multline} \label{eq:G1bis1}
  \int_0^t \abs{\psca{\Delta_q^h F^1_{\varphi}, \Delta_q^h  R^1_{\varphi}}_{L^2}} dt' + \int_0^t \abs{\psca{\Delta_q^h F^1_{\varphi}, \Delta_q^h (\pa_tR^1)_{\varphi}}_{L^2}} dt' \\ \lesssim  2^{-q}d_q^2 \bigg(\eps \Vert \pa_y u_{\varphi} \Vert_{\tilde{L}^2_{t}(\mathcal{B}^{\frac{5}{2}})} \Big( \Vert \eps R^1_{\varphi} \Vert_{\tilde{L}^2_{t}(\mathcal{B}^{\f12})} + \Vert \eps (\pa_tR^1)_{\varphi} \Vert_{\tilde{L}^2_{t}(\mathcal{B}^{\f12})}\Big) \\ +\Vert u_{\varphi} \Vert_{\tilde{L}^\infty_{t}(\mathcal{B}^{\f32})}^{\frac{1}{2}} \Vert \pa_y R^1_{\varphi} \Vert_{\tilde{L}^2_{t}\mathcal{B}^{\f12})} \Big( \|R^1_\varphi \|_{\tilde{L}^2_{t,\dot{\eta}}(\mathcal{B}^{1})} \\ + \|(\pa_tR^1)_\varphi \|_{\tilde{L}^2_{t,\dot{\eta}}(\mathcal{B}^{1})} \Big) + \|(R^1,\pa_t R^1)_\varphi \|_{\tilde{L}^2_{t,\dot{\eta}}(\mathcal{B}^{1})}^2 \bigg),
\end{multline}
and
\begin{multline} \label{eq:G2bis1}
 \int_0^t \abs{\psca{\Delta_q^h F^2_{\varphi}, \Delta_q^h R^2_{\varphi}}_{L^2}} dt' + \int_0^t \abs{\psca{\Delta_q^h F^2_{\varphi}, \Delta_q^h (\pa_tR^2)_{\varphi}}_{L^2}} dt'\\
\lesssim 2^{-q} d_q^2 \Big \lbrace \|R^1_\varphi \|_{\tilde{L}^2_{t,\dot{\eta}}(\mathcal{B}^{1})}^2 + \eps^2 \|(R^2_\varphi,(\pa_tR^2)_\varphi) \|_{\tilde{L}^2_{t,\dot{\eta}}(\mathcal{B^1})}^2 + \eps^2 \|(R^2_\varphi,\eps(\pa_tR^2)_\varphi) \|_{\tilde{L}^2_t(\mathcal{B}^\f12)} \\
\times \Big( \|(\pa_t^2 u)_\varphi \|_{\tilde{L}^2_t(\mathcal{B}^{\f32})} + \|(\pa_t u )_\varphi \|_{\tilde{L}^2_t(\mathcal{B}^{\f32})} + \eps \|\pa_y u_\varphi \|_{\tilde{L}^2_t(\mathcal{B}^{\f72})} + \|\pa_y u_\varphi \|_{\tilde{L}^2_t(\mathcal{B}^{\f32})} \Big) \\
 + \eps^2 \|(R^2_\varphi,(\pa_tR^2)_\varphi) \|_{\tilde{L}^2_{t,\dot{\eta}}(\mathcal{B}^{1})} \Big( \|R^2_\varphi \|_{\tilde{L}^2_{t,\dot{\eta}}(\mathcal{B}^{1})} + \|u^\eps_\varphi \|_{\tilde{L}^\infty_t(\mathcal{B}^\f12)}^\frac{1}{2} \|\pa_y u_\varphi \|_{\tilde{L}^2_t(\mathcal{B}^{\frac{3}{2}})} \\
 + \|u_\varphi \|_{\tilde{L}^\infty_t(\mathcal{B}^{\f32})}^\frac{1}{2}( \|\pa_y R^2_\varphi \|_{\tilde{L}^2_t(\mathcal{B}^\f12)} +  \|\pa_y u_\varphi \|_{\tilde{L}^2_t(\mathcal{B}^{\f32})} ) \Big \rbrace,
\end{multline}
the proof of those estimates will be presented later in the last section of the chapter.

By virtue of \eqref{5Mbis1}, \eqref{eq:G1bis1} and \eqref{eq:G2bis1}, we infer
\begin{multline}
\sum_{i=1}^{2} \int_0^t \abs{\psca{\Delta_q^h F^i_{\varphi}, \Delta_q^h R^i_{\varphi}}_{L^2}} dt' + \int_0^t \abs{\psca{\Delta_q^h F^i_{\varphi}, \Delta_q^h (\pa_tR^i)_{\varphi}}_{L^2}} dt'\\
\lesssim d_q^2 2^{-q} \Big( M\eps \|(\eps R^1,\eps\pa_tR^1)_\varphi\|_{\tilde{L}^2_t(\mathcal{B}^\f12)} + M^\frac{1}{2} \Vert \pa_y R^1_{\varphi} \Vert_{\tilde{L}^2_{t}\mathcal{B}^{\f12})}\|(R^1,\pa_tR^1)_\varphi \|_{\tilde{L}^2_{t,\dot{\eta}}(\mathcal{B}^{1})} \\
+ \|(R^1,\pa_t R^1)_\varphi \|_{\tilde{L}^2_{t,\dot{\eta}}(\mathcal{B}^{1})}^2 + \|(\eps R^2,\eps \pa_tR^2)_\varphi \|_{\tilde{L}^2_{t,\dot{\eta}}(\mathcal{B}^{1})}^2 + M\eps \|(\eps R^2,\eps \pa_tR^2)_\varphi \|_{\tilde{L}^2_t(\mathcal{B}^\f12)}  \\
+ M^\frac{3}{2} \eps \|(\eps R^2,\eps \pa_tR^2)_\varphi \|_{\tilde{L}^2_{t,\dot{\eta}}(\mathcal{B}^{1})} + M^\frac{1}{2} \Vert \eps \pa_y R^2_{\varphi} \Vert_{\tilde{L}^2_{t}\mathcal{B}^{\f12})}\|(\eps R^2,\eps \pa_tR^2)_\varphi \|_{\tilde{L}^2_{t,\dot{\eta}}(\mathcal{B}^{1})}\Big).
\end{multline}

Then by summing $2\times \eqref{S9eq3bis1}$ with $\eqref{S9eq1bis3bis1}$ and we multiply the resulting by $2^{q}$ and then integrating over time, and summing with respect to $q \in \ZZ$, we find that for $t<T^\star$

\begin{multline} \label{S9eq7bis1}
	  \mathcal{K}(t) \leq \sqrt{C} \mathcal{E}(0) 
 +\sqrt{C}\Big( \sqrt{M\eps} \|(\eps R^1,\eps\pa_tR^1)_\varphi\|_{\tilde{L}^2_t(\mathcal{B}^\f12)}^\frac{1}{2} + M^\frac{1}{4} \Vert \pa_y R^1_{\varphi} \Vert_{\tilde{L}^2_{t}\mathcal{B}^{\f12})}^\frac{1}{2}\|(R^1,\pa_tR^1)_\varphi \|_{\tilde{L}^2_{t,\dot{\eta}}(\mathcal{B}^{1})}^\frac{1}{2} \\
+ \|(R^1,\pa_t R^1)_\varphi \|_{\tilde{L}^2_{t,\dot{\eta}}(\mathcal{B}^{1})} + \|(\eps R^2,\eps \pa_tR^2)_\varphi \|_{\tilde{L}^2_{t,\dot{\eta}}(\mathcal{B}^{1})} + \sqrt{M\eps} \|(\eps R^2,\eps \pa_tR^2)_\varphi \|_{\tilde{L}^2_t(\mathcal{B}^\f12)}^\frac{1}{2}  \\
+ M^\frac{3}{4} \eps^\frac{1}{2} \|(\eps R^2,\eps \pa_tR^2)_\varphi \|_{\tilde{L}^2_{t,\dot{\eta}}(\mathcal{B}^{1})}^\frac{1}{2} + M^\frac{1}{4} \Vert \eps \pa_y R^2_{\varphi} \Vert_{\tilde{L}^2_{t}\mathcal{B}^{\f12})}^\frac{1}{2}\|(\eps R^2,\eps \pa_tR^2)_\varphi \|_{\tilde{L}^2_{t,\dot{\eta}}(\mathcal{B}^{1})}^\frac{1}{2} \Big),
\end{multline}
where 
\begin{align*}
\mathbf{K}(t) &= \Big( \frac{1}{2}\|(R^1 + \pa_tR^1,\eps (R^2+\pa_tR^2) )_\varphi \|_{\tilde{L}_t^\infty(\mathcal{B}^{\f12})} + \frac{1}{2}\|( \pa_tR^1)_\varphi,\eps (\pa_tR^2)_\varphi \|_{\tilde{L}_t^\infty(\mathcal{B}^{\f12})}  \\ & + \| \pa_y(R^1,\eps R^2)_\varphi \|_{\tilde{L}_t^\infty(\mathcal{B}^{\f12})} + \eps \| \pa_x(R^1,\eps R^2)_\varphi \|_{\tilde{L}_t^\infty(\mathcal{B}^{\f12})} \Big)  +\|(\pa_tR^1,\eps\pa_tR^2)_\varphi\|_{\tilde{L}_t^2(\mathcal{B}^{\f12})}  \\& +\sqrt{\mu} \| (R^1,\eps R^2)_\varphi \|_{\tilde{L}^2_{t,\dot{\eta}(t)}(\mathcal{B}^{1})} + \|\pa_y(R^1,\eps R^2)_\varphi\|_{\tilde{L}_t^2(\mathcal{B}^{\f12})}  +\eps \|\pa_x(R^1,\eps R^2)_\varphi\|_{\tilde{L}_t^2(\mathcal{B}^{\f12})}\\ & +  \sqrt{\mu}  \|(\pa_t R^1,\eps \pa_tR^2)_\varphi \|_{\tilde{L}^2_{t,\dot{\eta}(t)}(\mathcal{B}^{1})}  +  \sqrt{\mu} \| \pa_y (R^1,\eps R^2)_\varphi \|_{\tilde{L}^2_{t,\dot{\eta}(t)}(\mathcal{B}^{1})}^2 + \sqrt{\mu} \| \pa_x (R^1,\eps R^2)_\varphi\|_{\tilde{L}^2_{t,\dot{\eta}(t)}(\mathcal{B}^{1})}.
\end{align*}
and 
\begin{align*}
\mathcal{E}(0) &=  \|e^{a|D_x|}\pa_y(u^\eps_0 - u_0,\eps (v^\eps_0 - v_0)) \|_{\mathcal{B}^{\f12}} + \sqrt{C}\eps \|e^{a|D_x|}\pa_x(u^\eps_0 - u_0,\eps (v^\eps_0 - v_0)) \|_{\mathcal{B}^{\f12}}  \\ &+\sqrt{C}\|e^{a|D_x|}((u^\eps_0-u_0)+(u^\eps_1 -u_1),\eps (v^\eps_0 - v_0) + \eps(v^\eps_1 - v_1)) \|_{\mathcal{B}^{\f12}} \\ &+\sqrt{C}\|e^{a|D_x|}((u^\eps_1 -u_1), \eps(v^\eps_1 - v_1)) \|_{\mathcal{B}^{\f12}}
\end{align*}
Applying Young's inequality gives rise to
\begin{align} \label{S9eq8bis1}
\mathcal{K}(t) \leq \sqrt{C} \Big( \mathcal{E}(0) + M( \eps + \|( R^1, \pa_tR^1)_\varphi \|_{\tilde{L}^2_{t,\dot{\eta}}(\mathcal{B}^{1})} + \|(\eps R^2,\eps \pa_tR^2)_\varphi \|_{\tilde{L}^2_{t,\dot{\eta}}(\mathcal{B}^{1})}) \Big)
\end{align}
 
 Taking $\mu = CM^2$ lead to \eqref{vvbis1}, this completes the proof of the theorem \ref{th:33}.


\section{Proof of the estimates \eqref{eq:G1bis1} and \eqref{eq:G2bis1}}
\subsection{Proof of the estimate \eqref{eq:G1bis1}}
We first observe that
\begin{align*}
	F^1_{\varphi}  = (\eps^2 \pa_x^2u - (u^\eps \pa_x R^1 + R^1 \pa_x u)-(v^\eps \pa_yR^1 + R^2 \pa_y u))_{\varphi},
\end{align*}     
so, we define  
\begin{align*}
	G_1^q & =  \int_0^t \abs{\psca{\Delta_q^h F^1_{\varphi}, \Delta_q^h (\pa_tR^1)_{\varphi}}_{L^2}} dt' \\ &= \int_0^t \abs{\psca{\Delta_q^h (\eps^2 \pa_x^2u - (u^\eps \pa_x R^1 + R^1 \pa_x u)-(v^\eps \pa_yR^1 + R^2 \pa_y u))_{\varphi}, \Delta_q^h (\pa_tR^1)_{\varphi}}_{L^2}} dt' \\
	&\leq I_1^q + I_2^q + I_3^q,
\end{align*}
where
\begin{align*}
	I_1^q &= \int_0^t \abs{\psca{\Delta_q^h (\eps^2 \pa_x^2 u)_{\varphi}, \Delta_q^h (\pa_tR^1)_{\varphi}}_{L^2}} dt'\\
	I_2^q &= \int_0^t \abs{\psca{ \Delta_q^h(u^\eps \pa_x R^1 + R^1 \pa_x u )_{\varphi},\Delta_q^h (\pa_tR^1)_{\varphi}}_{L^2}} dt'\\
	I_3^q &= \int_0^t \abs{\psca{ \Delta_q^h(v^\eps \pa_yR^1 + R^2 \pa_y u)_{\varphi}, \Delta_q^h (\pa_tR^1)_{\varphi}}_{L^2}} dt'.
\end{align*}

We first observe that  
\begin{align}\label{I1qbis1}
	I_1^q \leq C d_q^2 2^{-q} \eps^2 \Vert u_{\varphi} \Vert_{\tilde{L}^2_t(\mathcal{B}^{\f52})} \Vert (\pa_tR^1)_{\varphi} \Vert_{\tilde{L}^2_t(\mathcal{B}^{\f12})}.
\end{align}

For $I_2$, we write
\begin{align*}
	I_2^q = \int_0^t \abs{\psca{\Delta_q^h(u^\eps \pa_x R^1 + R^1 \pa_x u )_{\varphi},\Delta_q^h (\pa_tR^1)_{\varphi}}_{L^2}} dt' \leq I_{21} + I_{22},
\end{align*}
where
\begin{align*}
	I_{21}^q &=  \int_0^t \abs{\psca{\Delta_q^h(u^\eps \pa_x R^1)_{\varphi},\Delta_q^h (\pa_tR^1)_{\varphi}}_{L^2}} dt'\\
	I_{22}^q &=  \int_0^t \abs{\psca{ \Delta_q^h( R^1 \pa_x u )_{\varphi},\Delta_q^h (\pa_tR^1)_{\varphi}}_{L^2}} dt'.
\end{align*}
Lemma $\ref{lem:ABCbis1}$ implies
\begin{align}\label{I21qbis1}
	I_{21}^q  \leq C d_q^2 2^{-q}  \Vert R^1_{\varphi} \Vert_{\tilde{L}^2_{t,\dot{\eta}(t)}(\mathcal{B}^{1})}\Vert (\pa_tR^1)_{\varphi} \Vert_{\tilde{L}^2_{t,\dot{\eta}(t)}(\mathcal{B}^{1})},
\end{align}

For $I_{22}$, using Bony's decomposition for the horizontal variable, we write
$$ R^1\pa_x u = T^h_{\pa_xu}R^1 + T^h_{R^1}\pa_xu + R^h(R^1,\pa_xu), $$
and then, we have the following bound
\begin{align*}
	I_{22}^q = \int_0^t \abs{\psca{ \Delta_q^h  (T^h_{\pa_xu}R^1 + T^h_{R^1}\pa_xu + R^h(R^1,\pa_xu))_{\varphi},\Delta_q^h (\pa_tR^1)_\varphi}_{L^2}} dt' \leq I_{22,1}^q + I_{22,2}^q +I_{22,3}^q
\end{align*}
with
\begin{align*} 
	I_{22,1}^q &= \int_0^t  \abs{\psca{\Delta_q^h  (T^h_{\pa_xu}R^1)_{\varphi},\Delta_q^h (\pa_tR^1)_\varphi}_{L^2}} dt'\\
	I_{22,2}^q &= \int_0^t  \abs{\psca{\Delta_q^h  (T^h_{R^1}\pa_xu)_{\varphi},\Delta_q^h (\pa_tR^1)_\varphi}_{L^2}} dt'\\
	I_{22,3}^q &= \int_0^t  \abs{\psca{\Delta_q^h  ( R^h(R^1,\pa_xu)_{\varphi},\Delta_q^h (\pa_tR^1)_\varphi}_{L^2}} dt'.
\end{align*}
Using the support properties given in [\cite{B1981}, Proposition 2.10] and the definition of $T^h_{R^1}\pa_xu$, we have 
\begin{align*}
	I_{22,2}^q &\leq   \sum_{|q-q'|\leq4} \int_0^t \Vert S_{q'-1}^h R^1_{\varphi} \Vert_{L^\infty} \Vert \Delta_{q'}^h \pa_x u_{\varphi} \Vert_{L^2} \Vert \Delta_q^h (\pa_tR^1)_{\varphi} \Vert_{L^2}dt'.
\end{align*}

As we have $R^1 = u^\eps - u$, so
\begin{align*}
 \Vert S_{q'-1}^h R^1_{\varphi} \Vert_{L^\infty} =  \Vert S_{q'-1}^h (u^\eps - u)_{\varphi} \Vert_{L^\infty} \leq \Vert S_{q'-1}^h u^\eps_{\varphi} \Vert_{L^\infty} + \Vert S_{q'-1}^h u_{\varphi} \Vert_{L^\infty},
\end{align*}
then,
\begin{align*}
	I_{22,2}^q &= \int_0^t \abs{\psca{\Delta_q^h  (T^h_{\pa_xu}R^1)_{\varphi},\Delta_q^h (\pa_tR^1)_\varphi}}\\ & \leq \sum_{|q-q'|\leq4} \int_0^t \Vert S_{q'-1}^h u^\eps_{\varphi} \Vert_{L^\infty} \Vert \Delta_{q'}^h \pa_x u_{\varphi} \Vert_{L^2} \Vert \Delta_q^h (\pa_tR^1)_{\varphi} \Vert_{L^2} dt' \\
	&+ \sum_{|q-q'|\leq4} \int_0^t \Vert S_{q'-1}^h u_{\varphi} \Vert_{L^\infty} \Vert \Delta_{q'}^h \pa_x u_{\varphi} \Vert_{L^2} \Vert \Delta_q^h (\pa_tR^1)_{\varphi} \Vert_{L^2}dt'.
\end{align*}
We note that we get by applying Bernstein lemma \ref{lem:Bernsteinbis1}
\begin{align*}
\Vert \Delta_{q'}^h u^\eps_{\varphi} \Vert_{L^\infty} &\leq 2^{\frac{q'}{2}} \Vert \Delta_{q'}^h u^\eps_{\varphi} \Vert_{L^2_h(L^\infty_v)} \\
&\leq C d_{q'}(u^\eps_\varphi) \Vert u^\eps_{\varphi} \Vert_{\mathcal{B}^\f12}^\f12 \Vert \pa_y u^\eps_{\varphi} \Vert_{\mathcal{B}^\f12}^\f12,
\end{align*}
Here and in all that follows, we always denote $(d_{q'}(u_\varphi^\eps))_{q'\in \ZZ}$ to be a generic element of $\ell^1(\ZZ)$ so that $\sum_{q'\in\ZZ}d_{q'}(u^\eps_\varphi) \leq 1$, then 
\begin{align*}
\Vert S_{q'-1}^h u^\eps_{\varphi} \Vert_{L^\infty} \lesssim \Vert u^\eps_{\varphi} \Vert_{\mathcal{B}^\f12}^\f12 \Vert \pa_y u^\eps_{\varphi} \Vert_{\mathcal{B}^\f12}^\f12 \text{ \ and \ } \Vert S_{q'-1}^h u_{\varphi} \Vert_{L^\infty} \lesssim \Vert u_{\varphi} \Vert_{\mathcal{B}^\f12}^\f12 \Vert \pa_y u_{\varphi} \Vert_{\mathcal{B}^\f12}^\f12.
\end{align*}
So 
\begin{align*}
I_{22,2}^q & \lesssim \sum_{|q-q'|\leq4} \int_0^t \Vert u^\eps_{\varphi} \Vert_{\mathcal{B}^\f12}^\f12 \Vert \pa_y u^\eps_{\varphi} \Vert_{\mathcal{B}^\f12}^\f12 \Vert \Delta_{q'}^h \pa_x u_{\varphi} \Vert_{L^2} \Vert \Delta_q^h (\pa_tR^1)_{\varphi} \Vert_{L^2} dt' \\
	&+ \sum_{|q-q'|\leq4} \int_0^t \Vert u_{\varphi} \Vert_{\mathcal{B}^\f12}^\f12 \Vert \pa_y u_{\varphi} \Vert_{\mathcal{B}^\f12}^\f12 \Vert \Delta_{q'}^h \pa_x u_{\varphi} \Vert_{L^2} \Vert \Delta_q^h (\pa_tR^1)_{\varphi} \Vert_{L^2}dt'\\
	& \lesssim \sum_{|q-q'|\leq4} \Vert u^\eps_{\varphi} \Vert_{L^\infty_t(\mathcal{B}^\f12)}^\f12  \Vert \Delta_{q'}^h \pa_x u_{\varphi} \Vert_{L^2_t(L^2)} \Big(\int_0^t \Vert \pa_y u^\eps_{\varphi} \Vert_{\mathcal{B}^\f12} \Vert \Delta_q^h (\pa_tR^1)_{\varphi} \Vert_{L^2}^2 dt'\Big)^\f12 \\
	&+ \sum_{|q-q'|\leq4} \Vert u_{\varphi} \Vert_{L^\infty_t(\mathcal{B}^\f12)}^\f12  \Vert \Delta_{q'}^h \pa_x u_{\varphi} \Vert_{L^2_t(L^2)} \Big(\int_0^t \Vert \pa_y u_{\varphi} \Vert_{\mathcal{B}^\f12} \Vert \Delta_q^h (\pa_tR^1)_{\varphi} \Vert_{L^2}^2 dt'\Big)^\f12 \\
	& \leq C 2^{-q} d_q^2 \big(\Vert u^\eps_{\varphi} \Vert_{\tilde{L}^\infty_t(\mathcal{B}^\f12)}^\f12 + \Vert u_{\varphi} \Vert_{\tilde{L}^\infty_t(\mathcal{B}^\f12)}^\f12  \big) \Vert \pa_y u_{\varphi} \Vert_{\tilde{L}^2_t(\mathcal{B}^1)} \Vert (\pa_t R^1)_{\varphi} \Vert_{\tilde{L}^2_{t,\dot{\eta}(t)}(\mathcal{B}^1)}. 
\end{align*}
Now, we recall that
$$ \Vert S_{q-1}^h \pa_x u_{\varphi} \Vert_{L^{\infty}} \leq \sum_{l\leq q-2} 2^{\frac{3l}{2}} \Vert \Delta_l^h u_{\varphi} \Vert_{L^2}^{\frac{1}{2}} \Vert \Delta_l^h \pa_y u_\varphi \Vert_{L^2}^{\frac{1}{2}} \lesssim 2^q \Vert \pa_y u_\varphi \Vert_{\mathcal{B}^{\f12}},$$
so we can deduce 
\begin{align*}
	I_{22,1}^q &= \int_0^t \abs{\psca{\Delta_q^h  (T^h_{\pa_xu}R^1)_{\varphi},\Delta_q^h (\pa_tR^1)_\varphi}}  \leq \sum_{|q-q'|\leq 4} \int_0^t  \Vert S_{q'-1}^h \pa_x u_{\varphi} \Vert_{L^{\infty}} \Vert \Delta_{q'}^h R^1_\varphi \Vert_{L^2} \Vert \Delta_{q}^h (\pa_tR^1)_\varphi \Vert_{L^2} \\
	&\lesssim \sum_{|q-q'|\leq 4} \int_0^t  2^{q'} \Vert \pa_y u_\varphi \Vert_{\mathcal{B}^{\f12}} \Vert \Delta_{q'}^h R^1_\varphi \Vert_{L^2} \Vert \Delta_{q}^h (\pa_tR^1)_\varphi \Vert_{^2} \\
	&\lesssim  \sum_{|q-q'|\leq 4} 2^{q'} \left( \int_0^t \Vert \pa_y u_\varphi \Vert_{\mathcal{B}^{\f12}} \Vert \Delta_{q'}^h R^1_\varphi \Vert_{L^2}^2 dt' \right)^\frac{1}{2}  \left( \int_0^t \Vert \pa_y u_\varphi \Vert_{\mathcal{B}^{\f12}} \Vert \Delta_{q}^h (\pa_tR^1)_\varphi \Vert_{L^2}^2 dt' \right)^\frac{1}{2} 
\end{align*}
Using the definition of $\dot{\eta}(t)$ and Definition \ref{def:CLweightbis12} we have
$$ \left( \int_0^t \Vert \pa_y u_\varphi \Vert_{\mathcal{B}^{\f12}} \Vert \Delta_{q}^h R^1_\varphi \Vert_{L^2}^2 dt' \right)^\frac{1}{2} \lesssim 2^{-q} d_q(R^1_\varphi) \Vert R^1_{\varphi} \Vert_{\tilde{L}^2_{t,\dot{\eta}(t)}(\mathcal{B}^{1})}. $$
Then,
$$ I_{22,1}^q \lesssim  2^{-q} d_q^2 \Vert R^1_{\varphi} \Vert_{\tilde{L}^2_{t,\dot{\eta}(t)}(\mathcal{B}^{1})} \Vert (\pa_tR^1)_{\varphi} \Vert_{\tilde{L}^2_{t,\dot{\eta}(t)}(\mathcal{B}^{1})}$$
where
$$ d_q^2 = d_q \left(\sum_{|q-q'|\leq 4} d_{q'} \right) $$

In a similar way, we have 
\begin{align*}
	I_{22,3}^q &= \int_0^t \abs{\psca{\Delta_q^h  (R^h(R^1,\pa_xu))_{\varphi},\Delta_q^h (\pa_tR^1)_\varphi}}dt' \\ &\lesssim 2^{\frac{q}{2}} \sum_{q'\geq q-3} \int_0^t \Vert \Delta_{q'}^h R^1_\varphi \Vert_{L^2} \Vert \tilde{\Delta}_{q'}^h \pa_x u_\varphi \Vert_{L^2_h(L^\infty_v)} \Vert \Delta_{q}^h (\pa_tR^1)_\varphi \Vert_{L^2} dt' \\
	&\lesssim 2^{\frac{q}{2}} \sum_{q'\geq q-3} \int_0^t 2^{\frac{q'}{2}} \Vert \Delta_{q'}^h R^1_\varphi \Vert_{L^2} \Vert \pa_y u_\varphi \Vert_{\mathcal{B}^{\f12}} \Vert \Delta_{q}^h (\pa_tR^1)_\varphi \Vert_{L^2} \\
	&\lesssim 2^{\frac{q}{2}} \sum_{q'\geq q-3} 2^{\frac{q'}{2}} \left(\int_0^t  \Vert \pa_y u_\varphi \Vert_{\mathcal{B}^{\f12}} \Vert \Delta_{q'}^h R^1_\varphi \Vert_{L^2} dt'  \right) \left(\int_0^t  \Vert \pa_y u_\varphi \Vert_{\mathcal{B}^{\f12}} \Vert \Delta_{q}^h (\pa_tR^1)_\varphi \Vert_{L^2} dt'  \right) \\
	&\lesssim  d_q^2 2^{-q}  \Vert  R^1_\varphi \Vert_{\tilde{L}^2_{t,\dot{\eta}(t)}(\mathcal{B}^{1})}\Vert  (\pa_tR^1)_\varphi \Vert_{\tilde{L}^2_{t,\dot{\eta}(t)}(\mathcal{B}^{1})}
\end{align*}

Then we conclude that 
\begin{align}\label{I22bis1}
I_{22} \lesssim  d_q^2 2^{-q} \Big( \Vert  (R^1,\pa_tR^1)_\varphi \Vert_{\tilde{L}^2_{t,\dot{\eta}(t)}(\mathcal{B}^{1})}^2 + \Vert u_{\varphi} \Vert_{\tilde{L}^\infty_t(\mathcal{B}^{\f32})} \Vert \pa_y R^1_\varphi \Vert_{\tilde{L}^2_t(\mathcal{B}^{\f12})}^2 \Big)
\end{align}

For the term $I_3^q$, we write 
\begin{align}
	\label{eq:I3bis1} I_3^q = \int_0^t \abs{\psca{ v^\eps \pa_yR^1 + R^2 \pa_y u))_{\varphi}, \Delta_q^h (\pa_tR^1)_{\varphi}}_{L^2}} dt' \leq I_{31}^q + I_{32}^q,
\end{align}
where
\begin{align*}
	I_{31}^q &= \int_0^t \abs{\psca{\Delta_q^h(v^\eps \pa_y R^1)_\varphi , \Delta_q^h (\pa_tR^1)_\varphi }_{L^2}} dt'\\
	I_{32}^q &= \int_0^t \abs{\psca{\Delta_q^h(R^2 \pa_y u)_\varphi, \Delta_q^h (\pa_tR^1)_\varphi}_{L^2}} dt'.
\end{align*}

Since 
$$ v^\eps \pa_y R^1 = (R^2 + v)\pa_y R^1 = R^2\pa_y R^1 +  v\pa_y R^1,$$
we get 
\begin{align*}
	I_{31}^q \leq I_{31,1}^q + I_{31,2}^q,
\end{align*}
with
\begin{align*}
	I_{31,1}^q &= \int_0^t \abs{\psca{ \Delta_q^h(R^2\pa_y R^1)_\varphi , \Delta_q^h (\pa_t R^1)_\varphi }_{L^2}} dt'\\
	I_{31,2}^q &= \int_0^t \abs{\psca{ \Delta_q^h( v\pa_y R^1)_\varphi , \Delta_q^h (\pa_tR^1)_\varphi }_{L^2}}dt'.
\end{align*}
Lemma $\ref{lem:Apa_yBCbis1}$ implies
\begin{align} \label{eq:I311bis1}
	 I_{31,1}^q =  \lesssim  d_q^2 2^{-q} \Vert R^1_\varphi \Vert_{\tilde{L}^2_{t,\dot{\eta}(t)}(\mathcal{B}^{1})}\Vert (\pa_tR^1)_\varphi \Vert_{\tilde{L}^2_{t,\dot{\eta}(t)}(\mathcal{B}^{1})}.
\end{align}
For the term $I_{31,2}^q$, we apply Bony's decomposition with respect to the horizontal variable  
$$ v\pa_y R^1 = T^h_v \pa_yR^1 + T^h_{\pa_yR^1}v +R^h(v,\pa_yR^1). $$
Using \eqref{vvbis1}, we have 
\begin{align*}
\Vert S_{q'-1}^h v_\varphi \Vert_{L^\infty} =  \Vert S_{q'-1}^h \int_0^y \pa_x u_\varphi(t,x,s)ds \Vert_{L^\infty} &\lesssim \sum_{l\leq q'-2} 2^{\frac{3l}{2}} \Vert \Delta_l^h u_\varphi \Vert_{L^2}^{\frac{1}{2}} \Vert \Delta_l^h \pa_y u_\varphi \Vert_{L^2}^{\frac{1}{2}} \\
&\lesssim 2^{\frac{q'}{2}} \Vert u_\varphi \Vert_{\mathcal{B}^{\f32}}^{\frac{1}{2}} \Vert \pa_yu_\varphi \Vert_{\mathcal{B}^{\f12}}^{\frac{1}{2}},
\end{align*}
from which, we infer 
\begin{multline}
\int_0^t \abs{\psca{\Delta_q^h( T^h_v \pa_yR^1)_\varphi , \Delta_q^h (\pa_tR^1)_\varphi }_{L^2}} dt'  \\ \lesssim \sum_{|q'-q| \leq 4} \int_0^t \Vert S_{q'-1}^h v_\varphi \Vert_{L^\infty} \Vert \Delta_{q'}^h \pa_y R^1_\varphi \Vert_{L^2} \Vert \Delta_q^h (\pa_tR^1)_\varphi \Vert_{L^2}dt' \\ \leq \sum_{|q'-q| \leq 4} \int_0^t  2^{\frac{q'}{2}} \Vert u_\varphi \Vert_{\mathcal{B}^{\f32}}^{\frac{1}{2}} \Vert \pa_yu_\varphi \Vert_{\mathcal{B}^{\f12}}^{\frac{1}{2}} \Vert \Delta_{q'}^h \pa_y R^1_\varphi \Vert_{L^2} \Vert \Delta_q^h (\pa_t R^1)_\varphi \Vert_{L^2} dt' \\
\lesssim  d_q^2 2^{-q} \Vert u_\varphi \Vert_{\tilde{L}^\infty_{t} (\mathcal{B}^{\f32})}^{\frac{1}{2}} \Vert \pa_y R^1_\varphi \Vert_{\tilde{L}^2_t(\mathcal{B}^{\f12})}  \Vert (\pa_t R^1)_\varphi \Vert_{\tilde{L}^2_{t,\dot{\eta}(t)}(\mathcal{B}^{1})},
\end{multline}
where $\set{d_q}$ forms a suitable sequence. 

As we have $R^1 = u^\eps - u$, so
\begin{align*}
 \Vert S_{q'-1}^h \pa_y R^1_{\varphi} \Vert_{L^\infty} =  \Vert S_{q'-1}^h \pa_y(u^\eps - u)_{\varphi} \Vert_{L^\infty} \leq \Vert S_{q'-1}^h \pa_y u^\eps_{\varphi} \Vert_{L^\infty} + \Vert S_{q'-1}^h \pa_y u_{\varphi} \Vert_{L^\infty},
\end{align*}
then,
\begin{align*}
&\int_0^t \abs{\psca{\Delta_q^h( T^h_{\pa_yR^1}v)_\varphi , \Delta_q^h (\pa_tR^1)_\varphi }_{L^2}} dt' \\ &\lesssim \sum_{|q'-q| \leq 4} \int_0^t \Vert S_{q'-1}^h \pa_yR^1_\varphi \Vert_{L^\infty_h(L^2_v)} \Vert \Delta_{q'}^h v_\varphi \Vert_{L^2_h(L^\infty_v)} \Vert \Delta_q^h (\pa_t R^1)_\varphi \Vert_{L^2} dt' \\
&\lesssim \sum_{|q'-q| \leq 4} \int_0^t\Vert S_{q'-1}^h \pa_y u^\eps_{\varphi} \Vert_{L^\infty_h(L^2_v)} \Vert \Delta_{q'}^h v_\varphi \Vert_{L^2_h(L^\infty_v)} \Vert \Delta_q^h (\pa_t R^1)_\varphi \Vert_{L^2} dt' \\
& + \sum_{|q'-q| \leq 4} \int_0^t\Vert S_{q'-1}^h \pa_y u_{\varphi} \Vert_{L^\infty_h(L^2_v)} \Vert \Delta_{q'}^h v_\varphi \Vert_{L^2_h(L^\infty_v)} \Vert \Delta_q^h (\pa_t R^1)_\varphi \Vert_{L^2} dt',
\end{align*}
We note that we get by applying Bernstein lemma \ref{lem:Bernsteinbis1}
\begin{align*}
\Vert S_{q'-1}^h \pa_y u^\eps_{\varphi} \Vert_{L^\infty_h(L^2_v)} \lesssim \Vert \pa_y u^\eps_{\varphi} \Vert_{\mathcal{B}^\f12} \text{ \ and \ } \Vert S_{q'-1}^h u_{\varphi} \Vert_{L^\infty_h(L^2_v)} \lesssim \Vert \pa_y u_{\varphi} \Vert_{\mathcal{B}^\f12}.
\end{align*}
So,
\begin{align*}
I_{22,2}^q & \lesssim \sum_{|q-q'|\leq4} \int_0^t \Vert \pa_y u^\eps_{\varphi} \Vert_{\mathcal{B}^\f12} \Vert \Delta_{q'}^h v_{\varphi} \Vert_{L^2_h(L^\infty_v)} \Vert \Delta_q^h (\pa_tR^1)_{\varphi} \Vert_{L^2} dt' \\
	&+ \sum_{|q-q'|\leq4} \int_0^t \Vert \pa_y u_{\varphi} \Vert_{\mathcal{B}^\f12} \Vert \Delta_{q'}^h v_{\varphi} \Vert_{L^2_h(L^\infty_v)} \Vert \Delta_q^h (\pa_tR^1)_{\varphi} \Vert_{L^2}dt'\\
	& \lesssim \sum_{|q-q'|\leq4} \Vert \pa_y u^\eps_{\varphi} \Vert_{L^2_t(\mathcal{B}^\f12)}^\f12  \Vert \Delta_{q'}^h v_{\varphi} \Vert_{L^\infty_t(L^2_h(L^\infty_v))}^\f12 \Vert \Delta_{q'}^h v_{\varphi} \Vert_{L^2_t(L^2_h(L^\infty_v))}^\f12 \Big(\int_0^t \Vert \pa_y u^\eps_{\varphi} \Vert_{\mathcal{B}^\f12} \Vert \Delta_q^h (\pa_tR^1)_{\varphi} \Vert_{L^2}^2 dt'\Big)^\f12 \\
	&+ \sum_{|q-q'|\leq4} \Vert \pa_y u_{\varphi} \Vert_{L^\infty_t(\mathcal{B}^\f12)}^\f12  \Vert \Delta_{q'}^h \pa_x \pa_y u_{\varphi} \Vert_{L^2_t(L^2)} \Big(\int_0^t \Vert \pa_y u_{\varphi} \Vert_{\mathcal{B}^\f12} \Vert \Delta_q^h (\pa_tR^1)_{\varphi} \Vert_{L^2}^2 dt'\Big)^\f12 \\
	& \leq C 2^{-q} d_q^2 \big(\Vert \pa_y u^\eps_{\varphi} \Vert_{\tilde{L}^2_t(\mathcal{B}^\f12)}^\f12 \Vert \pa_y u_{\varphi} \Vert_{\tilde{L}^\infty_t(\mathcal{B}^1)}^\f12 + \Vert \pa_y u_{\varphi} \Vert_{\tilde{L}^\infty_t(\mathcal{B}^\f12)}^\f12 \Vert \pa_y u_{\varphi} \Vert_{\tilde{L}^2_t(\mathcal{B}^1)}^\f12  \big) \Vert \pa_y u_{\varphi} \Vert_{\tilde{L}^2_t(\mathcal{B}^1)}^\f12 \Vert (\pa_t R^1)_{\varphi} \Vert_{\tilde{L}^2_{t,\dot{\eta}(t)}(\mathcal{B}^1)}. 
\end{align*}

where $\set{d_q}$ forms a suitable sequence. 

Finally, by using that 
$$ \Vert \Delta_{q'}^h v_\varphi(t,x,y) \Vert_{L^2_h(L^\infty_v)} \lesssim  d_{q'}(u_\phi)\Vert u_\varphi \Vert_{\mathcal{B}^{\f32}}^{\frac{1}{2}} \Vert \pa_yu_\varphi \Vert_{\mathcal{B}^{\f12}}^{\frac{1}{2}}, $$
we have 
\begin{align*}
&\int_0^t \abs{\psca{ \Delta_q^h( R^h(v,\pa_yR^1))_\varphi , \Delta_q^h (\pa_tR^1)_\varphi }_{L^2}} dt' \\ &\lesssim 2^{\frac{q}{2}} \sum_{q'\geq q -3} \int_0^t  \Vert \Delta_{q'}^h v_\varphi \Vert_{L^2_h(L^\infty_v)} \Vert \tilde{\Delta}_{q'}^h \pa_yR^1_\varphi \Vert_{L^2} \Vert \Delta_q^h (\pa_tR^1)_\varphi \Vert_{L^2} dt' \\
&\lesssim 2^{\frac{q}{2}} \sum_{q'\geq q-3} \int_0^t  \Vert u_\varphi \Vert_{\mathcal{B}^{\f32}}^{\frac{1}{2}}  \Vert \tilde{\Delta}_{q'}^h \pa_yR^1_\varphi \Vert_{L^2} \Vert \pa_yu_\varphi \Vert_{\mathcal{B}^{\f12}}^{\frac{1}{2}}  \Vert \Delta_q^h (\pa_tR^1)_\varphi \Vert_{L^2}dt' \\
&\lesssim  d_q^2 2^{-q} \Vert u_\varphi \Vert_{\tilde{L}^\infty_t(\mathcal{B}^{\f32})}^{\frac{1}{2}} \Vert \pa_y R^1_\varphi \Vert_{\tilde{L}^2_t(\mathcal{B}^{\f12})}  \Vert (\pa_tR^1)_\varphi \Vert_{\tilde{L}^2_{t,\dot{\eta}(t)}(\mathcal{B}^{1})},
\end{align*}

Then we obtaine the following estimates, 
\begin{eqnarray} \label{eq:I312bis1} I_{31,2}^q  \lesssim 2^{-q}  d_q^2 \Vert u_\varphi \Vert_{\tilde{L}^\infty_{t} (\mathcal{B}^{\f32})}^{\frac{1}{2}} \Vert \pa_y R^1_\varphi \Vert_{\tilde{L}^2_t(\mathcal{B}^\f12)}\Vert (\pa_tR^1)_\varphi \Vert_{\tilde{L}^2_{t,\dot{\eta}(t)}(\mathcal{B}^{1})}. \end{eqnarray}

\medskip

Now we estimate the term $I_{32}^q$ in \eqref{eq:I3bis1}. Bony's decomposition with respect to the horizontal variable implies 
\begin{align*}
I_{32}^q = \int_0^t \abs{\psca{\Delta_q^h( T_{R^2}^h\pa_y u + T^h_{\pa_yu} R^2 + R^h(R^2,\pa_y u))_\varphi, \Delta_q^h (\pa_tR^1)_\varphi}_{L^2}} dt' \leq  I_{32,1}^q + I_{32,2}^q+I_{32,3}^q,
\end{align*}
where
\begin{align*}
I_{32,1}^q &= \int_0^t \abs{\psca{\Delta_q^h( T_{R^2}^h \pa_yu)_\varphi, \Delta_q^h (\pa_tR^1)_\varphi}_{L^2}} dt' \\
I_{32,2}^q &= \int_0^t \abs{\psca{\Delta_q^h(  T^h_{\pa_yu} R^2)_\varphi, \Delta_q^h (\pa_tR^1)_\varphi}_{L^2}} dt' \\
I_{32,3}^q &= \int_0^t \abs{\psca{\Delta_q^h( R^h(R^2,\pa_y u))_\varphi, \Delta_q^h (\pa_tR^1)_\varphi}_{L^2}} dt'.
\end{align*}
We first observe that 
\begin{align*}
I_{32,1}^q & \lesssim \sum_{|q'-q|\leq 4} \int_0^t \Vert S_{q'-1}^h R^2_{\varphi} \Vert_{L^\infty} \Vert \Delta_{q'}^h \pa_y u_\varphi \Vert_{L^2} \Vert \Delta_q^h (\pa_t R^1)_\varphi \Vert_{L^2} dt' \\
&\lesssim \sum_{|q'-q|\leq 4} \int_0^t 2^{\frac{-q'}{2}} \Vert S_{q'-1}^h R^2_{\varphi} \Vert_{L^\infty} \Vert \pa_y u_\varphi \Vert_{\mathcal{B}^\f12} \Vert \Delta_q^h (\pa_tR^1)_\varphi \Vert_{L^2}dt'.
\end{align*}
Due to the fact that $R^2(t,x,y) = -\int_0^y \pa_x R^1(t,x,s) ds$, we deduce
\begin{align*}
I_{32,1}^q & \lesssim \sum_{|q'-q|\leq 4} \int_0^t 2^{\frac{-q'}{2}} \Vert S_{q'-1}^h R^2_{\varphi} \Vert_{L^\infty} \Vert \pa_y u_\varphi \Vert_{\mathcal{B}^\f12} \Vert \Delta_q^h (R^1)_\varphi \Vert_{L^2} \\
&\lesssim \sum_{|q'-q|\leq 4} 2^{q'} \left( \int_0^t \Vert \pa_y u_\varphi \Vert_{\mathcal{B}^\f12}  \int_0^y \Vert S_{q'-1}^h \pa_x R^1_\varphi(t,x,s) \Vert_{L^2}^2 ds dt' \right)^\frac{1}{2}  \left(  \int_0^t \Vert \pa_y u_\varphi \Vert_{\mathcal{B}^\f12} \Vert \Delta_q^h R^1_\varphi \Vert_{L^2}^2 dt' \right)^\frac{1}{2},
\end{align*}
Taking into account the definition of $\dot{\eta}(t)$ and Definition \ref{def:CLweightbis12} we obtain
$$ \left( \int_0^t \Vert \pa_y u_\varphi \Vert_{\mathcal{B}^{\f12}} \Vert \Delta_{q}^h (\pa_t R^1)_\varphi \Vert_{L^2}^2 dt' \right)^\frac{1}{2} \lesssim 2^{-\frac{q}{2}} d_q \Vert (\pa_tR^1)_{\varphi} \Vert_{\tilde{L}^2_{t,\dot{\eta}(t)}(\mathcal{B}^{1})}. $$
Then,
\begin{align} 
I_{32,1}^q \lesssim  2^{-q}  d_q^2\Vert R^1_{\varphi} \Vert_{\tilde{L}^2_{t,\dot{\eta}(t)}(\mathcal{B}^{1})} \Vert (\pa_tR^1)_{\varphi} \Vert_{\tilde{L}^2_{t,\dot{\eta}(t)}(\mathcal{B}^{1})}
\end{align}

Now, for $I_{32,2}^q$, we have 
\begin{align*}
I_{32,2}^q &\lesssim \sum_{|q'-q|\leq 4} \int_0^t \Vert S_{q'-1}^h \pa_y u_\varphi \Vert_{L^\infty_h(L^2_v)} \Vert \Delta_{q'}^h R^1_\varphi \Vert_{L^2_h(L^\infty_v)} \Vert \Delta_q^h (\pa_tR^1)_\varphi \Vert_{L^2} dt' \\
&\lesssim \sum_{|q'-q|\leq 4} \int_0^t 2^{\frac{q'}{2}} \Vert \pa_y u_\varphi \Vert_{\mathcal{B}^{\f12}} 2^{\frac{q'}{2}} \Vert \Delta_{q'}^h R^1_\varphi \Vert_{L^2} \Vert \Delta_q^h (\pa_tR^1)_\varphi \Vert_{L^2}dt' \\
&\lesssim  \sum_{|q'-q|\leq 4} 2^{q'} \left( \int_0^t \Vert \pa_y u_\varphi \Vert_{\mathcal{B}^{\f12}}  \Vert \Delta_{q'}^h R^1_\varphi \Vert_{L^2}^2 dt' \right)^\frac{1}{2}  \left( \int_0^t \Vert \pa_y u_\varphi \Vert_{\mathcal{B}^{\f12}}  \Vert \Delta_{q}^h (\pa_tR^1)_\varphi \Vert_{L^2}^2 dt' \right)^\frac{1}{2} \\
&\lesssim  d_q^2 2^{-q} \Vert R^1_\varphi \Vert_{\tilde{L}^2_{t,\dot{\eta}(t)}(\mathcal{B}^{1})} \Vert (\pa_tR^1)_\varphi \Vert_{\tilde{L}^2_{t,\dot{\eta}(t)}(\mathcal{B}^{1})}.
\end{align*}

We end by estimate $I_{32,3}^q$, in the same way, we have
\begin{align*}
I_{32,3}^q &\lesssim 2^{\frac{q}{2}} \sum_{q'\geq q-3} \int_0^t \Vert \Delta_{q'}^h R^2_\varphi \Vert_{L^2_h(L^\infty_v)} \Vert \tilde{\Delta}_{q'}^h \pa_y u_\varphi \Vert_{L^2} \Vert \Delta_q^h (\pa_tR^1)_\varphi \Vert_{L^2} dt' \\
&\lesssim 2^{\frac{q}{2}} \sum_{q'\geq q-3} \int_0^t 2^{\frac{q'}{2}}  \Vert \Delta_{q'}^h R^1_\varphi \Vert_{L^2} \Vert \pa_y u_\varphi \Vert_{\mathcal{B}^{\f12}} \Vert \Delta_q^h (\pa_tR^1)_\varphi \Vert_{L^2} dt'\\
&\lesssim 2^{\frac{q}{2}} \sum_{q'\geq q-3} 2^{\frac{q'}{2}} \left( \int_0^t \Vert \pa_y u_\varphi \Vert_{\mathcal{B}^{\f12}}  \Vert \Delta_{q'}^h R^1_\varphi \Vert_{L^2}^2 dt' \right)^\frac{1}{2}  \left( \int_0^t \Vert \pa_y u_\varphi \Vert_{\mathcal{B}^{\f12}}  \Vert \Delta_{q}^h (\pa_tR^1)_\varphi \Vert_{L^2}^2 dt' \right)^\frac{1}{2} \\
&\lesssim  d_q^2 2^{-q} \Vert R^1_\varphi \Vert_{\tilde{L}^2_{t,\dot{\eta}(t)}(\mathcal{B}^{1})} \Vert (\pa_tR^1)_\varphi \Vert_{\tilde{L}^2_{t,\dot{\eta}(t)}(\mathcal{B}^{1})},
\end{align*}

Summing all the resulting estimate, we can achieve 
\begin{align} \label{eq:I32bis1}
	I_{32}^q  \lesssim   d_q^2 2^{-q} \Vert R^1_{\varphi} \Vert_{\tilde{L}^2_{t,\dot{\eta}(t)}(\mathcal{B}^{1})}\Vert '\pa_tR^1)_{\varphi} \Vert_{\tilde{L}^2_{t,\dot{\eta}(t)}(\mathcal{B}^{1})}.
\end{align}
\begin{rem}
For the proof when we have $R^1$ and not $\pa_tR^1$, it also the some we need just to replace in all the proof with $R^1$ instead of $\pa_tR^1$.  
\end{rem}

By summing up \eqref{I1qbis1}-\eqref{eq:I32bis1}, we conclude the proof of \eqref{eq:G1bis1}.

\section{Proof of lemma \ref{lem:estinonlinéaire}}\label{app2}
In this appendix we present the proof of the lemma \ref{lem:estinonlinéaire}, we start first by giving the proof of the estimate \eqref{estimUU1}. By using the Bony's decomposition \eqref{eq:Bonybis1}, we can write 
$$u\pa_x u = T^h_u\pa_x u + T^h_{\pa_x u}u + R^h(u,\pa_x u). $$
Then 
\begin{align*}
&\Big(\int_0^t \|e^{\mathcal{R} t'} \Delta_q^h(u\pa_x u)_\phi\|_{L^2}^2 dt'\Big)^\f12  \lesssim \Big(\int_0^t \|e^{\mathcal{R} t'} \Delta_q^h(T^h_u\pa_x u)_\phi\|_{L^2}^2 dt'\Big)^\f12 \\
& +\Big(\int_0^t \|e^{\mathcal{R} t'} \Delta_q^h(T^h_{\pa_x u}u)_\phi\|_{L^2}^2 dt'\Big)^\f12  + \Big(\int_0^t \|e^{\mathcal{R} t'} \Delta_q^h(R^h(u,\pa_x u))_\phi\|_{L^2}^2 dt'\Big)^\f12 
\end{align*}
We use the definition of $T^h$, we have 
\begin{align*}
\Big(\int_0^t \|e^{\mathcal{R} t'} \Delta_q^h(T^h_u\pa_x u)_\phi\|_{L^2}^2 dt'\Big)^\f12 \lesssim \Big(\int_0^t \sum_{|q'-q|\leq 4} \|e^{\mathcal{R} t'} (S^h_{q'-1} u \Delta_{q'}^h \pa_x u)_\phi\|_{L^2}^2 dt'\Big)^\f12.
\end{align*}
The lemma \ref{estfg+} allows us to obtain
\begin{align}\label{appendixuu}
\Big(\int_0^t \sum_{|q'-q|\leq 4} \|e^{\mathcal{R} t'} (S^h_{q'-1} u \Delta_{q'}^h \pa_x u)_\phi\|_{L^2}^2 dt'\Big)^\f12 &\lesssim \sum_{|q'-q|\leq 4} \Big(\int_0^t \|S_{q'-1}^h u^+_\phi \|_{L^\infty_h(L^2_v)}^2 \|e^{\mathcal{R}t'} \Delta_{q'}^h \pa_x u_\phi \|_{L^2_h(L^\infty_v)}^2 dt'\Big)^\f12 \\
&\lesssim \sum_{|q'-q|\leq 4} \Big(\int_0^t \|S_{q'-1}^h u^+_\phi \|_{L^\infty_h(L^2_v)}^2 2^{2q'}\|e^{\mathcal{R}t'} \Delta_{q'}^h u_\phi \|_{L^2_h(L^\infty_v)}^2 dt'\Big)^\f12
\end{align}
 
Using  Bernstein Lemma \ref{lem:Bernsteinbis1} we have
 $$\|\Delta_{q'}^h u_\phi \|_{L^\infty_h} \lesssim 2^{\frac{q'}{2}} \|\Delta_{q'}^h u_\phi\|_{L^2_h}.$$
 So we can deduce that 
 \begin{align*}
 \|\Delta_{q'}^h u_\phi \|_{L^\infty_h(L^2_v)} &\lesssim 2^{\frac{q'}{2}} \|\Delta_{q'}^h u_\phi\|_{L^2} \\
 & \lesssim d_{q'}(u_\phi) \|u_\phi \|_{\mathcal{B}^\f12}
 \end{align*}
 Here and in all that follows, we always denote $(d_q(u_\phi))_{q\in\ZZ}$ to be a generic element of $\ell^{1}(\ZZ) $ so that $\sum\limits_{q\in\ZZ} d_q(u_\phi) \leq 1$. 
 
 Then,
\begin{align*}
	\Vert S^h_{q'-1} u_{\phi} (t') \Vert_{L^{\infty}_h(L^2_v)} \lesssim  \Vert u_{\phi}(t') \Vert_{\mathcal{B}^{\f12}}.
\end{align*}

While using the inclusion $H^1_y\hookrightarrow L^\infty_y$, 
$$\|\Delta_{q'}^h u_\phi\|_{L^\infty_v} \lesssim \|\Delta_{q'}^h u_\phi\|_{L^2_v}^\f12 \|\Delta_{q'}^h \pa_y u_\phi\|_{L^2_v}^\f12, $$
and the Poincaré inequality on the interval $\lbrace 0<y<1 \rbrace$ on $u$ (as we have that $u = 0$ when y = 0,1)
 $$ \|\Delta_{q'}^h u_\phi\|_{L^2} \lesssim \|\Delta_{q'}^h \pa_y u_\phi \|_{L^2},$$ 
 we obtain
\begin{align} \label{eq:normB12bis1bis2}
	\Vert \Delta^h_{q'} u_{\phi}(t') \Vert_{L^2_h(L^\infty_v)} &\lesssim \Vert \Delta_{q'}^h \pa_y u_{\phi}(t') \Vert_{L^2}.
\end{align}

We replace in \eqref{appendixuu} we obtain 
\begin{align}
\Big(\int_0^t \|e^{\mathcal{R} t'} \Delta_q^h(T^h_u\pa_x u)_\phi\|_{L^2}^2 dt'\Big)^\f12 &\lesssim \sum_{|q'-q|\leq 4} \Vert u_{\phi} \Vert_{\tilde{L}^\infty_t(\mathcal{B}^{\f12})} 2^{q'}\Big(\int_0^t \|e^{\mathcal{R}t'} \Delta_{q'}^h \pa_y u_\phi \|_{L^2}^2 dt'\Big)^\f12 \nonumber \\
&\lesssim  \sum_{|q'-q|\leq 4} \Vert u_{\phi} \Vert_{\tilde{L}^\infty_t(\mathcal{B}^{\f12})} 2^{q'} \Big( 2^{-\frac{5q'}{2}} d_{q'}(\pa_y u_\phi) \|e^{\mathcal{R}t} \pa_y u_\phi \|_{\tilde{L}^2_t(\mathcal{B}^\frac{5}{2})}\Big) \nonumber\\
&\leq Cd_q 2^{-\frac{3q}{2}} \Vert u_{\phi} \Vert_{\tilde{L}^\infty_t(\mathcal{B}^{\f12})} \|e^{\mathcal{R}t} \pa_y u_\phi \|_{\tilde{L}^2_t(\mathcal{B}^\frac{5}{2})},  
\end{align}
where 
$$ d_q = \sum_{|q'-q|\leq 4} d_{q'}(\pa_y u_\phi) 2^{\frac{3}{2}(q-q')}. $$
Along the same way we can obtain that 
\begin{align*}
\Big(\int_0^t \|e^{\mathcal{R} t'} \Delta_q^h(T^h_{\pa_x u}u)_\phi\|_{L^2}^2 dt'\Big)^\f12 \leq Cd_q 2^{-\frac{3q}{2}} \Vert u_{\phi} \Vert_{\tilde{L}^\infty_t(\mathcal{B}^{\f12})} \|e^{\mathcal{R}t} \pa_y u_\phi \|_{\tilde{L}^2_t(\mathcal{B}^\frac{5}{2})}.  
\end{align*}
We still have to find the estimate of the rest term, we have 
\begin{align*}
\Big(\int_0^t \|e^{\mathcal{R} t'} \Delta_q^h(R^h(u,\pa_x u))_\phi\|_{L^2}^2 dt'\Big)^\f12  &\lesssim \sum_{q'\geq q-3} \Big(\int_0^t \|\tilde{D}_{q'}^h u_\phi \|_{L^\infty_h(L^2_v)}^2 \|e^{\mathcal{R}t'} \Delta_{q'}^h \pa_x u_\phi \|_{L^2_h(L^\infty_v)}^2 dt'\Big)^\f12 \\
&\lesssim \sum_{q'\geq q-3} \Big(\int_0^t \| u_\phi \|_{\mathcal{B}^\f12}^2 2^{2q'}\|e^{\mathcal{R}t'} \Delta_{q'}^h u_\phi \|_{L^2_h(L^\infty_v)}^2 dt'\Big)^\f12.
\end{align*}
By using the Poincaré inequality we achieve 
\begin{align*}
\|e^{\mathcal{R}t'} \Delta_{q'}^h u_\phi \|_{L^2_h(L^\infty_v)} \lesssim \|e^{\mathcal{R}t'} \Delta_{q'}^h \pa_y u_\phi \|_{L^2}.
\end{align*}
Then 
\begin{align*}
\Big(\int_0^t \|e^{\mathcal{R} t'} \Delta_q^h(R^h(u,\pa_x u))_\phi\|_{L^2}^2 dt'\Big)^\f12  &\lesssim \sum_{q'\geq q-3}\Big(\int_0^t \| u_\phi \|_{\mathcal{B}^\f12}^2 2^{2q'}\|e^{\mathcal{R}t'} \Delta_{q'}^h \pa_y u_\phi \|_{L^2}^2 dt'\Big)^\f12 \\
&\leq Cd_q 2^{-\frac{3q}{2}} \Vert u_{\phi} \Vert_{\tilde{L}^\infty_t(\mathcal{B}^{\f12})} \|e^{\mathcal{R}t} \pa_y u_\phi \|_{\tilde{L}^2_t(\mathcal{B}^\frac{5}{2})}.
\end{align*}
By summing all the resulting estimates, and then multiplying by $2^{\frac{3q}{2}}$ and taking the sum over $\ZZ$ we obtain the proof of the term $\Vert e^{\mathcal{R}t} (u\pa_xu)_\phi \Vert_{\tilde{L}^2_t(\mathcal{B}^{\f32})}.$

Now we give the proof of the second estimate in \eqref{estimUU1}. By using the Bony's decomposition \eqref{eq:Bonybis1}, we can write 
$$v\pa_y u = T^h_v\pa_y u + T^h_{\pa_y u}v + R^h(v,\pa_y u). $$
Then 
\begin{align*}
&\Big(\int_0^t \|e^{\mathcal{R} t'} \Delta_q^h(v\pa_yu)_\phi\|_{L^2}^2 dt'\Big)^\f12  \lesssim \Big(\int_0^t \|e^{\mathcal{R} t'} \Delta_q^h(T^h_v\pa_y u)_\phi\|_{L^2}^2 dt'\Big)^\f12 \\
& +\Big(\int_0^t \|e^{\mathcal{R} t'} \Delta_q^h(T^h_{\pa_y u}v)_\phi\|_{L^2}^2 dt'\Big)^\f12  + \Big(\int_0^t \|e^{\mathcal{R} t'} \Delta_q^h(R^h(v,\pa_y u))_\phi\|_{L^2}^2 dt'\Big)^\f12 
\end{align*}
We use the definition of $T^h$, we have 
\begin{align*}
\Big(\int_0^t \|e^{\mathcal{R} t'} \Delta_q^h(T^h_v\pa_y u)_\phi\|_{L^2}^2 dt'\Big)^\f12 \lesssim \Big(\int_0^t \sum_{|q'-q|\leq 4} \|e^{\mathcal{R} t'} (S^h_{q'-1} v \Delta_{q'}^h \pa_y u)_\phi\|_{L^2}^2 dt'\Big)^\f12.
\end{align*}
The lemma \ref{estfg+} allows us to obtain
\begin{align}\label{appendixvu}
\Big(\int_0^t \sum_{|q'-q|\leq 4} \|e^{\mathcal{R} t'} (S^h_{q'-1} v \Delta_{q'}^h \pa_y u)_\phi\|_{L^2}^2 dt'\Big)^\f12 &\lesssim \sum_{|q'-q|\leq 4} \Big(\int_0^t \|S_{q'-1}^h v^+_\phi \|_{L^\infty}^2 \|e^{\mathcal{R}t'} \Delta_{q'}^h \pa_y u_\phi \|_{L^2_h}^2 dt'\Big)^\f12.
\end{align}
We have by applying Bernstein Lemma \ref{lem:Bernsteinbis1} that
\begin{align*}
	\Vert \Delta^h_{q'} v_{\phi}(t') \Vert_{L^\infty} \lesssim 2^{\frac{3q}{2}} \Vert \Delta_{q'}^h \int_0^y u_{\phi}(t',.,s)ds \Vert_{L^2_h(L^\infty_v)},
\end{align*}
we use the fact that $\|\int_0^y fdy'\|_{L^\infty_y}\leq \int_0^1|f|dy'\leq \|f\|_{L^2_y}$. As a result, it comes out 
\begin{align*}
	\Vert \Delta^h_{q'} v_{\phi}(t') \Vert_{L^\infty} &\lesssim 2^{\frac{3q}{2}} \Vert \Delta_{q'}^h  u_{\phi}(t') \Vert_{L^2} \\
	&\leq C d_{q'}(u_\phi) 2^{q'} \|u_\phi\|_{\mathcal{B}^\f12},
\end{align*}
where $(d_{q'}(u_\phi))_{q'\in\ZZ}$ is a generic element of $\ell^{1}(\ZZ) $ such that  $\sum d_{q'}(u_\phi) \leq 1$. Then 

\begin{align*}
	\Vert S^h_{q'-1} v_{\phi} (t') \Vert_{L^\infty} \lesssim 2^{q'} \Vert u_{\phi}(t') \Vert_{\mathcal{B}^{\f12}},
\end{align*}

We replace in \eqref{appendixvu} we obtain 
\begin{align}
\Big(\int_0^t \|e^{\mathcal{R} t'} \Delta_q^h(T^h_v\pa_y u)_\phi\|_{L^2}^2 dt'\Big)^\f12 &\lesssim \sum_{|q'-q|\leq 4} \Vert u_{\phi} \Vert_{\tilde{L}^\infty_t(\mathcal{B}^{\f12})} 2^{q'}\Big(\int_0^t \|e^{\mathcal{R}t'} \Delta_{q'}^h \pa_y u_\phi \|_{L^2}^2 dt'\Big)^\f12 \nonumber \\
&\lesssim  \sum_{|q'-q|\leq 4} \Vert u_{\phi} \Vert_{\tilde{L}^\infty_t(\mathcal{B}^{\f12})} 2^{q'} \Big( 2^{-\frac{5q'}{2}} d_{q'}(\pa_y u_\phi) \|e^{\mathcal{R}t} \pa_y u_\phi \|_{\tilde{L}^2_t(\mathcal{B}^\frac{5}{2})}\Big) \nonumber\\
&\leq Cd_q 2^{-\frac{3q}{2}} \Vert u_{\phi} \Vert_{\tilde{L}^\infty_t(\mathcal{B}^{\f12})} \|e^{\mathcal{R}t} \pa_y u_\phi \|_{\tilde{L}^2_t(\mathcal{B}^\frac{5}{2})},  
\end{align}
where 
$$ d_q = \sum_{|q'-q|\leq 4} d_{q'}(\pa_y u_\phi) 2^{\frac{3}{2}(q-q')}. $$
Along the same way we can obtain that 
\begin{align*}
\Big(\int_0^t \|e^{\mathcal{R} t'} \Delta_q^h(T^h_{\pa_y u}v)_\phi\|_{L^2}^2 dt'\Big)^\f12 \leq Cd_q 2^{-\frac{3q}{2}} \Vert u_{\phi} \Vert_{\tilde{L}^\infty_t(\mathcal{B}^{\f12})} \|e^{\mathcal{R}t} \pa_y u_\phi \|_{\tilde{L}^2_t(\mathcal{B}^\frac{5}{2})}.  
\end{align*}
We finish our proof by giving the estimate of the last term $\int_0^t \|e^{\mathcal{R} t'} \Delta_q^h(R^h(v,\pa_y u))_\phi\|_{L^2}^2 dt'$. We use the definition of $R^h$, we have 
\begin{align*}
\Big(\int_0^t \|e^{\mathcal{R} t'} \Delta_q^h(R^h(v,\pa_y u))_\phi\|_{L^2}^2 dt'\Big)^\f12  &\lesssim \sum_{q'\geq q-3} \Big(\int_0^t \|\tilde{\Delta}_{q'}^h v_\phi \|_{L^\infty}^2 \|e^{\mathcal{R}t'} \Delta_{q'}^h \pa_y u_\phi \|_{L^2}^2 dt'\Big)^\f12 \\
&\lesssim \sum_{q'\geq q-3} \Big(\int_0^t \| u_\phi \|_{\mathcal{B}^\f52}^2 2^{-2q'}\|e^{\mathcal{R}t'} \Delta_{q'}^h \pa_y u_\phi \|_{L^2}^2 dt'\Big)^\f12\\
&\leq Cd_q 2^{-\frac{3q}{2}} \Vert u_{\phi} \Vert_{\tilde{L}^\infty_t(\mathcal{B}^{\f52})} \|e^{\mathcal{R}t} \pa_y u_\phi \|_{\tilde{L}^2_t(\mathcal{B}^\frac{1}{2})}.
\end{align*}
By summing all the resulting estimates, and then multiplying by $2^{\frac{3q}{2}}$ and taking the sum over $\ZZ$ we obtain the proof of the term $\Vert e^{\mathcal{R}t} (v\pa_yu)_\phi \Vert_{\tilde{L}^2_t(\mathcal{B}^{\f32})}.$

\subsection{Proof of the estimate \eqref{eq:G2bis1}}
We first deduce from $\pa_x u + \pa_yv = 0$, that

\begin{align}\label{ffffbis1}
	&\eps^2  \int_0^t \abs{\psca{\Delta_q^h(\pa_t v)_\varphi ,\Delta_q^h (\pa_tR^2)_\varphi}_{L^2}} dt' \lesssim   d_q^2 2^{-q} \eps^2  \Vert (\pa_t u)_\varphi \Vert_{\tilde{L}^2_t(\mathcal{B}^{\f32})}\Vert  (\pa_tR^2)_\varphi \Vert_{\tilde{L}^2_t(\mathcal{B}^\f12)} \nonumber \\
	&\eps^2  \int_0^t \abs{\psca{\Delta_q^h(\pa_t^2 v)_\varphi ,\Delta_q^h (\pa_tR^2)_\varphi}_{L^2}} dt' \lesssim   d_q^2 2^{q} \eps^2  \Vert (\pa_t^2 u)_\varphi \Vert_{\tilde{L}^2_t(\mathcal{B}^{\f32})} \Vert  (\pa_tR^2)_\varphi \Vert_{\tilde{L}^2_t(\mathcal{B}^\f12)},\nonumber \\
	&\eps^2 \int_0^t \abs{\psca{\Delta_q^h(\pa_y^2 v)_\varphi ,\Delta_q^h (\pa_tR^2)_\varphi}_{L^2}} dt' \lesssim  d_q^2 2^{-q}  \eps^2  \Vert \pa_y^2 u_\varphi \Vert_{\tilde{L}^2_t(\mathcal{B}^{\f32})} \Vert (\pa_t R^2)_\varphi \Vert_{\tilde{L}^2_t(\mathcal{B}^\f12)},\nonumber \\
	&\eps^4 \int_0^t \abs{\psca{ \Delta_q^h(\pa_x^2 v)_\varphi ,\Delta_q^h (\pa_tR^2)_\varphi}_{L^2}}dt' \lesssim   d_q^2 2^{-q}\eps^4 \Vert \pa_y u_\varphi \Vert_{\tilde{L}^2_t(\mathcal{B}^{\f72})} \Vert (\pa_tR^2)_\varphi \Vert_{\tilde{L}^2_t(\mathcal{B}^\f12)}.
\end{align}

Then now we still have to control
\begin{align*}
	J_4^q = \eps^2\int_0^t \abs{\psca{\Delta_q^h(u^\eps \pa_x v^\eps)_{\varphi},\Delta_q^h (\pa_tR^2)_\varphi}_{L^2}}dt'
\end{align*}
and 
\begin{align*}
	J_5^q = \eps^2\int_0^t \abs{\psca{\Delta_q^h(v^\eps \pa_y v^\eps)_\varphi, \Delta_q^h (\pa_tR^2)_\varphi}_{L^2}} dt'.
\end{align*}

We start first by $J^4_q$, we have 
\begin{align*}
	J_4^q \leq \eps^2\pare{J_{41}^q + J_{42}^q},
\end{align*}
where
\begin{align*}
	J_{41}^q &= \int_0^t \abs{\psca{\Delta_q^h(u^\eps \pa_x R^2)_{\varphi},\Delta_q^h (\pa_t R^2)_\varphi}_{L^2}} dt' \\
	J_{42}^q &= \int_0^t \abs{\psca{\Delta_q^h(u^\eps \pa_x v)_{\varphi},\Delta_q^h (\pa_t R^2)_\varphi}_{L^2}} dt'.
\end{align*}
It follows from Lemma $\ref{lem:ABCbis1}$ that 
\begin{align} \label{eq:J41qbis1}
	J_{41}^q = &\int_0^t \abs{\psca{\Delta_q^h(u^\eps \pa_x R^2)_\varphi,\Delta_q^h (\pa_tR^2)_\varphi}_{L^2}} dt' \nonumber \\ &\lesssim d_q^2 2^{-q}  \Vert R^2_\varphi \Vert_{\tilde{L}^2_{t,\dot{\eta}(t)}(\mathcal{B}^{1})}\Vert (\pa_t R^2)_\varphi \Vert_{\tilde{L}^2_{t,\dot{\eta}(t)}(\mathcal{B}^{1})}.
\end{align}

For the second term, Bony's decomposition for the horizontal variable gives 
\begin{align*}
J_{42}^q = \int_0^t \abs{\psca{\Delta_q^h(u^\eps \pa_x v)_{\varphi},\Delta_q^h (\pa_tR^2)_\varphi}_{L^2}} dt' \leq J_{421}^q + J_{422}^q + J_{423}^q,
\end{align*}
with
\begin{align*}
J_{421}^q &= \int_0^t \abs{\psca{\Delta_q^h(T^h_{u^\eps} \pa_x v)_{\varphi},\Delta_q^h (\pa_tR^2)_\varphi}_{L^2}} dt' \\
J_{422}^q &= \int_0^t \abs{\psca{\Delta_q^h(T^h_{\pa_x v} u^\eps)_{\varphi},\Delta_q^h (\pa_tR^2)_\varphi}_{L^2}} dt'\\
J_{423}^q &= \int_0^t \abs{\psca{\Delta_q^h(R^h(u^\eps,\pa_x v))_{\varphi},\Delta_q^h (\pa_tR^2)_\varphi}_{L^2}} dt'.
\end{align*}

Due to
$$ \Vert S_{q'-1}^h u^\eps_\varphi \Vert_{L^\infty} \lesssim \Vert u_\varphi^\eps \Vert^{\frac{1}{2}}_{\mathcal{B}^{\f12}} \Vert \pa_y u_\varphi^\eps \Vert^{\frac{1}{2}}_{\mathcal{B}^{\f12}} ,$$
and the relation \eqref{vvbis1}, we have 
\begin{align*}
J_{421}^q &= \int_0^t \abs{\psca{\Delta_q^h(T^h_{u^\eps} \pa_x v)_\varphi,\Delta_q^h (\pa_tR^2)_\varphi}_{L^2}} dt' \\ &\lesssim \sum_{|q'-q| \leq 4} \int_0^t  \Vert S_{q'-1}^h u^\eps_\varphi \Vert_{L^\infty} \Vert \Delta_{q'}^h \pa_x v_\varphi \Vert_{L^2} \Vert \Delta_q^h (\pa_tR^2)_\varphi \Vert_{L^2} dt' \\
&\lesssim \sum_{|q'-q| \leq 4} \int_0^t \Vert u_\varphi^\eps \Vert^{\frac{1}{2}}_{\mathcal{B}^{\f12}} \Vert \pa_y u_\varphi^\eps \Vert^{\frac{1}{2}}_{\mathcal{B}^{\f12}} 2^{2q'} \Vert \Delta_{q'}^h u_\varphi \Vert_{L^2} \Vert \Delta_q^h (\pa_tR^2)_\varphi \Vert_{L^2} dt' \\
&\lesssim  d_q^2 2^{-q} \Vert u_\varphi^\eps \Vert^{\frac{1}{2}}_{\tilde{L}^\infty_{t}(\mathcal{B}^{\f12})} \Vert \pa_y u_{\varphi} \Vert_{\tilde{L}^2_{t}(\mathcal{B}^{2})}  \Vert (\pa_tR^2)_\varphi \Vert_{\tilde{L}^2_{t,\dot{\eta}(t)}(\mathcal{B}^{1})}.
\end{align*}

While again thanks to \eqref{vvbis1}, we find
\begin{align*}
\Vert S_{q'-1}^h \pa_x v_\varphi \Vert_{L^\infty} \lesssim \int_0^y \Vert S_{q'-1}^h \pa_x (\pa_x u_\varphi(t,x,s) \Vert_{L^\infty} ds \lesssim 2^{\frac{q'}{2}} \Vert \pa_y u_\varphi \Vert_{\mathcal{B}^{2}},
\end{align*}
which leads to 
\begin{align*}
J_{422}^q &= \int_0^t \abs{\psca{\Delta_q^h(T^h_{\pa_x v} u^\eps)_\varphi,\Delta_q^h (\pa_t R^2)_\varphi}_{L^2}}dt' \\ &\lesssim  d_q^2 2^{-q} \Vert u_\varphi^\eps \Vert^{\frac{1}{2}}_{\tilde{L}^\infty_{t}(\mathcal{B}^{\f12})} \Vert \pa_y u_{\varphi} \Vert_{\tilde{L}^2_{t}(\mathcal{B}^{2})}  \Vert (\pa_t R^2)_\varphi \Vert_{\tilde{L}^2_{t,\dot{\eta}(t)}(\mathcal{B}^{1})}.
\end{align*}

Along the same way, we obtain  

\begin{align*}
J_{423}^q &= \int_0^t \abs{\psca{ \Delta_q^h(R^h(\pa_x v, u^\eps))_\varphi,\Delta_q^h (\pa_t R^2)_\varphi}_{L^2}} \\ &\lesssim  d_q^2 2^{-q} \Vert u_\varphi^\eps \Vert^{\frac{1}{2}}_{\tilde{L}^\infty_{t}(\mathcal{B}^{\f12})} \Vert \pa_y u_{\varphi} \Vert_{\tilde{L}^2_{t}(\mathcal{B}^{2})}  \Vert (\pa_tR^2)_\varphi \Vert_{\tilde{L}^2_{t,\dot{\eta}(t)}(\mathcal{B}^{1})}.
\end{align*}

This gives rise to
\begin{align} \label{J42bis1}
 J_{42}^q \lesssim  d_q^2 2^{-q} \Vert u_\varphi^\eps \Vert^{\frac{1}{2}}_{\tilde{L}^\infty_{t}(\mathcal{B}^{\f12})} \Vert \pa_y u_{\varphi} \Vert_{\tilde{L}^2_{t}(\mathcal{B}^{2})}\Vert \pa_t R^2)_\varphi \Vert_{\tilde{L}^2_{t,\dot{\eta}(t)}(\mathcal{B}^{1})}. 
\end{align}

Now for $J_5^q$, We first note that
$$ v^\eps \pa_y v^\eps = v\pa_yR^2 + R^2\pa_yR^2 + v\pa_yv + R^2\pa_yv. $$
Lemma \ref{lem:ABCbis1} yields 
$$ \eps^2  \int_0^t \abs{\psca{ \Delta_q^h( R^2\pa_yR^2)_\varphi, \Delta_q^h (\pa_tR^2)_\varphi}_{L^2}} dt'\lesssim  d_q^2 2^{-q} \Vert (R^1_\varphi,\eps (\pa_tR^2)_\varphi) \Vert^2_{\tilde{L}^2_{t,\dot{\eta}(t)}(\mathcal{B}^{1})}. $$
From $\eqref{eq:I312bis1}$, we have 
$$ \int_0^t \abs{\psca{ \Delta_q^h( v\pa_yR^2)_\varphi, \Delta_q^h (\pa_tR^2)_\varphi}_{L^2}}dt' \lesssim   d_q^2 2^{-q} \Vert u_\varphi \Vert_{\tilde{L}^\infty_t(\mathcal{B}^{\f32})}^\frac{1}{2} \Vert \pa_y R^2_\varphi \Vert_{\tilde{L}^2_t(\mathcal{B}^\f12)}\Vert (\pa_tR^2)_\varphi \Vert_{\tilde{L}^2_{t,\dot{\eta}(t)}(\mathcal{B}^{1})}.$$
As for $\eqref{I22bis1}$, we obtain
\begin{align*}
\int_0^t &\abs{\psca{\Delta_q^h  (R^2\pa_x u)_{\varphi},\Delta_q^h (\pa_tR^2)_\varphi}_{L^2}} dt' \\ &\lesssim  d_q^2 2^{-q}\Vert  (\pa_tR^2)_\varphi \Vert_{\tilde{L}^2_{t,\dot{\eta}(t)}(\mathcal{B}^{1})} \Vert u_\varphi \Vert_{\tilde{L}^\infty_t(\mathcal{B}^{\f32})}^{\frac{1}{2}} \Vert \pa_y R^2_\varphi  \Vert_{\tilde{L}^2_t(\mathcal{B}^{\f12})} . \hspace{2cm}
\end{align*}
Then, we deduce from the proof of $\eqref{eq:I312bis1}$ that 
\begin{align*}
\int_0^t &\abs{\psca{\Delta_q^h  (v\pa_y v)_{\varphi},\Delta_q^h (\pa_t R^2)_\varphi}_{L^2}}dt'\\ &\lesssim  d_q^2 2^{-q} \Vert u_\varphi \Vert_{\tilde{L}^\infty_t(\mathcal{B}^{\f32})}^\frac{1}{2} \Vert \pa_y v_\varphi \Vert_{\tilde{L}^2_t(\mathcal{B}^\f12)}\Vert (\pa_tR^2)_\varphi \Vert_{\tilde{L}^2_{t,\dot{\eta}(t)}(\mathcal{B}^{1})}\\ & \lesssim  d_q^2 2^{-q} \Vert u_\varphi \Vert_{\tilde{L}^\infty_t(\mathcal{B}^{\f32})}^\frac{1}{2} \Vert \pa_y u_\varphi \Vert_{\tilde{L}^2_t(\mathcal{B}^{\f32})}\Vert (\pa_tR^2)_\varphi \Vert_{\tilde{L}^2_{t,\dot{\eta}(t)}(\mathcal{B}^{1})} .
\end{align*}
As a result, it comes out
\begin{align} \label{5.18bis1}
J_5^q &\lesssim  d_q^2 2^{-q} \Big( \Vert (R^1_\varphi,\eps (\pa_t R^2)_\varphi) \Vert_{\tilde{L}^2_{t,\dot{\eta}(t)}(\mathcal{B}^{1})}^2 \nonumber \\ &+ \eps^2 \Vert u_\varphi \Vert_{\tilde{L}^\infty_t(\mathcal{B}^{\f32})}^\frac{1}{2}  (  \Vert \pa_y R^2_\varphi \Vert_{\tilde{L}^2_t(\mathcal{B}^{\f12})} + \Vert \pa_y u_\varphi \Vert_{\tilde{L}^2_t(\mathcal{B}^{\f32})} ) \Vert (\pa_tR^2)_\varphi \Vert_{\tilde{L}^2_{t,\dot{\eta}(t)}(\mathcal{B}^{1})}\Big).
\end{align}

\begin{rem}
For the proof when we have $R^2$ and not $\pa_tR^2$, it also the same we need just to replace in all the proof with $R^2$ instead of $\pa_tR^2$.  
\end{rem}

By summing up \eqref{ffffbis1}-\eqref{5.18bis1}, we conclude the proof of \eqref{eq:G2bis1}

\bibliographystyle{plain}

\bibliography{bib-file}

\begin{thebibliography}{50}
\bibitem{AN2020} N. Aarach and V. Ngo, Hydrostatic approximation of the 2D primitive equations in a thin strip, 2020, arXiv:2006.16025v1

\bibitem{BA2019} B. Abdelhedi, : \textit{Global existence of solutions for hyperbolic Navier-Stokes equations in three space dimensions.} Asymptot. Anal. 112, 213-225 (2019)

\bibitem{awxy}  Alexandre R.,  Wang Y., Xu C.-J. and Yang T., Well-posedness of The Prandtl Equation in Sobolev Spaces, {\em J. Amer. Math. Soc.}, {\bf 28}, 2015, 745-784.

\bibitem{BCD2011book} H. Bahouri, J.-Y. Chemin and R. Danchin,
\emph{Fourier Analysis and Nonlinear Partial Differential Equations}, Grundlehren der Mathematischen Wissenschaften, vol. 343, Springer, Heidelberg, 2011.

\bibitem{B1981} J.-M. Bony, Calcul symbolique et propagation des singularit\'es pour les \'equations aux d\'eriv\'ees partielles non lin\'eaires, {\it Annales de l'\'Ecole Normale Sup\'erieure}, \textbf{14}, 1981, p.209-246.



\bibitem{Sadourny} P. Bougeault, R. Sadourny, Dynamique de l'atmosph\`ere et de l'oc\'ean, \'Editions de l'\'Ecole Polytechnique (2001).

\bibitem{YRM2004} Y. Brenier, R. Natalini, M. Puel, : \textit{On a relaxation approximation of the incompressible Navier-Stokes equations.} Proc. Am. Math. Soc. 132, 1021-1028 (2004)

\bibitem{6} D. Bresch,   A. Kazhikhov and J. Lemoine, On the two-dimensional hydrostatic Navier-Stokes equations. SIAM J.Math. Annal., {\bf 36} (2004), 796-814

\bibitem{CC} C. Cattaneo, Sulla  conduzione  del  calore, Atti Sem. Mat. Fis. Univ. Modena, 3 (1949), pp. 83-101
 
\bibitem{CR22} C. Cattaneo, Sur une forme de l'\'equation de la chaleur \'eliminant le paradoxe d'une propagation instantan\'ee, C. R. Acad. Sci. Paris, 247 (1958), pp. 431-433.


\bibitem{C1995book} J.-Y. Chemin, Fluides parfaits incompressibles, {\it Ast\'erisque}, \textbf{230}, 1995.

\bibitem{C2004} J.-Y. Chemin, Le syst\`eme de Navier-Stokes incompressible soixante dix ans apr\`es Jean Leray, \textit{Actes des Journ\'ees Math\'ematiques \`a  la M\'emoire de Jean Leray}, S\'eminaires \& Congr\`es, \textbf{9}, Soc. Math. France, Paris, 2004, p.99-123.


\bibitem{CDGG4} J.-Y. Chemin, B. Desjardins, I. Gallagher, E. Grenier, Mathematical Geophysics: An introduction to rotating fluids and to the Navier-Stokes equations, Oxford University Press, (2006).


\bibitem{CGP2011} J.-Y. Chemin, I. Gallagher, M. Paicu, Global regularity for some classes of large solutions to the Navier-Stokes equations, Annals of Mathematics, 2011.


\bibitem{CL1992} J.-Y. Chemin and N. Lerner, Flot de champs de vecteurs non Lipschitziens et \'equations de Navier-Stokes, \textit{Journal of Differential Equations}, \textbf{121}, 1992, 314-328.



\bibitem{OIG} O. Coulaud, I. Hachicha and G. Raugel, Hyperbolic quasilinear Navier-Stokes equations in $\mathbb{R}^2$, Journal of Dynamics and Differential Equations, 2021.

\bibitem{Cushman} B. Cushman-Roisin, Introduction to geophysical fluid dynamics, Prentice-Hall (1994).

\bibitem{BEE2004} B. Desjardins, E. Dormy, E. Grenier. \textit{Boundary Layer Instability at the top of the Earth's outer core}.Journal of Computational and Applied Mathematics 166 (1),123-131 (2004).

\bibitem{EmMa} P. Embid, A. Majda, Averaging over fast gravity waves for geophysical flows with arbitrary potential vorticity, \textit{Communications in Partial Differential Equations}, \textbf{21} (1996), 619-658.


\bibitem{e-2} B. Enquist \& E. Weinan. Blow up of solutions of the unsteady Prandtl's equation, {\it Comm. Pure Appl. Math.}, {\bf 50} (1997) 1287-1293.


\bibitem{GV-D}  D. G\'erard-Varet, \& E. Dormy, On the ill-posedness of the Prandtl equation, {\it J. Amer. Math. Soc.},  {\bf 23} (2010), 591-609.


\bibitem{GvM2015} D. G\'erard-Varet and N. Masmoudi, Well-posedness for the Prandtl system without analyticity or mono-tonicity, \emph{Ann. Sci. \'Ecole Norm. Sup.} (4), \textbf{48} (2015), 1273-1325.


\bibitem{GvMV2019} D. G\'erard-Varet, N. Masmoudi and V. Vicol, Well-posedness of the hydrostatic Navier-Stokes equations,
arXiv:1804.04489.

\bibitem{G1982} A.E. Gill, Atmosphere-Ocean Dynamics, \emph{Academic Press New York} (1982).

\bibitem{30} F. Guillen-Gonzalez, N. Masmoudi and M.A. Rodri­guez-Bellido : Anisotropic estimates and strong solutions of the primitive equations. Differ. Integral Equ., {\bf 14}(2001), 1381-1408.

\bibitem{IHachicha} I. Hachicha. Approximations hyperboliques des \'equations de Navier-Stokes, th\`ese Universit\'e d'Evry, 2013.

\bibitem{JH} J. Hartmann.  \textit{Theory of the laminar flow of an electrically conductive liquid in ahomogeneous magnetic field}, K. Dan. Vidensk. Selsk. Mat. Fys. Medd.15(6), 1-28(1937).

\bibitem{H2004} J.R. Holton, An Introduction to Dynamic Meteorology, 4th edition, \emph{Elsevier Academic Press} (2004).

\bibitem{MV2016} M. Ignatova and V. Vicol, \textit{Almost global existence for the Prandtl boundary layer equations}, Arch. Ration. Mech. Anal., \textbf{220} (2016), 809-848.

\bibitem{SZ1995} S. Jin and Z. Xin, : \textit{The relaxation  schemes  for  systems  of  conservation laws  in  arbitrary  space  dimensions.} Communications on pure and Applied Mathematics, Vol. 48 (1995), pages 235-276.

\bibitem{Lions}  J. L. Lions, R. Temam and S. Wang, Mathematical study of the coupled models of atmosphere and ocean (CAO III), J. Math. Pures Appl., {\bf 74} (1995), 105-163.

\bibitem{Temam}  J. L. Lions, R. Temam and S. Wang, New formulations of the primitive equations of the atmosphere and applications, Non-linearity, {\bf 5 } (1992), 237-288.

\bibitem{Wang}  J. L. Lions, R. Temam and S. Wang, On the equations of the large-scale ocean, Non-linearity, {\bf 5 } (1992), 1007-1053.

\bibitem{LCS2003} M. C. Lombardo, M. Cannone and M. Sammartino, Well-posedness of the boundary layer equations, \emph{SIAM J. Math. Anal.}, \textbf{35} (2003), 987-1004.

\bibitem{M-W} N. Masmoudi and T.~K. Wong, Local-in-time existence and uniqueness of solutions to the Prandtl equations by energy methods, {\em Comm. Pure Appl. Math.}, {\bf 68} (2015) 1683-1741.


\bibitem{oleinik}   O. A. Oleinik,    V. N. Samokhin,   {\it Mathematical Models in Boundary Layers Theory}. Chapman \& Hall/CRC, 1999.

\bibitem{MR2008} M. Paicu, G. Raugel, : A hyperbolic singular perturbation of the Navier-Stokes equations in $\mathbb{R}^2$, manuscript, (2008)

\bibitem{MR2007} M. Paicu, G. Raugel, \textit{Une perturbation hyperbolique des équations de Navier-Stokes}, ESAIM: Proc., 2007, Vol. 21, p. 65-87


\bibitem{MP2020} M. Paicu, P. Zhang, Global existence and decay of solutions to prandtl system with small analytic data, ARMA, 2020, 

\bibitem{PZ2011} M. Paicu, Z. Zhang, Global regularity for the Navier-Stokes equations with some classes of large initial data, Anal. PDE 4 (2011), no. 1, 95-113.

\bibitem{PZZ2019} M. Paicu, P. Zhang and Z. Zhang, On the hydrostatic approximation of the Navier-Stokes equations in a thin strip, Advances in Mathematics, 372 (2020), 42 pp.

\bibitem{Pedlosky} J. Pedlosky,  \textit{Geophysical fluid dynamics}, Springer (1979).

\bibitem{PZ2005} R. Plougonven and V. Zeitlin, Lagrangian approach to the geostrophic adjustment of frontal anomalies in a stratified fluid, \emph{Geophys. Astrophys. Fluid Dyn.}, \textbf{99} (2005), 101-135. 


\bibitem{RS2000} R. Racke, J. Saal. Hyperbolic Navier-Stokes equations II: Global existence of small solutions, 2012, Vol.1. Evolution equation and control theory.



\bibitem{samm} M.  Sammartino,   R. E. Caflisch,  Zero viscosity limit for analytic solutions of the Navier-Stokes equations on a half-space, I. Existence for Euler and Prandtl equations. {\it Comm. Math. Phys.}, {\bf 192} (1998) 433-461; II. 

\bibitem{Ziane} R. Temam and M. Ziane, Some mathematical problems in geophysical fluid dynamics, Handbook of Mathematical Fluid-Dynamics, 2003.

\bibitem{e-1} E, Weinan: Boundary layer theory and the zero-viscosity limit of the Navier-Stokes equation. {\it Acta Math. Sin. (Engl. Ser.)} {\bf 16} (2000) 207-218.

\bibitem{J} J. Wesson. \textit{Tokamacs}. Oxford University Press, Oxford, 2011.


\bibitem{ZZ2016} P. Zhang and Z. Zhang. Long time well-posedness of Prandtle system with small data, J. Funct. Anal., 270 (2016), 2591-2615.

\end{thebibliography}

\end{document}